\numberwithin{equation}{section}
\numberwithin{figure}{section}
\theoremstyle{plain}
\newtheorem{thm}{Theorem}[section]
\newtheorem{lemma}[thm]{Lemma}
\newtheorem{prop}[thm]{Proposition}
\newtheorem{corollary}[thm]{Corollary}
\newcounter{theoremalph}
\newtheorem{thmAlph}[theoremalph]{Theorem}
\theoremstyle{definition}
\newtheorem{defn}[thm]{Definition}
\newtheorem{example}[thm]{Example}
\newtheorem{remark}[thm]{Remark}
\newtheorem{conj}[thm]{Conjecture}
\newtheorem{notation}[thm]{Notation}
\newtheorem{question}[thm]{Question}
\newtheorem{construction}[thm]{Construction}
\newcommand{\Z}{\mathbb{Z}}
\newcommand{\R}{\mathbb{R}}
\newcommand{\N}{\mathbb{N}}
\renewcommand{\H}{\mathbb{H}}
\newcommand{\Hy}{\mathbb{H}}
\renewcommand{\P}{\mathbb{P}}
\newcommand{\cB}{\mathcal{B}}
\newcommand{\cC}{\mathcal{C}}
\newcommand{\cF}{\mathcal{F}}
\newcommand{\cH}{\mathcal{H}}
\newcommand{\cL}{\mathcal{L}}
\newcommand{\cN}{\mathcal{N}}
\newcommand{\cP}{\mathcal{P}}
\newcommand{\cS}{\mathcal{S}}
\newcommand{\cT}{\mathcal{T}}
\newcommand{\cW}{\mathcal{W}}
\newcommand{\cX}{\mathcal{X}}
\newcommand{\cY}{\mathcal{Y}}
\newcommand{\G}{\Gamma}
\newcommand{\CAT}{\operatorname{CAT}}
\newcommand{\diam}{\operatorname{diam}}
\newcommand{\Confdim}{\operatorname{Confdim}}
\newcommand{\Hdim}{\operatorname{Hdim}}
\newcommand{\Ima}{\operatorname{Im}}
\newcommand{\Rea}{\operatorname{Re}}
\newcommand{\St}{\operatorname{St}}
\newcommand{\Lk}{\operatorname{Lk}}
\newcommand{\Shapes}{\operatorname{Shapes}}
\newcommand{\p}{\partial}
\newcommand{\ba}{\boldsymbol{a}}
\definecolor{amethyst}{rgb}{0.6, 0.4, 0.8}
\newcommand{\hide}[1]{}
\title{Conformal dimension bounds \\ for certain Coxeter group Bowditch boundaries}
\author{Elizabeth Field \and Radhika Gupta \and Robert Alonzo Lyman \and Emily Stark}
\begin{document}

\maketitle

    \begin{abstract}
        We give upper and lower bounds on the conformal dimension of the Bowditch boundary of a Coxeter group with defining graph a complete graph and  edge labels at least three. The lower bounds are obtained by quasi-isometrically embedding Gromov's round trees in the Davis complex. The upper bounds are given by exhibiting a geometrically finite action on a $\mathrm{CAT}(-1)$ space and bounding the Hausdorff dimension of the visual boundary of this space. Our results imply that there are infinitely many quasi-isometry classes within each infinite family of such Coxeter groups with edge labels bounded from above. As an application, we prove there are infinitely many quasi-isometry classes among the family of hyperbolic groups with Pontryagin sphere boundary. Combining our results with work of Bourdon--Kleiner proves the conformal dimension of the boundaries of hyperbolic groups in this family achieves a dense set in $(1,\infty)$.
    \end{abstract}

    \section{Introduction} \label{sec:intro}

    Conformal dimension is a powerful analytic invariant of a metric space that captures the fractal behavior amongst all deformations of the space. This invariant has made a significant impact on geometric group theory and the study of spaces with coarse negative curvature.
    Pansu~\cite{pansu} introduced conformal dimension to study the geometry and boundaries of the rank-1 symmetric spaces. He computed the conformal dimension of the boundary of each rank-1 symmetric space and consequently proved that non-isometric non-compact rank-1 symmetric spaces are not quasi-isometric. Recently, Carrasco--Mackay~\cite{carrascomackay} characterized the hyperbolic groups without 2-torsion whose boundaries have conformal dimension equal to one, illustrating a remarkable connection between this analytic property of the boundary and the algebraic structure of the group.

    The conformal dimension of a metric space $Z$ is defined to be the infimum of the Hausdorff dimension among all metric spaces quasisymmetric to $Z$; see \Cref{sec:prelims}. Conformal dimension computations are notoriously difficult. For example, the conformal dimension of the standard square Sierpinski carpet is unknown; see~\cite{kwapisz}. The only hyperbolic groups for which the conformal dimension of their boundary has been computed are uniform lattices in isometry groups of the rank-1 symmetric spaces, uniform lattices in isometry groups of certain Fuchsian buildings~\cite{bourdon}, and groups that split as a graph of groups with groups from these two families as vertex groups and with elementary edge groups.
    Nonetheless, there are a plethora of tools to bound the conformal dimension of a metric space. These tools have been utilized to prove there are infinitely many quasi-isometry classes within certain families of hyperbolic groups~\cite{bourdon95, mackay, mackay16}.

    In this paper, we give the first non-trivial bounds on the conformal dimension of the Bowditch boundary of a non-hyperbolic, relatively hyperbolic group pair. Consequently, we prove there are infinitely many quasi-isometry classes within each family of Coxeter groups with defining graph a complete graph with edge labels at least three and a uniform upper bound on the edge labels.

    Let $\cW$ denote the family of Coxeter groups with defining graph a complete graph and with edge labels greater than or equal to three. These groups are familiar and well-studied if the graph has few vertices. Two-generator Coxeter groups are dihedral. Groups in $\cW$ defined by graphs with three vertices are classically studied triangle groups, which act cocompactly by reflections on the Euclidean plane if all edge labels are three and act cocompactly by reflections on the hyperbolic plane otherwise. Groups in $\cW$ defined by graphs with four vertices are Kleinian: they act discretely on 3-dimensional real hyperbolic space by Andreev's Theorem~\cite{andreev, schroeder}. In general, a group in the family $\cW$ contains many of the above well-understood groups as subgroups. Broadly, one hopes to understand the geometry, boundaries, and rigidity properties for the groups in this family $\cW$.

    Some aspects of the geometry of groups in $\cW$ are well-understood. Each group acts geometrically on its Davis--Moussong complex, which is a 2-dimensional $\CAT(0)$ space with each $2$-cell isometric to a regular Euclidean polygon and with each vertex link a complete graph. If the defining graph contains a triangle with all edges labeled three, then the Davis--Moussong complex contains a Euclidean flat tiled by regular Euclidean hexagons. It follows from an observation of Wise~\cite{wise96} that the Davis--Moussong complex has isolated flats in this case (see~\cite{hruska04}). Otherwise, if no such triangle exists, then the group is $\delta$-hyperbolic \cite[Moussong's Theorem; Corollary 12.6.3]{davis}.

    The $\CAT(0)$ boundary of each group in $\cW$ is therefore well-defined by work of Hruska--Kleiner \cite{hruskakleiner}. If the defining graph has three vertices, then the $\CAT(0)$ boundary is a circle. If the defining graph has four vertices, then the $\CAT(0)$ boundary is homeomorphic to the Sierpinski carpet~\cite{ruane-sier}. Haulmark--Hruska--Sathaye~\cite{haulmarkhruskasathaye} proved that when the defining graph has at least five vertices, then the $\CAT(0)$ boundary is homeomorphic to the Menger curve. Their result helped motivate our study of this family $\cW$, as these groups were the first examples of non-hyperbolic $\CAT(0)$ groups with Menger curve $\CAT(0)$ boundary. In particular, these Coxeter groups cannot be distinguished by the topology of their boundaries, and Haulmark--Hruska--Sathaye asked for the quasi-isometry classification in their work~\cite[Question 1.6]{haulmarkhruskasathaye}. A corollary of our main results is the following.

    \begin{corollary}
	   There are infinitely many quasi-isometry classes among each family of groups in $\cW$ with a uniform upper bound on the edge labels in the defining graph.
       In particular, groups in $\cW$ fall into infinitely many quasi-isometry classes.
    \end{corollary}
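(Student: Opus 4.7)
The plan is to combine the paper's lower bound on conformal dimension (obtained via quasi-isometric embeddings of round trees into the Davis complex) with its upper bound (obtained via $\CAT(-1)$ geometry), together with the fact that the conformal dimension of the Bowditch boundary is a quasi-isometry invariant of groups in $\cW$.

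First, I would establish invariance. When $W\in\cW$ is hyperbolic (equivalently, no triangle of the defining graph has all three labels equal to $3$), the Bowditch boundary is the Gromov boundary, and quasi-isometry invariance of its conformal dimension is classical. Otherwise the Davis--Moussong complex has isolated flats by Wise's observation, so $W$ is hyperbolic relative to the stabilizers of its maximal flats. By the Hruska--Kleiner theorem, this peripheral structure is canonical: any quasi-isometry between two such groups must coarsely permute peripheral cosets. Hence the homeomorphism type of $\pb W$ and, a fortiori, its conformal dimension are quasi-isometry invariants of the group $W$.

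Next, fix $N\geq 3$ and let $\cW_N\subset\cW$ consist of those groups whose defining graph has all edge labels in $\{3,\dots,N\}$. I would construct an explicit sequence $W_n\in\cW_N$, for instance with defining graph $K_n$ and all edge labels equal to $3$. Applying the paper's lower bound theorem produces lower bounds on $\Confdim(\pb W_n)$ that tend to infinity with $n$, because the round tree embedded in the Davis complex branches using the many independent reflection generators supplied by the large defining graph and does not require large edge labels. Combined with the paper's upper bound, which gives $\Confdim(\pb W_n)<\infty$ for each individual $n$, the sequence $\{\Confdim(\pb W_n)\}$ attains infinitely many distinct values. By the first step, the $W_n$ represent infinitely many quasi-isometry classes in $\cW_N$. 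Taking $N=3$ then yields the ``in particular'' assertion for all of $\cW$.

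The main obstacle I anticipate is confirming that the round tree lower bound grows without bound within a single $\cW_N$. The branching of the embedded round tree must be controlled by the number of vertices of the defining graph rather than by the size of the edge labels, and the quasi-isometry constants of the embedding must remain uniformly bounded as $n\to\infty$, so that no hidden dependence on the size of the group degrades the lower bound. This is precisely the quantitative content delivered by the paper's round tree construction.
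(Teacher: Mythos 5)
Your proposal is correct and follows essentially the same route as the paper: quasi-isometry invariance of the conformal dimension of the Bowditch boundary (the paper derives this from \Cref{thm:periph_preserved} and \Cref{thm_qs}, since the peripheral subgroups are not nontrivially relatively hyperbolic, rather than from Hruska--Kleiner), combined with the round-tree lower bound of \Cref{thm:lowerbound} tending to infinity for fixed label bound, and finiteness of each conformal dimension from the upper bound. Your explicit appeal to the upper bound to rule out infinite values is a point the paper leaves implicit, but otherwise the arguments coincide.
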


    The corollary follows from our lower bound on the conformal dimension of the Bowditch boundary of a group in the family $\cW$. Each group $W_\Gamma \in \cW$ is hyperbolic relative to the collection $\cP$ of stabilizers of two-dimensional flats in the Davis--Moussong complex (where $\cP$ is empty if $W_\Gamma$ is hyperbolic). The pair $(\cW_\Gamma,\cP)$ admits a Bowditch boundary $\p (W_\Gamma, \cP)$ that is well-defined up to homeomorphism~\cite{bowditch12}. If $W_\Gamma$ is hyperbolic,  then this boundary is homeomorphic to the Gromov boundary of $W_\Gamma$. The Bowditch boundary admits a family of metrics inducing its topology. Since each group in $\cP$ is not itself nontrivially relatively hyperbolic, the quasisymmetry type of $\p(W_\Gamma, \cP)$ is a quasi-isometry invariant of the group $W_\Gamma$ \cite{behrstockdrutumosher, groff13, mackaysisto24, healyhruska}. Thus, the conformal dimension of $\p(W_\Gamma, \cP)$ is a quasi-isometry invariant of $W_\Gamma$ by definition. We first exhibit lower bounds.

    \begin{thmAlph}\label{thm:intro:lowb}(\Cref{thm:lowerbound}.)
       Let $\Gamma$ be a complete graph with $m \geq  11$ vertices and edge labels $m_{ij} \geq 3$. Let $M = \max{m_{ij}}$. Then,
        \[\Confdim\bigl(\p (W_\G, \cP)\bigr) \geq 1 + \frac{\log \bigl(\lfloor\frac{m-5}{3}\rfloor\bigr)}{\log (2M-1)}.\]
    \end{thmAlph}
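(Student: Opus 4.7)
The plan is to obtain the bound by constructing a quasi-isometric embedding of a Gromov round tree $R_{k, \lambda}$ into the Davis--Moussong complex $\Sigma$ of $W_\G$. Recall that $R_{k, \lambda}$ is a $\CAT(-1)$ $2$-complex obtained from a regular rooted $k$-ary tree by thickening each edge into a negatively curved hyperbolic ribbon, calibrated so that sibling geodesic rays separate at exponential rate $\lambda$. Its visual boundary $\p R_{k, \lambda}$ is a self-similar metric space whose conformal dimension equals $1 + \log k / \log \lambda$, by computations of Pansu and Bourdon. Because each peripheral subgroup in $\cP$ is itself not nontrivially relatively hyperbolic, a quasi-isometric embedding $R_{k, \lambda} \hookrightarrow \Sigma$ produces a quasi-symmetric embedding $\p R_{k, \lambda} \hookrightarrow \p(W_\G, \cP)$, whence
\[
\Confdim\bigl(\p(W_\G, \cP)\bigr) \;\geq\; 1 + \frac{\log k}{\log \lambda}.
\]
It therefore suffices to construct such an embedding with $k = \lfloor (m-5)/3 \rfloor$ and $\lambda = 2M - 1$.

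Labeling the vertices of $\G$ by generators $s_1, \dots, s_m$, I would build the axis of the round tree along the central geodesic line in the $2 m_{12}$-gon of the dihedral subgroup $\la s_1, s_2 \ra$. Branches are attached at each vertex of this axis using the remaining $m-2$ generators, organized into disjoint triples $\{s_a, s_b, s_c\}$, each of which spans a triangle Coxeter sub-complex in which a branch may descend. Reserving two generators for the axis and three more for the parent triple carried in from the previous recursive step leaves $\lfloor(m-5)/3\rfloor$ independent triples per vertex available to spawn new branches, giving the claimed branching degree $k$. The expansion rate $\lambda = 2M - 1$ is dictated by the local geometry of the $2 m_{ij}$-gons forming the $2$-cells of $\Sigma$: after passing through a single polygon, a pair of sibling rays can be shown to separate by a factor bounded above by $2 m_{ij} - 1 \leq 2M - 1$, and iterating along each branch produces separations at depth $n$ of order $(2M-1)^{-n}$ in the visual metric on $\p(W_\G, \cP)$.

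The main obstacle is to verify that this combinatorial assignment is indeed a quasi-isometric embedding. I would encode each point of $R_{k, \lambda}$ by a reduced word in the generators alternating axis segments with branch excursions, and apply Tits' solution of the word problem in Coxeter groups to deduce that such words have length comparable to their $\Sigma$-distance. The disjointness of the generator triples across sibling branches is essential: it prevents braid relations from cancelling contributions from different branches, so that distinct branch rays fellow-travel only along initial axis segments and diverge thereafter at the prescribed rate. Avoiding triples whose triangles are flat requires some care, but in the worst case (as when $M = 3$) the branch can be routed around the offending flat with only bounded additive distortion, which is absorbed into the quasi-isometry constants and does not affect the asymptotic separation rate. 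The hypothesis $m \geq 11$ is used precisely to guarantee $\lfloor(m-5)/3\rfloor \geq 2$, so that the round tree has non-trivial branching and its boundary has conformal dimension strictly greater than one; higher $m$ simply enlarges the branching. Once the quasi-isometric embedding is in place, the formula for $\Confdim(\p R_{k,\lambda})$ yields the desired lower bound.
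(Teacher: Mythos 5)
Your overall strategy --- embed a round tree with branching $k=\lfloor(m-5)/3\rfloor$ and expansion rate $2M-1$, then invoke the Pansu--Bourdon--Mackay lower bound $1+\log k/\log\lambda$ --- is the same as the paper's, and your generator count reproduces the correct numerology. But there are two genuine gaps. First, your branches are supposed to ``descend into triangle Coxeter sub-complexes'' spanned by triples $\{s_a,s_b,s_c\}$; when all three edge labels equal $3$ that subcomplex contains a Euclidean flat, and when $M=3$ \emph{every} triple spans such a flat, so there is nothing non-flat to route a branch through and your ``worst case'' is in fact the generic case. The paper never descends into $3$-generator subcomplexes at all: the round tree is built from strips of polygons (each polygon involving only two generators) glued along outer edge paths, and a separate bookkeeping device (inductive hypothesis (IH4), which cyclically forbids every triple of generators) guarantees the tree meets each flat in a uniformly bounded set.

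Second, and more seriously, ``bounded additive distortion'' is not the relevant issue when avoiding flats. The lower bound concerns the Bowditch boundary, i.e.\ the cusped Cayley graph, where each flat is coned off by a combinatorial horoball. If the round tree had unbounded intersection with even one flat, horoball shortcuts would destroy the quasi-isometric embedding (a distance $L$ inside a peripheral coset collapses to roughly $\log L$ in the cusped space), and no additive correction repairs that. This is exactly why the paper needs both the flat-avoidance hypothesis and a genuine convexity statement (\Cref{prop_convex}, proved by a wall-crossing induction) before concluding in \Cref{lem:Aboundaryembeds} that the tree embeds quasi-isometrically in the cusped space. Your appeal to Tits' solution of the word problem gestures at geodesity of individual words, but the actual danger is a wall leaving the tree and returning, which only the inductive convexity argument excludes. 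Finally, the paper must also prove the constructed tree is itself hyperbolic (\Cref{lem:Ahyperbolic}, via strictly systolic angled complexes) before its boundary exists and Mackay's theorem applies; in your setup the analogous burden --- that your map from the abstract $\CAT(-1)$ model $R_{k,\lambda}$ is a quasi-isometric embedding realizing the claimed expansion rate --- is asserted rather than proved.
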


   Note that we use the notation $\log x$ to represent the natural logarithm throughout this paper.

    \Cref{thm:intro:lowb} proves that the conformal dimension of the Bowditch boundary tends to infinity within each infinite family of Coxeter groups in $\cW$ with a uniform upper bound on the edge labels.

  We prove \Cref{thm:intro:lowb} by quasi-isometrically embedding Gromov's round trees~\cite[Section 7.C3]{gromov93} in the Davis--Moussong complex for groups in the family $\cW$. A round tree can be viewed as a complex made up of pieces that are each quasi-isometric to an infinite sector in the hyperbolic plane and which are glued together in the pattern of a rooted tree, $T$. When a round tree is quasi-isometrically embedded in a hyperbolic metric space, it produces a system of arcs in the boundary homeomorphic to the product of a Cantor set and an interval. It turns out that the conformal dimension of this product is controlled by the Hausdorff dimension of the Cantor set, which in turn is controlled by the branching of the tree $T$.

    Round trees have been a primary tool for producing lower bounds on the conformal dimension of the boundary of a hyperbolic metric space~\cite{bourdon95, mackay10, mackay, mackay16, frost}.
    The argument given in this paper is notable in that the Davis--Moussong complex is not itself hyperbolic. By carefully avoiding flats in our construction and maintaining convexity, we are able to show that the system of arcs in the boundary of the round tree that we construct will still survive in the Bowditch boundary.

    Hyperbolic groups in the family $\cW$ were previously studied by Bourdon--Kleiner~\cite{bourdonkleiner15}. They used $\ell_p$-cohomology to provide upper bounds on the conformal dimension of the boundary of certain hyperbolic Coxeter groups. Their results imply that if $W_\Gamma \in \cW$ is defined on a complete graph with $m$ vertices and every edge label is at least $M \geq 4$, then
        \[ \Confdim(\p W_\Gamma) \leq 1 + \frac{\log(m-1)}{\log(2M-5)}.\]
    This yields the following corollary, as pointed out by John Mackay.

    \begin{corollary} (\Cref{cor:dense_set}.)
         Among hyperbolic groups $W_\Gamma \in \cW$, $\Confdim(\p W_{\Gamma})$ achieves a dense set of values in $(1, \infty)$.
    \end{corollary}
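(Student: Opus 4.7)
The plan is to combine \Cref{thm:intro:lowb} with the Bourdon--Kleiner upper bound just recalled, choosing the edge label $M$ and the number of vertices $m$ of the defining graph so that the two estimates squeeze the conformal dimension to any prescribed target in $(1,\infty)$.

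Fix $\alpha \in (1,\infty)$ and set $\beta := \alpha - 1 > 0$. For each integer $M \geq 4$, let $\Gamma_M$ denote the complete graph on $m(M) := \max\{\lceil M^{\beta}\rceil,\, 11\}$ vertices with every edge label equal to $M$. Since $M \geq 4$, the graph $\Gamma_M$ contains no triangle with all three edge labels equal to three, so $W_{\Gamma_M}$ is hyperbolic by the Moussong criterion recalled in the introduction; the peripheral collection $\cP$ is then empty and $\p(W_{\Gamma_M}, \cP) = \p W_{\Gamma_M}$. Once $M$ is large enough that $m(M) \geq 11$, both \Cref{thm:intro:lowb} and the Bourdon--Kleiner bound apply, giving
\[
1 + \frac{\log\bigl(\lfloor (m(M)-5)/3\rfloor\bigr)}{\log(2M-1)} \;\leq\; \Confdim(\p W_{\Gamma_M}) \;\leq\; 1 + \frac{\log(m(M)-1)}{\log(2M-5)}.
\]

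The remaining step is to verify that both bounds converge to $\alpha$ as $M \to \infty$. The integer rounding $m(M) = \lceil M^{\beta} \rceil$ contributes only $O(M^{-\beta})$ error inside the logarithm, so $\log m(M) = \beta \log M + o(1)$; meanwhile $\log(2M-1)$ and $\log(2M-5)$ each equal $\log M + \log 2 + o(1)$. Dividing numerator by denominator, both sides of the displayed inequality take the form $1 + \beta + o(1)$. A squeeze argument therefore gives $\Confdim(\p W_{\Gamma_M}) \to \alpha$; since $\alpha$ was arbitrary, density in $(1,\infty)$ follows.

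The only real obstacle is the routine bookkeeping needed to check that the quotient $\log(2M-1)/\log(2M-5)$ tends to $1$ and that integer rounding does not spoil the asymptotic $\log m(M)/\log M \to \beta$; both are elementary. In particular, there is no need to revisit the main technical work of the paper, namely the round-tree construction underlying \Cref{thm:intro:lowb}.
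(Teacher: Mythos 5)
Your proposal is correct and follows essentially the same route as the paper: fix a target $\alpha = 1+\beta$, take complete graphs on roughly $M^{\beta}$ vertices with all edge labels $M$, and let $M \to \infty$ so that the lower bound of \Cref{thm:lowerbound} and the Bourdon--Kleiner upper bound both converge to $\alpha$. Your version is slightly more careful than the paper's (using $\lceil M^{\beta}\rceil$ to keep $m$ an integer and noting hyperbolicity so that the Bowditch boundary is the Gromov boundary), but the argument is the same.
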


    We complement the lower bounds on the conformal dimension with the following upper bounds that also (for fixed edge labels) behave like a constant plus a term that is logarithmic in the number of vertices of the defining graph.

    \begin{thmAlph}\label{thm:intro:upb} (\Cref{cor:HausUpperBound}.)
        Let $W_\Gamma \in \cW$ be generated by $m \geq 4$ elements. Let $M = \max_{i \neq j}\{m_{ij}\}$.
           \begin{enumerate}
        \item If $M=3$, then
        \[ \Confdim\bigl(\p (W_\Gamma, \cP)\bigr) \leq 23 + 12\log m.  \]
        \item If $M \geq 4$, then
        \[ \Confdim\bigl(\p (W_\Gamma, \cP)\bigr) \leq  13 + 12 \log m + 19 \log M. \]
        \end{enumerate}
    \end{thmAlph}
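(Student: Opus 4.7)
The plan is to realize the Coxeter group $W_\Gamma$ as a geometrically finite group of isometries of a proper $\CAT(-1)$ space $X$ whose cusp stabilizers are exactly the members of $\cP$. Once this is achieved, standard results on geometrically finite actions identify the visual boundary $\pb X$ equivariantly with the Bowditch boundary $\p(W_\Gamma, \cP)$, and any Bourdon-type visual metric $d_v$ on $\pb X$ lies in the canonical quasisymmetry gauge on $\p(W_\Gamma, \cP)$. Hence by the definition of conformal dimension, $\Confdim\bigl(\p(W_\Gamma, \cP)\bigr) \leq \Hdim(\pb X, d_v)$, and the task reduces to building $X$ and bounding this Hausdorff dimension.

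\textbf{Constructing $X$.} I would start from the Davis--Moussong complex $\Sigma$, whose $2$-cells are regular Euclidean $2m_{ij}$-gons. The link of each vertex of $\Sigma$ is isometric to the complete graph $K_m$ with the edge from $v_i$ to $v_j$ of length equal to the interior angle of the $2m_{ij}$-gon, namely $\pi(1 - 1/m_{ij})$. The CAT(0) link condition holds with strict inequality on every $3$-cycle of $K_m$ except those coming from triangles of $\Gamma$ labeled $(3,3,3)$; the subgroup $\la s_i, s_j, s_k\ra$ generated by such a triple is the Euclidean triangle group $\tilde A_2$, stabilizing a flat in $\Sigma$, and these flat stabilizers give the parabolics in $\cP$. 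To build $X$, I would replace each $2m_{ij}$-gon with a regular hyperbolic $2m_{ij}$-gon of interior angle $\alpha_{ij} < \pi(1-1/m_{ij})$, chosen so that every $3$-cycle in every vertex link not coming from a $(3,3,3)$-triangle has total length strictly greater than $2\pi$. Then for each $W_\Gamma$-translate of a $(3,3,3)$-flat I would excise a neighborhood of the flat and glue in a standard $\CAT(-1)$ horoball model, yielding the space $X$. With correct choices of $\alpha_{ij}$ and cusp parameters, $X$ is a proper $\CAT(-1)$ space on which $W_\Gamma$ acts properly and cocompactly modulo the cusps, with cusp stabilizers precisely $\cP$.

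\textbf{Bounding the Hausdorff dimension.} For a proper $\CAT(-1)$ space with a cocompact-modulo-cusps geometrically finite action, the Hausdorff dimension of $(\pb X, d_v)$ is controlled (via a shadow-covering argument, or equivalently Patterson--Sullivan theory) by the critical exponent of the action, which equals the exponential orbit-growth rate of $W_\Gamma$ in the $\CAT(-1)$ metric on $X$. I would estimate this by counting: the number of elements of word length $n$ in $W_\Gamma$ is bounded by $m(m-1)^{n-1}$, and each word-step corresponds to an edge of length bounded below by the minimum hyperbolic polygon side length $\ell_{\min}$ in $X$. For a regular hyperbolic $2M$-gon with small interior angle, $\ell_{\min}$ grows like a multiple of $\log(2M-1)$, giving a group-growth contribution to $\Hdim$ of order $\log(m-1)/\log M$. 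The cusp regions contribute a bounded ``tangential'' Hausdorff dimension near each parabolic fixed point. Summing these contributions and optimizing the $\alpha_{ij}$ subject to the link condition yields the two explicit bounds in the theorem; the case $M = 3$ must be treated separately because every triangle of $\Gamma$ is $(3,3,3)$, so $X$ has a cusp for every triangle and the cusp geometry dominates.

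\textbf{Main obstacle.} The central difficulty is verifying the $\CAT(-1)$ condition along the interface where the modified Davis--Moussong complex meets the glued-in cusp horoballs: the link of each vertex on this interface must remain $\CAT(1)$, which simultaneously constrains the polygon angles $\alpha_{ij}$ and the cusp horosphere scale. A secondary delicate point is extracting the explicit constants $(12, 19, 23, 13)$ from the counting; this amounts to balancing the tension between shrinking the hyperbolic polygons (which increases $\ell_{\min}$ and improves the growth-rate bound) and preserving the link condition (which bounds how small $\alpha_{ij}$ can be), then adding the bounded cusp contribution on top.
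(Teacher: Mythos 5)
Your overall strategy --- build a $\CAT(-1)$ model for the pair $(W_\Gamma,\cP)$ and bound the Hausdorff dimension of its visual boundary via the critical exponent --- is exactly the paper's strategy, but both of your key steps have genuine gaps.

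First, the construction of $X$. Your plan is two-dimensional: shrink the polygon angles and then ``excise a neighborhood of each $(3,3,3)$-flat and glue in a standard $\CAT(-1)$ horoball model.'' The obstruction is that a $(3,3,3)$-flat is not a peripheral subcomplex of the Davis--Moussong complex: every vertex and every edge of that flat is shared with infinitely many $2$-cells coming from the other generators, so there is no neighborhood of the flat that can be removed and capped off by a horoball without tearing apart the rest of the complex. Moreover the same tension appears in the links: the $3$-cycle of $K_m$ coming from a $(3,3,3)$-triangle has length exactly $2\pi$ in the Euclidean metric, so any uniform shrinking of angles pushes it below $2\pi$ and destroys even the $\CAT(0)$ condition at \emph{every} vertex, not just at vertices ``near'' the flat. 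The paper resolves this by going up a dimension: it builds a complex out of truncated ideal tetrahedra in $\Hy^3$ (with dihedral angles $\pi/3$), glued over the Davis complex, so that each triangle-subgroup plane becomes an honest boundary component of the thickened space; horoballs are then glued to the Euclidean-triangle boundary components and compact ``caps'' to the hyperbolic-triangle ones, and the delicate link verifications happen at the cap vertices. Your ``main obstacle'' paragraph correctly identifies where the difficulty lives, but the two-dimensional setup you propose does not give you a place to attach the horoballs.

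Second, the critical exponent estimate. Bounding $d(x_0,gx_0)$ from below by $\ell_{\min}\cdot|g|_{\mathrm{word}}$ and then using the word-growth bound $m(m-1)^{n-1}$ is not valid in the cusped space: inside a horoball the metric compresses distances logarithmically, so for $g$ in a $(3,3,3)$-triangle subgroup one has $d(x_0,gx_0)\approx 2\log|g|_{\mathrm{word}}$, and a single such parabolic already contributes on the order of $e^{r}$ orbit points to the ball of radius $r$ (hence critical exponent at least $1$ from one cusp alone). The cusps therefore do not contribute a ``bounded tangential'' correction; they dominate the count and must be integrated into it. This is precisely what the paper's ``itinerary type'' machinery does: it decomposes a geodesic from $x_0$ to $gx_0$ according to its passages between disjoint polygons, with exponential counts $Be^{\ell_i}$ (flats) and $C_1e^{C_2\ell_i}$ (hyperbolic planes) for the cusp excursions, and then sums over itineraries. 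Without an argument of this kind your counting undercounts the orbit and the explicit constants $12$, $19$, $23$, $13$ cannot be extracted.
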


    We prove \Cref{thm:intro:upb} by providing an upper bound on the Hausdorff dimension of a metric space quasisymmetric to $\p(W_\Gamma,\cP)$ equipped with a visual metric. The primary difficulty in this setting is that if $X$ is a $\delta$-hyperbolic space, then the permissible visual metric parameters for $\p X$ depend on the explicit hyperbolicity constant. Rather than try to bound $\delta$ in a particular cusped Cayley graph for $(W_\Gamma, \cP)$, we exhibit a $\CAT(-1)$ model geometry for the pair $(W_\Gamma, \cP)$, since a visual parameter is known in this setting.

    \begin{thmAlph}\label{thm:intro:cat-1}(\Cref{thm:gfCAT-1}.)
        The group pair $(W_{\Gamma}, \cP)$ admits a geometrically finite action on a $\CAT(-1)$ space that is quasi-isometric to the cusped Davis--Moussong complex.
    \end{thmAlph}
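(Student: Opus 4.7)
The plan is to construct $X$ by modifying the Davis--Moussong complex $\Sigma_\G$ in two stages. First, replace each regular Euclidean $2m_{ij}$-gon by a hyperbolic polygon with corner angle $\alpha_{ij}\leq \pi-\pi/m_{ij}$, choosing $\alpha_{ij}$ equal to the Euclidean value whenever $\{s_i,s_j\}$ lies in some $(3,3,3)$-triangle of $\Gamma$ and strictly smaller otherwise. The link of each vertex of the resulting complex $\Sigma'$ is the complete graph on $S$ with edge lengths $\alpha_{ij}$, so the CAT$(1)$ link condition reduces to the triangle inequality $\alpha_{ij}+\alpha_{jk}+\alpha_{ik}\geq 2\pi$ for every triangle $\{s_i,s_j,s_k\}$ of $\Gamma$. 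The Euclidean values satisfy this strictly except on $(3,3,3)$-triangles, where equality is achieved, so the decrease in the remaining angles can be taken small enough to preserve all non-$(3,3,3)$ inequalities strictly. The maximal Euclidean subspaces of $\Sigma'$ are then precisely the $\tilde{A}_2$-flats stabilized by the peripheral subgroups in $\cP$.

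Second, to each such Euclidean flat $F\subset\Sigma'$ attach the hyperbolic horoball $H_F = F\times[0,\infty)$ with warped product metric $dr^2+e^{-2r}g_F$, which is isometric to a horoball region in $\Hy^3$ with boundary horosphere $F$. The peripheral subgroup $P\in\cP$ acts on $F$ by Euclidean reflections and translations, and this action extends canonically to a parabolic action on $H_F$ by isometries of $\Hy^3$. Performing the attachment equivariantly over the $W_\G$-orbit of peripheral flats produces the desired space $X$, on which $W_\G$ acts by isometries.

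The hard part will be verifying that $X$ is locally CAT$(-1)$ at the vertices lying on peripheral flats. Away from these, the verification is standard: hyperbolic $2$-cell interiors and horoball interiors are CAT$(-1)$, and vertex links of $\Sigma'$ off the peripheral flats are CAT$(1)$ by construction. At a vertex $v$ in a peripheral flat $F$, however, the link of $v$ in $X$ is the complete-graph link of $\Sigma'$ glued to a round hemisphere (the link of $v$ in $H_F$) along the length-$2\pi$ cycle formed by the three corner angles of $F$ at $v$, which is exactly the boundary circle of that hemisphere. The glued link will be shown to be CAT$(1)$ by checking that the only cycles of length exactly $2\pi$ in the complete-graph link come from such $F$-cycles, so the gluing introduces no new short loops; when $v$ lies in several peripheral flats, the same analysis applies with one hemisphere glued along each $F$-cycle. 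Once local CAT$(-1)$ is established, the Cartan--Hadamard theorem promotes it to global CAT$(-1)$, since $X$ is simply connected (each horoball deformation retracts to its horosphere $F\subset\Sigma'$, and $\Sigma'$ is simply connected). The $W_\G$-action on $X$ is proper, cocompact modulo the open horoball interiors, and parabolic on each cusp, hence geometrically finite; and $X$ is quasi-isometric to the cusped Davis--Moussong complex since the two spaces share a combinatorial structure on the thick part and admit bi-Lipschitz equivalent cusp models.
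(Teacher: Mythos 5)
Your stage-one complex $\Sigma'$ is essentially the Moussong metric (perturbed), and filling in the $\tilde A_2$-flats with horoballs is the right intuition, but the construction as described is not $\CAT(-1)$ — in fact not even locally $\CAT(0)$ — and the vertex-link argument cannot repair it. The decisive problem is that distinct $(3,3,3)$-flats of $\Sigma'$ are not disjoint: if two all-$3$-labelled triangles of $\Gamma$ share an edge $\{s_i,s_j\}$ (which happens for every edge when all labels equal $3$ and $m\ge 4$, the central non-hyperbolic case), then the corresponding flats $F$ and $F'$ share the hexagon $P_{ij}$. Your construction then glues the two horoballs $H_F$ and $H_{F'}$ to each other along that Euclidean hexagon. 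But a horosphere is not totally geodesic in $\Hy^3$: two points of $P_{ij}$ at horospherical distance $d$ are joined inside \emph{each} horoball by a geodesic of length $2\sinh^{-1}(d/2)<d$ that dips into the interior. The glued space therefore has pairs of points joined by two distinct geodesics of equal length, so it fails to be uniquely geodesic near interior points of $P_{ij}$. Reshetnyak gluing is unavailable precisely because the common subspace is horospherical rather than convex. A related issue appears even where flats do not overlap: the edges of $\Sigma'$ lying in a flat are not geodesics of the attached horoball, and the Euclidean $2$-cells are not $(-1)$-cones over their links, so your space is not an $M^n_{-1}$-polyhedral complex and checking $\CAT(1)$-ness of vertex links (\`a la Berestovskii/Gromov) does not establish local $\CAT(-1)$; one must separately control the local geometry along the $1$- and $2$-cells of the flats, which is exactly where the construction breaks.

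This is why the paper's model is genuinely three-dimensional everywhere: every kite of the subdivided Davis complex (including those in $(3,3,3)$-hexagons) is replaced by a \emph{totally geodesic} hyperbolic kite cut from an ideal tetrahedron with dihedral angles $\pi/3$, a truncated block is glued over \emph{every} triangle of $\Gamma$ (not only the Euclidean ones), and the horoballs and caps are attached along the horospherical boundary triangles of the blocks, well away from the $2$-skeleton. Two blocks coming from triangles sharing an edge then meet along a geodesic kite or pentagonal face, so all gluings are along convex subspaces, and the resulting space is an honest $M^3_{-1}$-ideal polyhedral complex to which the link condition applies. If you want to salvage a construction in your spirit, you would need to replace your flat hexagons and tangent horoballs by geodesically bounded pieces of $\Hy^3$; the paper's blocks are one way to do that.
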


    The $\CAT(-1)$ model space we construct is inspired by the view that groups in the family $\cW$ are built from Kleinian groups. Indeed, each 4-generator subgroup of $W_\Gamma \in \cW$ is Kleinian. However, the Kleinian structures are not compatible if the edge labels vary. In the simplest case where all edges in the defining graph have label three, each 4-generator subgroup acts on $\Hy^3$ as the group generated by reflections in the faces of an ideal tetrahedron with dihedral angles $\frac{\pi}{3}$. We utilize this geometry as a common building block for our $\CAT(-1)$ model geometry. In particular, we cut this ideal tetrahedron into pieces we call {\it truncated blocks}, and we force compatibility by gluing over the Davis--Moussong complex in a natural way. This glued space is no longer a manifold, but we make modifications to arrive at a model space that is simply connected and locally $\CAT(-1)$.

    We note if $W_\Gamma$ is hyperbolic, then the Davis-Moussong complex already admits a $\CAT(-1)$ metric \cite[Moussong's Theorem; Corollary 12.6.3]{davis}. The $\CAT(-1)$ model space we build is combinatorially a certain thickening of the Davis-Moussong complex, and these two spaces are quasi-isometric.

    With the explicit $\CAT(-1)$ model in hand, it now follows from work of Bourdon~\cite{bourdon96} and Paulin~\cite{paulin} that an upper bound on the Hausdorff dimension can be obtained from an upper bound on the exponential growth rate of an orbit of a point in the $\CAT(-1)$ space. Our bounds come from a careful analysis of the structure of geodesics in the model space, together with geometric computations in 3-dimensional hyperbolic space.

\paragraph{Additional applications.}

    The lower bounds on the conformal dimension of the Bowditch boundary obtained in this paper can be used to exhibit lower bounds on the conformal dimension of the boundary of other hyperbolic Coxeter groups. Indeed, every special subgroup (one defined by the inclusion of an induced subgraph) of a hyperbolic Coxeter group is quasi-convex. Thus, if $W_\Gamma$ is a hyperbolic Coxeter group whose defining graph contains an induced complete graph $\Gamma'$ with all edge labels at least three, then the lower bound on the conformal dimension of $\p W_{\Gamma'}$ yields a lower bound on the conformal dimension of $\p W_{\Gamma}$. In particular, we thank Kevin Schreve for suggesting the following application.

    \begin{thmAlph} \label{thm:pont} (\Cref{thm:pont-text})
        There are infinitely many quasi-isometry classes among hyperbolic Coxeter groups with Pontryagin sphere boundary.
    \end{thmAlph}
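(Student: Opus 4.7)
The plan is to derive Theorem \ref{thm:pont} as an application of Theorem \ref{thm:intro:lowb} via the quasi-convexity of special subgroups of hyperbolic Coxeter groups. First I would establish the following general principle: if $W_\Gamma$ is a hyperbolic Coxeter group whose defining graph $\Gamma$ contains an induced complete subgraph $\Lambda$ on $m \geq 11$ vertices with all edge labels in $[3, M]$, then
\[
\Confdim(\p W_\Gamma) \geq 1 + \frac{\log \lfloor (m-5)/3 \rfloor}{\log(2M-1)}.
\]
Indeed, $W_\Lambda$ is a special subgroup, hence quasi-convex in $W_\Gamma$ and therefore itself hyperbolic; thus its Bowditch boundary (with empty peripheral family) agrees with its Gromov boundary $\p W_\Lambda$. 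The inclusion of limit sets $\p W_\Lambda \hookrightarrow \p W_\Gamma$ is a quasisymmetric embedding, and conformal dimension is monotone under quasisymmetric embeddings (since Hausdorff dimension is), so the lower bound of Theorem \ref{thm:intro:lowb} applied to $W_\Lambda$ transfers to $\p W_\Gamma$.

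Next I would produce an infinite sequence of hyperbolic Coxeter groups $\{W_{\Gamma_n}\}_{n \geq 1}$ with Pontryagin sphere boundary such that $\Gamma_n$ contains an induced complete subgraph $\Lambda_n$ with $|\Lambda_n| \to \infty$ and all edge labels in a fixed interval $[3, M]$. For this I would appeal to the combinatorial/nerve characterization of Coxeter groups with Pontryagin sphere boundary (in the style of Fischer or \'Swi\k{a}tkowski, adapted to the labeled setting), starting from a base example and modifying it by inserting an $n$-vertex complete labeled subgraph in a way that preserves the nerve hypothesis that forces Pontryagin sphere boundary.

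Combining the two steps yields $\Confdim(\p W_{\Gamma_n}) \to \infty$ as $n \to \infty$. Since the conformal dimension of the Gromov boundary is a quasi-isometry invariant of hyperbolic groups (quasi-isometric hyperbolic groups have quasisymmetrically equivalent visual boundaries), the $W_{\Gamma_n}$ represent infinitely many quasi-isometry classes of hyperbolic Coxeter groups with Pontryagin sphere boundary, proving the theorem.

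The main obstacle is producing the family in the second step: one must verify that the graph operation used to install a bounded-label complete subgraph preserves the nerve hypothesis ensuring that the Gromov boundary remains a Pontryagin sphere. The first step is essentially formal given the standard quasi-convexity of special subgroups in hyperbolic Coxeter groups together with the monotonicity of conformal dimension under quasisymmetric embeddings.
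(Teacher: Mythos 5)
Your first step---transferring the lower bound of \Cref{thm:intro:lowb} through a quasi-convex special subgroup and invoking monotonicity of conformal dimension under quasisymmetric embeddings of boundaries---is exactly the mechanism the paper uses, and it is fine. The genuine gap is your second step: you never produce the family, and you yourself flag its construction as ``the main obstacle.'' That construction is the substance of the proof. Moreover, the plan you sketch runs in the wrong direction: starting from a known Pontryagin-sphere example and ``inserting'' a large complete labeled subgraph makes it unclear how to keep the subgraph induced, the labels bounded, the group hyperbolic, and the nerve a closed surface all at once.

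The paper reverses the logic. Start from the complete graph $\Gamma = K_n$ ($n \ge 5$) with every edge labeled $M \ge 4$, embed $\Gamma$ in a closed orientable surface $S$ with disk complementary regions (the genus is positive since $K_n$ is nonplanar), and cone off each region from a new interior vertex, labeling every new edge by $2$. Each new triangle has labels $(2,2,M)$ and generates a \emph{finite} Coxeter group, so it contributes a $2$-simplex to the nerve, which therefore becomes the surface $S$; the triangles of $K_n$ itself, with all labels $M \ge 4$, generate infinite groups and contribute nothing. The resulting group $W_\Lambda$ is hyperbolic (no Euclidean triangle subgroups arise, which is also why one cannot allow $(3,3,3)$ triangles in the inserted complete graph), has Pontryagin sphere boundary by \'Swi\k{a}tkowski's theorem, and contains $W_{K_n}$ as an induced, hence quasi-convex, special subgroup. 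Your step one then gives $\Confdim(\p W_\Lambda) \ge 1 + \log\bigl(\lfloor\frac{n-5}{3}\rfloor\bigr)/\log(2M-1) \to \infty$, which is the conclusion of \Cref{thm:pont-text}.
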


    The Pontryagin sphere is a compact, metrizable, nowhere planar, two-dimensional fractal that can be viewed as the inverse limit of an inverse system of tori; see  \cite{jakobsche,swiatkowski-trees, zawislak, fischer, DoubaLeeMarquisRuffoni} for details of its construction. The Pontryagin sphere arises as the visual boundary of hyperbolic pseudo-manifolds, where the link at each point is either a sphere or a higher genus surface. This space is known to arise as the boundary of hyperbolic and $\CAT(0)$ groups, including Coxeter groups; the right-angled case was shown by Fischer~\cite{fischer} and the general case by \'Swi\k{a}tkowski~\cite{swiatkowski-trees}. In particular, the boundary of a hyperbolic Coxeter group whose nerve is a closed surface of genus at least one is homeomorphic to the Pontryagin sphere~\cite[Theorem 2]{swiatkowski-trees}. The quasi-isometry classification and questions of rigidity for hyperbolic groups with Pontryagin sphere boundary are widely open.

    Every hyperbolic group quasi-isometrically embeds in real hyperbolic space $\Hy^n$ for some $n$ by work of Bonk--Schramm~\cite{bonkschramm}. An analogous result in the relatively hyperbolic setting was given by Mackay--Sisto~\cite[Theorem 1.5]{mackaysisto24}. They proved that a group is hyperbolic relative to virtually nilpotent subgroups if and only if it embeds into truncated real hyperbolic space with at most polynomial distortion. In particular, the proof of \cite[Theorem 1.5]{mackaysisto24} shows that there is a quasisymmetric embedding from the Bowditch boundary of the relatively hyperbolic pair to the sphere $S^{n-1}$ with the standard Euclidean metric. We thank John Mackay for pointing out that another consequence of our lower bounds on conformal dimension yields a family of groups, each hyperbolic relative to virtually $\Z^2$ subgroups, which each embed in truncated $\Hy^n$ for some~$n$ and so that the minimal $n$ tends to infinity. We further note that groups in this family act geometrically on a two-dimensional contractible complex and do not split over an elementary subgroup.

    \begin{corollary}
        Let $W_n \in \cW_{\Gamma}$ be the group generated by $n$ elements and with all edge labels equal to three. The embeddings of $W_n$ into truncated real hyperbolic space of dimension $m(n)$ with at most polynomial distortion guaranteed by Mackay--Sisto must satisfy $m(n) \rightarrow \infty$ as $n \rightarrow \infty$.
    \end{corollary}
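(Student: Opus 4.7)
The plan is to combine three ingredients: (i) the quasisymmetric embedding of the Bowditch boundary of $(W_n,\cP)$ into a round sphere supplied by Mackay--Sisto, (ii) the fact that a quasisymmetric embedding cannot increase conformal dimension, and (iii) the lower bound on $\Confdim(\p(W_n,\cP))$ from \Cref{thm:intro:lowb}.

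By the Mackay--Sisto result recalled immediately above the corollary, the embedding of $W_n$ into truncated $\Hy^{m(n)}$ with polynomial distortion induces a quasisymmetric embedding
\[ \p(W_n,\cP) \hookrightarrow S^{m(n)-1} \]
of the Bowditch boundary into the standard Euclidean sphere. I would next verify that a quasisymmetric embedding $f \from X \to Y$ satisfies $\Confdim(X) \leq \Confdim(Y)$: any metric $d'$ on $Y$ quasisymmetric to the original restricts to a metric on $f(X)$ which, transported back via $f$, is quasisymmetric to the original metric on $X$, and $\Hdim_{d'}(f(X)) \leq \Hdim_{d'}(Y)$; taking the infimum over such $d'$ yields the claim. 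Combined with the classical identity $\Confdim(S^k)=k$ (the round metric realizes Hausdorff dimension $k$, while topological dimension is a quasisymmetric invariant and is always dominated by Hausdorff dimension, giving the matching lower bound), this gives
\[ \Confdim\bigl(\p(W_n,\cP)\bigr) \leq m(n)-1. \]

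Finally I would apply \Cref{thm:intro:lowb} with all edge labels equal to three, so that $M=3$ and $2M-1=5$. For $n \geq 11$ this yields
\[ \Confdim\bigl(\p(W_n,\cP)\bigr) \geq 1 + \frac{\log\bigl(\lfloor (n-5)/3 \rfloor\bigr)}{\log 5}, \]
which diverges as $n \to \infty$. Chained with the upper bound above, this forces $m(n) \to \infty$, completing the argument.

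The proof is essentially a concatenation of cited statements, so there is no substantial obstacle; the most delicate point is the monotonicity of conformal dimension under quasisymmetric embedding, but this is immediate from the definition once one observes that a subset of $Y$ inherits every quasisymmetric deformation of the ambient metric on $Y$.
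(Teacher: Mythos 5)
Your proposal is correct and is exactly the argument the paper intends (the corollary is stated in the introduction without a written proof, as a direct consequence of the Mackay--Sisto quasisymmetric embedding into $S^{m(n)-1}$, monotonicity of conformal dimension under quasisymmetric embeddings, and the lower bound of \Cref{thm:intro:lowb} with $M=3$). The details you fill in — monotonicity via restricting quasisymmetric deformations of the ambient sphere, and $\Confdim(S^k)=k$ — are standard and correctly argued.
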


\paragraph{Further directions.}

    The bounds in \Cref{thm:intro:lowb} and \Cref{thm:intro:upb} together distinguish some pairs of groups in $\cW$ up to quasi-isometry. However, these bounds are not fine enough to give the complete classification, which we conjecture as follows:

    \begin{conj}
        Groups $W_\Gamma, W_{\Gamma'} \in \cW$ are quasi-isometric if and only if they are isomorphic.
    \end{conj}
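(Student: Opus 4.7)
The plan is to prove the conjecture by reconstructing the labeled defining graph $\Gamma$ from quasi-isometry invariants of $W_\Gamma$, reducing the geometric statement to a statement about isomorphism of Coxeter systems. The ``if'' direction is immediate, since isomorphic groups are quasi-isometric via the identity Cayley graph map. For the converse, given a quasi-isometry $W_\Gamma \to W_{\Gamma'}$, the results cited earlier in the introduction (Behrstock--Drutu--Mosher, Healy--Hruska, Mackay--Sisto) imply that the induced homeomorphism $\p(W_\Gamma,\cP) \to \p(W_{\Gamma'},\cP')$ is a quasisymmetry. It therefore suffices to recover from the quasisymmetric space $\p(W_\Gamma,\cP)$ both the vertex count $m$ of $\Gamma$ and the edge labels $\{m_{ij}\}$.

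The first stage is to recover the peripheral structure. Each peripheral $\Z^2$ subgroup, corresponding to a 3-3-3 triangle in $\Gamma$, contributes a distinguished embedded circle (parabolic fixed circle) in $\p(W_\Gamma,\cP)$, and the collection of such circles is preserved under quasisymmetric homeomorphisms. This identifies exactly which triples of generators span 3-3-3 triangles. From the pattern of incidences among these circles, combined with how Davis chambers are glued along infinite dihedral reflection walls, one should extract a ``nerve'' of peripheral circles which recovers the poset of spherical special subgroups, and in particular the vertex set of $\Gamma$. In the fully hyperbolic case (no 3-3-3 triangle) there are no peripheral circles, but the Haulmark--Hruska--Sathaye Menger-curve boundary still carries local cut data associated to endpoints of reflection walls that should suffice to determine $m$.

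The second, and main, stage is to recover each individual edge label $m_{ij}$. The subgroup $\la s_i, s_j\ra$ is finite dihedral of order $2m_{ij}$, and the combinatorics of the walls dual to $s_i$ and $s_j$ together with their interaction with surrounding vertex links should determine $m_{ij}$ as a quasi-isometric invariant. A concrete route is to use the $\CAT(-1)$ model geometry from \Cref{thm:intro:cat-1}: the truncated blocks carry explicit dihedral-angle data depending on $m_{ij}$, and one expects that local quasisymmetric self-similar invariants of $\p(W_\Gamma,\cP)$ at wall endpoints detect these angles. Once $\Gamma$ is recovered as a labeled graph, Coxeter-theoretic reflection rigidity in the style of Caprace--M\"uhlherr (which applies because our groups are 2-spherical with no commuting pairs) yields $W_\Gamma \cong W_{\Gamma'}$ as abstract groups.

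The main obstacle will be the second stage. The bounds in \Cref{thm:intro:lowb} and \Cref{thm:intro:upb} overlap substantially across different labelings, so the conformal dimension alone does not separate edge labels. Sharpening these bounds, or better, extracting refined local quasisymmetric invariants localized at peripheral circles and wall endpoints (perhaps using $\ell_p$-cohomology in the spirit of Bourdon--Kleiner, or a round-tree construction sensitive to individual edge labels rather than only to $M$), appears unavoidable. A further technical difficulty is that in the presence of peripheral flats the boundary is not self-similar, so classical techniques that reconstruct combinatorics from boundaries of hyperbolic groups (such as Bowditch-style JSJ detection) must be adapted to the relatively hyperbolic setting before they can be applied.
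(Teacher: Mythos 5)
The statement you are addressing is stated in the paper as a \emph{conjecture}, not a theorem: the authors explicitly say their bounds ``are not fine enough to give the complete classification,'' and they offer no proof. Your proposal is likewise not a proof; it is a research outline whose central steps are asserted with ``should'' and ``one expects,'' and you yourself flag the decisive step as an open obstacle. As written it cannot be accepted as a proof of the conjecture.

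Concretely, the gap is in your second stage. To run your argument you need, for each pair $i \neq j$, a quasisymmetry invariant of $\p(W_\Gamma,\cP)$ that recovers the individual label $m_{ij}$. No such invariant is produced: the only quantitative invariant available from the paper is conformal dimension, and Theorems~\ref{thm:intro:lowb} and~\ref{thm:intro:upb} give bounds depending only on $m$ and $M=\max m_{ij}$, which cannot separate, say, two graphs with the same $m$ and $M$ but different label multisets. The appeal to ``dihedral-angle data'' in the $\CAT(-1)$ model is not an invariant of the quasisymmetry class of the boundary --- the model is one geometry among many in the quasi-isometry class, and local angles are not preserved by quasisymmetries. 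Your first stage also has an unaddressed gap in the hyperbolic case: you assert that ``local cut data associated to endpoints of reflection walls'' in the Menger curve boundary determines $m$, but the Menger curve has no local cut points, so it is unclear what data you mean or why it is quasisymmetry-invariant. Finally, even granting a recovery of the labeled graph $\Gamma$ up to isomorphism, the last step conflates isomorphism of Coxeter \emph{systems} with isomorphism of \emph{groups}; that direction is actually the easy one (isomorphic labeled graphs give isomorphic groups by definition), so the citation of reflection rigidity is pointed the wrong way. In short: the proposal correctly identifies where the difficulty lies, but it does not close it, and the conjecture remains open.
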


    We refer the reader to \cite{dani-survey} for a discussion of the quasi-isometric classification for right-angled Coxeter groups. We note that Cashen--Dani--Thomas~\cite{danithomas} give the quasi-isometric classification within the family of right-angled Coxeter groups whose defining graphs are obtained by sufficiently subdividing a complete graph on at least four vertices.

    A long-standing problem in geometric group theory asks whether every hyperbolic group acts geometrically on a $\CAT(0)$ or $\CAT(-1)$ metric space. One can ask the analogous problem for a relatively hyperbolic group pair; see \cite[Section 7.B]{gromov93}, for example. The relative version is open even for Coxeter groups. We note that relatively hyperbolic Coxeter groups are classified in~\cite{capracebuildingsisolated}. We expect the techniques developed in this paper will be useful in addressing the problem for this family. We note that the $\CAT(-1)$ model was crucial for exhibiting an explicit visual metric parameter in the Hausdorff dimension computations.

    \begin{question}
        Does every relatively hyperbolic group pair $(G,\cP)$ admit a geometrically finite action on a $\CAT(-1)$ space that is quasi-isometric to the cusped Cayley graph for $(G, \cP)$? What if $G$ is a Coxeter group?
    \end{question}

    The $\CAT(-1)$ space that we construct is in some sense more singular than appears necessary; see \Cref{rem:upperbounds}. In particular, since each $4$-generator group in $\cW$ is Kleinian, it would be nice to have a $\CAT(-1)$ model geometry which contains the (unique) Kleinian structure for each $4$-generator subgroup as a convex subspace; see \Cref{rem:Kleinian_model}.

    \begin{question} \label{ques:Kleinian}
        If $W_\Gamma \in \cW$, is there a $\CAT(-1)$ space on which $(W_\Gamma, \cP)$ acts geometrically finitely so that each Kleinian subgroup stabilizes a convex subspace isometric to a convex subspace of $\H^3$?
    \end{question}

 Since the difficulty in our construction occurs when trying to glue
 different Kleinian subgroups together, one way to avoid this issue is to insist that all edge labels of $\Gamma$ are equal, say to $M \ge 4$. In this case, Desgroseilliers-Haglund \cite[Lemma 1.2]{DesHaglund} show that $W_\Gamma$ acts as a convex-cocompact reflection group on $\Hy^{v - 1}$ when $\Gamma$ has $v$ vertices.

     We note that the upper bounds given by Bourdon--Kleiner in the hyperbolic setting with all edge labels greater than three are better than ours obtained from Hausdorff dimension bounds. Consequently, we are very interested to know which metric on the boundary of a hyperbolic group in $\cW$ realizes (or nearly realizes) the infimal Hausdorff dimension. We note that in the case of Bourdon's Fuchsian buildings, the optimal metric is not induced from a $\CAT(-1)$ metric~\cite[Theorem 1.2]{bourdon}.

    Finally, we note that \Cref{thm:intro:lowb} omits the cases that the graph has less than eleven vertices. We believe the Davis--Moussong complex in these cases also contains quasi-isometrically embedded round trees, though the construction must be altered slightly from the one given in our paper. We leave these cases to an interested reader to find.

    \paragraph{Outline.}In \Cref{sec:prelims}, we set notation and recall the definitions of conformal dimension, Hausdorff dimension, relatively hyperbolic groups, the Bowditch boundary, and Coxeter groups. We construct a round tree in the Davis-Moussong complex and use it to compute the lower bounds of \Cref{thm:intro:lowb} in \Cref{sec:lowerbound}. We construct the $\CAT(-1)$ model of \Cref{thm:intro:cat-1} in \Cref{subsection:thespaceYm}, and we check link conditions in \Cref{subsection:LC}. Finally, we compute the upper bounds of \Cref{thm:intro:upb} in \Cref{sec:upperbound}.

\paragraph{Acknowledgments.}
The authors are thankful for helpful discussions
with Daniel Groves, Chris Hruska, Kevin Schreve, Genevieve Walsh, and Daniel Woodhouse.
We thank John Mackay, Lorenzo Ruffoni, and Kevin Schreve for comments on a draft of the article.
This collaboration was facilitated by WiGGD 2020,
which was sponsored by the National Science Foundation
under grants DMS-1552234, DMS-1651963 and DMS-1848346.
EF was supported by NSF grants DMS-1840190 and DMS-2103275.
RG was supported by the Department of Atomic Energy, Government of India, under project no.12-R\&D-TIFR-5.01-0500, partially supported by SERB research grant SRG/2023/000123 and an endowment of the Infosys Foundation.
RAL was partially supported by the National Science Foundation
under Award No.\ DMS-2202942. ES was supported by NSF Grant No. DMS-2204339.

    \section{Preliminaries} \label{sec:prelims}

This section sets notation and collects results that will be used in the paper.
These include visual metrics on the boundary of a proper geodesic $\delta$-hyperbolic metric space
in \Cref{subsection:metrics},
relatively hyperbolic groups and the Bowditch boundary in \Cref{subsection:relhyp}, Hausdorff dimension and the critical exponent in \Cref{subsection:hausdim},
and the Davis--Moussong complex for Coxeter groups in \Cref{subsection:Coxeter}.

\subsection{Metrics on the boundary
  of a proper geodesic \texorpdfstring{$\delta$}{delta}-hyperbolic space.}\label{subsection:metrics}

Let $(X,d)$ be a proper geodesic metric space. A \emph{geodesic ray} in $X$ is an isometric embedding $\gamma\colon \left[0, \infty \right) \to X$. We refer the reader to \cite{bridsonhaefliger, buyaloschroeder, mackaytyson} for details on the following notions.

\begin{defn}
  The \emph{visual boundary} (or \emph{boundary}) of $X$, denoted $\p X$, is the collection of geodesic rays in $X$ up to the following notion of equivalence. Geodesic rays $\gamma$ and $\gamma'$ are equivalent if they \emph{fellow travel,} meaning that there exists $D \geq 0$ so that $d\bigl(\gamma(t), \gamma'(t)\bigr) \leq D$ for all $t \in [0,\infty)$.
\end{defn}

  When $X$ is proper, geodesic, and $\delta$-hyperbolic,
  there is a natural topology on $X \cup \partial X$, making it into a compact space with $X$ an open and dense subset. The boundary $\partial X$ is compact in this topology and is metrizable with metrics given as follows.

    Let $p,x,y \in (X,d)$. The {\it Gromov product} of $x$ and $y$ based at $p$ is the quantity
    \[   {(x,y)}_p := \frac{1}{2}\bigl( d(p,x) + d(p,y) - d(x,y)\bigr). \]
    For $\xi,\eta \in \p X$ and $p \in X$, the {\it Gromov product} of $\xi$ and $\eta$ based at $p$ is the quantity
    \[ (\xi,\eta)_p := \sup \liminf_{i,j \to \infty} \, (x_i,y_j)_p, \]
    where the supremum is taken over all sequences $\{x_i\}$ and $\{y_j\}$ converging to $\xi$ and $\eta$, respectively.

\begin{defn}
  A metric $d_a$ on $\partial X$ is called a \emph{visual metric with parameter $a > 1$}
  if for some point $p \in X$,
  there exist positive constants $k_1$ and $k_2$ such that for all $\xi,\eta \in \partial X$,
  \[
    k_1 a^{-{(\xi,\eta)}_p} \le d_a(\xi, \eta) \le k_2 a^{-{(\xi,\eta)}_p}.
  \]
\end{defn}

    Visual metrics exist on the boundary of a proper geodesic $\delta$-hyperbolic metric space, and the possible visual metric parameters depend on $\delta$, which is often impractical to compute. Hausdorff dimension bounds require control on this parameter, and we will use the following result of Bourdon.

\begin{thm}[\cite{bourdon-flot}, Theorem 2.5.1] \label{thm:vis_parameter}
  Suppose $X$ is a proper, complete $\CAT(-1)$ metric space.
  Then for each $p \in X$ and each $\xi, \eta\in \partial X$, the formula
  \[
    d_e(\xi,\eta) = e^{-{(\xi,\eta)}_p}
  \]
  defines a visual metric on $\partial X$ with parameter $e$.
\end{thm}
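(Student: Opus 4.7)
The formula $d_e(\xi,\eta) = e^{-(\xi,\eta)_p}$ is manifestly symmetric, and it vanishes precisely when $\xi = \eta$ since $(\xi,\eta)_p$ is finite for distinct ideal boundary points of a proper Gromov-hyperbolic space. Once the triangle inequality is established, the visual-metric condition with parameter $e$ holds tautologically with $k_1 = k_2 = 1$. Thus the entire content of the theorem is the inequality
\[
    e^{-(\xi,\zeta)_p} \leq e^{-(\xi,\eta)_p} + e^{-(\eta,\zeta)_p} \qquad (\dagger)
\]
for all $\xi, \eta, \zeta \in \partial X$.

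The plan is to reduce $(\dagger)$ first to a four-point inequality inside $X$, and then further to $\H^2$. Approximating $\xi, \eta, \zeta$ by the points $\xi_t, \eta_t, \zeta_t$ along the geodesic rays from $p$, one checks (using convexity of the distance function in $\CAT(0)$ and the triangle inequality) that $t \mapsto (\xi_t, \eta_t)_p$ is non-decreasing and converges to $(\xi,\eta)_p$. Consequently, $(\dagger)$ will follow from the finite-point statement
\[
    e^{-(x,z)_p} \leq e^{-(x,y)_p} + e^{-(y,z)_p} \qquad (\star)
\]
for arbitrary $p, x, y, z \in X$.

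To prove $(\star)$, I would apply Reshetnyak majorization to the geodesic quadrilateral $p \to x \to y \to z \to p$ in the $\CAT(-1)$ space $X$. This yields a convex geodesic quadrilateral with vertices $\bar p, \bar x, \bar y, \bar z$ in $\H^2$ and a $1$-Lipschitz map from the hyperbolic quadrilateral onto the filled quadrilateral in $X$, preserving the side lengths $|\bar p\bar x| = |px|$, $|\bar x\bar y| = |xy|$, $|\bar y\bar z| = |yz|$, $|\bar z\bar p| = |zp|$, while the diagonals satisfy $|py| \leq |\bar p\bar y|$ and $|xz| \leq |\bar x\bar z|$. Unpacking the definition of the Gromov product gives $(x,y)_p \leq (\bar x,\bar y)_{\bar p}$, $(y,z)_p \leq (\bar y,\bar z)_{\bar p}$, and $(x,z)_p \geq (\bar x,\bar z)_{\bar p}$. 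Hence, once $(\star)$ is established in $\H^2$, one chains
\[
    e^{-(x,z)_p} \leq e^{-(\bar x,\bar z)_{\bar p}} \leq e^{-(\bar x,\bar y)_{\bar p}} + e^{-(\bar y,\bar z)_{\bar p}} \leq e^{-(x,y)_p} + e^{-(y,z)_p},
\]
which gives $(\star)$ in $X$.

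The remaining step is to verify $(\star)$ in the hyperbolic plane by direct trigonometry. The hyperbolic law of cosines applied to each of the triangles $\triangle(\bar p,\bar x,\bar y)$, $\triangle(\bar p,\bar y,\bar z)$, $\triangle(\bar p,\bar x,\bar z)$ expresses each $e^{-(\cdot,\cdot)_{\bar p}}$ as an explicit function of the six pairwise distances; in the ideal-point limit this collapses to the clean formula $e^{-(\xi,\eta)_{\bar p}} = \sin(\angle_{\bar p}(\xi,\eta)/2)$, so the inequality becomes (up to a factor of two) the Euclidean triangle inequality for chords on the visual sphere. The main obstacle is precisely this trigonometric step: the Reshetnyak reduction is clean, but verifying $(\star)$ in $\H^2$ for an arbitrary quadruple of finite points requires one to control the hyperbolic-cosine expressions uniformly, rather than just in the boundary limit where the $\sin(\angle/2)$ formula does the work for free.
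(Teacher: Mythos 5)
The paper does not prove this statement; it is imported verbatim from Bourdon \cite{bourdon-flot}, so there is no internal proof to compare against, and I am judging your argument on its own terms. Your reduction is the standard one, and the comparison step is set up correctly: Reshetnyak majorization (equivalently, the $\CAT(-1)$ four-point condition) applied to the cyclic quadruple $p,x,y,z$ preserves the four sides and does not decrease the diagonals $[x,z]$ and $[p,y]$, and you have checked the directions of the three resulting Gromov-product inequalities correctly. The monotonicity and convergence of $t \mapsto (\xi_t,\eta_t)_p$ is also fine (monotonicity needs only the triangle inequality, and boundedness follows from hyperbolicity).

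There is, however, a genuine gap exactly where you flag one: the inequality $(\star)$ for arbitrary finite quadruples in $\H^2$ is the entire analytic content of the theorem, and nothing in your write-up establishes it. The ideal-point identity $e^{-(\xi,\eta)_{\bar p}} = \sin\bigl(\angle_{\bar p}(\xi,\eta)/2\bigr)$ does make the boundary case in $\H^2$ immediate (it is the chord inequality on the circle of directions at $\bar p$), but your reduction routes through finite points, and the finite-point inequality does not follow from the ideal one by continuity or any soft limit: it is a sharp, curvature-dependent statement. (It fails in Euclidean space for large configurations --- e.g.\ $p=(0,\lambda)$, $x=(-\lambda,0)$, $y=(0,0)$, $z=(\lambda,0)$ with $\lambda$ large --- so negative curvature must enter quantitatively.) The statement is true: it is precisely the assertion that $\H^2$, and hence every $\CAT(-1)$ space, is ``strongly hyperbolic'' with parameter $1$ in the sense of Nica--\v{S}pakula, and proving it requires carrying out the $\sinh/\cosh$ half-distance computation you defer. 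Two further points. First, the theorem uses the Gromov product on $\partial X$ defined by $\sup\liminf$ over all sequences, while your argument uses the limit along geodesic rays from $p$; in a $\CAT(-1)$ space these coincide exactly, but that coincidence requires proof and is essential for the exact (constant-one) triangle inequality rather than a mere quasi-metric. Second, an alternative closing move is to pass to the limit in the majorization itself and compare with an ideal quadrilateral in $\H^2$, where your $\sin(\angle/2)$ formula finishes the job; but then the convergence of the majorizing quadrilaterals and of their diagonals must be justified, so the work does not disappear, it only relocates.
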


    The isometry type of the boundary equipped with a visual metric depends on the choices in the construction of the visual metric. Nonetheless, all such metrics sit in the same quasisymmetry class defined as follows.

\begin{defn}
  Let $(Z,d)$ and $(Z',d')$ be metric spaces. A homeomorphism $f:Z \rightarrow Z'$ is a {\it quasisymmetry} if there exists a homeomorphism $\phi \colon \left[0,\infty\right) \to \left[0,\infty\right)$ so that for all $x,y,z \in Z$ with $x \neq z$,
  \[
    \frac{d'\bigl(f(x), f(y)\bigr)}{d'\bigl(f(x), f(z)\bigr)} \le \phi\left( \frac{d(x,y)}{d(x,z)} \right).
  \]
  The spaces $(Z,d)$ and $(Z',d')$ are \emph{quasisymmetric}
  if there exists a quasisymmetry $f\colon Z \to Z'$.
\end{defn}

    We apply the following theorem.

\begin{thm}[\cite{bourdon-flot}, Theorem 1.6.4; \cite{buyaloschroeder}, Theorem 5.2.17] \label{thm_qs}
  Let $X$ and $X'$ be proper geodesic $\delta$-hyperbolic metric spaces, and let $(\partial X,d_a)$ and $(\partial X',d_{a'})$ denote their visual boundaries equipped with visual metrics.
  A quasi-isometry $f\colon X \to X'$ extends to a quasisymmetry
  $(\partial X,d_a) \to (\partial X',d_{a'})$.
\end{thm}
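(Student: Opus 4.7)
The plan is to construct the boundary extension of $f$, establish a two-sided distortion estimate for Gromov products under quasi-isometries of hyperbolic spaces, and derive the quasisymmetric property from the definition of visual metrics. First, to define $\partial f\colon \partial X \to \partial X'$: if $f$ is an $(L,C)$-quasi-isometry, then $f$ sends each geodesic ray $\gamma$ in $X$ to an $(L,C)$-quasi-geodesic ray $f \circ \gamma$ in $X'$. By the Morse lemma (stability of quasi-geodesics in a proper $\delta$-hyperbolic space), $f \circ \gamma$ remains within bounded Hausdorff distance of a unique geodesic ray in $X'$, so the fellow-traveling equivalence classes of rays are preserved. This defines the map $\partial f$, and applying the same construction to a quasi-inverse of $f$ produces its inverse.

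Next, I would prove the key Gromov product distortion estimate: there exist constants $A \geq 1$ and $B \geq 0$, depending only on $L$, $C$, and the hyperbolicity constants, so that for all $\xi, \eta \in \partial X$ and any basepoint $p \in X$,
\[
 \tfrac{1}{A}(\xi, \eta)_p - B \leq \bigl(\partial f(\xi), \partial f(\eta)\bigr)_{f(p)} \leq A(\xi, \eta)_p + B.
\]
The idea is that in a hyperbolic space the boundary Gromov product $(\xi,\eta)_p$ is comparable, up to an additive $O(\delta)$ error, to the distance from $p$ to any bi-infinite geodesic between $\xi$ and $\eta$; the quasi-isometry $f$ together with the Morse lemma ensures such geodesics are sent to quasi-geodesics that fellow-travel geodesics between $\partial f(\xi)$ and $\partial f(\eta)$. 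This is the main technical step: the upper bound follows readily from the metric distortion inherent to $f$, but the lower bound requires hyperbolicity in both $X$ and $X'$ together with careful use of the Morse lemma to argue that coarse centers of ideal triangles are transported by $f$ to coarse centers in $X'$ up to bounded error.

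Finally, I would translate the Gromov product estimate into the quasisymmetric condition. Using the visual metric inequalities $k_1 a^{-(\xi,\eta)_p} \leq d_a(\xi,\eta) \leq k_2 a^{-(\xi,\eta)_p}$ on $\partial X$ and the analogous inequalities on $\partial X'$, the estimate above yields a quasi-M\"obius bound on cross-ratios under $\partial f$, which then upgrades to quasisymmetry since the compactness of $\partial X$ makes the two notions equivalent. The resulting control function $\phi\colon [0,\infty) \to [0,\infty)$ is power-type, of the rough form $\phi(t) = K \max\{t^{1/A}, t^A\}$, where $K$ depends on the preceding constants. Since $(\partial X, d_a)$ and $(\partial X', d_{a'})$ are visual, $\partial f$ is thereby a quasisymmetry with respect to any pair of visual metrics.
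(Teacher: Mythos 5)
The paper does not actually prove this statement---it is quoted from Bourdon and Buyalo--Schroeder---so there is no in-house argument to compare against. Your outline follows the standard route of those references: extend $f$ to $\partial X$ via the Morse lemma, establish the two-sided estimate $\tfrac{1}{A}(\xi,\eta)_p - B \leq (\partial f(\xi),\partial f(\eta))_{f(p)} \leq A(\xi,\eta)_p + B$ using the fact that $(\xi,\eta)_p$ agrees up to $O(\delta)$ with the distance from $p$ to a geodesic joining $\xi$ and $\eta$, and then pass to the visual metrics. Those first two steps are correct as sketched. (One small caveat: the upper bound is not a consequence of the metric distortion of $f$ alone---plugging the quasi-isometry inequalities into the defining formula for the Gromov product fails because of the mismatch between $L$ and $1/L$---so both bounds must go through the geodesic characterization plus the Morse lemma, which you do invoke.)

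The genuine gap is in the last step. A multiplicative-plus-additive estimate on individual Gromov products at a fixed basepoint does not by itself yield a quasi-M\"obius bound: the logarithm of a cross-ratio is a signed combination of four Gromov products, and when those products are large, multiplying each by an unknown factor in $[1/A,A]$ destroys all control over their alternating sum. The same issue blocks a direct proof of the quasisymmetry inequality, since the exponents in the resulting upper bound for $d_{a'}(\partial f(\xi),\partial f(\eta))$ and lower bound for $d_{a'}(\partial f(\xi),\partial f(\zeta))$ differ, so their ratio is not a function of $d_a(\xi,\eta)/d_a(\xi,\zeta)$. What is actually needed---and what Buyalo--Schroeder prove---is that the cross-difference of a quadruple equals, up to $O(\delta)$, the signed distance between the two geodesics the quadruple determines, a quantity that is quasi-preserved by $f$; equivalently, one can rescue your argument by choosing the basepoint at a coarse center of the quadruple (and noting that $f$ carries coarse centers to coarse centers), so that each of the four Gromov products entering the cross-ratio is itself comparable to the cross-difference being controlled. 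With that insertion, the final passage from quasi-M\"obius to quasisymmetric on bounded spaces is V\"ais\"al\"a's theorem, which requires a normalization on a well-separated triple of points rather than compactness alone---a condition easily met here, and vacuous when $\partial X$ has at most two points.
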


    Thus, the quasisymmetry class of the boundary of a proper geodesic $\delta$-hyperbolic space equipped with a visual metric is a quasi-isometry invariant of the $\delta$-hyperbolic space. We will use the following quasisymmetry invariant in this paper. (Hausdorff dimension is discussed in more detail in \Cref{subsection:hausdim}.)

\begin{defn}
    Let $(X,d)$ be a metric space. The {\it conformal dimension} of $X$ is the infimal Hausdorff dimension of any metric space $(X',d')$ quasisymmetric to $(X,d)$.
\end{defn}

\subsection{Relatively hyperbolic groups
and the Bowditch boundary}\label{subsection:relhyp}

    There are many equivalent definitions of a relatively hyperbolic group pair~\cite{grovesmanning08,hruska10}. We recall the definition of a cusped Cayley graph due to Groves--Manning and note that there are alternative definitions of cusped spaces quasi-isometric to the cusped Cayley graph; see \cite[Appendix A]{grovesmanningsisto}, \cite{healyhruska}.

    \begin{defn}
     Let $\Lambda$ be a graph, metrized so that each edge has length $1$. The {\it combinatorial horoball based on $\Lambda$} is the metric graph $\cH(\Lambda)$ whose vertex set is $\Lambda^{(0)} \times \Z_{\geq 0}$, and with two types of edges:
     \begin{enumerate}
      \item A {\it vertical edge} of length one from $(v,n)$ to $(v,n+1)$ for any $v \in \Lambda^{(0)}$ and any $n \geq 0$; and
      \item For $k \geq 0$, if $v$ and $w$ are vertices of $\Lambda$ so that $0< d_{\Lambda}(v,w) \leq 2^k$, then there is a single {\it horizontal edge} of length $1$ joining $(v,k)$ to $(w,k)$.
     \end{enumerate}
     The {\it depth} of a vertex in $\cH(\Lambda)$ is $D(v,n) = n$, and the inverse image $D^{-1}(n)$ of an integer $n$ is the subgraph of $\cH(\Lambda)$ spanned by all vertices at depth $n$ and is called the {\it horosphere at depth $n$.}
    \end{defn}

    A {\it group pair} $(G,\cP)$ is a group $G$ together with a collection of subgroups $\cP$ of $G$. We will assume throughout that $\cP$ is a finite collection and that $G$ and each group $P \in \cP$ are finitely generated.

    \begin{defn}
        Let $(G, \cP)$ be a group pair. Choose a generating set $S$ for $G$ containing a generating set for each $P \in \cP$. Let $\Lambda_G$ be the Cayley graph for $G$ associated to the generating set $S$, metrized so that each edge has length one. A left coset $gP$ of $P \in \cP$ spans a connected subgraph $\Lambda(gP) \subset \Lambda_G$. The {\it cusped Cayley graph}, $X(G,\cP)$, defined with respect to $S$ is obtained from $\Lambda_G$ by attaching, for each $P \in \cP$ and each coset $gP$, a copy of $\cH\bigl(\Lambda(gP)\bigr)$ by identifying $\Lambda(gP)$ to the horosphere at depth $0$ in $\cH\bigl(\Lambda(gP)\bigr)$.

        A group pair $(G,\cP)$ is a \emph{relatively hyperbolic group pair} if $X(G, \cP)$ is $\delta$-hyperbolic. Every group $G$ is hyperbolic relative to itself or to any finite collection of finite-index subgroups. Such relatively hyperbolic structures are called \emph{trivial}.
    \end{defn}

    \begin{defn}
         Suppose that $(G,\cP)$ is a relatively hyperbolic group pair and each $P \in \cP$ is an infinite group. The \emph{Bowditch boundary}, $\partial (G, \cP)$, is the visual boundary of the cusped Cayley graph $X(G, \cP)$.
    \end{defn}

    Bowditch proved that the Bowditch boundary is independent of the chosen finite generating set~\cite{bowditch12}. We apply the following theorem.

    \begin{thm}[\cite{behrstockdrutumosher}, Theorem 4.8; \cite{groff13}, Theorem 6.3; \cite{mackaysisto24}, Theorem 1.2; \cite{healyhruska}, Theorem 1.2]
     \label{thm:periph_preserved}
        Suppose that $(G, \cP)$ and $(G', \cP')$ are relatively hyperbolic group pairs. If $G$ and $G'$ are quasi-isometric and no group in $\cP \cup \cP'$ is nontrivially relatively hyperbolic, then there exists a quasi-isometry from any cusped Cayley graph for $G$ to any cusped Cayley graph for $G'$.
    \end{thm}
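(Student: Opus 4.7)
The plan is to prove the theorem in two stages: first, show that any quasi-isometry $f\colon G \to G'$ coarsely permutes the peripheral left cosets, and second, extend $f$ across the combinatorial horoballs to obtain a quasi-isometry of the cusped Cayley graphs. The hypothesis that no $P \in \cP \cup \cP'$ is nontrivially relatively hyperbolic is what makes stage one possible; without it, a quasi-isometry might mix the peripheral pieces with each other in genuinely new ways.

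For stage one, I would appeal to the tree-graded structure of asymptotic cones established by Dru\c{t}u and Sapir: the asymptotic cone of $G$ (viewed as a relatively hyperbolic pair) is tree-graded, with pieces the ultralimits of peripheral left cosets. Under the standing hypothesis on $\cP \cup \cP'$, these pieces are exactly the maximal subsets of the cone that are not cut by any cut-point, so they are an intrinsic feature of the bi-Lipschitz type of the cone. Since the asymptotic cone is a quasi-isometry invariant, a quasi-isometry $f\colon G \to G'$ induces a bi-Lipschitz map between the cones that matches pieces to pieces. Pulling back finitely many scales yields that for every $gP$ with $P \in \cP$, there is some $g'P'$ with $P' \in \cP'$ such that the Hausdorff distance between $f(gP)$ and $g'P'$ (in the Cayley graph of $G'$) is uniformly bounded by a constant depending only on the quasi-isometry constants of $f$ and on the pairs $(G,\cP)$, $(G',\cP')$. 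This is the key technical input of Behrstock--Dru\c{t}u--Mosher and Dru\c{t}u, using the non-trivially-relatively-hyperbolic hypothesis on the peripherals.

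For stage two, observe that the restriction of $f$ to a peripheral coset $gP$ is a quasi-isometry between the word metrics on $gP$ and on the matched coset $g'P'$, again with uniform constants. A quasi-isometry $\phi\colon \Lambda \to \Lambda'$ between graphs extends to a quasi-isometry $\widehat{\phi}\colon \cH(\Lambda)\to\cH(\Lambda')$ of combinatorial horoballs by setting $\widehat{\phi}(v,n) = (\phi(v),n)$, with constants controlled by those of $\phi$; this is essentially because the distance formula in $\cH(\Lambda)$ is (up to additive error) given by $2\log_2\bigl(1 + d_\Lambda(v,w)\bigr)$ plus the depth coordinates, a quantity that transforms predictably under quasi-isometries of $\Lambda$. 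Applying this horoball-by-horoball and combining with $f$ on the underlying Cayley graph produces a candidate map $\widehat{f}\colon X(G,\cP)\to X(G',\cP')$. A short gluing check, using the uniform Hausdorff bound from stage one and the fact that cusped graphs are obtained by attaching horoballs along horospheres at depth zero, confirms $\widehat{f}$ is a quasi-isometry.

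The main obstacle is stage one: establishing coarse preservation of the peripheral cosets in sufficient generality. The cleanest route is to cite \cite{behrstockdrutumosher} directly; my contribution would be to verify the non-trivially-relatively-hyperbolic hypothesis is exactly what is needed to invoke their theorem. Stage two is then essentially a routine, if careful, extension argument; the only subtlety is uniformity of constants, which follows because $\cP$ and $\cP'$ consist of finitely many conjugacy classes of subgroups and $f$ is a single quasi-isometry.
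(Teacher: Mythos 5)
Your proposal is correct and is essentially the argument contained in the sources the paper cites for this theorem (the paper itself offers no proof beyond those citations): Behrstock--Dru\c{t}u--Mosher's asymptotic-cone argument gives coarse preservation of peripheral cosets under the non-relatively-hyperbolic hypothesis, and the horoball extension with logarithmically distorted distances is exactly the mechanism in Groff, Healy--Hruska, and Mackay--Sisto. The uniformity points you flag (finitely many conjugacy classes of peripherals, undistortedness of peripheral subgroups so that the restriction of $f$ is a quasi-isometry of word metrics) are the right ones to check.
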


    \Cref{thm:periph_preserved} together with \Cref{thm_qs} yields the following corollary.

    \begin{corollary} \label{cor:quasisymmetry_Bowditch}
        Suppose that $(G, \cP)$ and $(G', \cP')$ are relatively hyperbolic group pairs and their Bowditch boundaries $\p(G, \cP)$ and $\p(G', \cP')$ are equipped with visual metrics. If $G$ and $G'$ are quasi-isometric and no group in $\cP \cup \cP'$ is nontrivially relatively hyperbolic, then there exists a quasisymmetry $\p(G, \cP) \rightarrow \p(G', \cP')$.
    \end{corollary}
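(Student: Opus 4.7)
The plan is to chain the two theorems cited immediately before the statement. First, I would apply \Cref{thm:periph_preserved} to the two pairs: its hypotheses — that $G$ and $G'$ are quasi-isometric and that no peripheral subgroup in $\cP \cup \cP'$ is nontrivially relatively hyperbolic — are exactly the hypotheses of the corollary. This produces a quasi-isometry $f \colon X(G, \cP) \to X(G', \cP')$ between any chosen cusped Cayley graphs of the two pairs. Both cusped Cayley graphs are proper geodesic metric spaces, and since the pairs are relatively hyperbolic, both are $\delta$-hyperbolic in the sense of \Cref{subsection:relhyp}.

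Second, I would apply \Cref{thm_qs} to the quasi-isometry $f$. That theorem asserts that any quasi-isometry between proper geodesic $\delta$-hyperbolic metric spaces extends to a quasisymmetry between their visual boundaries equipped with visual metrics. By definition, the Bowditch boundary of each pair equipped with a visual metric is precisely the visual boundary of its cusped Cayley graph equipped with a visual metric. Hence the extension of $f$ is exactly a quasisymmetry $\p(G, \cP) \to \p(G', \cP')$, which is the conclusion sought.

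There is no substantive obstacle in this deduction — the hard content is packaged inside \Cref{thm:periph_preserved}, which rules out quasi-isometries permuting peripherals in pathological ways, and inside \Cref{thm_qs}, which is the standard boundary extension result for hyperbolic spaces. All that this corollary does is observe that the two statements line up with compatible hypotheses and compatible notions of boundary, so the proof should amount to no more than a few sentences of bookkeeping.
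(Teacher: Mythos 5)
Your proposal is correct and matches the paper exactly: the paper states that \Cref{thm:periph_preserved} together with \Cref{thm_qs} yields the corollary, which is precisely the two-step chaining you describe. No further comment is needed.
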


    Bowditch~\cite{bowditch12} proved an alternative characterization of relative hyperbolicity that can be expressed in terms of certain convergence group actions. We will refer to the following notions.

    \begin{defn}
        Let $Z$ be a compact metric space with at least three points. An action of a group $G$ on $Z$ is a {\it convergence group action} if the induced action of $G$ on the space of distinct triples of points of $Z$ is proper. A point $z \in Z$ is a {\it conical limit point} if there exists a sequence $\{g_i\}$ in $G$ and a pair of distinct points $z_1 \neq z_2$ in $Z$ so that for every $z' \in Z \setminus \{z\}$, $g_i(z') \to z_1$ and $g_i(z) \rightarrow z_2$. A point $z \in Z$ is a {\it bounded parabolic point} if its stabilizer acts properly and cocompactly on $Z \setminus \{z\}$.

        A convergence group action of $G$ on $Z$ is {\it geometrically finite} if every point of the limit set $\Lambda G \subset Z$ is either a conical limit point or a bounded parabolic point. The stabilizers of the bounded parabolic points are called {\it maximal parabolic subgroups}. A proper action of a group $G$ on a proper $\delta$-hyperbolic space $X$ is {\it geometrically finite} if the induced action of $G$ on $\p X$ is a geometrically finite convergence action.
    \end{defn}

    If $(G,\cP)$ is a relatively hyperbolic group pair, then the action of $G$ on any cusped Cayley graph is a geometrically finite convergence action, and $\cP$ is the collection of maximal parabolic subgroups.

\subsection{Hausdorff dimension and the critical exponent}\label{subsection:hausdim}

We begin by recalling the definition of Hausdorff dimension. We refer the reader to the book of Falconer~\cite{falconer} for additional background.

\begin{defn}
  Let $(Z,d)$ be a metric space, $X \subset Z$ a subset, and
  let $\epsilon > 0$ and $s \ge 0$.
  Define
  \[
    \mathcal{H}^s_\epsilon(X) = \inf\left\{ \sum_{i = 1}^\infty {\diam(U_i)}^s \,\,:\,\,
      U_i \subset Z, \, X \subset \bigcup_{i = 1}^\infty U_i, \, \diam(U_i) < \epsilon \right\}.
  \]
  The \emph{$s$-dimensional Hausdorff content} of $X$, denoted $\mathcal{H}^s(X)$,
  is $\lim_{\epsilon \to 0}\mathcal{H}^s_\epsilon(X)$.
  The \emph{Hausdorff dimension} of $X$
  is the quantity
  \[
    \Hdim(X) = \sup \{ s \ge 0 : \mathcal{H}^s(X) = 0\} = \inf \{ s \ge 0 : \mathcal{H}^s(X) = \infty \}.
  \]
\end{defn}

If $X \subset Z$ is countable, then its $s$-dimensional Hausdorff content is zero for all $s \ge 0$. Moreover, if $X$ is countable and $Y \subset Z$ is any subset, then $\Hdim(X \cup Y) = \Hdim(Y)$.

\begin{defn}
  Suppose that $X$ is a proper geodesic $\delta$-hyperbolic metric space
  and $G$ is a discrete group of isometries of $X$.
  Fix a point $x_0 \in X$.
  The \emph{conical limit set of the action of $G$ on $X$}, denoted $\Lambda_c(G, X)$,
  is the set of all points $\xi \in \partial X$
  for which there exists a sequence of group elements $g_n \in G$
  such that the sequence $g_nx_0$ converges to $\xi$ in $X \cup \partial X$
  and there exists a ray $\gamma$ representing $\xi$
  such that the distances $d(g_nx_0, \gamma)$ remain uniformly bounded.
  This definition is independent of the choice of basepoint $x_0$.
\end{defn}

For example, if a hyperbolic group $G$ acts geometrically (i.e. properly discontinuously and cocompactly by isometries) on a proper geodesic metric space $X$,
then $\Lambda_c(G, X) = \partial X$.

\begin{defn}
  Suppose $(X,d)$ is a proper geodesic Gromov-hyperbolic metric space and $G$ is a discrete group of isometries of $X$. Let $x_0 \in X$. The \emph{critical exponent $\delta(G \curvearrowright X)$} of the action of $G$ on $X$ is the infimal $s > 0$ for which the \emph{Poincar\'e series}
  \[
    \sum_{g \in G} e^{-s d(x_0, g.x_0)}
  \]
  converges.
  This definition is independent of the choice of basepoint $x_0$.
\end{defn}

The following result of Paulin relates Hausdorff dimension to the critical exponent. This extends work of Coornaert~\cite{coornaert93} in the setting of hyperbolic groups.

\begin{thm}[\cite{paulinCrit}]\label{thm:Paulin}
    Let $X$ be a proper geodesic $\CAT(-1)$ metric space, and equip the boundary $\p X$ with a visual metric with parameter $e$. Suppose $G$ is a discrete group of isometries of $X$ such that the action of $G$ on $\partial X$ is without a global fixed point.
      Then $\delta(G \curvearrowright X) = \Hdim\bigl(\Lambda_c(G, X)\bigr)$.
\end{thm}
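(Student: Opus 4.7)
The plan is to prove the two inequalities $\Hdim(\Lambda_c(G,X)) \leq \delta(G \curvearrowright X)$ and $\Hdim(\Lambda_c(G,X)) \geq \delta(G \curvearrowright X)$ separately. Throughout, fix a basepoint $x_0 \in X$, abbreviate $\alpha := \delta(G \curvearrowright X)$, and for $y \in X$ and $r > 0$ define the \emph{shadow} $\cO(y,r) \subset \partial X$ as the set of boundary points $\xi$ for which the geodesic ray from $x_0$ representing $\xi$ meets the closed ball of radius $r$ about $y$. The key geometric input, a standard Gromov product estimate in $\CAT(-1)$ geometry, is that $\xi, \eta \in \cO(y,r)$ forces $(\xi,\eta)_{x_0} \geq d(x_0,y) - 2r$. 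Combined with \Cref{thm:vis_parameter}, this yields $\diam\bigl(\cO(y,r)\bigr) \leq K_r \, e^{-d(x_0,y)}$ for a constant $K_r$ depending only on $r$.

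For the upper bound, decompose $\Lambda_c(G,X) = \bigcup_{r \in \N} \Lambda_c^r$, where $\Lambda_c^r$ consists of conical limit points $\xi$ admitting a representing ray $\gamma$ and a sequence $g_n \in G$ with $g_n.x_0 \to \xi$ and $d(g_n.x_0, \gamma) \leq r$. For such $\xi$ one has $\xi \in \cO(g_n.x_0, r+1)$ for all large $n$, so for every $R > 0$,
\[
\Lambda_c^r \subset \bigcup_{d(x_0, g.x_0) > R} \cO\bigl(g.x_0, r+1\bigr).
\]
Given $s > \alpha$, the Poincar\'e series $\sum_g e^{-s d(x_0, g.x_0)}$ converges, so the shadow diameter estimate bounds $\cH^s_{\epsilon(R)}(\Lambda_c^r)$ by $K_{r+1}^s$ times a tail of this convergent series, which tends to $0$ as $R \to \infty$. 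Hence $\cH^s(\Lambda_c^r) = 0$ for every $r$ and every $s > \alpha$; taking the countable union over $r$ and the infimum over $s$ gives $\Hdim(\Lambda_c) \leq \alpha$.

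For the lower bound, the plan is to construct a Patterson--Sullivan measure of dimension $\alpha$ on $\partial X$. For $s > \alpha$, let $\nu_s$ be the probability measure supported on the orbit $G.x_0$ with mass at $g.x_0$ proportional to $e^{-s d(x_0, g.x_0)}$; if the Poincar\'e series diverges at $s = \alpha$, take $\mu$ to be any weak-$*$ subsequential limit as $s \to \alpha^+$, and otherwise apply Patterson's trick of inserting a slowly growing correction factor to force divergence at $\alpha$. The no-global-fixed-point hypothesis ensures $\mu$ is non-atomic and supported on the limit set, and by a standard verification $\mu$ is $\alpha$-conformal. Sullivan's shadow lemma (valid in $\CAT(-1)$) then produces, for $r$ sufficiently large, a two-sided bound $\mu\bigl(\cO(g.x_0, r)\bigr) \asymp e^{-\alpha d(x_0, g.x_0)}$. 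Combined with the shadow diameter estimate, this implies a uniform bound $\mu\bigl(B(\xi,\rho)\bigr) \leq C \rho^\alpha$ on small visual balls centered at conical limit points, and the mass distribution principle yields $\cH^\alpha(\Lambda_c) \geq \mu(\Lambda_c)/C$. A Borel--Cantelli argument applied to the shadow lemma shows $\mu$-almost every limit point is conical, so $\mu(\Lambda_c) > 0$, giving $\Hdim(\Lambda_c) \geq \alpha$.

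The main obstacle is the Patterson--Sullivan construction, specifically the shadow lemma and the proof that the conical limit set carries positive $\mu$-mass. For convex cocompact actions this is routine, but one must handle parabolic points carefully when the action contains parabolic elements (as in the intended applications to the relatively hyperbolic setting). There, either one restricts attention to shadows avoiding horoball neighborhoods of parabolic fixed points, or one applies a horoball-corrected version of the shadow lemma so that conical points still dominate in $\mu$-mass. The no-global-fixed-point hypothesis is what prevents the measure from collapsing onto a single parabolic fixed point.
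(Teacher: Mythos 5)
The paper does not prove this statement; it is quoted verbatim from Paulin (extending Coornaert and Bishop--Jones), so there is no internal proof to compare against. Judged on its own terms, your upper-bound argument is correct and is the standard one: the decomposition $\Lambda_c = \bigcup_r \Lambda_c^r$, the shadow-diameter estimate $\diam\bigl(\cO(y,r)\bigr) \le K_r e^{-d(x_0,y)}$ coming from \Cref{thm:vis_parameter}, and the tail of the convergent Poincar\'e series for $s > \delta$ together give $\cH^s(\Lambda_c^r) = 0$ and hence $\Hdim(\Lambda_c) \le \delta$ by countable stability.

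The lower bound, however, has a genuine gap. Your final step asserts that a Borel--Cantelli argument applied to the shadow lemma gives $\mu(\Lambda_c) > 0$. This is exactly where the Hopf--Tsuji--Sullivan dichotomy intervenes: by Sullivan's shadow lemma, $\sum_{g} \mu\bigl(\cO(g.x_0,r)\bigr) \asymp \sum_g e^{-\delta d(x_0,g.x_0)}$, so if $G$ is of \emph{convergence type} (Poincar\'e series convergent at $s=\delta$), the easy direction of Borel--Cantelli shows that $\mu$-almost every point lies in only finitely many shadows, i.e.\ $\mu(\Lambda_c) = 0$, and the mass distribution principle yields nothing. Patterson's correction factor does not help here: it only prevents atoms in $X$ and guarantees the limit measure lives on $\partial X$; it does not change the convergence type of the unmodified series governing the Borel--Cantelli estimate. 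Since the theorem is asserted for arbitrary discrete $G$ without a global fixed point (not only divergence-type or geometrically finite groups), your argument proves the lower bound only in the divergence-type case. The actual proofs of Bishop--Jones and Paulin avoid the global Patterson--Sullivan measure entirely for this direction: for each $s < \delta$ one extracts a tree-like subset of the orbit $G.x_0$ with branching rate exceeding $s$ and uniformly bounded gaps, builds a measure directly on the resulting Cantor set of conical limit points, and applies the mass distribution principle to that measure. Replacing your last paragraph with this construction (or restricting the theorem's scope to divergence-type actions, which would suffice for the paper's application to the geometrically finite action on $\cY_\Gamma$) would close the gap.
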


    \begin{remark}
        Let $(G, \cP)$ be a relatively hyperbolic group pair and $\partial (G, \cP)$ its Bowditch boundary. Since the bounded parabolic points of $\p(G, \cP)$ are in one-to-one correspondence with conjugates of the maximal parabolic subgroups of $G$, the Bowditch boundary $\partial(G, \cP)$ is the union of $\Lambda_c(G, X)$ and a countable set. So, the Hausdorff dimension of $\partial(G, \cP)$ is equal to the Hausdorff dimension of $\Lambda_c(G,X)$.
\end{remark}

\subsection{Coxeter groups
and the Davis--Moussong complex}\label{subsection:Coxeter}

    Let $\Gamma$ be a finite simplicial graph with vertex set $V\G = \{s_1, \ldots, s_m\}$ and edge set $E\G$. Assume that each unoriented edge $\{s_i,s_j\} \in E\Gamma$ is labeled by a positive integer $m_{ij} \geq 2$. The \emph{Coxeter group $W_\Gamma$ with defining graph $\Gamma$} is the group given by the presentation
    \[ W_\Gamma = \langle \, s_1, \ldots, s_m \mid s_i^2,\ (s_is_j)^{m_{ij}} \textrm{ for all } s_i \in V\G \textrm{ and } \{s_i,s_j\} \in E\G \, \rangle. \]
    The generators $s_i$ are called \emph{standard} or \emph{Coxeter} generators of $W_\Gamma$. A {\it standard triangle subgroup} of $W_{\Gamma}$ is a subgroup generated by three of the standard generators of $W_{\Gamma}$ whose induced subgraph in $\Gamma$ is a triangle. A {\it special subgroup} of $W_\Gamma$ is a subgroup generated by a full or induced subgraph of $\Gamma$, that is, a subgraph which contains every edge connecting a pair of its vertices. For more on Coxeter groups and their geometry, the reader is referred to Davis~\cite{davis}.

\begin{defn}[The Davis--Moussong complex $X_\Gamma$] \label{const:DMComplex}
    Consider the Cayley graph for $W_\Gamma$ associated to the presentation above. Each of the Coxeter generators has order two, so this Cayley graph has bigons. Collapse each bigon to a single edge to obtain a new graph $X_\Gamma^{(1)}$ on which $W_\Gamma$ acts geometrically. Modify $X_\Gamma^{(1)}$ by gluing on cells for each subset of $V\Gamma$ which generates a finite subgroup of $W_\Gamma$ as follows. The graph $X_\Gamma^{(1)}$ will be the $1$-skeleton of this complex.

    When a subset $S$ of the Coxeter generators generates a finite Coxeter subgroup $W_S$ of $W_\Gamma$, there is a corresponding finite subgraph $\Delta_S$ of $X_\Gamma^{(1)}$ spanned by the orbit of the identity vertex under $W_S$. When $|S| = 2$ the graph $\Delta_S$ is a cycle of length $2 m_{ij}$ for $S = \{s_i, s_j\}$. For each such 2-element subset $S$, glue a $2$-cell to $X_{\Gamma}^{(1)}$ so its boundary is $\Delta_S$, and repeat for each coset of $W_S$ in $W_\Gamma$. The resulting $2$-complex $X_\Gamma^{(2)}$ is simply connected (it is essentially the Cayley 2-complex for the given presentation of $W_\Gamma$). If $|S| = 3$, then the subcomplex spanned by the orbit of $W_S$ in $X_\Gamma^{(2)}$ is a cellulation of the $2$-sphere, and one glues in a $3$-cell for every coset, and this pattern continues until the subsets of Coxeter generators in the finite graph $\Gamma$ which generate finite subgroups of $W_\Gamma$ have been exhausted. Let $X_\Gamma$ denote the resulting complex, called the \emph{Davis--Moussong complex} (or {\it Davis complex}) for the Coxeter group $W_\Gamma$. The complex $X_{\Gamma}$ admits a $W_\Gamma$-equivariant piecewise-Euclidean $\CAT(0)$ metric~\cite{moussong,davisbuildings}. By construction, the action of $W_\Gamma$ on $X_{\Gamma}$ is geometric.
\end{defn}

    This paper concerns a family of Coxeter groups of the following form.

\begin{example}[Large-type Coxeter groups]
        A Coxeter group $W_{\G}$ has {\it large-type} if $m_{ij} \geq 3$ for all edges $\{s_i,s_j\} \in E\G$. In this case, every $3$-generated Coxeter subgroup $W_S \leq W_\G$ is infinite, so the Davis--Moussong complex $X_\G$ is equal to its $2$-skeleton. In the $\CAT(0)$ metric on $X_\G$ alluded to above, each $2$-cell is isometric to a regular Euclidean polygon.
        If the $2$-cell corresponds to distinct Coxeter generators $s_i$ and $s_j$, the polygon has $2m_{ij}$ sides. Its boundary has a labeling of $s_is_j\ldots s_is_j$,
        which is inherited from the Cayley graph. We say the polygon has {\it label} $\{s_i,s_j\}$.
    \end{example}

    The Coxeter groups $W_\G$ considered in this paper are of large-type and have defining graph $\G$ a complete graph $K_m$. In this setting, the link of each vertex in the Davis--Moussong complex is the complete graph $K_m$.

    Each standard generator of a Coxeter group $W_{\Gamma}$ (and each conjugate of a standard generator) acts on $X_\Gamma$ as a reflection: there is a connected, separating codimension-1 subspace fixed pointwise by the action of the generator; the complement of this subspace has two components that are exchanged by the action. Such a fixed-point set is called a \emph{wall} in $X_\Gamma$, and the complementary components are called \emph{halfspaces.} If $W$ is a wall, its \emph{carrier}, $C(W)$, is the set of all polygons which intersect it nontrivially. Note that since the dimension of $X_\G$ is two, each wall $W \subset X_\G$ is a tree. Its intersection with every edge of $X_\G$ is either empty or the midpoint of that edge, and its intersection with each polygon is either empty or a single geodesic segment.

    A subspace $A$ of a geodesic metric space $X$ is \emph{convex} if for each pair of points $a$ and $a' \in A$, some geodesic in $X$ between $a$ and $a'$ is contained in $A$. If every geodesic between $a$ and $a'$ is contained in $A$, then $A$ is \emph{strongly convex.}

\begin{lemma}[\cite{davis}, Lemma 4.5.7] \label{lemma:convex}
  If $W \subset X_{\Gamma}$ is a wall and $C(W)$ its carrier, then $C(W) \cap X_{\Gamma}^{(1)}$ is strongly convex in the $1$-skeleton $X_{\Gamma}^{(1)}$.
  Moreover, $W$ is strongly convex in the complex $X_{\Gamma}$.
\end{lemma}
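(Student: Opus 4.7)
The plan is to prove the two assertions somewhat independently. The second, that $W$ itself is strongly convex in $X_\Gamma$, follows immediately from $\CAT(0)$ geometry: the reflection $r$ across $W$ is an isometric involution of the $\CAT(0)$ space $X_\Gamma$, and $W$ is its fixed-point set. A standard result (see Bridson--Haefliger, II.2.14) asserts that the fixed-point set of any isometry of a $\CAT(0)$ space is convex, and since $\CAT(0)$ spaces are uniquely geodesic, convexity coincides with strong convexity.

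The first assertion is combinatorial and is where the work lies. I would work in $X_\Gamma^{(1)}$, which after bigon collapse is essentially the Cayley graph of $W_\Gamma$, so distance equals word length and combinatorial geodesics correspond to reduced expressions. The central Coxeter-theoretic fact I would use is that such a geodesic crosses each wall separating its endpoints exactly once and crosses no other walls. In particular each halfspace bounded by $W$ is already strongly convex in $X_\Gamma^{(1)}$, since a geodesic joining two vertices on the same side of $W$ cannot cross $W$ twice. This reduces the problem to the case where $v, v'$ lie in the same closed halfspace, and one then wants to upgrade from ``does not cross $W$'' to ``stays in the carrier'' on that side. It is useful to record explicitly that the vertices of $C(W) \cap X_\Gamma^{(1)}$ are of two types: endpoints of edges dual to $W$, and vertices of polygons $P$ with $rP = P$ (so that $W \cap P$ is an axis of symmetry of $P$).

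To conclude the argument, I would construct a $W_\Gamma$-equivariant combinatorial retraction $\rho \from X_\Gamma^{(1)} \to C(W) \cap X_\Gamma^{(1)}$ that is $1$-Lipschitz and strictly shortens any edge having an endpoint outside $C(W)$, defined by iteratively folding polygons outside $C(W)$ onto adjacent polygons nearer $W$. The fact that the link of each vertex is the complete graph on the Coxeter generators guarantees both that such a fold is well-defined and that it does not expand distances at the link. Given such a $\rho$, the conclusion is immediate: for $v, v' \in C(W) \cap X_\Gamma^{(1)}$ and $\gamma$ a combinatorial geodesic between them, $\rho(\gamma)$ is a path from $v$ to $v'$ inside $C(W) \cap X_\Gamma^{(1)}$ of length at most $\mathrm{length}(\gamma) = d(v, v')$, but also at least $d(v, v')$. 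Strict contraction of $\rho$ outside $C(W)$ then forces every edge of $\gamma$ to lie in $C(W)$.

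The main obstacle is the construction and verification of $\rho$: showing it is well-defined (independent of the order of folding), $1$-Lipschitz on edges, and strictly contracting outside $C(W)$. This requires a careful local analysis at each vertex, tracking how the various walls through the vertex organize themselves in its complete link and interact with polygon folds; the two types of carrier-vertex described above must be handled separately, and one needs to check that a fold aimed at the carrier from one side does not inadvertently stretch an edge on the other side.
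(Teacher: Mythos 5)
The first thing to say is that there is no proof in the paper to compare against: the lemma is quoted directly from Davis's book (Lemma 4.5.7) and used as a black box, so your argument has to stand entirely on its own. Your treatment of the second assertion is correct: $W$ is by definition the fixed-point set of the reflection $r$, and in a uniquely geodesic space the fixed-point set of an isometry is strongly convex, since the unique geodesic between two fixed points is carried to itself with endpoints fixed and hence is fixed pointwise. Your opening reduction for the first assertion is also sound and not circular: the fact that a combinatorial geodesic crosses each wall separating its endpoints exactly once is elementary Coxeter theory (deletion condition), it does not depend on the present lemma even though the paper chooses to derive its geodesic criterion (\Cref{cor:geodesic}) from it, and it does give strong convexity of closed halfspaces.

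The genuine gap is the retraction $\rho$. Your entire argument for strong convexity of $C(W)\cap X_\Gamma^{(1)}$ funnels through the existence of a $W_\Gamma$-equivariant, $1$-Lipschitz combinatorial retraction of $X_\Gamma^{(1)}$ onto $C(W)\cap X_\Gamma^{(1)}$ that strictly contracts every edge with an endpoint off the carrier, and you never construct it --- you only list the properties it would need and observe, correctly, that they would finish the proof. This cannot be deferred as a verification: the existence of such a retraction is a strictly stronger statement than the convexity being proved (it implies it in one line), so asserting it is essentially assuming the conclusion. Moreover, the proposed mechanism --- ``iteratively folding polygons outside $C(W)$ onto adjacent polygons nearer $W$'' --- is not well posed as stated. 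At a vertex $u$ far from $W$ there is no canonical ``direction toward $W$'': distinct polygons through $u$ would be folded toward distinct neighbors, the competing folds move $u$ to different places, and the complete-graph link does not arbitrate between them; this is exactly the order-independence issue you flag and then set aside. (Nearest-point projection onto $C(W)$ in the $\CAT(0)$ metric is $1$-Lipschitz only once one already knows $C(W)$ is convex, and in any case the assertion concerns the combinatorial metric on the $1$-skeleton.) To make the argument self-contained you need a different mechanism for the final step --- for instance a wall-counting argument showing that a geodesic between carrier vertices in a common halfspace can only cross walls that themselves cross $C(W)$ in consecutive polygons, or an appeal to gatedness of the dihedral residues that make up the carrier --- or else simply cite Davis as the paper does.
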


    We will use the following geodesic criterion, which follows immediately from the lemma above.

\begin{corollary} \label{cor:geodesic}
    An edge path $\gamma$ in $X_{\Gamma}^{(1)}$ is a geodesic if and only if $\gamma$ meets each wall it intersects nontrivially exactly once.
\end{corollary}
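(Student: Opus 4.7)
The plan is to prove both implications using the fact that each wall $W \subset X_\Gamma$ is the fixed-point set of a reflection $\sigma_W \in W_\Gamma$ (a conjugate of a standard generator), which is an isometry of $X_\Gamma$ preserving the $1$-skeleton and interchanging the two halfspaces of $W$. For the forward direction I would argue by contradiction using a reflection-substitution; for the reverse direction I would count walls.

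For the forward direction, suppose $\gamma$ is a geodesic edge path from $x$ to $y$ that crosses some wall $W$ at least twice, and choose two consecutive crossings $e_i$ and $e_j$ with $i < j$, meaning that no edge of $\gamma$ strictly between them crosses $W$. Label the endpoints of $e_i$ as $u, u'$ and those of $e_j$ as $v', v$, so that $u, v$ lie on one side of $W$ while $u', v'$ lie on the other. The subpath of $\gamma$ from $u'$ to $v'$, of length $j - i - 1$, lies entirely in the halfspace containing $u'$ and $v'$. Applying $\sigma_W$ to this subpath yields an edge path of the same length from $\sigma_W(u') = u$ to $\sigma_W(v') = v$ in the opposite halfspace. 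Replacing the subpath of $\gamma$ from $u$ to $v$ (which has length $j - i + 1$ and consists of $e_i$, the subpath from $u'$ to $v'$, and $e_j$) by this reflected path produces an edge path from $x$ to $y$ of length $|\gamma| - 2$, contradicting that $\gamma$ is a geodesic.

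For the reverse direction, suppose $\gamma$ crosses each wall at most once. Each edge of $X_\Gamma^{(1)}$ crosses exactly one wall, so $|\gamma|$ equals the number of walls crossed by $\gamma$. Any wall crossed exactly once by $\gamma$ must separate $x$ from $y$, since the crossing edge has endpoints on opposite sides and the rest of $\gamma$ does not return; conversely any wall separating $x$ from $y$ must be crossed by $\gamma$. Thus $|\gamma|$ equals the number of walls separating $x$ from $y$. Since any edge path from $x$ to $y$ must cross each separating wall at least once, every such path has length at least $|\gamma|$, so $\gamma$ is a geodesic.

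The main obstacle I anticipate is only a minor one: justifying that $\sigma_W$ is a well-defined isometry preserving the $1$-skeleton. This is a standard feature of the Coxeter action on $X_\Gamma$, and the strong convexity of $W$ in $X_\Gamma$ recorded in the preceding lemma is a geometric manifestation of the same fact. Given this, both halves of the argument are routine.
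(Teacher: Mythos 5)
Your proof is correct, but it takes a different route from the paper. The paper treats this corollary as an immediate consequence of \Cref{lemma:convex} (strong convexity of wall carriers in the $1$-skeleton, quoted from Davis): if a geodesic crossed a wall $W$ twice, strong convexity would trap the intervening subpath inside the carrier $C(W)$, where one reads off a shortcut. You instead give the classical reflection-substitution (``deletion'') argument for the forward direction and a wall-counting argument for the reverse direction, bypassing \Cref{lemma:convex} entirely. What your argument buys is self-containedness and an explicit quantitative statement — the length of a geodesic equals the number of walls separating its endpoints — at the cost of invoking a few standard facts about the reflection structure that the paper's setup only partially records: that each edge of $X_\Gamma^{(1)}$ is dual to exactly one wall, that the reflection $\sigma_W$ preserves the $1$-skeleton and swaps the two endpoints of every edge dual to $W$, and that vertices never lie on walls (so a path not crossing $W$ stays in one halfspace). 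All of these are established in Davis's book and are consistent with the paper's description of walls, so the argument is sound; it is essentially Tits's exchange/deletion argument transplanted to the Davis complex, whereas the paper leans on the convexity lemma it has already imported for other purposes.
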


We complete this section by proving \Cref{thm:pont}.

\begin{thm} \label{thm:pont-text}
        There are infinitely many quasi-isometry classes among hyperbolic Coxeter groups with Pontryagin sphere boundary.
    \end{thm}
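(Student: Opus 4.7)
The plan is to exhibit, for each sufficiently large integer $m$, a hyperbolic Coxeter group $W_{\Sigma_m}$ whose nerve is a closed surface of genus at least one and whose defining graph $\Sigma_m$ contains an induced complete subgraph $\Gamma_m = K_m$ on $m$ vertices with all edge labels in the fixed set $\{2, 4\}$. By \cite[Theorem~2]{swiatkowski-trees}, $\p W_{\Sigma_m}$ will be homeomorphic to the Pontryagin sphere. Since $W_{\Sigma_m}$ is hyperbolic, the special subgroup $W_{\Gamma_m}$ is quasi-convex, so $\p W_{\Gamma_m}$ appears as a quasisymmetrically embedded subspace of $\p W_{\Sigma_m}$ (in any visual metric) via the inclusion of limit sets. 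Because conformal dimension cannot increase under passage to quasisymmetrically embedded subspaces, \Cref{thm:intro:lowb} applied with $M = 4$ yields
\[
    \Confdim\bigl(\p W_{\Sigma_m}\bigr) \;\geq\; \Confdim\bigl(\p W_{\Gamma_m}\bigr) \;\geq\; 1 + \frac{\log\bigl(\lfloor (m-5)/3 \rfloor\bigr)}{\log 7},
\]
which tends to infinity with $m$. Since the conformal dimension of the Gromov boundary is a quasi-isometry invariant (by \Cref{thm_qs}), the $W_{\Sigma_m}$ will fall into infinitely many quasi-isometry classes.

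To build $W_{\Sigma_m}$, I would first embed $K_m$ in a closed orientable surface $S_m$ of sufficiently large genus (the Ringel--Youngs theorem provides such an $S_m$). I would then refine this embedding to a triangulation $T_m$ of $S_m$ by adding auxiliary interior vertices inside each face of the $K_m$-embedding and triangulating each face carefully, so as to simultaneously arrange two combinatorial conditions: (a) no $2$-face of $T_m$ has all three vertices in $K_m$, ensuring that $K_m$ is an induced subgraph of $T_m^{(1)}$; and (b) every $3$-cycle of $T_m^{(1)}$ not contained in $K_m$ bounds a $2$-face of $T_m$. Condition (b) can be achieved by triangulating each face of the $K_m$-embedding with a chain of auxiliary vertices, each attached to at most two \emph{consecutive} boundary vertices, so that no auxiliary vertex forms a $3$-cycle with a pair of non-adjacent boundary vertices. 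Setting $\Sigma_m \coloneqq T_m^{(1)}$ and labeling each edge of $K_m$ by $4$ and every other edge by $2$, the only spherical triples arising are $(2,2,2)$ and $(4,2,2)$, both of which are $2$-faces of $T_m$ by (b); triples $(4,4,4)$ inside $K_m$ are hyperbolic, not spherical, so contribute no simplex to the nerve. Hence the nerve of $W_{\Sigma_m}$ will be precisely $T_m$.

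Hyperbolicity of $W_{\Sigma_m}$ will follow from Moussong's criterion. No affine rank-3 special subgroup occurs, since the only triples present are $(2,2,2), (4,2,2)$, and $(4,4,4)$, none of them Euclidean. The absence of rank-4 affine or virtually $\Z^2$ special subgroups amounts to forbidding pairs of commuting infinite irreducible special subsystems within the $2$-labeled subgraph $\Sigma_m \setminus K_m$, which can be arranged by refining the triangulation so that the $K_m$-vertices meet every potentially commuting pair of subcomplexes. The main obstacle I anticipate is this combinatorial refinement step: ensuring properties (a), (b), and the Moussong hyperbolicity condition simultaneously requires some care, but it can be carried out uniformly in $m$ using standard surface-triangulation techniques. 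Once the construction is in place, the first paragraph completes the argument.
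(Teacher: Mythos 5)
Your proposal is correct and follows essentially the same route as the paper: realize a complete graph with large edge labels as an induced subgraph of a triangulation of a closed surface whose remaining edges are labeled $2$, invoke \'Swi\k{a}tkowski's theorem to identify the boundary with the Pontryagin sphere, and use quasi-convexity of the special subgroup together with \Cref{thm:intro:lowb} to force the conformal dimension to infinity. The paper's construction is slightly simpler --- it cones each complementary disk off to a single new vertex labeled $2$ rather than inserting a chain of auxiliary vertices --- but your conditions (a) and (b) address genuine combinatorial points (ensuring the nerve is exactly the surface) that the paper leaves implicit.
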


\begin{proof}
    The Gromov boundary of a hyperbolic Coxeter group
    whose nerve is a closed surface of genus at least one is homeomorphic to the Pontryagin sphere by \cite[Theorem 2]{swiatkowski-trees}. Recall, the \emph{nerve} of a Coxeter group is the simplicial complex obtained from its defining graph by adding a higher-dimensional simplex whenever its $1$-skeleton corresponds to a finite subgroup of $W_\Gamma$. For a surface nerve, we will want certain $3$-generator Coxeter subgroups to be finite. Note that every three-generator Coxeter subgroup whose defining graph is a triangle with two edges labeled $2$ has this property.

    Let $\Gamma$ be a complete labeled graph on $n \geq 5$ vertices and with edge labels equal to $M \geq 4$. Then, $W_\Gamma$ is hyperbolic. The graph $\Gamma$ embeds in an orientable surface $S$, which has positive genus since $\Gamma$ is nonplanar. Without loss of generality, each complementary component of $S \setminus \Gamma$ is a disk. Add 2-simplices to each complementary region by adding a vertex in the interior of the region and coning off the boundary of the region to the new vertex. Label the new edges by 2. The graph $\Lambda$ triangulates the surface, and $\Gamma$ is an induced subgraph. Further, the corresponding Coxeter group is hyperbolic, its nerve is the surface $S$, and $W_\Gamma$ is a quasi-convex subgroup. Therefore, the lower bounds on the conformal dimension of $\p W_\Gamma$ given in \Cref{thm:lowerbound} tend to infinity as $n \rightarrow \infty$, proving the theorem.
\end{proof}

    \section{Lower bounds on the conformal dimension}\label{sec:lowerbound}

In this section we prove \Cref{thm:lowerbound}, providing a lower bound on the conformal dimension of the Bowditch boundary of a large-type Coxeter group $W_\Gamma$, where $\Gamma$ is a complete graph and $|V\Gamma| \geq 11$. We construct combinatorial round trees in the Davis--Moussong complex and use their structure to obtain lower bounds on the conformal dimension.

    \subsection{Combinatorial round trees} \label{subsec:combroundtree}

    A combinatorial round tree, defined below, is a polygonal complex with visual boundary homeomorphic to the product of a Cantor set and an interval. Its conformal dimension can be bounded from below in terms of the branching data in its construction, as shown by Mackay~\cite{mackay16}, using work of Bourdon~\cite{bourdon95} and Pansu~\cite[Proposition 2.9]{pansu}, \cite[Lemma 6.3]{pansu89b}. In what follows, if $A'\subset A$ is a subcomplex of a polygonal complex $A$, then the {\it star} of $A'$ in $A$, denoted $\St(A')$, is the union of all closed cells in $A$ that nontrivially intersect $A'$.  Let $\N = \Z_{\geq 1}$.

    \begin{defn} \cite[Definition 7.1]{mackay16} \label{defn:RT} (Combinatorial round tree.)
        A polygonal $2$-complex $A$ is a {\it combinatorial round tree} with {\it vertical branching} $V \in \N$ and {\it horizontal branching} at most $H \in \N$ if the following holds. Let $T = \{1,2, \ldots, V\}$.
        Then
            \[ A = \bigcup_{\ba \in T^{\N}} A_{\ba}, \]
        where
        \begin{enumerate}
            \item The complex $A$ has a basepoint $x_0$ contained in the boundary of a unique $2$-cell $A_0' \subset A$.
            \item Each complex $A_{\ba}$ is an infinite planar $2$-complex homeomorphic to a half-plane whose boundary is the union of two rays $L_{\ba}$ and $R_{\ba}$ with $L_{\ba} \cap R_{\ba} = \{x_0\}$.

            \item Let $A_0$ be a union of $2$-cells so that $A_0' \subseteq A_0$ and $A_0$ is homeomorphic to a closed disk.
            For $n>0$, let the {\it round tree at step $n$} be defined as $A_n = \St(A_{n-1})$. Given $\ba = (a_1, a_2, \ldots) \in T^\N$, let $\ba_n = (a_1, \ldots, a_n) \in T^n$. If $\ba,\ba' \in T^\N$ satisfy $\ba_n = \ba'_n$ and $\ba_{n+1} \neq \ba'_{n+1}$, then
                \[ A_n \cap A_{\ba} \subseteq A_{\ba'} \cap A_{\ba} \subseteq A_{n+1} \cap A_{\ba}.\]
            \item Each $2$-cell $P \subseteq A_n$ meets at most $VH$ $2$-cells in $A_{n+1} \backslash A_n$.
        \end{enumerate}
    \end{defn}

    As explained by Mackay~\cite{mackay16}, each complex $A_n$ is the union of $V^n$ distinct planar $2$-complexes $\{A_{\ba_n}\}_{\ba_n \in T^n}$. Each complex $A_{\ba_n}$ is homeomorphic to a disk; see \Cref{figure:Aan}. These complexes in $A_n$ are glued together along their initial subcomplexes in a branching fashion. The boundary of $A_{\ba_n}$ decomposes as a union of three paths: $L_{\ba} \cap A_n$, $R_{\ba} \cap A_n$, and a connected path $E_{\ba_n}$. The complex $A_{\ba_{n+1}}$ is built from $A_{\ba_n}$ by attaching $2$-cells in $V$ strips along the path $E_{\ba_n}$. In each strip, each $2$-cell in $A_{\ba_n}$ is adjacent to at most $H$ new $2$-cells. We will apply the following theorem.

    \begin{thm}[\cite{mackay16}, Theorem 7.2] \label{thm:mackayRT}
        Let $X$ be a hyperbolic polygonal $2$-complex. Suppose there is a combinatorial round tree $A$ with vertical branching $V \geq 2$ and horizontal branching $H \geq 2$. Suppose that $A^{(1)}$, with the natural length metric giving each edge length one, admits a quasi-isometric embedding into $X$. Then
            \[ \Confdim(\p X) \geq 1 + \frac{\log V}{\log H}.\]
    \end{thm}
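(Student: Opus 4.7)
The plan is to reduce the lower bound on $\Confdim(\partial X)$ to a direct estimate on the boundary of the round tree. Since $A^{(1)}$ quasi-isometrically embeds into the hyperbolic $2$-complex $X$, its image is itself hyperbolic, so $\partial A^{(1)}$ embeds topologically in $\partial X$; by \Cref{thm_qs}, this inclusion is a quasisymmetry onto its image when both boundaries carry visual metrics. Conformal dimension is monotone under quasisymmetric embeddings, since any metric in the quasisymmetric gauge of $\partial X$ restricts to one in the gauge of $\partial A^{(1)}$ of no larger Hausdorff dimension. So it suffices to show $\Confdim(\partial A^{(1)}) \geq 1 + \log V / \log H$.

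I would next analyze the topology and combinatorics of $\partial A^{(1)}$. Each half-plane $A_{\ba}$ is quasi-isometric to a hyperbolic half-plane, so its boundary at infinity is an arc $\gamma_{\ba}$ with endpoints coming from the rays $L_{\ba}$ and $R_{\ba}$. As $\ba$ varies over $T^\N$, these arcs cover $\partial A^{(1)}$. Condition (3) of \Cref{defn:RT} implies that arcs $\gamma_{\ba}$ and $\gamma_{\ba'}$ agree only on the initial portion corresponding to their longest common prefix $\ba_n = \ba'_n$, yielding a natural continuous surjection $T^\N \times [0,1] \to \partial A^{(1)}$ that identifies only countably many pairs. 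Thus, up to a countable set, $\partial A^{(1)}$ is the product of a Cantor set and an interval.

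The heart of the argument is a combinatorial modulus estimate in the spirit of Bourdon and Pansu applied to the family $\{\gamma_{\ba}\}_{\ba \in T^\N}$. Conditions (3) and (4) of \Cref{defn:RT} provide a self-similar cellular structure on $\partial A^{(1)}$: at level $n$ there are exactly $V^n$ strips $A_{\ba_n}$, and the bound of $VH$ in Condition (4) forces the combinatorial width of each strip at level $n+1$, as a cell of the transverse direction, to shrink by a factor of at most $H^{-1}$. Fix any metric $\rho$ in the quasisymmetric gauge of the visual metric. By quasisymmetry applied scale-by-scale, the combinatorial nesting structure on $T^\N$ with branching ratio $V$ and contraction ratio at most $H^{-1}$ carries a self-similar mass distribution of dimension $\log V / \log H$ in $\rho$, placed on the transverse Cantor direction. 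Simultaneously, the curves $\gamma_{\ba}$ are uniformly quasi-arcs in $\rho$ of positive $1$-modulus, contributing an independent direction of dimension $1$.

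The main obstacle is making these two contributions add cleanly, since a naïve product-of-dimensions inequality requires a statement about how rectifiable curves interact with transverse Cantor-set measures under a general quasisymmetric deformation. This is exactly the content of Pansu's inequality (used by Bourdon in the Fuchsian buildings setting): if a metric space $Z$ admits a family of curves of uniformly positive $1$-modulus parameterized by a set of Hausdorff dimension $s$ transverse to the curves, then $\Hdim(Z, \rho) \geq 1 + s$ for every $\rho$ in the quasisymmetric gauge of $Z$. Verifying that the arcs $\gamma_{\ba}$ and the transverse Cantor structure satisfy this hypothesis, using the horizontal bound from Condition (4) to supply the modulus estimate and the vertical branching $V$ together with the separation estimate to supply the $s = \log V/\log H$ transverse dimension, then yields $\Confdim(\partial X) \geq 1 + \log V / \log H$.
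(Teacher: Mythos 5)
This theorem is not proved in the paper at all: it is imported verbatim from Mackay \cite{mackay16}, and the paper only records (at the start of \Cref{subsec:combroundtree}) that Mackay's proof rests on the Bourdon--Pansu modulus argument. Your outline reproduces exactly that strategy --- monotonicity of conformal dimension under the quasisymmetric boundary embedding induced by the quasi-isometric embedding $A^{(1)} \to X$, the Cantor-set-of-arcs structure of $\partial A$, and a transverse mass-distribution/modulus estimate with exponent $\log V/\log H$ --- so it is essentially the same approach as the cited proof. The one point worth tightening is your formulation of Pansu's inequality: the hypothesis is not that the transverse index set merely \emph{has} Hausdorff dimension $s$, but that it carries a probability measure satisfying an upper bound $\mu\bigl(B(\cdot,r)\bigr) \le C r^{s}$ at \emph{every} scale, verified for the metrics in the quasisymmetric gauge; in the round-tree setting this is supplied by the separation estimate that branches splitting at level $n$ are $\asymp H^{-n}$ apart in the visual metric while there are $V^{n}$ of them, and deriving that two-sided separation from conditions (3)--(4) of \Cref{defn:RT} (via the fact that level-$n$ curves cross at most $\sim H^{n}$ cells) is the actual technical content of Mackay's argument, which your sketch gestures at but does not carry out.
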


    \subsection{Round tree construction} \label{subsec:roundtrees}

Fix $\G$, a complete graph with all edge labels $m_{ij}\geq 3$, and let $W_\G$ be the Coxeter group with defining graph $\G$.
In this subsection we build combinatorial round trees in the Davis complex $X_\G$ of $W_\Gamma$ provided $|V\Gamma| \ge 11$.

    \begin{construction}[Combinatorial round trees]
        Fix $V \geq 2$ and suppose $|V\G| = m \geq 3V+5$.
        Let $X_\Gamma$ denote the Davis complex for the group $W_\Gamma$ as above. Label the edges of $X_\Gamma$ with elements from $\{1, \ldots, m\}$ according to a bijection with the $m$ generators of $W_\G$. For a basepoint, let $x_0 \in X_\G^{(0)}$ be the identity vertex.
        Construct a combinatorial round tree $A \subset X_\G$ recursively as follows with vertical branching $V$ and horizontal branching at most $H = 2 \max\bigl\{m_{ij} \,|\, \{s_i,s_j\} \in E\G \bigr\} - 1$.

    \begin{figure}
    \begin{centering}
	\begin{overpic}[width=.6\textwidth, tics=5]{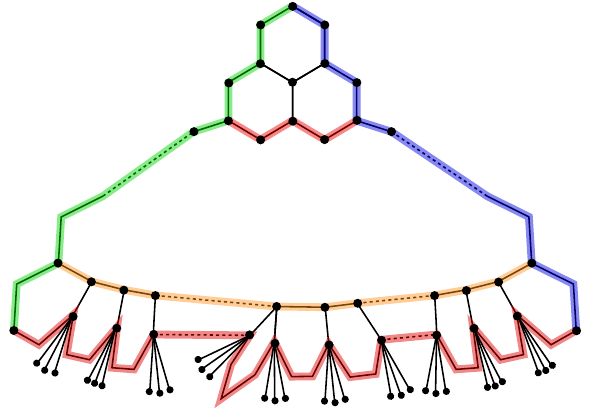}
        \put(51,68.5){\small{$x_0$}}
        \put(38,62){$A_0$}
        \put(15,42){$L_{\ba_n}$}
        \put(78,42){$R_{\ba_n}$}
        \put(-3,9){$E_{\ba_n}$}
        \put(31,21){$E_{\ba_{n-1}}$}
        \put(46,43){$E_0$}
        \put(9,17){\small{$v_1$}}
        \put(56.5,11.5){\small{$v_i$}}
        \put(64.5,13.5){\small{$v_{i+1}$}}
        \put(89,16){\small{$v_k$}}
        \put(15,23.3){\small{$v_1'$}}
        \put(53.5,19.4){\small{$v_i'$}}
        \put(59,20){\small{$v_{i+1}'$}}
        \put(82.5,23.7){\small{$v_k'$}}
    \end{overpic}
	\caption{\small{The $2$-complex $A_{\ba_n}$ in the case that $m_{ij}=3$ and the $2$-cells in the Davis--Moussong complex are hexagons. The initial complex $A_0$ is the union of the three hexagons at the top of the figure, and its outer edge path $E_0$ is drawn in red. The outer edge path of $A_{\ba_n}$ is $E_{\ba_n}$, which is drawn in red, and its adjacent path $E_{\ba_{n-1}}$ is drawn in orange. The internal vertices $v_1, \ldots, v_k$ along $E_{\ba_n}$ are indicated with black circles. The complex is extended by adding $V$ strips of polygons glued to $E_{\ba_n}$ between the new edges emanating from the internal vertices. (As drawn, $V=3$.)}}
	\label{figure:Aan}
    \end{centering}
    \end{figure}

        \noindent {\bf Initial Step.} Let $A_0 \subset X_\G$ be the union of three polygons as shown in \Cref{figure:Aan}. More precisely, take the union of the polygon containing $x_0$ and label $\{1,2\}$, and the polygons labeled by $\{1,3\}$ and $\{2,3\}$ so that all three polygons intersect in a vertex as in the figure. Let $L_0$ and $R_0$ be the left and right edge paths of length four along the boundary of $A_0$ that begin at $x_0$ and are colored green and blue in the figure. Let $E_0$ be the outer edge path of $A_0$ so that the boundary of $A_0$ is the union $L_0 \cup R_0 \cup E_0$.

        \noindent {\bf Inductive Hypothesis.} Let $n \geq 0$ be an integer. Let $T = \{1,2, \ldots, V\}$. Let $T^0 =\{0\}$. Suppose that a $2$-complex $A_n \subset X_\Gamma$ has been constructed satisfying the following induction hypotheses.

        \begin{enumerate}
         \item[(IH1)] (Round Tree, I.) The complex $A_n$ is the union of $V^n$ complexes $\{A_{\ba_n} \, | \, \ba_n \in T^n\}$, where $A_{\ba_n}$ is homeomorphic to a closed disk. Each
         $2$-complex $A_{\ba_n}$ can be viewed as a triangular region based at $x_0$, with left side $L_{\ba_n}$ a geodesic from $x_0$, right side $R_{\ba_n}$ a geodesic from $x_0$, and outer edge path $E_{\ba_n}$. The paths $L_{\ba_n}$ and $R_{\ba_n}$ have length $4+2n$ and are incident to $2+n$ polygons in $A_{\ba_n}$. The {\it left tree} of $A_n$ is the subcomplex $L_n= \bigcup L_{\ba_k}$, and the {\it right tree} of $A_n$ is the subcomplex $R_n = \bigcup R_{\ba_k}$, where the unions are taken over all $k \leq n$ and $\ba_k \in T^k$. The left and right trees at stage~$n$ are homeomorphic to a rooted $(V+1)$-valent tree, truncated at height $n$.

         \item[(IH2)] (Round Tree, II.) At each stage $k \leq n-1$ of the construction and each $\ba_k \in T^k$, there are $V$ new strip subcomplexes $S_{\ba_k}^1, \ldots, S_{\ba_k}^V$ each homeomorphic to $[0,1]^2$ glued along an interval $[0,1]\times \{0\}$ to the edge path $E_{\ba_k}$. Moreover, for each $\ba_k \in T^k$ and $i,j \in \{1, \ldots, V\}$ with $i \neq j$, we have $S_{\ba_k}^i \cap S_{\ba_k}^j = E_{\ba_k}$ and for $\ba_k, \ba_k' \in T^k$ with $\ba_k \neq \ba_k'$ and all $i,j \in \{1, \ldots, V\}$, we have $S_{\ba_k}^i \cap S_{\ba_k'}^j = \emptyset$.  The intersection of each polygon in $S_{\ba_k}^i$ with $E_{\ba_k}$ is either a vertex or an edge.  Each polygon in $A_k$ incident to $E_{\ba_k}$ intersects at most $H$ polygons in each strip $S_{\ba_k}^1, \ldots, S_{\ba_k}^V$ (and therefore at most $VH$ polygons in $A_{k+1} \backslash A_k$). The opposite edge path $[0,1] \times\{1\}$ of $S_{\ba_k}$ forms a new outer edge path $E_{\ba_{k+1}}$ of $A_{\ba_{k+1}}$, where $\ba_k$ coincides with $\ba_{k+1}$ on the first $k$ coordinates, and $E_{\ba_k}$ is called the {\it incident} edge path to $E_{\ba_{k+1}}$.

        \item[(IH3)] (Convexity.) The $1$-skeleton $A_n^{(1)}$ is convex in $X_\Gamma^{(1)}$.

        \item[(IH4)] (To avoid fellow-traveling with a flat.)
        Let $\{t_i\}_{i=1}^R$, where $R = \binom{m}{3}$, be the set of all triples $\{p,q,r\}$ of distinct numbers $1 \leq p,q,r \leq m$. At each stage $k \leq n$, the new polygons added in each strip subcomplex $S^j_{\ba_k}$ with $j$ satisfying $1 \le j \le V$ are not labeled with a pair in the triple~$t_{\ell}$, where $\ell = k \mod R$. (By periodically disallowing every triple of generators to repeat, we ensure the round tree has uniformly bounded intersection with every flat.)
        \end{enumerate}

    \noindent {\bf Inductive Step.} To construct the complex $A_{n+1}$ we will glue $V$ strips to the edge path $E_{\ba_n} \subset A_n$ for each $\ba_n \in T^n$. We then verify the inductive hypotheses hold for the resulting complex.

    We begin by setting notation. Let $\ba_n \in T^n$. Note that $\ba_0 = (0)$ and we let $A_{\ba_0} = A_0$.
    If $n \geq 1$, call a vertex $v$ in the outer edge path $E_{\ba_n}$ {\bf internal}
    if it is adjacent to a vertex in $E_{\ba_{n-1}}$.  If $v \in E_{\ba_n}$ is an internal vertex, let $v' \in E_{\ba_{n-1}}$ denote the unique vertex adjacent to $v$. Let $\{v_i\}_{i=1}^k$ denote the set of internal vertices of $E_{\ba_n}$, labeled in order along $E_{\ba_n}$ from left to right. Let $\{v_i'\}_{i=1}^k \subset E_{\ba_{n-1}}$ be the corresponding adjacent vertices. See \Cref{figure:Aan}. Note that it may be the case that $v_i' = v_{i+1}'$. If $n=0$, the {\bf internal} vertex $v_1$ is the vertex of $E_0$ at distance two from $L_0$ and $R_0$, and $v_1'\subset A_0$ is the vertex at distance one from $L_0, R_0$, and $E_0$.

    We choose $V$ edges in $X_\Gamma \setminus A_{\ba_n}$ with distinct labels that are incident to each internal vertex in $\{v_i\}_{i=1}^k$, while omitting edges with some labels. For instance, since a vertex in $X_{\Gamma}$ does not have two incident edges with same labels, we have to omit the labels of edges in $A_{\ba_n}$ incident to $v_i$.
Before we explain which other labels to omit, we describe how to glue polygons along the path $E_{\ba_n}$ assuming we have chosen $V$ edges $e^i_1, \ldots, e^i_V$ with distinct labels as above, incident at each vertex $v_i$ in $\{v_i\}_{i=1}^k$.

    \begin{figure}
    \begin{centering}
	\begin{overpic}[width=.6\textwidth,tics=5]{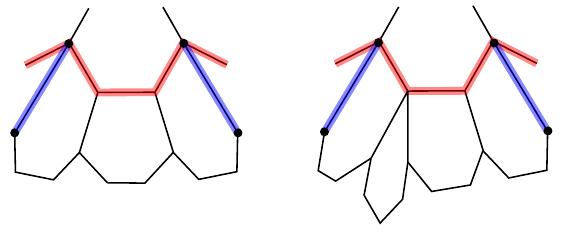}
        \put(1,25){\small{ $e_j^i$}}
        \put(39,25){\small{ $e_j^{i+1}$}}
        \put(56,25){\small{ $e_j^i$}}
        \put(93,25){\small{ $e_j^{i+1}$}}
        \put(10,36){\small{$v_i$ }}
        \put(32,36){\small{$v_{i+1}$ }}
        \put(64.5,36){\small{$v_i$ }}
        \put(87,36){\small{$v_{i+1}$ }}
        \put(7,22){\small{$x$}}
        \put(16,18){\small{$x$}}
        \put(26.5,18){\small{$x$}}
        \put(36,22){\small{$x$}}
        \put(61.5,22){\small{$x$}}
        \put(66,19){\small{$x$}}
        \put(73,18){\small{$y$}}
        \put(81.5,19){\small{$y$}}
        \put(91,22){\small{$y$}}
    \end{overpic}
	\caption{\small{Gluing in polygons during the inductive step. The red line depicts a segment of the outer edge path $E_{a_n}$.}}
	\label{figure:hex_strips}
    \end{centering}
    \end{figure}

    We glue in polygons along the path $E_{\ba_n}$ in two steps: we glue $V$ strips between the new edges $\{e_r^i\}_{r=1}^V$ and $\{e_r^{i+1}\}_{r=1}^V$ for $1 \leq i \leq k-1$, and then we glue $V$ strips adjacent to each of $\{e^1_r\}_{r=1}^V$ and $\{e^k_r\}_{r=1}^V$ to extend the left and right trees. The  construction illustrated in \Cref{figure:hex_strips} is as follows.
    First, let $i \in \{1, \ldots, k-1\}$, and let $\{e_r^i\}_{r=1}^V$ and $\{e_r^{i+1}\}_{r=1}^V$ be the new edges added to the internal vertices $v_i$ and $v_{i+1}$. Suppose the edges in  $\{e_r^i\}_{r=1}^V$ and $\{e_r^{i+1}\}_{r=1}^V$ have $M$ labels in common for some $M \leq V$. Re-index the edges so that the label of $e_r^i$ is the same as the label of $e_r^{i+1}$ for $r \leq M$. Let $\gamma = \{d_1, \ldots, d_D\}$ be the subpath of $E_{\ba_n}$ from $v_i$ to $v_{i+1}$. By the Induction Hypothesis~(IH2), $D \geq 2$. (Note that $D=2$ occurs only at the first step of the construction.) Let $\ell_s$ be the label of $d_s$. Let $j \in \{1, \ldots, V\}$.

    Suppose first that the labels of $e^i_j$ and $e^{i+1}_j$ agree, and call the label $x$; see the left side of \Cref{figure:hex_strips}. Since we omitted edge labels incident at $v_i$, we have that
    $x \neq \ell_s$ for $1 \leq s \leq D$.
    Glue a polygon labeled by $\{x, \ell_s\}$
    to the edge $d_s$,
    and glue adjacent polygons together along edges labeled by $x$. Call the union of these polygons the {\bf strip} connecting $e^i_j$ and $e^{i+1}_j$.
    Suppose now that the label of $e^i_j$ is $x$, the label of $e^{i+1}_j$ is $y$, and $x \neq y$; see the right side of \Cref{figure:hex_strips}. As before, we have that
    $x \neq \ell_s$ and $y \neq \ell_s$ for $s$ satisfying $1 \leq s \leq D$.
    Glue a polygon labeled by $\{x,\ell_1\}$ to the edge $d_1$, and glue a polygon labeled by
    $\{y, \ell_s\}$ to the edge $d_s$ for $s$ satisfying $2 \leq s \leq D$.
    Glue a polygon labeled $\{x,y\}$ along a vertex adjacent to $d_1$ and $d_2$. Glue adjacent polygons together along edges labeled $x$ or $y$ as indicated. Call the union of these polygons the {\bf strip} connecting $e^i_j$ and $e^{i+1}_j$.

    We now glue $V$ strips along $E_{\ba_n}$ adjacent to $\{e^1_r\}_{r=1}^V$ to extend the left tree. The construction to extend the right tree is analogous. Let $j \in \{1,\ldots, V\}$, and let $x$ be the label of the edge $e^1_j$.
    The vertex $v_1$ is contained in one polygon $P$ intersecting the line $L_{\ba_n}$ as shown in \Cref{figure:Aan}. Let $d_1$ and $d_2$ be the edges of $E_{\ba_n}$ between $v_1$ and $L_{\ba_n}$. Let $\ell_s$ be the label of $d_s$.
    By our choice of new edges, we have that $x \neq \ell_s$ for $s=1,2$.
    Glue a polygon labeled by $\{x,\ell_s\}$ to the edge $d_s$,
    and glue the two polygons together along edges labeled by $x$. Call the union of these polygons the {\it strip} at $e^1_j$. Call the two adjacent edges of the polygon labeled by $\{x,\ell_1\}$ that emanate from $L_{\ba_n}$ an {\bf extension} of $L_{\ba_n}$.

    Next, we explain the labeling of the $V$ new subcomplexes that are the subsets of $A_{n+1}$ attached to
    $A_{\ba_n} \subset A_n$. Suppose $\ba_n = (a_1, \ldots, a_n) \in T^n$. Let $\ba_{n+1}^i = (a_1, \ldots, a_n, i) \in T^{n+1}$ for $i \in \{1, \ldots, V \}$. The strips built at edges incident to the internal vertices naturally glue to form $V$ strips along $E_{\ba_n}$. Arbitrarily label these strips $S^1_{\ba_n}, \ldots, S^V_{\ba_n}$. Let $A_{\ba_{n+1}^i} = A_{\ba_n} \cup S^i_{\ba_n}$. The line $L_{\ba_{n+1}^i}$ is the union of $L_{\ba_n}$ with the extension of $L_{\ba_n}$ that is contained in $S^i_{\ba_n}$. The new right line $R_{\ba_{n+1}^i}$ is similar. The new external line $E_{\ba_{n+1}^i}$ is the subsegment of the boundary of $A_{\ba_{n+1}^i}$ so that the boundary of $A_{\ba_{n+1}^i}$ is $L_{\ba_{n+1}^i} \cup R_{\ba_{n+1}^i} \cup E_{\ba_{n+1}^i}$, and the elements in the union pairwise intersect in a single vertex. The construction is analogous for each $\ba_n \in T^n$.

    The desired vertical branching $V$ and horizontal branching $H = 2 \max\{m_{ij} \,|\, \{s_i,s_j\} \in E\G \} - 1$ are obtained from this construction.  Indeed, let $P$ be a polygon in $A_n$ incident to the path $E_{\ba_n}$. Let $M = \max\{m_{ij} \,|\, \{s_i,s_j\}\in E\Gamma\}$. The polygon $P$ has at most $2M$ sides, and at most $2M-2$ sides are not contained in $A_n$; see \Cref{figure:hex_strips}. There are at most $2M - 2 + 1$ polygons glued to $P$, which gives the value $H$ above. There are $V$ strips built, so each polygon in $A_n$ incident to $E_{\ba_n}$ intersects at most $VH$ polygons in $A_{n+1} \backslash A_n$ as in Inductive Hypothesis (IH2).

    Finally, we are only left to explain which edge labels to omit to conclude the construction of $A_{n+1}$. Let $v_i$ be an internal vertex. Let $v_i'$ be a vertex of $E_{\ba_{n-1}}$ connected to $v_i$ by an edge $e$ and let $S$ be the strip attached to $E_{\ba_{n-1}}$ containing $e$. As mentioned before, we want to avoid the labels of edges in $S$ incident to $v_i$ since a vertex in the Davis complex does not have two edges with same labels incident at a vertex. Note that all such edge labels are contained in the edge labels of edges in $S$ incident at $v_i'$. Depending on whether $v_i'$ is itself an internal vertex or not, there are at most 4 such labels at $v_i'$. In fact, we want to avoid any label of an edge in $A_n$ incident to $v_i'$. This is to avoid the new edges that are being added to come back to the complex along the boundary of a polygon. Excluding the strip $S$, there are $V-1$ other strips and each one contributes at most 2 different labels to edges incident at $v_i'$. Therefore, we need to avoid at most $2V+2$ such labels. Finally we need to avoid three more labels as per Induction Hypothesis~(IH4) to avoid fellow-traveling with a flat, yielding $2V+5$ labels to avoid when making a choice of labels at $v_i$. Since $|V\Gamma| \geq 3V+5$, we can make the required choices.
    \end{construction}

    It follows immediately from construction that if (IH1), (IH2), and (IH4) hold for $A_n$, then they hold for $A_{n+1}$. Therefore, the conditions of \Cref{defn:RT} are satisfied, which gives the following.

    \begin{prop} \label{prop:RT}
        The complex $A$ is a combinatorial round tree with vertical branching $V$ and horizontal branching $H = 2 \max\{m_{ij} \,|\, \{s_i,s_j\} \in E\G \} - 1$. \qed
    \end{prop}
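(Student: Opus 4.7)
The plan is to verify that the complex $A$ satisfies all four conditions of \Cref{defn:RT}, working from the inductive hypotheses (IH1)--(IH4) maintained throughout the construction.

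First I would confirm that (IH1), (IH2), and (IH4) pass from $A_n$ to $A_{n+1}$. The base case is immediate: $A_0$ is a union of three polygons meeting at $x_0$ along $L_0 \cup R_0 \cup E_0$ with $|L_0| = |R_0| = 4$, so (IH1) holds; (IH2) and (IH4) hold vacuously. For the inductive step, the construction attaches, for each $\ba_n \in T^n$, exactly $V$ strips $S^1_{\ba_n}, \ldots, S^V_{\ba_n}$ along $E_{\ba_n}$. By the hypothesis $m \ge 3V + 5$, at each internal vertex $v_i$ there are enough remaining labels to avoid the forbidden labels described in the construction (the labels of adjacent edges in $A_n$ at $v_i'$ and the three labels of the triple $t_\ell$), so the choice of $V$ new edges with distinct labels at each $v_i$ can be made. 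This gluing yields $V$ triangular regions $A_{\ba_{n+1}^i} = A_{\ba_n} \cup S^i_{\ba_n}$ with $L_{\ba_{n+1}^i}$ and $R_{\ba_{n+1}^i}$ of length $6 + 2n$, extending the left and right trees by the prescribed branching; this gives (IH1) and (IH2). Condition (IH4) is immediate from the cyclic avoidance of the triple $t_\ell$ at stage $k \equiv \ell \pmod R$.

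With the inductive hypotheses established, the conditions of \Cref{defn:RT} follow. Condition (1) holds with $A_0'$ equal to the unique $\{1,2\}$-polygon in $A_0$ whose boundary contains $x_0$. Condition (2) follows from (IH1) by assembling $A_{\ba} = \bigcup_{n} A_{\ba_n}$: the nested sequence of triangular regions produces an infinite planar half-plane whose boundary consists of the rays $L_\ba = \bigcup_n L_{\ba_n}$ and $R_\ba = \bigcup_n R_{\ba_n}$, with $L_\ba \cap R_\ba = \{x_0\}$. For condition (3), let $\ba, \ba' \in T^\N$ with $\ba_n = \ba_n'$ and $\ba_{n+1} \neq \ba_{n+1}'$. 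Then $A_\ba$ and $A_{\ba'}$ both contain $A_{\ba_n}$, but their stage-$(n+1)$ extensions use distinct strips $S^i_{\ba_n}$ and $S^j_{\ba_n}$, which by (IH2) intersect only in $E_{\ba_n} \subseteq A_n$. This sandwiches $A_{\ba'} \cap A_\ba$ between $A_n \cap A_\ba$ and $A_{n+1} \cap A_\ba$ as required.

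Condition (4) reduces to a counting argument. A polygon $P \subseteq A_n$ has at most $2M$ sides with $M = \max m_{ij}$; if $P$ meets $E_{\ba_n}$, then at most $2M - 1$ of these sides can receive new polygons in a given strip $S^i_{\ba_n}$ (the gluing of \Cref{figure:hex_strips} realizes this bound). Summing over the $V$ strips yields at most $V(2M - 1) = VH$ new $2$-cells adjacent to $P$ in $A_{n+1} \setminus A_n$, matching the desired vertical/horizontal branching. The main point I expect to require care is ensuring condition (3) holds without spurious identifications: one must check that new polygons across distinct strips are not glued to one another except along $E_{\ba_n}$, and that new polygons within a single strip do not coincide with any existing $2$-cell of $A_n$. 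This is exactly what the $2V + 5$ forbidden labels at each internal vertex enforce---the label of each new edge differs from every label appearing in $A_n$ at the neighboring vertex $v_i'$, which prevents such identifications in $X_\Gamma$.
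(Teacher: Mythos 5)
Your proposal is correct and follows essentially the same route as the paper, which simply observes that the construction preserves (IH1), (IH2), and (IH4) and that these imply the conditions of \Cref{defn:RT}; your write-up fills in the same details (the $3V+5$ label-avoidance count, the nesting giving condition (3), and the $H=2M-1$ bound, which in the paper arises as $2M-2$ free sides plus one polygon glued at a corner vertex).
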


    \begin{prop} \label{prop_convex}
        For each $n \in \Z_{\geq 0}$, if $A_n^{(1)}$ is convex in $X_{\Gamma}^{(1)}$, then $A_{n+1}^{(1)}$ is convex in $X_{\Gamma}^{(1)}$.
    \end{prop}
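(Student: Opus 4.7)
The plan is to apply \Cref{cor:geodesic}: an edge path in $X_\Gamma^{(1)}$ is a geodesic if and only if it meets each wall it crosses exactly once. It therefore suffices to exhibit, for every pair of vertices $p, q \in A_{n+1}^{(1)}$, an edge path in $A_{n+1}^{(1)}$ from $p$ to $q$ that meets each wall at most once.

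My approach is to reduce to the inductive hypothesis by ``projecting'' endpoints out of the newly attached strips. Recall that $A_{n+1}^{(1)}$ is obtained from $A_n^{(1)}$ by gluing the strips $S = S_{\ba_n}^i$, each attached along a subpath of some $E_{\ba_n}$, and that each polygon of such a strip meets $E_{\ba_n}$ in at most one edge. For each strip $S$ and each vertex $q \in S$, I would define a combinatorial ``downward'' edge path $\gamma_q$ in $S$ running from $q$ to a vertex $\pi_S(q)$ of $E_{\ba_n}$, built by descending through successive polygons of the strip along the shorter of the two boundary arcs toward $E_{\ba_n}$. A careful polygon-by-polygon analysis, together with \Cref{lemma:convex}, shows that each wall which meets $\gamma_q$ is crossed exactly once, so $\gamma_q$ is a geodesic in $X_\Gamma^{(1)}$.

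Given $p, q \in A_{n+1}^{(1)}$, I then build a candidate path in $A_{n+1}^{(1)}$ by concatenating the downward path $\gamma_p$ (if $p$ lies in a new strip; otherwise $\pi(p) = p$), a geodesic in $A_n^{(1)}$ between $\pi(p)$ and $\pi(q)$ supplied by the inductive hypothesis, and the reverse $\gamma_q^{-1}$ of the downward path from $q$. By \Cref{cor:geodesic}, this composite path is a geodesic provided no wall is crossed twice by the concatenation.

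The main obstacle is exactly this non-repetition, and it is here that the deliberate edge-label omissions in the construction become essential. I would prove the key lemma: every wall $W$ crossed by an edge of $S$ not lying on $E_{\ba_n}$ has carrier $C(W)$ entirely contained in $S$, so in particular $W$ cannot meet any edge of $A_n$ or of another sibling strip. This would follow from the strong convexity of carriers (\Cref{lemma:convex}) combined with the omission, at each new internal vertex $v_i$, of the labels appearing on edges incident to the previous-level vertex $v_i'$: if such a wall propagated back into $A_n$ or into a sibling strip, strong convexity of $C(W)$ would force a polygon of $X_\Gamma$ incident to $v_i$ whose label pair was explicitly forbidden at that stage of the construction. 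Granting the key lemma, the three segments of the candidate path cross pairwise disjoint sets of walls, so the composite meets each wall at most once, and the proposition follows.
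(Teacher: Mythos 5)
Your overall skeleton is the same as the paper's: reduce to the wall-crossing criterion of \Cref{cor:geodesic}, concatenate a short path from $p$ into $A_n^{(1)}$, a geodesic $\gamma\subset A_n^{(1)}$ supplied by induction, and a short path back out to $q$, and use the label omissions plus \Cref{lemma:convex} to rule out double wall crossings. (A small point of confusion: each strip is only one polygon thick --- every polygon of $S^i_{\ba_n}$ already meets $E_{\ba_n}$ in a vertex or edge --- so your ``descent through successive polygons'' is just a path inside a single polygon $P_p$ to its nearest vertex $\bar p$ of $A_n^{(1)}$, which is what the paper uses.)

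The genuine gap is your key lemma, which is false as stated and whose corrected form is not established. A carrier $C(W)$ is the union of \emph{all} polygons of $X_\Gamma$ meeting the wall $W$; since $W$ is an infinite tree, $C(W)$ is an unbounded subcomplex of $X_\Gamma$ and cannot be contained in a strip. Even the weakened claim $C(W)\cap A_{n+1}\subseteq S$ fails: if $P$ is a strip polygon attached along an edge $d_s\subseteq E_{\ba_n}$, the wall dual to the edge of $P$ opposite $d_s$ is the same wall dual to $d_s$, so its carrier contains the polygon of $A_n$ on the far side of $d_s$. More importantly, the hard case --- a wall crossed inside a new polygon that leaves the round tree, wanders through $X_\Gamma$, and returns to cross $\gamma$ somewhere in $A_n$ --- is not ruled out by your sketch: a re-entry point of $W$ into $A_n$ can be far from the internal vertex $v_i$, so no ``forbidden label pair at $v_i$'' is directly violated. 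The paper's actual mechanism is different: if $W$ meets both $[p,\bar p]$ and $\gamma$, then strong convexity of $C(W)\cap X_\Gamma^{(1)}$ forces the geodesic in $A_n^{(1)}$ between $\bar p$ and the re-entry point to lie in $C(W)$; combined with the fact (coming from the label omissions) that the connected component of $C(W)\cap A_{n+1}$ containing $p$ consists of at most two polygons, and a minimality argument over the levels $A_i$ of the round tree tracking where $W$ crosses the outer edge paths $E_{\ba_k}$, this forces $P_p$ and $P_q$ (or $P_p$ and the polygon $P_r$ where $W$ crosses $E_{\ba_n}$) to be adjacent, contradicting disjointness. Without an argument of this kind your three segments need not cross pairwise disjoint sets of walls, and the proof does not close.
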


    \begin{proof}
    Let $p,q \in A_{n+1}^{(1)}$ be two vertices. We will show the existence of a geodesic between $p$ and $q$ in $X_{\Gamma}^{(1)}$ that is contained in $A_{n+1}^{(1)}$, which implies that $A_{n+1}^{(1)}$ is convex.

    Suppose both $p,q \in A_{n+1}^{(1)}\setminus A_n^{(1)}$. The case when one of $p$ or $q$ is in $A_n^{(1)}$ follows by the same argument, and the case when both are follows by hypothesis. Let $p$ and $q$ be contained in polygons $P_p$ and $P_q$ in $A_{n+1}$, respectively. If $P_p$ and $P_q$ intersect non-trivially, then one can verify that a geodesic in $X_{\Gamma}^{(1)}$ is contained in the 1-skeleton of these polygons, and is hence contained in $A_{n+1}^{(1)}$. Henceforth, we suppose $P_p$ and $P_q$ are disjoint. Let $\bar{p}$ and $\bar{q}$ denote the vertices of $A_n^{(1)}$ nearest to $p$ and $q$ respectively. Since $P_p$ and $P_q$ intersect $A_n^{(1)}$, we have $\bar{p} \in P_p$ and $\bar{q} \in P_q$.
    By assumption, there is a geodesic edge path $\gamma \subset A_n^{(1)}$ between $\bar{p}$ and $\bar{q}$. Let $[p,\bar{p}], [\bar{q},q] \subset A_{n+1}^{(1)}$ denote geodesic edge paths from $p$ to $\bar{p}$ and from $\bar{q}$ to $q$, respectively. We will show that the concatenation $[p, \bar{p}] \circ \gamma \circ [\bar{q}, q]$ is a geodesic edge path. To prove this, we will show it crosses every wall at most once and apply \Cref{cor:geodesic}.

    \begin{figure}
    \begin{centering}
	\begin{overpic}[width=.7\textwidth,  tics=5]{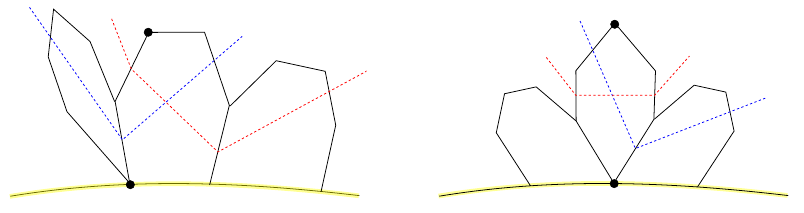}
        \put(17,24){$p$}
        \put(15,0){$\bar{p}$}
        \put(75,25){$p$}
        \put(75,0){$\bar{p}$}
        \put(5,-2){$A_n$}
        \put(91,-2){$A_n$}
    \end{overpic}
	\caption{\small{Configurations of walls locally.}}
	\label{figure:walls}
    \end{centering}
    \end{figure}

    Let $W$ be a wall with $W \cap [p,\bar{p}] \neq \emptyset$. Let $C(W)$ denote the carrier of $W$. By construction, $C(W)$ intersects at most one polygon in $A_{n+1}$ that is adjacent to $P_p$. If such a polygon exists, then it is contained in $A_{n+1} \setminus A_n$. See~\Cref{figure:walls}.

     Suppose towards a contradiction that $W \cap [q,\bar{q}] \neq \emptyset$. Then $\bar{p}, \bar{q} \in C(W)$. Hence, the geodesic~$\gamma$ is contained in $C(W)$ since the carrier is strongly convex in the $1$-skeleton by \Cref{lemma:convex}.
     Thus, the path $[p,\bar{p}] \circ \gamma \circ [\bar{q},q]$ is contained in the connected component of $C(W) \cap A_{n+1}$ containing $p$.
     Since $W$ intersects at most one polygon in $A_{n+1}$ adjacent to $P_p$, this connected component of $C(W) \cap A_{n+1}$ containing $p$ is at most two polygons. So the polygons $P_p$ and $P_q$ must be adjacent, a contradiction. Thus, $W \cap [q,\bar{q}] = \emptyset$.

      Now suppose towards a contradiction that $W \cap \gamma \neq \emptyset$. In principle, $W$ could exit the round tree~$A$ and come back to intersect the path $\gamma$; we will show that this is not possible. Let $P \subset A_i$ be a polygon in $C(W)$ which $\gamma$ intersects, chosen so that $i \in \N$ is minimal. Then by construction, for all $k \in \N$ with $i \leq k \leq n$ there exists $\ba_k \in T^k$ so that $W$ intersects the outer edge path $E_{\ba_k}$. Suppose $W$ intersects $E_{\ba_n}$ in the edge $\{r,r'\}$, which is contained in a polygon $P_r \subset A_{n+1}$. We next argue that $P_p$ and $P_r$ must be adjacent.  Indeed, since $A_n^{(1)}$ is convex, there is a geodesic $\rho$ in $A_n^{(1)}$ joining $\bar{p}$ and $r$. Then, $\bar{p}, r \in C(W)$, and hence
      $\rho \subset C(W)$ by \Cref{lemma:convex}. Hence $[p,\bar{p}] \circ \rho \circ [r,r'] \subset C(W) \cap A_{n+1}$, so $P_p$ and $P_r$ must be adjacent polygons by the same reasoning as in the previous case.
      This contradicts the structure of walls, as $W$ is assumed to cross $P_r$ via an edge in $E_{\ba_n}$.

      Therefore, the path $[p, \bar{p}] \circ \gamma \circ [\bar{q}, q]$ crosses each wall at most once, and is therefore a geodesic. Hence, $A_{n+1}^{(1)}$ is convex in $X_\Gamma^{(1)}$.
     \end{proof}

    \subsection{Lower bound computation} \label{subsec:lowerboundcomp}

    Given the construction of the combinatorial round tree above, we now prove that the boundary of the round tree embeds in the Bowditch boundary. Let $(A,d_A)$ denote the convex subspace $A \subset X_\G$ equipped with the induced metric.

    \begin{lemma} \label{lem:Ahyperbolic}
        The metric space $(A,d_A)$ is $\delta$-hyperbolic.
    \end{lemma}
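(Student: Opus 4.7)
The plan is to exhibit a quasi-isometric embedding of $(A, d_A)$ into a proper geodesic $\delta$-hyperbolic metric space; since a quasi-isometric embedding from a geodesic metric space into a $\delta$-hyperbolic metric space has $\delta'$-hyperbolic domain (by a Morse-lemma argument applied to images of geodesic triangles), this implies the lemma. The natural target is the cusped Davis--Moussong complex $\widehat X_\Gamma$ associated to the relatively hyperbolic pair $(W_\Gamma, \cP)$, where $\cP$ is the collection of $W_\Gamma$-stabilizers of flats in $X_\Gamma$; since $(W_\Gamma, \cP)$ is relatively hyperbolic, $\widehat X_\Gamma$ is $\delta$-hyperbolic.

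The first step is to use the labeling restriction (IH4) to show that for every Euclidean flat $F \subset X_\Gamma$, the intersection $A \cap F$ has uniformly bounded diameter. Flats in $X_\Gamma$ arise only from cosets of standard triangle subgroups in which every edge label is $3$, so each such flat is tiled by hexagons whose labels form a single triple $t \in \{t_1,\dots,t_R\}$. By (IH4), at every stage $k$ of the construction satisfying $k \equiv \ell \pmod{R}$ with $t = t_\ell$, no polygon added to $A$ at that stage has label in $t$. Combined with the bounded combinatorial width of each shell $A_{n+1}\setminus A_n$ relative to its incident outer edge path, this forces any connected piece of $A \cap F$ to halt its growth within a uniformly bounded number of stages, yielding the desired diameter bound.

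The second step, which I expect to be the main obstacle, is to show the composite inclusion $\iota \colon A \hookrightarrow X_\Gamma \hookrightarrow \widehat X_\Gamma$ is a quasi-isometric embedding. The inequality $d_{\widehat X_\Gamma}\bigl(\iota(x),\iota(y)\bigr)\le d_A(x,y)$ is immediate since $\iota$ is $1$-Lipschitz. For the matching linear lower bound, given $x,y\in A$, I would take a geodesic $\widehat\gamma$ in $\widehat X_\Gamma$ from $\iota(x)$ to $\iota(y)$ and decompose it into maximal subpaths lying either in $X_\Gamma$ or within a single combinatorial horoball over a flat $F$. Each horoball excursion is then replaced by a path in the base flat $F$ between its entry and exit points; by Step 1 these endpoints lie within bounded distance of the uniformly bounded piece $A\cap F$, so by \Cref{prop_convex} the replacement can be routed inside $A^{(1)}$ at the cost of an additive constant per excursion. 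A linear bound on the number of horoball excursions along $\widehat\gamma$ then yields a path in $A$ of length linearly comparable to the length of $\widehat\gamma$, establishing the quasi-isometric embedding and hence that $(A,d_A)$ is $\delta'$-hyperbolic.
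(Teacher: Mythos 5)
Your route is entirely different from the paper's. The paper proves this lemma \emph{intrinsically}: it takes the union $A'$ of polygons of $A$ disjoint from the left and right trees, subdivides each polygon into triangles from a valence-four internal vertex, assigns the Euclidean angles and then scales the angles at that vertex by $\frac34$, and invokes the strictly systolic angled complex criterion of Blufstein--Minian. No reference to the ambient cusped space is needed. By contrast, in the paper's logical order the quasi-isometric embedding of $A$ into the cusped Cayley graph (\Cref{lem:Aboundaryembeds}) is stated \emph{after}, and with a citation to, the present lemma; so your plan obliges you to establish that embedding from scratch, using only \Cref{prop_convex} and (IH4). Your Step 1 (bounded diameter of $A\cap F$ for every flat $F$) is fine and is exactly how the paper uses (IH4), but your Step 2 has a genuine gap.

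Two problems. First, the entry and exit points $(p,0)$ and $(q,0)$ of a horoball excursion of the cusped-space geodesic $\widehat\gamma$ lie on the horosphere over $F$, but nothing forces them to be anywhere near the bounded set $A\cap F$: $\widehat\gamma$ is a geodesic of the cusped space, not of $A$, and can wander far from $A$ before entering the horoball. Step 1 bounds $\diam(A\cap F)$; it says nothing about where $\widehat\gamma$ meets $F$. Second, and more seriously, replacing a horoball excursion by a path in the base is exponentially, not additively, expensive: $d_{\cH(F)}\bigl((p,0),(q,0)\bigr)\approx 2\log d_F(p,q)$, so the replacement multiplies the length of that portion by roughly the exponential of half its length. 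Your ``additive constant per excursion'' is valid only if $d_F(p,q)$ is uniformly bounded, which is precisely the point at issue. The argument has to run in the other direction: take a geodesic of $A^{(1)}$ from $x$ to $y$ --- by \Cref{prop_convex} it is a geodesic of $X_\Gamma^{(1)}$ --- and show it is a \emph{uniform quasi-geodesic of the cusped space}. For that, bounded intersection with each flat is not by itself sufficient; one must bound the relative projections of pairs of points of $A$ to each peripheral coset (equivalently, bound how long a geodesic of $A^{(1)}$ can fellow-travel a flat without entering it), and then appeal to a bounded-coset-penetration or distance-formula statement for relatively hyperbolic groups. Without that input the lower bound on $d_{\widehat X_\Gamma}$, and hence the hyperbolicity of $A$, does not follow from your argument.
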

    We will use the {\it strictly systolic angled complex} condition of Blufstein--Minian~\cite{blufsteinminian} to prove the lemma. Recall, a simplicial complex $X$ is called \emph{angled}, if there is a nonnegative weight function on the corners of 2-simplices of $X$. A weight of a corner $v$ of a 2-simplex can also be thought of as a weight of the corresponding edge in the link of $v$. The complex $X$ is {\it locally $2\pi$-large if} every $2$-full cycle in every vertex link has angular length $\geq 2\pi$ (where a simple cycle $\sigma$ of edge length greater than three in the link of a vertex $v \in X$ is {\it $2$-full} if there is no edge in the link of $v$ that connects two vertices having a common neighbor in $\sigma$).
    A simplicial complex $X$ is {\it $3$-flag} if whenever $X$ has three faces of a tetrahedron, then the whole tetrahedron is in $X$.
    An angled complex $X$ is called \emph{strictly systolic} if $X$ is simply connected, locally $2\pi$-large, $3$-flag and the sum of weights associated to each 2-simplex in $X$ is strictly less than $\pi$. A strictly systolic angled complex is $\delta$-hyperbolic \cite[Corollary 2.10]{blufsteinminian}.

    \begin{proof}[Proof of \Cref{lem:Ahyperbolic}]
    Let $A' \subset A$ be the union of the polygons in $A$ that are disjoint from the left tree and the right tree of $A$ as defined in Inductive Hypothesis (IH1).
    We show that the complex $A'$ can be subdivided to a simplicial complex which is a strictly systolic angled complex.

    Let $P$ be a polygon in $A'$, and suppose $P$ is contained in the planar subspace $A_{\ba} \subset A$ for some $\ba \in T^{\N}$. By construction, $P$ has an internal vertex $u$ that connects back to the previous strip and $u$ has valence 4 in $A_{\ba}$. See the Inductive Step in \Cref{subsec:roundtrees} for more about internal vertices. (In fact, $P$ has two internal vertices.) Divide $P$ into triangles by adding diagonals based at $u$.

    We now assign angles as follows. Recall that the Davis complex is $\CAT(0)$.
    Therefore, if we assign to each corner of each triangle the angle that that triangle has in the Euclidean metric,
    the resulting angled complex will have the property that links of vertices are $2\pi$-large.
    However, this angled complex fails to have the property that each $2$-simplex
    has angle sum strictly smaller than $\pi$; because each triangle is Euclidean,
    we have equality instead. Nonetheless, because $u$ has valence 4 in $A_{\ba}$,
    the angle sum of the cycles in its link is in fact at least $\frac{8\pi}{3}$, since
    the smallest an internal angle sum in any polygon $P$ could be is $\frac{2\pi}{3}$
    in the case that $P$ is a hexagon.
    Therefore in our angled complex structure,
    we will scale the angles of corners at $u$ by $\frac{3}{4}$
    and leave all other angles what they would be in the Euclidean structure.
    Note that this scaling preserves the property that cycles in the link of $u$
    are $2\pi$-large, but that after scaling, we now have that every $2$-simplex in $P$,
    since it contains a corner at $u$,
    has angle sum strictly smaller than $\pi$.

    Since $A'$ is simply connected and a flag (hence $3$-flag) complex, it is a strictly systolic angled complex. Therefore the complex $A'$, and hence the complex $A$, is $\delta$-hyperbolic.
    \end{proof}

    \begin{lemma} \label{lem:Aboundaryembeds}
        The space $(A,d_A)$ quasi-isometrically embeds in the cusped Cayley graph $X(W_\Gamma, \cP)$. Consequently, $\p A$ embeds in the Bowditch boundary $\p(W_\G, \cP)$.
    \end{lemma}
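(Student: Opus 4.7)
The plan is to verify the quasi-isometric embedding property of $(A, d_A) \hookrightarrow X(W_\G, \cP)$ using two ingredients from the construction: (IH3) gives that geodesics between points of $A$ stay in $A$, and (IH4) gives that $A$ meets each flat in a bounded set. The boundary embedding then follows from the standard extension of any QI embedding between proper $\delta$-hyperbolic spaces to Gromov boundaries.

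The trivial direction of the QI embedding follows because the natural inclusion $A \hookrightarrow X_\G \hookrightarrow X(W_\G, \cP)$ is $1$-Lipschitz on vertex sets, giving $d_X(p, q) \le d_A(p, q)$. For the reverse direction, by (IH3), \Cref{lemma:convex}, and \Cref{cor:geodesic}, any Davis-geodesic $\alpha$ between vertices $p, q \in A$ satisfies $d_A(p, q) = d_{X_\G}(p, q) = \mathrm{length}(\alpha)$, with $\alpha \subset A^{(1)}$. So it suffices to bound the Davis length of $\alpha$ linearly in terms of its cusped length.

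The key combinatorial claim is that there exists a constant $C = C(\G)$ such that for every flat $F \subset X_\G$ (equivalently, every peripheral coset for $(W_\G, \cP)$), we have $\diam_{X_\G}(A \cap F) \le C$. Each flat corresponds to one of the $R = \binom{m}{3}$ generator triples $t_\ell$ indexed in (IH4), and the polygons of $F$ are labeled precisely by the three pairs drawn from $t_\ell$. By (IH4), no polygon of $F$ is added to $A$ at any construction step $k$ with $k \equiv \ell \pmod R$. Since the strips of $A$ form a radial filtration (with step $k$ at Davis-distance roughly $k$ from $x_0$), the forced gap of one strip in every $R$ consecutive strips, together with the wall-crossing criterion of \Cref{cor:geodesic} preventing an $A$-geodesic from re-entering $F$ through a fresh wall, confines $A \cap F$ to a block of bounded diameter. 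This is where the precise choice $R = \binom{m}{3}$ in (IH4) is essential, and it is the main obstacle of the proof.

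The bounded-intersection property is exactly the condition ensuring that every Davis-geodesic in $A$ is a uniform relative quasi-geodesic for the pair $(W_\G, \cP)$, equivalently, that its image in the cusped Cayley graph $X(W_\G, \cP)$ is a quasi-geodesic with uniform constants. This yields a bound of the form $d_A(p, q) \le L \cdot d_X(p, q) + L$ for some $L$ depending only on $C$, completing the quasi-isometric embedding. Finally, since $(A, d_A)$ is $\delta$-hyperbolic by \Cref{lem:Ahyperbolic} and $X(W_\G, \cP)$ is $\delta$-hyperbolic by relative hyperbolicity of $(W_\G, \cP)$, this QI embedding extends continuously to a topological embedding $\p A \hookrightarrow \p(W_\G, \cP)$ by the standard result that QI embeddings of proper geodesic hyperbolic spaces induce topological embeddings of visual boundaries.
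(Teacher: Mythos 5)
Your proposal is correct and follows essentially the same route as the paper: convexity (IH3) keeps Davis geodesics between points of $A$ inside $A^{(1)}$, (IH4) forces uniformly bounded intersection of $A$ with each flat so the only potential shortcuts through combinatorial horoballs are uniformly short, and the boundary embedding then follows from the standard extension of quasi-isometric embeddings of hyperbolic spaces. Your write-up merely fleshes out the mod-$R$ argument behind the bounded-intersection claim, which the paper states without elaboration.
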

    \begin{proof}
        The space $A$ is $\delta$-hyperbolic and its 1-skeleton $A^{(1)}$ is a convex subset of the 1-skeleton of the Davis complex $X_\Gamma$ by \Cref{lem:Ahyperbolic} and \Cref{prop_convex}. The 1-skeleton of $A$ is a subset of the Cayley graph for $W_\Gamma$ and is thus a subset of the cusped Cayley graph $X(W_\Gamma, \cP)$. The only shortcuts between points in $A^{(1)}$ occur in a combinatorial horoball of $X(W_\Gamma, \cP)$. The diameter of the intersection of any horoball with $A$ is uniformly bounded by Induction Hypothesis~(IH4). Hence, $A$ quasi-isometrically embeds in the cusped Cayley graph, and $\p A$ embeds in the Bowditch boundary $\p(W_\G, \cP)$.
    \end{proof}

    The desired lower bound on the conformal dimension of the Bowditch boundary follows.

    \begin{thm} \label{thm:lowerbound}
    Let $\Gamma$ be a complete graph with $m \geq  11$ vertices and edge labels $m_{ij} \geq 3$. Let $M = \max{m_{ij}}$. Then
        \[\Confdim(\p (W_\G, \cP)) \geq 1 + \frac{\log (\lfloor\frac{m-5}{3}\rfloor)}{\log (2M-1)}.\]
    \end{thm}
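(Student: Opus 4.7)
The plan is to assemble the results established so far: the round tree construction, convexity, hyperbolicity, quasi-isometric embedding, and Mackay's criterion. Given $m \geq 11$, set the vertical branching parameter $V = \lfloor \tfrac{m-5}{3} \rfloor$, which satisfies $V \geq 2$ and meets the hypothesis $m \geq 3V + 5$ required to run the round tree construction in \Cref{subsec:roundtrees}. With this choice, \Cref{prop:RT} produces a combinatorial round tree $A \subset X_\Gamma$ with vertical branching $V$ and horizontal branching $H = 2M - 1$, where $H \geq 5$ since $M \geq 3$, and the exponent $\log V / \log H$ matches the one in the theorem statement.

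Next I would verify that $A$ sits inside a hyperbolic ambient space in the required sense. \Cref{prop_convex}, combined with the fact that $A_0^{(1)}$ is convex (an immediate check on a configuration of three polygons meeting at a common vertex), gives by induction that $A^{(1)}$ is convex in $X_\Gamma^{(1)}$. Then \Cref{lem:Ahyperbolic} shows $(A, d_A)$ is $\delta$-hyperbolic, and \Cref{lem:Aboundaryembeds} shows $A$ quasi-isometrically embeds in the cusped Cayley graph $X(W_\Gamma, \cP)$, whose visual boundary is by definition $\p(W_\Gamma, \cP)$.

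Finally, I would invoke Mackay's round tree criterion, \Cref{thm:mackayRT}, with $X = X(W_\Gamma, \cP)$, which is $\delta$-hyperbolic since $(W_\Gamma, \cP)$ is a relatively hyperbolic pair. Since the hypotheses $V \geq 2$, $H \geq 2$, and the quasi-isometric embedding of $A^{(1)}$ into $X$ are all in hand, Mackay's theorem yields
\[
\Confdim\bigl(\p(W_\Gamma,\cP)\bigr) \;\geq\; 1 + \frac{\log V}{\log H} \;=\; 1 + \frac{\log\!\bigl(\lfloor \tfrac{m-5}{3} \rfloor\bigr)}{\log(2M - 1)},
\]
which is exactly the claimed inequality.

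The bulk of the genuine work has already been done in the preceding subsections; the remaining obstacle for this final theorem is essentially bookkeeping, making sure that the induction hypotheses in the construction are compatible with the choice $V = \lfloor (m-5)/3 \rfloor$ (so that the label-omission count $2V + 5$ does not exceed $m$) and that \Cref{thm:mackayRT} is applicable in the relatively hyperbolic setting via the cusped Cayley graph rather than the Davis--Moussong complex directly. This latter point is precisely what \Cref{lem:Aboundaryembeds} was arranged to handle: the Induction Hypothesis (IH4), which forces the round tree to meet every Euclidean flat, and hence every combinatorial horoball, in a uniformly bounded set, is what lets the quasi-isometric embedding into the cusped Cayley graph go through, and thus converts Mackay's hyperbolic-space conclusion into a statement about the Bowditch boundary.
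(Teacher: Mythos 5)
Your proposal is correct and takes essentially the same route as the paper: set $V = \lfloor\frac{m-5}{3}\rfloor$, assemble \Cref{prop:RT}, \Cref{prop_convex}, \Cref{lem:Ahyperbolic}, and \Cref{lem:Aboundaryembeds}, and invoke Mackay's round tree criterion. The only (cosmetic) difference is that the paper applies \Cref{thm:mackayRT} with the ambient space $X$ taken to be the round tree $A$ itself --- which is a genuine hyperbolic polygonal $2$-complex by \Cref{lem:Ahyperbolic} --- and then concludes via the quasisymmetric embedding $\p A \hookrightarrow \p(W_\Gamma,\cP)$ from \Cref{lem:Aboundaryembeds} together with monotonicity of conformal dimension, whereas you apply the criterion with $X$ the cusped Cayley graph, which is a graph rather than a polygonal $2$-complex; the paper's arrangement avoids that literal mismatch with the hypotheses of \Cref{thm:mackayRT}, but the mathematical content is the same.
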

    \begin{proof}
    By \Cref{lem:Aboundaryembeds}, $\p A$ quasisymmetrically embeds in
    $\p(W_\G, \cP)$.
    Therefore, $\Confdim(\p (W_\G, \cP)) \geq \Confdim(\p A)$. Now applying \Cref{thm:mackayRT} in the setting when the hyperbolic polygonal 2-complex is the round tree $A$ itself, we get the desired lower bound on $\Confdim(\partial A)$, since $m=|V\Gamma| \geq 3V+5$ and $H = 2M-1$.
    \end{proof}

    We now use the upper bounds given by Bourdon--Kleiner~\cite{bourdonkleiner15} on the conformal dimension of certain hyperbolic groups in the family $\cW$ to prove conformal dimension achieves a dense set of values for each $M>4$.

    \begin{corollary} \label{cor:dense_set}
        Let $W_{M,m} \in \cW$ denote the family of Coxeter groups defined on a complete graph with $m$ vertices and with edge labels equal to $M$. Then $\Confdim(\p W_{M,n})$ achieves a dense set of values in $(1, \infty)$.
    \end{corollary}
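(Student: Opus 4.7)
The plan is to sandwich $\Confdim(\p W_{M,m})$ between the lower bound of Theorem~\ref{thm:lowerbound} and the Bourdon--Kleiner upper bound, and then exploit the fact that these two bounds become asymptotically equivalent as $M \to \infty$. For $M \geq 4$ and $m \geq 11$, the group $W_{M,m}$ is hyperbolic (no triangle has all edges labeled $3$), so both bounds apply and I obtain
\[
L(M,m) := 1 + \frac{\log \lfloor (m-5)/3 \rfloor}{\log(2M-1)} \leq \Confdim(\p W_{M,m}) \leq 1 + \frac{\log(m-1)}{\log(2M-5)} =: U(M,m).
\]

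To prove density, I will fix $c \in (1,\infty)$ and $\epsilon > 0$ and exhibit $(M,m)$ with both bounds inside $(c-\epsilon, c+\epsilon)$. The crucial observation is that $\log(2M-1)/\log(2M-5) \to 1$ as $M \to \infty$, so for large $M$ the lower and upper bounds are nearly equal. Concretely, I will set $k_M = \lfloor (2M-1)^{c-1} \rfloor$ and $m = 3 k_M + 5$. By construction, $L(M,m) = 1 + \log k_M / \log(2M-1)$ sits just below $c$, with gap at most $\log(1 + 1/k_M)/\log(2M-1)$, which tends to $0$ as $M \to \infty$. For the upper bound, the estimate $\log(3k_M + 4) \leq \log 8 + (c-1)\log(2M-1)$ (valid for $k_M \geq 1$) gives
\[
U(M,m) - 1 \leq (c-1) \cdot \frac{\log(2M-1)}{\log(2M-5)} + \frac{\log 8}{\log(2M-5)},
\]
whose right side tends to $c-1$ as $M \to \infty$. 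Choosing $M$ large enough forces both $L(M,m)$ and $U(M,m)$ into $(c-\epsilon, c+\epsilon)$, and hence $\Confdim(\p W_{M,m})$ as well.

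The main obstacle is bookkeeping around the discreteness of $m$ and the boundary case $c$ close to $1$: one must check that $M$ can be chosen large enough that $k_M \geq 2$ (so $m \geq 11$ and Theorem~\ref{thm:lowerbound} is applicable), that the discrete jumps in $L(M,\cdot)$ near the target are less than $\epsilon$, and that the error term $\log 8/\log(2M-5)$ in the bound for $U(M,m) - 1$ is small. Each of these conditions is achieved by taking $M$ sufficiently large, and none presents a conceptual difficulty beyond careful quantification.
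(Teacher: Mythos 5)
Your proof is correct and takes essentially the same approach as the paper: both sandwich $\Confdim(\p W_{M,m})$ between the lower bound of \Cref{thm:lowerbound} and the Bourdon--Kleiner upper bound, choose $m$ as an (approximate) power of $M$ targeting the prescribed value, and let $M \to \infty$ so that both bounds converge to that value. Your version is somewhat more careful about integrality of $m$ and the applicability thresholds ($m \geq 11$, $M \geq 4$), which the paper's shorter proof glosses over.
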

    \begin{proof}
        Fix $M \geq 4$ and $Q \in (0,\infty)$. Let $m = M^Q$.
        Then
            \[1 + \frac{\log \bigl(\lfloor\frac{M^Q-5}{3}\rfloor\bigr)}{\log (2M-1)} \leq \Confdim(\p W_{M,n}) \leq 1 + \frac{\log M^Q - 1}{2M-5}.\]
         The lower bound is given by \Cref{thm:lowerbound}, and the upper bound is due to Bourdon--Kleiner~\cite[Corollary 8.1]{bourdonkleiner15}.
        The limit on both sides as $M \rightarrow \infty$ equals $1 + Q$.
    \end{proof}

    \section{A \texorpdfstring{$\CAT(-1)$}{CAT(-1)} model space}

The aim of this section is to prove the following theorem.

    \begin{thm} \label{thm:gfCAT-1}
        The group pair $(W_{\Gamma}, \cP)$ admits a geometrically finite action on a $\CAT(-1)$ space that is quasi-isometric to the cusped Davis complex.
    \end{thm}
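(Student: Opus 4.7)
The plan is to construct an explicit piecewise-hyperbolic three-dimensional complex $Y$ containing (a subdivision of) the Davis--Moussong complex $X_\Gamma$ as a subcomplex, on which $W_\Gamma$ acts properly by isometries extending its action on $X_\Gamma$. I would then verify that $Y$ is simply connected and locally $\CAT(-1)$, and invoke the Cartan--Hadamard theorem for locally $\CAT(-1)$ spaces to conclude that $Y$ is globally $\CAT(-1)$. Geometric finiteness of the action and the quasi-isometry with the cusped Davis complex will follow from the combinatorial structure of $Y$.

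The building blocks are pieces of an ideal tetrahedron $T \subset \Hy^3$ with dihedral angles $\pi/3$; such a tetrahedron is a Dirichlet domain for the $4$-generator Coxeter subgroup in which all edge labels are $3$. Following the outline in the introduction, I would cut $T$ into \emph{truncated blocks}, one associated to each incidence of a vertex of $T$ with a $2$-face, bounded by portions of totally geodesic planes (coming from faces of $T$) and horospherical surfaces truncating the ideal vertex. Because the Kleinian structures on different $4$-generator subgroups are incompatible when edge labels vary, I cannot place the tetrahedra directly inside a model; instead, I glue truncated blocks combinatorially over $X_\Gamma$, attaching blocks around each $2$-cell and each vertex according to the incidence pattern dictated by the Coxeter group. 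For a $2$-cell with label $\{s_i,s_j\}$ I group $2m_{ij}$ blocks in the angular pattern forcing dihedral angle $\pi/m_{ij}$ across the corresponding wall. The resulting space is typically not a manifold (as signalled in the introduction), so I would next perform local modifications (for example, inserting small chamber complexes along singular edges) to produce a space whose combinatorial links admit a sensible piecewise-spherical metric.

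Next I would verify the three key properties of $Y$. Simple connectedness: since $X_\Gamma$ is simply connected and each added truncated block is contractible and attached along a simply connected subcomplex, a straightforward van Kampen / deformation retract argument produces a deformation retraction of $Y$ onto $X_\Gamma$. Local $\CAT(-1)$: by Bridson--Haefliger it suffices to show that the link of every vertex is a $\CAT(1)$ piecewise spherical $2$-complex, i.e.\ that every essential cycle has length at least $2\pi$. The link edge lengths are determined by the $\pi/3$ dihedral angles of the truncated blocks together with the angles forced at $2$-cells of $X_\Gamma$. Geometric finiteness: the maximal parabolic subgroups in $\cP$ are exactly the stabilizers of $2$-flats in $X_\Gamma$, and each such flat is asymptotically represented in $Y$ by a horospherical end assembled from horospherical faces of the truncated blocks; the action on the visual boundary is then a convergence action in which each boundary point is either conical (corresponding to a geodesic ray through the cocompact core) or bounded parabolic (corresponding to a horospherical end).

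Finally, to identify $Y$ up to quasi-isometry with the cusped Davis complex, I would note that $Y$ decomposes $W_\Gamma$-equivariantly into a thickening of $X_\Gamma$ on which $W_\Gamma$ acts cocompactly, together with horoball-like regions attached along flats stabilized by the groups in $\cP$; the first piece is quasi-isometric to $X_\Gamma$, and each horoball-like region is bilipschitz to a combinatorial horoball over the associated flat, giving a $W_\Gamma$-equivariant quasi-isometry to the cusped Davis complex. The hard part will be the link computation in the presence of varying edge labels: different labels $m_{ij}$ force different dihedral angles at $2$-cells, and the truncated blocks, which were designed for a uniform angle of $\pi/3$, must be combined in a way that still produces girth at least $2\pi$ at every vertex link. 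I expect this to be the main source of the ``modifications'' alluded to in the introduction, and the reason the $\CAT(-1)$ model ends up more singular than a direct Kleinian model would be.
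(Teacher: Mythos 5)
Your outline matches the paper's architecture at a high level (truncated blocks from the $\pi/3$ ideal tetrahedron glued over the Davis complex, link condition plus Cartan--Hadamard, deformation retraction onto $X_\Gamma$ for simple connectedness, a Cannon--Cooper-style equivariant quasi-isometry to the cusped complex), but two of your key steps are either wrong or missing.

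First, the dihedral angles. You propose to arrange the $2m_{ij}$ blocks around a $2$-cell ``in the angular pattern forcing dihedral angle $\pi/m_{ij}$ across the corresponding wall.'' That cannot be done with a single uniform block shape, and it is not what is needed. In the paper every kite is the same hyperbolic quadrilateral $K(ij)$ cut from the $\pi/3$ tetrahedron, so the angle of each kite at the polygon's center $x_{ij}$ is $\pi/3$ regardless of the label; the link of $x_{ij}$ is then (a spherical join over) a circle of length $2m_{ij}\cdot\frac{\pi}{3}\ge 2\pi$, and this is precisely where the large-type hypothesis $m_{ij}\ge 3$ enters. The space is deliberately singular at these edges (cone angle $\ge 2\pi$, not $=2\pi$); trying to force the ``manifold'' angle $\pi/m_{ij}$ would require label-dependent block shapes and would destroy the compatibility of blocks shared between polygons with different labels -- the very problem the uniform-block construction is designed to avoid.

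Second, and more seriously, you have not said what happens at the horospherical faces of the truncated blocks. After gluing, these faces assemble into a disjoint union of planes tiled by unit Euclidean equilateral triangles, one plane for each coset of each standard \emph{triangle} subgroup -- not only the Euclidean $(3,3,3)$ ones. Left alone, these flat boundary components contain Euclidean geodesic triangles, so the space cannot be $\CAT(-1)$. For the $(3,3,3)$ subgroups one attaches genuine horoballs (producing the cusps and the parabolic points); but for every \emph{hyperbolic} triangle subgroup one must fill the flat boundary plane with compact hyperbolic ``caps'' (convex hulls of a horospherical polygon and the geodesic polygon on its vertices), in some cases after re-tiling the plane by larger polygons so that every vertex has cone angle strictly greater than $2\pi$. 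Verifying $\CAT(1)$ links at the cap vertices is the delicate part of the whole proof: it forces a specific choice of the truncation height $y_0$ of the blocks, via an explicit spherical-trigonometry estimate on the girth of the resulting link complexes. Without this step your space is not locally $\CAT(-1)$, and no amount of link-checking at the interior vertices repairs it.
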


To prove the theorem we construct a $\CAT(-1)$ space, denoted $\cY_{\Gamma}$, as a polyhedral complex that contains both compact and noncompact cells that are each subsets of $\Hy^3$. The complex contains a subspace combinatorially isomorphic to the Davis complex for $W_{\Gamma}$, and the group $W_{\Gamma}$ acts on $\cY_{\Gamma}$ by isometries. The construction is described in \Cref{subsection:thespaceYm}. We verify Gromov's link condition is satisfied and conclude the space $\cY_{\Gamma}$ is $\CAT(-1)$ in \Cref{subsection:LC}.

\subsection{The space \texorpdfstring{$\cY_{\Gamma}$.}{Ym}}\label{subsection:thespaceYm}

We begin by outlining the construction. We first build a space $\hat{\cY}_{\Gamma}$ that the group $W_{\Gamma}$ acts on geometrically. The space $\hat{\cY}_{\Gamma}$ is $\CAT(-1)$ away from its boundary, which is a countable collection of disjoint planes, each tiled by Euclidean triangles. These planes are in bijection with the cosets of the standard triangle subgroups of $W_{\Gamma}$. Depending on the standard triangle subgroups of $W_{\Gamma}$, these planes are either isometric to the Euclidean plane or are quasi-isometric to the hyperbolic plane. In order to satisfy $\CAT(-1)$ condition at points in the boundary of $\hat{\cY}_{\Gamma}$,  we attach cells isometric to a subset of $\Hy^3$ to these planes, and this construction depends on the type of triangle group.

\subsubsection{The space \texorpdfstring{$\hat{\cY}_{\Gamma}$}{Ymhat}}\label{subsection:thespaceYmhat}

    The metric space $\hat{\cY}_{\Gamma}$ is constructed from the following building blocks. See \Cref{figure:blockAndDavisHex}. We will denote the Poincar\'{e} disk model of the hyperbolic 3-space by $\Hy^3$ and the upper half space model by $\mathbb{U}^3$.

  \begin{figure}
    \begin{centering}
	\begin{overpic}[width=.8\textwidth, tics=5]{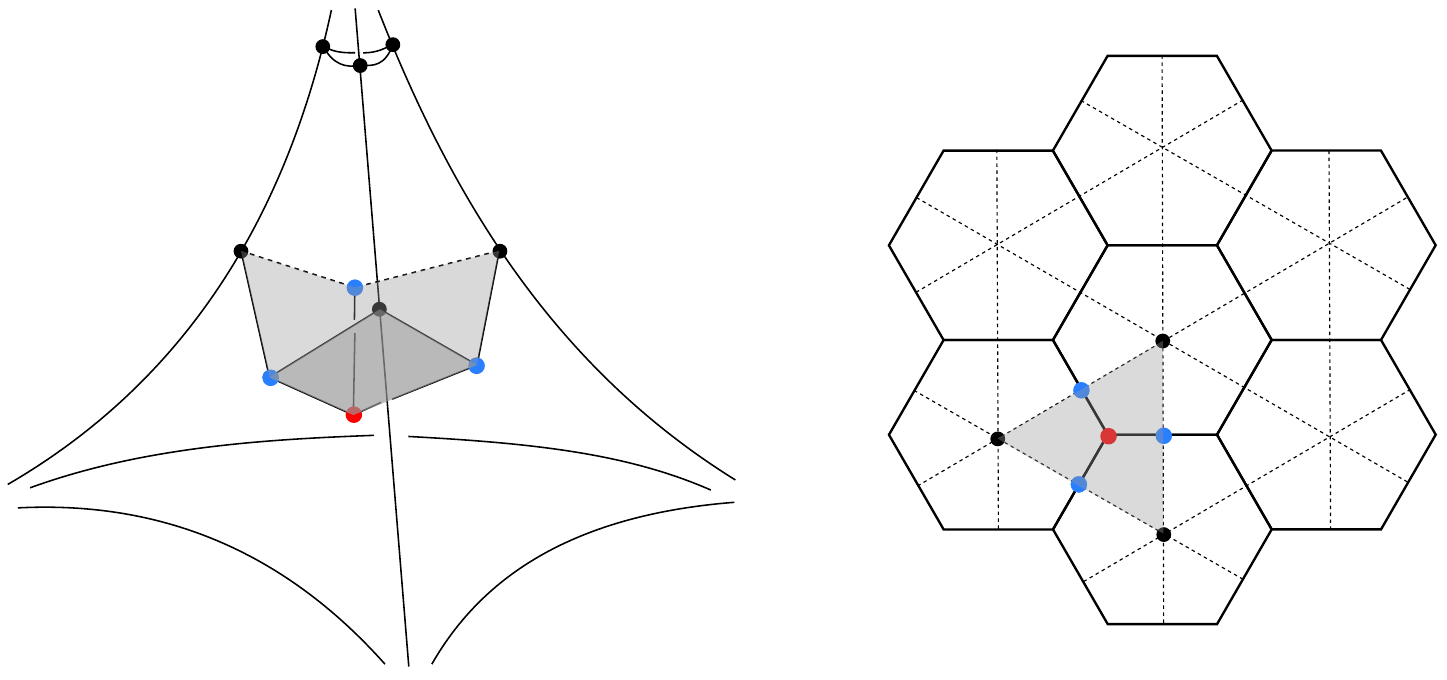}
 	    \put(20.5,17.5){\small{$x_0$}}
	    \put(15,20){\small{$x_i$}}
	    \put(34,21){\small{$x_j$}}
	    \put(22,28.5){\small{$x_k$}}
	    \put(27,26){\small{$x_{ij}$}}
	    \put(12,30){\small{$x_{ik}$}}
	    \put(36,30){\small{$x_{jk}$}}
	    \put(4,40){$\cT$}
	    \put(32,38){\small$\hat{B}(ijk)$}
    \end{overpic}
	\caption{\small{On the left a truncated block is the indicated subspace of the ideal tetrahedron $\cT$ with boundary a triangle, three pentagonal faces, and three shaded kite faces. These kites correspond to the three shaded kites in the portion of the Davis complex illustrated on the right. The union of these kites in the Davis complex form a fundamental domain for the action of the triangle subgroup that stabilizes the plane tiled by hexagons. The length of the geodesic segment connecting $x_0$ to $x_i$ equals $\frac{\log 2}{2}$, as computed in \Cref{lemma:hexside}. }}
	\label{figure:blockAndDavisHex}
    \end{centering}
    \end{figure}

    \begin{notation}[Blocks] \label{nota:blocks}
            Let $\cT$ be an ideal tetrahedron in $\Hy^3$ with all dihedral angles equal to $\frac{\pi}{3}$. Fix distinct $i,j,k,\ell \in \N_+$. Denote the four faces of $\cT$ by $F(i), F(j), F(k), F(\ell)$. Let $E(ij)$ be the edge incident to faces $F(i)$ and $F(j)$, and let $V(ijk)$ be the ideal vertex incident to faces $F(i), F(j)$, and $F(k)$.
            Let $x_0 \in \cT$ be the unique point equidistant from the four faces of $\cT$, and let $x_i \in F(i)$ be the point equidistant from the three sides of $F(i)$. Let $x_{ij} \in E(ij)$ be the intersection of the geodesic plane through $x_0, x_i, x_j$ and the line $E(ij)$. By symmetry, the intersection of the inscribed circle around $x_i$ in $F(i)$ with the side $E(ij)$ is exactly the point $x_{ij}$.

            The {\bf kite} $K(ij)$ is defined to be the convex hull of  $x_0, x_i, x_j, x_{ij}$, which is $2$-dimensional by the choice of~$x_{ij}$. The union of three distinct kites $K(ij) \cup K(jk) \cup K(ik)$ separates the tetrahedron $\cT$ into two components.

            The {\bf cusped block}  $B(ijk)$ is defined to be the closure of the complementary component of the union of kites $K(ij) \cup K(jk) \cup K(ik)$ that contains the ideal vertex $V(ijk)$. The cusped block $B(ijk)$ is thus an ideal polyhedron with three compact faces: the kites, and three non-compact faces, each of which inherits a label from the face, $F(i)$, $F(j)$, or $F(k)$, that contains it.

            Let $\mathbb{U}^3 = \{(w_1,w_2,w_3) \in \R^3 \,|\, w_3>0\}$ be the upper half space model of hyperbolic 3-space with $\p \mathbb{U}^3 = \R^2 \cup \{\infty\}$. Fix an ideal tetrahedron $\cT_0$ in $\mathbb{U}^3$ with vertex set $\{(0,0), (1,0), (\frac{1}{2}, \frac{\sqrt{3}}{2}), \infty\}$, and let \[\cH_0 = \{(w_1,w_2,w_3)\,|\, w_3 \geq y_0\}\] be a horoball based at $\infty$ for some $y_0>0$ large enough so $\cH_0$ does not intersect the face of $\cT_0$ with vertices in $\R^2$. Also denote the horosphere corresponding to $\cH_0$ by $\partial \cH_0$. The intersection of $\cT_0$ with $\partial \cH_0$ is an equilateral triangle of side length one. Let $\Phi \colon \Hy^3 \to \mathbb{U}^3$ be an isometry that maps $\cT$ to $\cT_0$ such that the vertex $V(ijk)$ of $\cT$ maps to the vertex $\infty$ of $\cT_0$. Indeed, $\Phi$ can be constructed by sending two ideal vertices and $V(ijk)$ of $\cT$ to $(0,0)$, $(1,0)$, and $\infty$, respectively, in $\mathbb{U}^3$. The last ideal vertex of $\cT$ has to map to $(\frac{1}{2}, \frac{\sqrt{3}}{2})$ to ensure all the dihedral angles equal $\frac{\pi}{3}$.

            Let $\cH(ijk)$ be the horoball based at $V(ijk)$ given by the $\Phi$-preimage of the horoball $\cH_0$. The height $y_0$ of the horoball $\cH_0$ will be carefully chosen in \Cref{prop:y_0}. The value $y_0$ will play a role in the proof that the constructed space $\cY_\Gamma$ is $\CAT(-1)$ and in the explicit upper bounds on the Hausdorff dimension of its boundary. We note that the value $y_0 = 1.5$ suffices as a special case.

            The {\bf truncated block} $\hat{B}(ijk) \subset B(ijk)$ is defined to be the closure of $B(ijk) \setminus \bigl( B(ijk) \cap \cH(ijk) \bigr)$. It is thus a polyhedron with seven compact faces: the three kites, the triangle boundary from truncation, and three pentagonal faces, each of which inherits a label from the face, $F(i)$, $F(j)$, or $F(k)$, that contains~it.
        \end{notation}

        The isometry types of the kite $K(ij)$, the cusped block $B(ijk)$, and the truncated block $\hat{B}(ijk)$ are independent of the choice of $i,j$ and $i,j,k$, respectively.
        Note that the tetrahedron $\cT$ is tiled by four cusped blocks, where two distinct blocks intersect in a kite, three distinct blocks intersect in a geodesic segment, and the intersection of four distinct blocks is $x_0$. We record a fact regarding the geometry of kites that we will need in our upper bound calculation in \Cref{sec:upperbound}.

    \begin{lemma} \label{lemma:hexside}
        Let $K(ij)$ be the kite specified in \Cref{nota:blocks}. Then $d_{\Hy^3}(x_0, x_i) = \frac{\log 2}{2}$.
    \end{lemma}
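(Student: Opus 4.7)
The plan is to work in the upper half-space model $\mathbb{U}^3$ via the isometry $\Phi$ described in \Cref{nota:blocks}, so that $\cT$ is identified with $\cT_0$, whose vertices are $(0,0)$, $(1,0)$, $(\tfrac{1}{2},\tfrac{\sqrt{3}}{2})$ and $\infty$. By the symmetry of the regular ideal tetrahedron, $d_{\Hy^3}(x_0, x_i)$ is independent of $i$, so I take $F(i)$ to be the face opposite $\infty$. This face is the geodesic hemisphere over the circumscribed circle of the boundary equilateral triangle, which has Euclidean center $c = (\tfrac{1}{2}, \tfrac{\sqrt{3}}{6}, 0)$ and Euclidean radius $\tfrac{1}{\sqrt{3}}$. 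Let $\ell$ denote the vertical geodesic through $c$.

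The threefold rotation of $\Hy^3$ about $\ell$ cyclically permutes the three faces through $\infty$ and preserves $F(i)$, hence fixes both $x_0$ and $x_i$. Consequently both points lie on $\ell$. Since $x_i$ is the incenter of the ideal triangle $F(i)$, it must be the topmost point of the hemisphere, namely $x_i = (\tfrac{1}{2}, \tfrac{\sqrt{3}}{6}, \tfrac{1}{\sqrt{3}})$. Write $x_0 = (\tfrac{1}{2}, \tfrac{\sqrt{3}}{6}, h)$ for some $h > \tfrac{1}{\sqrt{3}}$.

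To determine $h$, I will equate the distance from $x_0$ to $F(i)$ with the distance from $x_0$ to a vertical face, say $F(j) = \{w_2 = 0\}$. The geodesic $\ell$ meets the hemisphere $F(i)$ orthogonally at $x_i$ (vertical and horizontal Euclidean directions are $\Hy^3$-orthogonal), so the first distance equals $\log(h\sqrt{3})$. The standard distance-to-a-vertical-plane formula in $\mathbb{U}^3$, obtained by restricting to the vertical plane through $x_0$ perpendicular to $F(j)$ and computing in the upper half-plane, gives $\sinh d_{\Hy^3}(x_0, F(j)) = \tfrac{\sqrt{3}/6}{h}$. Using the identity $\sinh(\log(h\sqrt{3})) = \tfrac{3h^2 - 1}{2h\sqrt{3}}$ and equating the two distances yields $3h^2 - 1 = 1$, so $h = \sqrt{2/3}$. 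Therefore
\[ d_{\Hy^3}(x_0, x_i) = \log(h\sqrt{3}) = \log \sqrt{2} = \frac{\log 2}{2}. \]
The computation is elementary; the only step that requires a moment's thought is the distance-to-vertical-plane formula, which is standard and not a real obstacle.
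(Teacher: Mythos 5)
Your proof is correct, and it takes a genuinely different route from the paper's. The paper works intrinsically with the hyperbolic triangle on $\{x_0, x_i, x_{ij}\}$: it determines its three angles ($\tfrac{1}{2}\arccos(-\tfrac{1}{3})$ at $x_0$ via the spherical link of the regular ideal tetrahedron, $\tfrac{\pi}{6}$ at $x_{ij}$ from the dihedral angle, and $\tfrac{\pi}{2}$ at $x_i$) and then extracts the side length from the Hyperbolic Law of Cosines. You instead place everything in explicit upper half-space coordinates, use the threefold rotation about the vertical axis $\ell$ through the circumcenter to pin $x_0$ and $x_i$ to $\ell$, and solve for the height $h$ of $x_0$ by equating the distance to the hemispherical face, $\log(h\sqrt{3})$, with the distance to a vertical face, $\sinh^{-1}\bigl(\tfrac{\sqrt{3}/6}{h}\bigr)$; I checked the algebra and $h = \sqrt{2/3}$ is right, as is the identification of $d(x_0,x_i)$ with $d(x_0,F(i))$ since $\ell$ meets the hemisphere orthogonally at its apex. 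The trade-off: the paper's argument stays model-free and reuses the angle data ($\arccos(-\tfrac{1}{3})$, the right angle at $x_i$) that it needs elsewhere for the link computations, whereas your argument is more elementary and self-contained (only the vertical-geodesic distance and the standard distance-to-a-vertical-plane formula) and has the side benefit of producing explicit coordinates for $x_0$ and $x_i$ in the model $\cT_0$, which meshes well with how \Cref{nota:blocks} and the later horoball-height computations are set up.
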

    \begin{proof}
        The computation follows from computing the interior angles of the hyperbolic triangle with vertex set $\{x_0, x_i, x_{ij}\}$ and applying the Hyperbolic Law of Cosines. First, the angle $\theta = \angle_{x_0}(x_i,x_j) = \arccos(\frac{-1}{3})$. Indeed, four equally spaced points on a sphere of radius~$1$ span a spherical tetrahedron where each face is a spherical triangle with angles $\frac{2\pi}{3}$; see the left of \Cref{figure:sphere}. Using the Spherical Law of Cosines, one can compute the side length of such a triangle, which is equal to the angle $\theta$. Then, $\angle_{x_0}(x_i, x_{ij}) = \frac{\theta}{2}$. Second, the dihedral angles of $\cT$ are $\frac{\pi}{3}$, hence $\angle_{x_{ij}}(x_i,x_0) = \frac{\pi}{6}$. Indeed, $x_i$ (resp. $x_j$) is the center of the inscribed circle in the ideal triangle containing it and the geodesic joining $x_i$ (resp. $x_j$) to $x_{ij}$ is normal to the side of the ideal triangle containing $x_{ij}$; see the right of \Cref{figure:sphere}. Therefore, the dihedral angle between the two ideal triangles containing $x_j$ and $x_i$ is realized by $\angle_{x_{ij}}(x_i,x_j)$.   Finally, $\angle_{x_i}(x_0, x_{ij}) =\frac{\pi}{2}$ because the nearest point projection of $x_0$ to the ideal triangle in \Cref{figure:blockAndDavisHex} containing $x_i$ is precisely $x_i$. The Hyperbolic Law of Cosines then gives the desired result.
    \end{proof}

\begin{figure}
    \begin{centering}
	\begin{overpic}[width=.8\textwidth, tics=5]{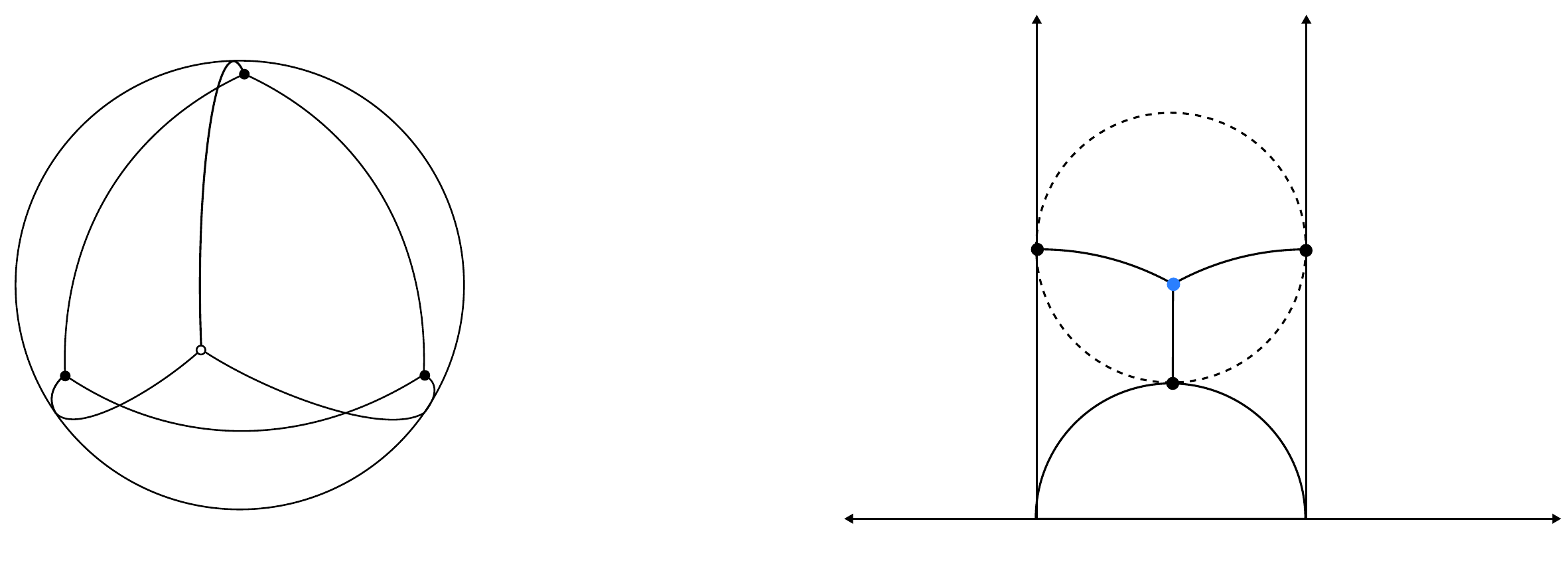}
 	    \put(73.7,19.7){\small{$x_i$}}
        \put(84.5,20){\small{$x_{ij}$}}
        \put(62,20){\small{$x_{ik}$}}
        \put(73,9.5){\small{$x_{i\ell}$}}
        \put(65.5,.5){$0$}
        \put(82.5,.5){$1$}
    \end{overpic}
	\caption{\small{ On the left is a regular tiling of the 2-sphere by triangles with angles $\frac{2\pi}{3}$. On the right is a labeled side of the ideal tetrahedron $\cT$ viewed in the upper half space model of the hyperbolic plane. }}
	\label{figure:sphere}
    \end{centering}
    \end{figure}

        We now glue together blocks by isometries of their faces to construct a model space $\hat{\cY}_{\Gamma}$ for $W_{\Gamma}$ ``over'' the Davis complex for $W_{\Gamma}$. The construction is modeled on the Davis complex and its action by $W_{\Gamma}$. The group $W_{\Gamma}$ acts on the Davis complex by reflections, with each of the standard generators fixing a wall and exchanging two halfspaces. A fundamental domain for this action is the intersection of all such halfspaces that contain a fixed vertex in the Davis complex. An example of this fundamental domain in the case $\Gamma$ is a triangle with edges labeled three is highlighted in \Cref{figure:blockAndDavisHex}. In general, the fundamental domain for this action is a union of Euclidean kites
        that share a fixed vertex. In the construction below, we alter the geometry of the kites in the Davis complex, and then glue on truncated blocks along their kites. The pentagonal faces of these truncated blocks are then glued if their boundaries intersect.

        \begin{construction}[The space $\hat{\cY}_\Gamma$] \label{const:Ymhat}
            We begin with notation. A $2$-cell in the Davis complex has $2m_{ij}$ sides corresponding to Coxeter generators $s_i$ and $s_j$. There are $m_{ij}$ walls in the Davis complex that intersect this polygon and divide it into $2m_{ij}$ closed regions, each of which we call a {\it Euclidean kite}.  \Cref{figure:blockAndDavisHex} highlights three Euclidean kites in the Davis complex. Note that the isometry type of each Euclidean kite depends on $m_{ij} \in \N$. We call the cellulation of the Davis complex by Euclidean kites the {\it subdivided Davis complex} and denote it by $S X_\Gamma$.

            First, build a 2-dimensional polygonal complex $\cX_{\Gamma}$ by replacing each Euclidean kite in $S X_{\Gamma}$ with a hyperbolic polygon isometric to the kite $K(ij)$ so that the vertex of the kite corresponding to a vertex of the Davis complex corresponds to the point $x_0$ in \Cref{figure:blockAndDavisHex}. Then, the complex $\cX_{\Gamma}$ is combinatorially isomorphic to $S X_\Gamma$ and with all cells hyperbolic and pairwise isometric. Note that this complex is not $\CAT(-1)$; in \Cref{figure:blockAndDavisHex} Gromov's link condition (see  \Cref{subsec:link_cond} for definition) fails for the new geometry at the red vertex. Indeed, the link at the red vertex has a isometrically embedded circle of length less than $2\pi$ and hence it is not $\CAT(1)$ by \cite[Theorem II.5.4]{bridsonhaefliger}. Nonetheless, the group $W_{\Gamma}$ acts geometrically on $\cX_{\Gamma}$ in the same way, combinatorially, as it acts on $X_{\Gamma}$.

            We now glue truncated blocks along their union of kites to collection of kites in the complex $\cX_{\Gamma}$ as follows. Let $v$ be a vertex in $\cX_{\Gamma}$ corresponding to a vertex in the Davis complex, as illustrated in red in \Cref{figure:blockAndDavisHex}. There are $|V\Gamma|$ edges incident to $v$, each of which inherits a label $i \in \{1, \ldots, |V\Gamma|\}$ from the Davis complex. Each pair of distinct $i,j$ yields a kite $k(ij)$ in $\cX_{\Gamma}$ containing~$v$. Each triple of distinct $i,j,k$ with $1 \leq i,j,k \leq |V\Gamma|$ yields a triangle in $\Gamma$ and hence a union of three kites in $\cX_{\Gamma}$
            that contain $v$ and pairwise intersect in an edge. The union of these three kites $k(ij) \cup k(ik) \cup k(jk)$ is isometric to the union of kites $K(ij) \cup K(ik) \cup K(jk) \subset \hat{B}(ijk)$.
            Glue a copy of $\hat{B}(ijk)$ to $\cX_{\Gamma}$ by gluing each kite in $\hat{B}(ijk)$ to the one with the same label in $\cX_{\Gamma}$ by an isometry. Repeat this gluing for each $v \in \cX_{\Gamma}$ corresponding to a vertex in the Davis complex and for each distinct $i,j,k \in \{1, \ldots, |V\Gamma|\}$. The action of $W_{\Gamma}$ on $\cX_{\Gamma}$ extends naturally to a geometric action on this intermediary space.

            Finally, we glue truncated blocks together along subsets of their pentagonal faces when their boundaries intersect as follows. Two truncated blocks $B$ and $B'$ glued to $\cX_{\Gamma}$ intersect if and only if they are glued at vertices $v$ and $w$ in $\cX_{\Gamma}$ whose corresponding vertices in the Davis complex are in the same polygon. Suppose this polygon has edge labels $i$ and $j$. Then, $B$ and $B'$ are the image after gluing of truncated blocks $\hat{B}(ijk)$ and $\hat{B}(ij\ell)$, where $k, \ell \in \{1, \ldots, |V\Gamma|\} \setminus \{i,j\}$.

            If $v$ and $w$ correspond to adjacent vertices in the Davis complex and the edge between them has label $i$, then glue together the pentagonal faces of $B$ and $B'$ labeled $F(i)$ by an isometry. Now suppose $v$ and $w$ correspond to nonadjacent vertices in the same polygon of the Davis complex. In this case, $B$ and $B'$ intersect in a vertex labeled $x_{ij}$ from each block. Glue $B$ and $B'$ together along the edges $E(ij) \cap \hat{B}(ijk)$ and $E(ij) \cap \hat{B}(ij\ell)$ by an isometry.
            Let $\hat{\cY}_{\Gamma}$ denote the resultant space.
        \end{construction}

        \begin{remark} \label{rem:hatYActionMap}
            There is a natural deformation retraction from $\hat{\cY}_{\Gamma}$ to $\cX_{\Gamma}$ with fibers closed intervals. In particular, each pentagonal face in each block in $\hat{\cY}_{\Gamma}$ maps to the union of two edges in the subdivided Davis complex (which are contained in a wall in the Davis complex).
            Furthermore, the action of $W_{\Gamma}$ on $\cX_{\Gamma}$ extends naturally to a geometric action on $\hat{\cY}_{\Gamma}$.
        \end{remark}

\subsubsection{\texorpdfstring{The space $\cY_{\Gamma}$}{The CAT(-1) space}.}

    We now construct a space $\cY_{\Gamma}$ that contains the space $\hat{\cY}_{\Gamma}$ as a subset. All choices in the construction of $\cY_{\Gamma}$ will be $W_{\Gamma}$-equivariant, and $W_{\Gamma}$ will act by isometries on the resultant space.  Recall, the space $\hat{\cY}_{\Gamma}$ is a polyhedral complex with each cell isometric to a truncated block. Each truncated block has a single triangular face isometric to an equilateral Euclidean triangle contained in a horosphere in $\mathbb{U}^3$ and with edge lengths equal to one. We call this face of a truncated block a {\bf boundary face}; the interior of these cells are not glued to other blocks in the construction of $\hat{\cY}_{\Gamma}$. The union of all boundary faces in the complex $\hat{\cY}_{\Gamma}$ is called the {\bf boundary} of $\hat{\cY}_{\Gamma}$. The next lemma follows from the construction of $\hat{\cY}_{\Gamma}$.

    \begin{lemma} \label{lemma:boundarystabilizers}
        The components of the boundary of $\hat{\cY}_{\Gamma}$ are in bijective correspondence with cosets of the standard triangle subgroups of $W_{\Gamma}$. \qed
    \end{lemma}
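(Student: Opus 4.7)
The plan is to construct an explicit bijection between the connected components of the boundary of $\hat{\cY}_{\Gamma}$ and the set of pairs $\bigl(\{s_i,s_j,s_k\},\, g\langle s_i,s_j,s_k\rangle\bigr)$, where $\{s_i,s_j,s_k\}$ spans a triangle of $\Gamma$ and $g\langle s_i,s_j,s_k\rangle$ is a left coset of the corresponding standard triangle subgroup. First I would define a map $\phi$ on boundary faces. Each boundary face is the unique triangular truncation face of a single truncated block $\hat{B}(ijk)$ that was glued to $\cX_{\Gamma}$ at some Davis vertex $v$, and I set $\phi$ to send that triangle to the pair $\bigl(\{s_i,s_j,s_k\},\, v\langle s_i,s_j,s_k\rangle\bigr)$.

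To verify that $\phi$ is constant on components I would enumerate the ways two boundary triangles can share a point. By \Cref{const:Ymhat}, the only relevant gluings are the pentagonal face gluings $F(i) \sim F(i)$ between blocks at adjacent Davis vertices $v$ and $vs_i$, which cause two boundary triangles to share an edge, and the edge gluings $E(ij) \sim E(ij)$ between blocks at non-adjacent vertices of a common $\{s_i,s_j\}$-polygon, which cause two boundary triangles to share a vertex. In each case the two blocks carry the same triple of labels, and the two Davis vertices differ by an element of $\langle s_i, s_j\rangle \subseteq \langle s_i, s_j, s_k\rangle$; hence $\phi$ takes the same value on the two triangles. The bulk of the verification is a local check at a single gluing, since any shared point of two triangles arises from one such face identification.

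The bijection then follows. Surjectivity is immediate, since any representative $g$ of a colored coset supports a block $\hat{B}(ijk)$ whose boundary triangle maps to $\bigl(\{s_i,s_j,s_k\},\, g\langle s_i,s_j,s_k\rangle\bigr)$. For injectivity on components, given blocks $\hat{B}(ijk)$ at $v$ and at $v'$ with $v' = vh$ for some $h \in \langle s_i,s_j,s_k\rangle$, I would express $h$ as a word in $s_i, s_j, s_k$; the induced chain of adjacent Davis vertices produces same-triple blocks glued in succession along pentagonal faces, yielding a path in the boundary between the two truncation triangles.

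The hard part will be verifying that the pentagonal face gluings and the edge gluings along $E(ij)$ only identify blocks that share the same triple of Coxeter generators. While \Cref{const:Ymhat} permits the triples of $B$ and $B'$ to differ a priori, geometric consistency with the existing kite identifications in $\cX_{\Gamma}$ forces them to coincide. For instance, a hypothetical gluing $F(i) \sim F(i)$ between $\hat{B}(ijk)$ at $v$ and $\hat{B}(ij\ell)$ at $vs_i$ with $k \neq \ell$ would induce an identification of the kites $K(ik)$ and $K(i\ell)$ across the wall labeled $i$ that is not present in the subdivided Davis complex; ruling this out pins down the gluing pattern and makes the bijection work.
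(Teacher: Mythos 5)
Your proposal is correct, and it supplies an argument where the paper offers none: the lemma is stated with an immediate \qed and the single sentence ``the next lemma follows from the construction,'' so there is no proof to compare against beyond the construction itself. Your bijection $\phi$ and the two-step verification (constancy on components via a local check at each pentagonal or $E(ij)$ identification, connectedness via a word in $s_i,s_j,s_k$ realized as a chain of pentagonal gluings) is exactly the right skeleton. You are also right to flag the gluing convention as the genuine subtlety: as literally written, \Cref{const:Ymhat} allows $\hat{B}(ijk)$ at $v$ and $\hat{B}(ij\ell)$ at $w$ with $k\neq\ell$ to be glued, and if that were the intended meaning the lemma would be false (an $F(i)$- or $E(ij)$-identification between different triples would merge the corresponding boundary components). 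Your consistency argument for the pentagonal case is the right one — the boundary of $F(i)$ in $\hat{B}(ijk)$ at $v$ contains the kite edges $[x_i,x_{ij}]$ and $[x_i,x_{ik}]$, which are already identified in $\cX_\Gamma$ only with the corresponding edges of the same-triple block at $vs_i$. For the $E(ij)$-gluings between non-adjacent vertices the blocks meet $\cX_\Gamma$ only at the single point $x_{ij}$, so the kite identifications alone do not force $k=\ell$ there; the same-triple convention in that case is instead confirmed by the link computation at $x_{ij}$ in the proof of \Cref{thm:YGammaCAT-1}, where the $m-2$ distinct link vertices $w_k$ require the edges $E(ij)\cap\hat{B}(ijk)$ for distinct $k$ to remain distinct, and by the $W_\Gamma$-equivariance of the construction. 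With that reading fixed, your proof is complete.
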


     We will attach a space to each boundary component of $\hat{\cY}_{\Gamma}$ as follows. If the boundary component corresponds to a Euclidean triangle group, we will attach a horoball in $\Hy^3$ along its boundary horosphere. If the boundary component corresponds to a hyperbolic triangle group, we will attach compact cells defined below that are each isometric to a subset of $\Hy^3$.

    \begin{defn}[Caps] \label{defn:cap}
        Let $\cH$ be a horoball in $\mathbb{U}^3 = \{(w_1,w_2,w_3)\,|\, w_3>0\}$ centered at $\infty$, and let $\partial \cH$ denote the corresponding horosphere. Let $P$ be a regular Euclidean polygon contained in $\partial \cH$. Then there exists a hyperbolic polygon $P'$ in $\mathbb{U}^3$ with vertex set the vertices of $P$. The {\bf cap} $C_P$ over $P$ is the convex hull in $\mathbb{U}^3$ of $P \cup P'$. The {\bf height} of a cap is the maximal value $h$ so that the cap nontrivially intersects the horosphere $w_3 = h$.
        See \Cref{figure:link334} for an example.
    \end{defn}

    \begin{construction}[The space $\cY_\Gamma$] \label{const:Ym}
        Let $\cF$ be a boundary component of $\hat{\cY}_{\Gamma}$. The stabilizer of $\cF$ under the action of $W_{\Gamma}$ is a conjugate of a standard triangle subgroup of $W_{\Gamma}$ by \Cref{lemma:boundarystabilizers}.

        First suppose the triangle group is Euclidean, corresponding to a triangle in $\Gamma$ with all edge labels equal to three. Fix a horoball in $\Hy^3$ and glue its boundary horosphere to $\cF$ by an isometry.

    \begin{figure}
    \begin{centering}
	\begin{overpic}[width=.8\textwidth,tics=5]{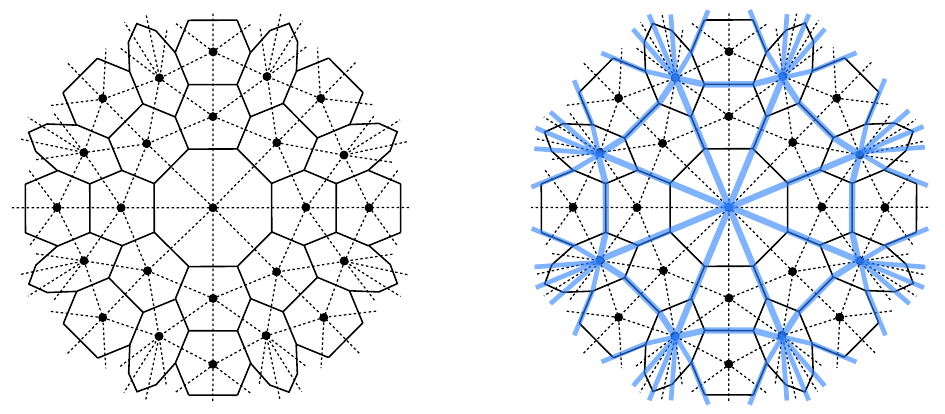}
    \end{overpic}
	\caption{\small{ On the left is the Davis complex for $\Delta(3,3,4)$ and its dual tiling by triangles. On the right in blue is the alternative cellulation by equilateral Euclidean triangles with the key property that the link of every vertex has cone angle strictly greater than $2\pi$. In the construction of the model space, one can visualize the truncated blocks as lying below the dashed triangles, and the added hyperbolic caps as lying above the blue Euclidean polygons. }}
	\label{figure:new_cellulation}
    \end{centering}
    \end{figure}

        Next suppose the triangle group stabilizing $\cF$ is hyperbolic, corresponding to a triangle in $\Gamma$ with edge labels $p,q,r \geq 3$ and with at least one edge label $\geq 4$. The boundary component $\cF$ is homeomorphic to a plane and is tiled by Euclidean triangles. This tiling, call it $T_1(\cF)$,  is the tiling dual to the Davis complex for the Coxeter group $\Delta(p,q,r)$ with defining graph a triangle and edge labels $p,q,r$. See \Cref{figure:new_cellulation}.

        We will now glue caps equivariantly to tiles in a specific tiling of the boundary component $\cF$. The reason we need to consider a tiling of $\cF$ other than $T_1(\cF)$ is as follows. If at least one of $p,r,q$ is equal to three, then there is a vertex in $T_1(\cF)$ with vertex link a $6$-cycle and cone angle $2\pi$. In this case, gluing caps to these triangles would not result in a $\CAT(-1)$ space; indeed, otherwise, one could apply this construction in the case that all edge labels equal three, a contradiction, since such a Coxeter group is not hyperbolic. Thus, if at least one of $p,r,q$ is equal to three we specify an alternative tiling of $\cF$ by Euclidean polygons so that the tiling is preserved by the triangle subgroup and so that each tile is isometric to a regular Euclidean polygon. The key property of this alternative tiling is that the cone angle at each vertex is strictly greater than $2\pi$. See \Cref{figure:new_cellulation}.

        Let  $T_2(\cF)$ denote the following tiling of $\cF$.  If $p,q,r>3$, then $T_2(\cF) = T_1(\cF)$. Otherwise, the vertex set of $T_2(\cF)$ is the vertices of $\cF$ with degree greater than six. Two such vertices are connected by a geodesic path in $\cF$ if the vertices lie in adjacent triangles in $T_1(\cF)$. See \Cref{figure:new_cellulation}.  If $p=q=3$ and $r>3$, then the $2$-cells are triangles with Euclidean edge length $\sqrt{3}$. If $p=3$ and $q,r>3$, then the $2$-cells are hexagons with Euclidean edge length $1$.

        Attach a cap to each $2$-cell $\sigma$ in the tiling $T_2(\cF)$ as follows. If $T$ is a triangle in $T_1(\cF)$, then $T$ is isometric to the intersection of the ideal tetrahedron $\cT_0$ and the horosphere $ \partial \cH_0$ as in \Cref{nota:blocks}.
        Each $2$-cell of $T_2(\cF)$ is contained in a union of such triangles and isometrically embeds in the horosphere $\partial \cH_0$. Thus, we may attach a cap to $\sigma$ as described in \Cref{defn:cap}.

         The triangle subgroup stabilizing $\cF$ preserves the isomorphism type of polygons in the Davis complex, so the stabilizer of $\cF$ acts by isometries on the space obtained after capping all $2$-cells of $T_2(\cF)$. Apply the construction to each boundary component of $\hat{\cY}_{\Gamma}$ to obtain a space $\cY_{\Gamma}$ on which the group $W_{\Gamma}$ acts by isometries.
    \end{construction}

    \begin{prop} \label{prop:QIYAndCuspX}
    The space $\cY_{\Gamma}$ is equivariantly quasi-isometric to the cusped Cayley graph $X(W_{\Gamma}, \cP)$. Therefore, the group $W_{\Gamma}$ admits a geometrically finite action on $\cY_{\Gamma}$. \qed
    \end{prop}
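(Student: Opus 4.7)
The plan is to build a $W_\Gamma$-equivariant quasi-isometry in two stages: first from $\hat{\cY}_\Gamma$ to the Davis complex $X_\Gamma$, and then extending across the horoball and cap attachments of \Cref{const:Ym} to an equivariant quasi-isometry between $\cY_\Gamma$ and the cusped Davis complex. This suffices because the cusped Davis complex is in turn equivariantly quasi-isometric to $X(W_\Gamma, \cP)$ by a standard argument: both arise from attaching combinatorial horoballs over the corresponding peripheral cosets of equivariantly quasi-isometric base graphs, as in \cite[Appendix A]{grovesmanningsisto}.

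For the first stage, I would invoke the $W_\Gamma$-equivariant deformation retraction $\hat{\cY}_\Gamma \to \cX_\Gamma$ from \Cref{rem:hatYActionMap}. Its fibers are closed intervals contained in individual truncated blocks, and because $W_\Gamma$ acts cocompactly on $\hat{\cY}_\Gamma$ and all truncated blocks are pairwise isometric, these fibers have uniformly bounded diameter. So the retraction is a $W_\Gamma$-equivariant quasi-isometry. Since $\cX_\Gamma$ is combinatorially isomorphic to the subdivided Davis complex $SX_\Gamma$ (only the metric on each kite has been altered, from Euclidean to hyperbolic, but all kites remain pairwise isometric), this yields the required equivariant quasi-isometry onto $X_\Gamma$.

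For the second stage, \Cref{lemma:boundarystabilizers} identifies the Euclidean boundary components of $\hat{\cY}_\Gamma$, to which $\cY_\Gamma$ attaches horoballs in $\Hy^3$, with the cosets of subgroups in $\cP$, which index the horoballs of the cusped Davis complex. A combinatorial horoball over a flat Euclidean plane is equivariantly quasi-isometric to a horoball in $\Hy^3$ via a uniform comparison, since both models exponentially contract the Euclidean metric along nested horospheres; an equivariant version of this comparison is documented in \cite{healyhruska}. The remaining boundary components of $\hat{\cY}_\Gamma$ are stabilized by hyperbolic triangle subgroups (which do not lie in $\cP$) and receive cap attachments of bounded height; the stabilizer acts cocompactly on the union of these caps, so this attachment is a bounded-width thickening and does not change the quasi-isometry type.

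The second conclusion follows from the first: the action of $W_\Gamma$ on $X(W_\Gamma, \cP)$ is a geometrically finite convergence action with maximal parabolic subgroups $\cP$, and once $\cY_\Gamma$ is shown to be $\CAT(-1)$ in \Cref{subsection:LC} (and thus in particular Gromov-hyperbolic), the equivariant quasi-isometry transports this geometrically finite action to $\cY_\Gamma$, with the attached horoballs centered at the bounded parabolic points. The main technical worry I expect is uniformity of the QI constants across the infinitely many horoball comparisons and compatibility with the gluings along their bounding horospheres; both are handled by $W_\Gamma$-equivariance together with the cocompactness of each parabolic subgroup's action on its horosphere.
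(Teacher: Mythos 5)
Your proposal is correct and follows essentially the same route as the paper, which simply cites Cannon--Cooper's construction of an equivariant quasi-isometry to the augmented Cayley graph (retract the blocks onto the Davis complex, compare the attached hyperbolic horoballs to combinatorial ones) together with \cite[Corollary A.7]{grovesmanningsisto}, leaving the details to the reader. You have in effect written out the extension of the Cannon--Cooper argument that the paper asserts "directly extends," including the correct handling of the bounded-height caps and the transport of geometric finiteness through the boundary homeomorphism.
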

    \begin{proof}
        Cannon--Cooper~\cite[Section 4.2]{cannoncooper92} construct an equivariant quasi-isometry from $\cY_{\Gamma}$ to the {\it augmented Cayley graph} in the case that $\Gamma$ has four vertices and all edge labels equal to $3$. Their argument directly extends to the setting of any graph $\Gamma$ considered here. We refer to their paper for the definition of the augmented Cayley graph. The augmented Cayley graph is equivariantly quasi-isometric to the cusped Cayley graph for any graph $\Gamma$ by \cite[Corollary A.7]{grovesmanningsisto}. The proof of the proposition follows. We leave the details to the reader.
    \end{proof}

    We also record an elementary fact needed to prove that $\cY_{\Gamma}$ is $\CAT(-1)$.

    \begin{lemma} \label{lemma:cYsimplyconn}
       There is a deformation retraction from $\cY_{\Gamma}$ to $\cX_{\Gamma}$. In particular, the space $\cY_{\Gamma}$ is simply connected.
    \end{lemma}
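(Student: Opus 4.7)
The plan is to build the deformation retraction $\cY_{\Gamma} \to \cX_{\Gamma}$ in two stages: first retract $\cY_{\Gamma}$ onto $\hat{\cY}_{\Gamma}$ by collapsing each piece attached in \Cref{const:Ym} onto the boundary of $\hat{\cY}_{\Gamma}$, then apply the interval-fiber deformation retraction $\hat{\cY}_{\Gamma} \to \cX_{\Gamma}$ already recorded in \Cref{rem:hatYActionMap}. Composing these two gives the desired deformation retraction, and simple connectivity of $\cY_{\Gamma}$ will follow immediately from that of $\cX_{\Gamma}$.

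For the first stage, the pieces attached to each boundary component $\cF$ of $\hat{\cY}_{\Gamma}$ are of two types. If $\cF$ corresponds to a Euclidean triangle subgroup, a horoball has been attached along its bounding horosphere; modeled in $\mathbb{U}^{3}$ with the horoball based at $\infty$ and the horosphere at height $y_0$, the map that linearly interpolates the vertical coordinate $w_3$ of any point down to $y_0$ is a strong deformation retraction of the horoball onto the horosphere that fixes the horosphere pointwise and is equivariant for the stabilizer of $\cF$. If $\cF$ corresponds to a hyperbolic triangle subgroup, caps $C_P$ have been attached to the polygons $P$ of the tiling $T_2(\cF)$; a cap sits in $\mathbb{U}^{3}$ below its base $P$, and vertical projection (again linear interpolation of $w_3$ up to the horosphere level) gives a strong deformation retraction of $C_P$ onto $P$, equivariant with respect to the dihedral symmetries of $P$ inherited from the stabilizer action.

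These retractions all fix the pieces they retract onto pointwise, and the pieces are glued to $\hat{\cY}_{\Gamma}$ precisely along those pointwise-fixed subsets, so there is no compatibility issue where attached pieces meet $\hat{\cY}_{\Gamma}$ or one another (adjacent caps share edges of their bases, which are fixed throughout). Making the choices $W_{\Gamma}$-equivariantly on orbits of attached pieces, the individual retractions assemble to a $W_{\Gamma}$-equivariant deformation retraction $\cY_{\Gamma} \to \hat{\cY}_{\Gamma}$.

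Composing with the deformation retraction $\hat{\cY}_{\Gamma} \to \cX_{\Gamma}$ from \Cref{rem:hatYActionMap} yields the first claim. For simple connectivity, observe that $\cX_{\Gamma}$ is combinatorially isomorphic to the subdivided Davis complex $SX_{\Gamma}$: it is obtained by replacing each Euclidean kite of $SX_{\Gamma}$ with an isometric copy of a hyperbolic kite of the same combinatorial type, which is a homeomorphism onto the Davis complex $X_{\Gamma}$. Since $X_{\Gamma}$ admits a $\CAT(0)$ metric it is simply connected, hence so are $\cX_{\Gamma}$ and $\cY_{\Gamma}$. The only genuine thing to check is that the two families of retractions are compatible under the $W_{\Gamma}$-action and at shared boundaries, which is immediate from the fact that each retraction fixes its target pointwise.
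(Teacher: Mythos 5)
Your proof is correct and takes essentially the same approach as the paper's, which simply asserts that the interval-fiber retraction of \Cref{rem:hatYActionMap} ``extends naturally to the cusps and caps''; your vertical-projection argument in the upper half-space model is the natural way to make that extension explicit. (One cosmetic slip: by \Cref{defn:cap} and \Cref{lem:height_of_cap} the caps sit \emph{above} their bases in $\mathbb{U}^3$, not below, but vertical projection onto the horosphere level $w_3=y_0$ works regardless.)
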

    \begin{proof}
        The deformation retraction from $\hat{\cY}_{\Gamma}$ to $X_{\Gamma}$ described in \Cref{rem:hatYActionMap} with fibers closed intervals extends naturally to the cusps and caps glued to $\hat{\cY}_{\Gamma}$ to yield a deformation retraction $\cY_{\Gamma} \rightarrow \cX_{\Gamma}$. Since $\cX_{\Gamma}$ is combinatorially isomorphic to the Davis complex, it is contractible.
    \end{proof}

    Finally, following the construction of $\cY_\Gamma$, we comment on \Cref{ques:Kleinian} from the Introduction.

    \begin{remark} \label{rem:Kleinian_model}
         Recall \Cref{ques:Kleinian}. If $W_\Gamma \in \cW$, is there a $\CAT(-1)$ space on which $(W_\Gamma, \cP)$ acts geometrically finitely so that each Kleinian subgroup stabilizes a convex subspace isometric to a convex subspace of $\H^3$?
     We suspect the answer to the question is negative. Each 4-generator subgroup has a unique Kleinian structure, coming from reflections across the faces of a hyperbolic polytope. In general, one cannot naturally cut these polytopes into block shapes that can be glued by isometries along their kite faces as in our construction.
    \end{remark}

\subsection{The link condition}\label{subsection:LC}

    The space $\cY_{\Gamma}$ is simply connected by \Cref{lemma:cYsimplyconn}, so it remains to prove that $\cY_{\Gamma}$ is locally $\CAT(-1)$. We will apply Gromov's Link Condition. In \Cref{subsec:link_cond} we describe the variant of the link condition applied. We verify that certain spherical complexes that appear as links are $\CAT(1)$ in \Cref{subsection:otherlinks} and \Cref{subsection:2skeletonnsimplex}.
    \Cref{thm:gfCAT-1} is proved in \Cref{subsec:remaining_links}.

\subsubsection{The link condition for ideal polyhedral complexes} \label{subsec:link_cond}

In this subsection, we state Gromov's Link Condition as in \cite[Theorem II.5.2]{bridsonhaefliger} for a slight generalization of $M^n_{\kappa}$-polyhedral complexes, called `ideal polyhedral complexes' whose cells are not necessarily compact.

Let $M^n_{\kappa}$ denote the $n$-dimensional model space of curvature~$\kappa$. Recall, a {\it convex $M^n_{\kappa}$-polyhedral cell} $C \subset M^n_{\kappa}$ is the convex hull of finitely many points in $M^n_{\kappa}$, and a {\it $M^n_{\kappa}$-polyhedral complex} $K$ is a cell complex whose cells are convex $M^n_{\kappa}$-polyhedral cells, glued by isometries along faces.
We say $K$ satisfies the {\it link condition} if for every vertex $v \in K$, the link complex $\Lk(v,K)$ is $\CAT(1)$. See \cite[Chapter I.7]{bridsonhaefliger} for additional background.

\begin{thm}[\cite{bridsonhaefliger}, Theorem II.5.2]
An $M^n_{\kappa}$-polyhedral complex $K$ with $\Shapes(K)$ finite has curvature $\leq \kappa$, if and only if it satisfies the link condition.
\end{thm}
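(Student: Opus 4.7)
The proof naturally splits into two implications, so the plan is to handle the two directions separately and note where the ``ideal polyhedral'' generalization requires attention beyond the Bridson--Haefliger treatment. For the forward implication---that curvature $\le \kappa$ implies the link condition---the plan is to fix a vertex $v$ and exploit the fact that a sufficiently small metric ball $B(v,\epsilon)$ is CAT$(\kappa)$ by hypothesis. One identifies $\Lk(v,K)$ with the space of directions at $v$ equipped with the Alexandrov angle as a metric, and observes that the angle at $v$ between two geodesics agrees with the spherical distance between the corresponding directions. The CAT$(\kappa)$ four-point inequality for triangles with apex $v$ then translates directly into the CAT$(1)$ inequality for the corresponding triangles in the link. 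Finiteness of $\Shapes(K)$ guarantees that the relevant ball radius $\epsilon$ may be chosen uniformly.

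The reverse direction carries essentially all the content. The first step is to use finiteness of $\Shapes(K)$ to produce a uniform $\epsilon_0 > 0$ so that for each vertex $v$ the ball $B(v,\epsilon_0)$ lies inside the closed star of $v$ and so that the preimage under the exponential map from a tangent cone controls geometry at $v$. The plan is then to prove that small balls $B(v,\epsilon_0)$ are CAT$(\kappa)$. I would first establish unique geodesics in such a ball: a purported local geodesic passing through a lower-dimensional face $F$ corresponds to a pair of points in $\Lk(F,K)$, and the CAT$(1)$ link condition (applied inductively on $\dim F$) ensures these points are at spherical distance $\ge \pi$ precisely when the path is a genuine geodesic and not a locally shortcuttable ``corner''. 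This also gives a characterization of local geodesics in terms of the link condition at every face they meet.

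The main obstacle is verifying the CAT$(\kappa)$ comparison inequality for geodesic triangles in $B(v,\epsilon_0)$. The plan here is a development/unfolding argument: along any geodesic passing through a vertex or lower-dimensional face, the CAT$(1)$ link condition lets one unfold the two adjacent cells into a single copy of $M^n_\kappa$ without decreasing distances, since the angles at the interface add to at least $\pi$. Iterating this unfolding along the sides of a geodesic triangle produces a comparison triangle in $M^n_\kappa$ which dominates the original; combined with Reshetnyak's gluing theorem applied along the convex lower-dimensional interfaces, this establishes the CAT$(\kappa)$ inequality on $B(v,\epsilon_0)$, and local CAT$(\kappa)$ gives curvature $\le \kappa$ globally. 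The extension to \emph{ideal} polyhedral complexes is the step requiring new care: the standard proof uses compactness of cells to choose $\epsilon_0$ uniformly, but with noncompact (ideal) cells, one instead works locally near each actual point of $K$. Near a vertex belonging to finitely many cells the argument is unchanged; near an interior point of an ideal cell one uses the CAT$(\kappa)$ geometry of the model cell itself; and at any ideal end, no point of $K$ is present, so no local condition needs to be checked there.
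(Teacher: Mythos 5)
First, a point of orientation: the paper does not prove this statement — it is quoted from Bridson--Haefliger (Theorem II.5.2) — and the paper's own work in this direction is \Cref{prop:linkcond}, the extension to ideal polyhedral cells, whose proof simply retraces the Bridson--Haefliger steps. That argument runs through the identification of a small ball $B(v,\epsilon(v)/2)$ with a neighborhood of the cone point in the $\kappa$-cone $C_{\kappa}(\Lk(v,K))$ (Theorems I.7.16 and I.7.39 of Bridson--Haefliger), followed by Berestovskii's theorem, which says $C_{\kappa}(Y)$ is $\CAT(\kappa)$ if and only if $Y$ is $\CAT(1)$; since the identification is an isometry and Berestovskii's theorem is an equivalence, both implications of the link-condition theorem fall out simultaneously. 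Your forward direction is essentially an inlined sketch of one half of Berestovskii's theorem and is fine in spirit.

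The genuine gap is in your reverse direction. The development/unfolding argument --- flattening the two cells adjacent to a geodesic into one copy of $M^n_{\kappa}$ and iterating along the sides of a triangle to build a dominating comparison triangle --- is the classical proof for \emph{two-dimensional} complexes, and it does not extend to general $n$ as stated. A geodesic triangle in an $n$-complex meets faces of all dimensions; around a codimension-two face arbitrarily many top-dimensional cells can be arranged like pages of a book, and there is no single copy of $M^n_{\kappa}$ into which their union develops. Reshetnyak's gluing theorem applies only to a gluing of \emph{two} $\CAT(\kappa)$ spaces along a complete convex subspace, so it cannot be iterated over the cells meeting such a face. The correct replacement for this entire step is Berestovskii's cone theorem, which your proposal never invokes. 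Two smaller issues: a non-vertex point may lie in a positive-dimensional face shared by several cells rather than in the interior of a single cell, and the source handles this by Lemma I.7.56 (reducing to a nearby point in the same open cell closer to a vertex), not by appealing to ``the geometry of the model cell itself''; and ``curvature $\le \kappa$'' is by definition the local condition, so no globalization step is needed (nor available without simple connectivity).
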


In this section, we will restrict to $\kappa = -1$. We define a convex \textbf{$M^n_{-1}$-ideal polyhedral cell} to be the convex hull of finitely many points which are allowed to be ideal points (in other words, boundary points) in $M^n_{-1}$. We call a complex $K$ built out of these cells a \textbf{$M^n_{-1}$-ideal polyhedral complex}. Note that the ideal points are not part of the complex $K$. The \emph{vertices of $K$} are all the vertices of the cells that are not ideal points. Such a complex is said to be \emph{locally finite} if for every point $x \in K$, there exists a neighborhood that intersects only finitely many cells of $K$.

Let $x \in K$ and let $\epsilon(x) := \inf\{\epsilon(x,S) \, | \, S\subset K \text{ is an ideal polyhedral cell containing } x\}$, where $\epsilon(x,S) := \inf\{d_S(x,T) \, | \, T \text{ is a face of } S \text{ and } x \notin T\}$. Set $\epsilon(x,S) = \infty$ if $\{x\} = S$. Note that $\epsilon(x)>0$ for every $x \in K$ if $\Shapes(K)$ is finite, irrespective of the cells having ideal vertices or not. The arguments in the next two propositions basically rely on $\epsilon(x)>0$ for all $x \in K$.

\begin{prop} \label{prop:comlenspace}
    An $M^n_{-1}$-ideal polyhedral complex with $\Shapes(K)$ finite is a complete length space. In addition, if $K$ is locally finite, then it is a geodesic metric space.
\end{prop}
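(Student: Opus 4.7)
The plan is to adapt the proof of Bridson--Haefliger Theorem I.7.13, which establishes the analogous statement for $M_\kappa$-polyhedral complexes with compact cells. The crucial observation, already highlighted before the statement, is that even though cells of an ideal polyhedral complex may fail to be compact (because of ideal vertices), the finiteness of $\Shapes(K)$ still forces $\epsilon(x) > 0$ at every (non-ideal) point $x \in K$. This uniform local positivity is the lynchpin of the classical arguments, and essentially every step transplants once one tracks what role compactness of cells plays.

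First, I would define a pseudometric $d_K$ on each path-component of $K$ by taking the infimum, over all $m$-strings $x = x_0, x_1, \dots, x_m = y$ (with consecutive terms sharing a common cell $C_i$), of the sum $\sum d_{C_i}(x_i, x_{i+1})$ of intrinsic distances. Symmetry and the triangle inequality are immediate. To promote $d_K$ to a genuine metric, suppose $x \neq y$. If $x$ and $y$ lie in a common cell $C$, then $d_K(x,y) \geq d_C(x,y) > 0$ since $C$ inherits the hyperbolic metric. Otherwise, every $m$-string from $x$ to $y$ must exit the open star $\St(x)$, and the same combinatorial argument as in the compact case shows that such a string has length at least $\epsilon(x) > 0$.

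Second, $d_K$ is visibly a length metric: an $m$-string produces an honest piecewise-geodesic path in $K$ whose length equals that of the string, so $d_K$ is the infimum of lengths of paths between its endpoints. For completeness, I would take a Cauchy sequence $(x_n)$ and fix $N$ so that $d_K(x_n, x_m) < \epsilon(x_N)/3$ for all $n, m \geq N$. By the previous step, each $x_n$ with $n \geq N$ lies in some cell $C_n$ containing $x_N$, and within the closed star of $x_N$ the distance $d_K$ agrees with the obvious length metric on a union of complete convex subsets of $\H^n$. Using finiteness of $\Shapes(K)$ to reduce to finitely many isometry types of local configurations around $x_N$, one can pass to a subsequence lying in a single cell, invoke completeness of that cell as a closed convex subset of $\H^n$ to produce a candidate limit $x_\infty$, and then verify that $(x_n) \to x_\infty$ in $d_K$.

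Finally, suppose $K$ is locally finite. Then every point has a neighborhood meeting only finitely many cells, and in each such cell sufficiently small closed metric balls are compact (closed, bounded subsets of a closed convex region in $\H^n$), so their finite union is compact. Hence $K$ is locally compact. A complete, locally compact length space is geodesic by the Hopf--Rinow--Cohn-Vossen theorem (\cite{bridsonhaefliger}, Proposition I.3.7), which completes the argument. The main obstacle is the completeness step: one must rule out that a Cauchy sequence ``escapes'' toward an ideal vertex, and this is precisely where $\epsilon(x_N) > 0$ is used to confine the tail of the sequence to the star of $x_N$, after which one falls back on the completeness of individual cells as convex subsets of $\H^n$.
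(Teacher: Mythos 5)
Your proposal is correct and follows essentially the same route as the paper, which simply observes that $\epsilon(x)>0$ for all $x$, cites \cite[Corollary I.7.10]{bridsonhaefliger} for the length-space claim, defers completeness to the proof of \cite[Theorem I.7.13]{bridsonhaefliger}, and invokes Hopf--Rinow for the geodesic statement. You have merely unpacked those citations, with the same key points: $\epsilon(x)>0$ confines the tail of a Cauchy sequence, and completeness (and local compactness) of the non-compact cells as closed convex subsets of $\H^n$ substitutes for the compactness used in the classical argument.
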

\begin{proof}
Since $\epsilon(x)>0$ for every $x \in K$, by \cite[Corollary I.7.10]{bridsonhaefliger} $K$ supports a metric and is a length space with respect to this metric. The rest of the proof for completeness follows as in \cite[Theorem I.7.13]{bridsonhaefliger}. When $K$ is also locally finite, then the Hopf Rinow theorem implies that it is a geodesic metric space.
\end{proof}

\begin{prop} \label{prop:linkcond}
An $M^n_{-1}$-ideal polyhedral complex $K$ with $\Shapes(K)$ finite has curvature $\leq -1$, if and only if for every vertex $v \in K$ the link complex $\Lk(v,K)$ is $\CAT(1)$.
\end{prop}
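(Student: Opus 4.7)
My plan is to adapt the proof of Bridson--Haefliger's link condition, \cite[Theorem II.5.2]{bridsonhaefliger}, to the ideal setting. The forward direction is standard: if $K$ has curvature $\leq -1$, then every point has a $\CAT(-1)$ neighborhood whose link is $\CAT(1)$, which applies in particular at each vertex.

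For the reverse direction, the first step I would carry out is to upgrade the vertex-link hypothesis to a ``link at every point'' statement: $\Lk(x,K)$ is $\CAT(1)$ for every $x \in K$. The key observation is that ideal vertices lie outside $K$ and, by \Cref{prop:comlenspace}, $\epsilon(x) > 0$ for every $x \in K$. Hence any ball $B(x,r)$ with $r < \epsilon(x)$ meets only those cells containing $x$, meeting each such cell in a compact region bounded away from its ideal vertices. Consequently, $\Lk(x,K)$ is naturally a (non-ideal) compact spherical polyhedral complex, and a sufficiently small ball around $x$ is isometric to the corresponding ball in the hyperbolic cone $C_{-1}\bigl(\Lk(x,K)\bigr)$. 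By the standard cone characterization \cite[II.3]{bridsonhaefliger}, this ball is $\CAT(-1)$ if and only if $\Lk(x,K)$ is $\CAT(1)$.

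Second, I would reduce the link at an arbitrary point to links at vertices, exactly as in the non-ideal proof. If $x$ lies in the interior of a face $F$ of dimension $d \geq 1$, then there is an isometric spherical join decomposition $\Lk(x,K) \cong \mathbb{S}^{d-1} * \Lk(F,K)$, and a spherical join is $\CAT(1)$ if and only if each factor is; so it suffices to verify that $\Lk(F,K)$ is $\CAT(1)$. For any vertex $v$ of $F$, the analogous decomposition $\Lk(v,K) \cong \Lk(v,F) * \Lk(F,K)$ together with the join criterion shows that $\Lk(F,K)$ is $\CAT(1)$ whenever $\Lk(v,K)$ is.

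The main subtlety I expect to confront is justifying that the spherical join and cone identifications remain isometries in the ideal setting. This ultimately reduces to the fact that a non-ideal point $x$ does not see the ideal vertices of its ambient cells within a distance $\epsilon(x)$, so the local geometry near $x$ is formally identical to that of a non-ideal polyhedral complex, and the arguments of \cite{bridsonhaefliger} apply verbatim.
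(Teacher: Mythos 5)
Your proof is correct, and your treatment of vertices coincides with the paper's: $\epsilon(x)>0$ because $\Shapes(K)$ is finite, a small ball about a vertex $v$ is isometric to a ball about the cone point of $C_{-1}\bigl(\Lk(v,K)\bigr)$, and Berestovskii's theorem converts the $\CAT(1)$ link condition into the local $\CAT(-1)$ condition. Where you genuinely diverge is at non-vertex points. You follow the Bridson--Haefliger route of decomposing $\Lk(x,K)\cong \mathbb{S}^{d-1}*\Lk(F,K)$ for $F=\operatorname{supp}(x)$, comparing with $\Lk(v,K)\cong \Lk(v,F)*\Lk(F,K)$ at a vertex $v$ of $F$, and using that a spherical join is $\CAT(1)$ if and only if both factors are. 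The paper instead sidesteps links of non-vertex points entirely: it takes a vertex $v$ of the open cell containing $x$, a point $x'$ of that cell lying in the $\CAT(-1)$ ball $B(v,\epsilon(v)/2)$, and invokes \cite[Lemma I.7.56]{bridsonhaefliger} to see that sufficiently small balls about $x$ and $x'$ are isometric; since $B(x',\epsilon)$ sits inside a ball already known to be $\CAT(-1)$, so does $B(x,\epsilon)$. Your route requires checking that the two join decompositions of links survive in the ideal setting (they do, for exactly the reason you give: a non-ideal point does not see the ideal vertices within distance $\epsilon(x)$, so its link is a genuine compact spherical complex), while the paper's route only needs the homogeneity lemma, which is the lighter verification. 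One shared caveat worth noting: both arguments implicitly require the supporting cell of $x$ to contain at least one non-ideal vertex; this holds for the complexes $\cY_\Gamma$ constructed in the paper, and is tacitly assumed in the general statement.
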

\begin{proof}
We go through the steps in the proof of \cite[Theorem II.5.2]{bridsonhaefliger} and ensure they hold true when $K$ has ideal polyhedral cells. We need to show that for every point $x \in K$, there exists a neighborhood of $x$ that is $\CAT(-1)$. We will first argue that this is true when $x$ is a vertex of $K$.

\cite[Theorem I.7.39, Theorem I.7.16]{bridsonhaefliger} imply that for every vertex $v$ in $K$ and $\epsilon(v)>0$ defined above, the ball $B(v,\epsilon(v)/2)$ is convex and isometric to the $\epsilon(v)/2$ neighborhood of the cone point in the metric cone $C_{-1}(\Lk(v,K))$ (see \cite[Definition I.5.6]{bridsonhaefliger}). Now Berestovskii's theorem \cite[Theorem 3.14]{bridsonhaefliger} implies that $C_{-1}(\Lk(v,K))$ is $\CAT(-1)$ if and only if $\Lk(v,K)$ is $\CAT(1)$.

Next suppose $x \in K$ is not a vertex. Let $C$ be the unique open cell containing $x$ and let $v$ be a vertex of $C$. Let $x' \in B(v, \frac{\epsilon(v)}{2}) \cap C$. By \cite[Lemma I.7.56]{bridsonhaefliger}, for $\epsilon < \frac{1}{2}\min\{\epsilon(x'), \epsilon(x), \frac{\epsilon(v)}{2}\}$, we have $B(x,\epsilon)\cap B$ is isometric to $B(x', \epsilon) \cap B$ for any closed ideal polyhedron $B$ containing both $x$ and $x'$. Since $B(x',\epsilon)$ is $\CAT(-1)$, we are done.
\end{proof}

\subsubsection{\texorpdfstring{Links at the cap vertices.}{Links at the cap vertices}}
\label{subsection:otherlinks}
    \begin{figure}
    \begin{centering}
	\begin{overpic}[width=.8\textwidth, tics=5]{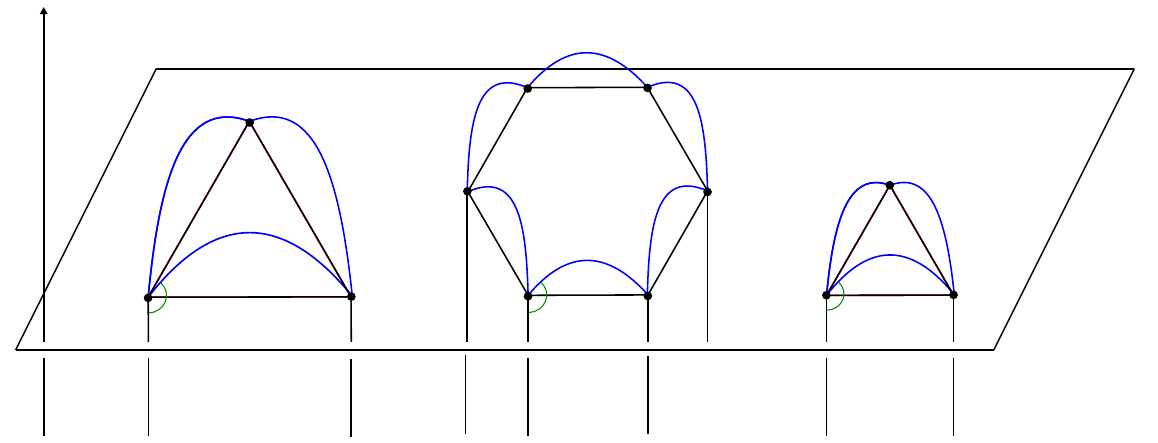}
        \put(-2,30){$\Hy^3$}
        \put(12,27){\small{$C_1$}}
        \put(10,13){\small{$v_1$}}
        \put(15,10){\small{$\theta_1$}}
        \put(19.5,13.5){\small{$\sqrt{3}$}}
        \put(37,27){\small{$C_2$}}
        \put(42,13){\small{$v_2$}}
        \put(47,10){\small{$\theta_2$}}
        \put(50,13.5){\small{$1$}}
        \put(69,20){\small{$C_3$}}
        \put(68,13){\small{$v_3$}}
        \put(73,10){\small{$\theta_3$}}
        \put(76,13.5){\small{$1$}}
        \put(16,36){\small{$\Delta(3,3,r)$}}
        \put(45.5,36){\small{$\Delta(3,q,r)$}}
        \put(71,36){\small{$\Delta(p,q,r)$}}
    \end{overpic}
	\caption{\small{The three isometry types of the caps used to build the space $\cY_\Gamma$. }}
	\label{figure:caps}
    \end{centering}
    \end{figure}

In this section we describe the links in $\cY_\Gamma$ of vertices that lie in a cap and prove these are $\CAT(1)$. To do this, we first describe these links as an abstract spherical complex and provide conditions to ensure this abstract complex is $\CAT(1)$. We then specify the geometry of $\cY_\Gamma$ by giving a suitable constant $y_0$ in the horoball $\cH_0$ as introduced in \Cref{nota:blocks} so that the links of the cap vertices satisfy these conditions.

\begin{notation}[Isometry types of caps and their vertex links]  \label{nota:cap_links}
        Up to isometry, there are three types of caps used in the construction of $\cY_\Gamma$, and they are given as follows. See \Cref{figure:caps}. Let $C_1$, $C_2$, and $C_3$ denote the cap corresponding to the triangle group $\Delta(3,3,r)$, $\Delta(3,q,r)$, and $\Delta(p,q,r)$ with $p,q,r \geq 4$, respectively. Each cap is built over a Euclidean polygon contained in the horosphere $\partial \cH_0$ specified by $w_3 = y_0$ in $\mathbb{U}^3$ as in \Cref{nota:blocks}.
        As explained in \Cref{const:Ym}, the cap $C_1$ is built over an equilateral triangle with edge lengths $\sqrt{3}$. The cap $C_2$ is built over a regular hexagon with edge lengths $1$. Lastly, the cap $C_3$ is built over an equilateral triangle with edge lengths $1$.

        Let $v_i$ be a vertex of the cap $C_i$. Let $\gamma_i$ be a vertical geodesic ray emanating from $v_i$ that is contained in complement of the horoball $\cH_0$. Let $\gamma_i'$ be a geodesic side of the hyperbolic polygon bounding $C_i$ emanating from $v_i$. Let $\theta_i = \angle_{v_i}(\gamma_i, \gamma_i')$. Let $\Sigma_i$ be an isosceles spherical triangle with two sides of length $\theta_i$ and a third side of length denoted $\sigma_i$. Let the angle between the sides of equal length equal $\frac{\pi}{3}$ for $i=1,3$ and equal to $\frac{2\pi}{3}$ for $i=2$.

      The number of caps incident to $v_i$ in the space $\cY_\Gamma$ depends on the   values of $p$, $q$, and $r$ in the hyperbolic triangle group $\Delta(p,q,r)$. Since $C_1$ is built from $\Delta(3,3,r)$, there are $2r$ caps incident to $v_1$. Similarly, there are $q$ or $r$ caps incident to $v_2$, and either $2p$, $2q$, or $2r$ caps incident to $v_3$.  Then, the structure of $\Lk(v_i, \cY_\Gamma)$ has the form $\cL_{\theta,\frac{\pi}{3}, 2r}$, for $i=1,3$ and  $\cL_{\theta, \frac{2\pi}{3}, r}$ for $i=2$, where $\cL_{\theta, \alpha, n}$ is defined as follows.
    \end{notation}

\begin{defn}[Spherical complex $\cL_{\theta, \alpha, n}$]
        For $\frac{\pi}{2} < \theta < \pi$, $0 < \alpha \leq \frac{2\pi}{3}$ and $n$ a positive integer, let $\Delta_{\theta, \alpha}$ denote the isosceles triangle in $S^2$ with two sides of length $\theta$ and with angle $\alpha$ between these sides. The third side of the triangle has length $\sigma = \arccos\left(\cos^2\left(\theta \right)+ \cos(\alpha)\sin^2\left(\theta\right) \right)$ by the Spherical Law of Cosines. Let $\cL_{\theta, \alpha, n}$ denote the spherical complex obtained by taking $n$ copies of $\Delta_{\theta, \alpha}$ and gluing sides of length $\theta$ together cyclically to produce a connected space with $n+1$ vertices as in \Cref{figure:flowerCAT1}.
    \end{defn}

    We will find sufficient condition for $\cL_{\theta,\frac{\pi}{3}, 2r}$ and $\cL_{\theta,\frac{2\pi}{3}, r}$ to be $\CAT(1)$. A similar condition can be obtained for the complex $\cL_{\theta,\alpha, n}$ for appropriate relation between $\alpha$ and $n$.

\begin{figure}
    \begin{centering}
	\begin{overpic}[width=.25\textwidth, tics=5]{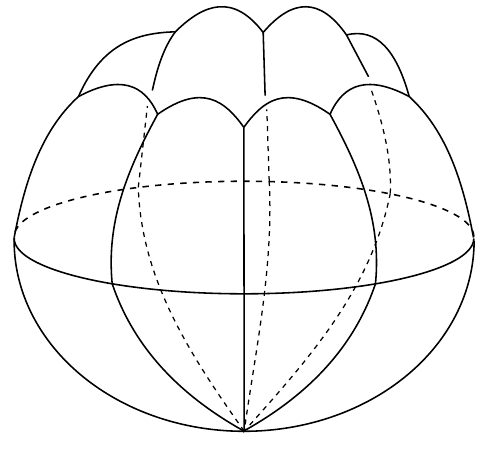}
    \end{overpic}
	\caption{\small{The spherical complex $\cL_{\theta, 2r}$ built from $2r=8$ spherical isosceles triangles with two sides of length $\theta$ for $\frac{\pi}{2}<\theta<\pi$ and with the angle between these sides equal to $\frac{\pi}{3}$. We show the complex is $\CAT(1)$ provided $\theta$ is sufficiently close to $\frac{\pi}{2}$ so that the boundary loop has length greater than $2\pi$. }}
	\label{figure:flowerCAT1}
    \end{centering}
    \end{figure}

\begin{lemma} \label{lemma_LaCAT1}
        If $2r\sigma>2\pi$, then $\cL_{\theta, \frac{\pi}{3}, 2r}$ is $\CAT(1)$. If $r\sigma>2\pi$, then $\cL_{\theta, \frac{2\pi}{3}, r}$ is $\CAT(1)$.
    \end{lemma}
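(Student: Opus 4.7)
The plan is to apply the characterization that a $2$-dimensional piecewise spherical complex with finite shapes is $\CAT(1)$ if and only if the link of every vertex is $\CAT(1)$ and no closed local geodesic has length less than $2\pi$.

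First I verify the link conditions. At the central vertex $v_0$, the link is a circle of length $n\alpha$ (with $n=2r$ or $r$). A direct calculation from the definition of $\sigma$ gives $\cos\sigma = 1 - \sin^2\theta(1-\cos\alpha)$, whence $\sin(\sigma/2) = \sin(\alpha/2)\sin\theta \leq \sin(\alpha/2)$ and therefore $\sigma \leq \alpha$. Consequently $n\alpha \geq n\sigma > 2\pi$ by hypothesis, so the circle is $\CAT(1)$. The link at each outer vertex is a path of two arcs joined at the direction toward $v_0$, which is a tree and trivially $\CAT(1)$. The spherical dual law of cosines yields $\cot\beta = \sin(\alpha)\cos\theta/(1+\cos\alpha)$, which is negative for $\theta>\pi/2$; hence the base angle $\beta$ of $\Delta_{\theta,\alpha}$ satisfies $\beta > \pi/2$. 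In particular the outer cycle $v_1 v_2\cdots v_n v_1$ is itself a closed local geodesic (the interior angle $2\beta$ at each outer vertex exceeds $\pi$), of length $n\sigma > 2\pi$ by hypothesis.

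For a closed local geodesic disjoint from the outer cycle, I compare $\cL_{\theta,\alpha,n}$ with the piecewise spherical cone $C$ of apex angle $n\alpha$ obtained by extending each triangle radially to radius $\pi$. The cone $C$ is simply connected and locally $\CAT(1)$: its apex link is the same circle of length $n\alpha \geq 2\pi$, and its shortest closed local geodesic is the equator at radius $\pi/2$, of length $n\alpha$. By a standard argument $C$ is $\CAT(1)$. Since $\cL_{\theta,\alpha,n}$ embeds locally isometrically into $C$ away from the outer cycle, any closed local geodesic of $\cL$ disjoint from the boundary is also a closed local geodesic of $C$ and hence has length at least $2\pi$.

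For closed local geodesics meeting the outer cycle but not contained in it, I double $\cL$ across its boundary to form $D\cL$ with the reflection isometry $\tau$. The link at each outer vertex of $D\cL$ is a $4$-cycle of length $4\beta > 2\pi$, and the links at the two central vertices are unchanged, so $D\cL$ is locally $\CAT(1)$. Once $D\cL$ is shown to be $\CAT(1)$, uniqueness of geodesics combined with $\tau$-symmetry forces any geodesic between interior points of $\cL$ to remain in $\cL$, making $\cL$ a convex subspace of $D\cL$ and hence $\CAT(1)$ itself. A closed local geodesic in $D\cL$ is either fixed by $\tau$ (and then lies on the outer cycle, length $n\sigma > 2\pi$), contained in a single copy of $\cL$ (handled by the previous paragraph), or crosses the reflection locus transversally, in which case unfolding via $\tau$ on alternating segments transforms it into a closed local geodesic contained in a single copy of $\cL$, again reducible to the previous case.

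The principal obstacle is the unfolding step for closed local geodesics in $D\cL$ that cross the reflection locus repeatedly. The essential point is that the hypothesis $n\sigma > 2\pi$ pins down the length of the shortest closed local geodesic candidate (the outer cycle), and the comparison with the $\CAT(1)$ cone $C$ then controls all other cases.
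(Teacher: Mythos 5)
Your verification of the link conditions and your treatment of interior geodesics are correct, and they follow a genuinely different route from the paper's. The identity $\sin(\sigma/2)=\sin\theta\,\sin(\alpha/2)$ does give $\sigma\le\alpha$, so the central link is a circle of length $n\alpha\ge n\sigma>2\pi$; the dual law of cosines does give base angle $\beta>\pi/2$, so the outer cycle is a closed local geodesic of length $n\sigma>2\pi$; and the comparison with the spherical suspension $C$ of a circle of length $n\alpha>2\pi$ (which is $\CAT(1)$ by Berestovskii) correctly handles closed local geodesics disjoint from the boundary. For comparison, the paper instead cuts $\cL_{\theta,\alpha,n}$ along the "equator" into a disk $\cS$ around the apex and an annulus $\cN$, proves $\cS$ is $\CAT(1)$ by repeated Reshetnyak gluing along $\pi$-convex edges, proves $\cN$ is $\CAT(1)$ via \cite[Theorem II.5.4]{bridsonhaefliger} using convexity of the half-spokes to force any embedded circle to cross all $n$ of them, and reassembles along the $\pi$-convex equators.

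The gap is in your handling of closed local geodesics that meet the outer cycle without being contained in it. The unfolding step fails: if $\gamma$ is a closed local geodesic of $D\cL$ crossing the reflection locus transversally at an interior point of a boundary edge (where $D\cL$ is locally a smooth disk of angle $2\pi$), then folding the excursions back into one copy produces a closed \emph{billiard} trajectory — at each former crossing point the folded curve makes two equal angles $\ne\pi$ on the same side of the edge — not a closed local geodesic of $\cL$, so case (iii) does not reduce to case (ii). The trichotomy is also incomplete: since the link of a doubled outer vertex is a circle of length $4\beta>2\pi$, there are local geodesics of $D\cL$ that run along an arc of the reflection locus and peel off into the interior of one copy at an outer vertex; these are neither $\tau$-fixed, nor interior to one copy, nor transverse. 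Most fundamentally, the reduction is circular: you deduce that $\cL$ is $\CAT(1)$ from $D\cL$ being $\CAT(1)$, but $D\cL$ is a piecewise-spherical $2$-sphere whose cone angles being $\ge 2\pi$ does not by itself preclude short embedded circles, so certifying $D\cL$ requires ruling out exactly the boundary-interacting geodesics you set out to avoid. Note that \cite[Theorem II.5.4]{bridsonhaefliger} only requires excluding isometrically embedded circles of length $<2\pi$, but such circles can still run along boundary arcs and cut through the interior, so this case must be confronted directly — for instance by the paper's device of cutting along the equator and exploiting convexity of the half-spokes in the annular piece.
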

    \begin{proof} Let $\cL_{\theta, \frac{\pi}{3}, 2r}$ be constructed from $2r$ copies of $\Delta_{\theta, \frac{\pi}{3}}$ denoted $\Delta_1, \ldots, \Delta_{2r}$.  Let $s_i^1$ and $s_i^2$ denote the two sides of the triangle $\Delta_i$ with length $\theta$ and let $\Delta_i$ be glued to $\Delta_{i+1}$ by identifying $s_i^2$ with $s_{i+1}^1$ for all $i$, taken modulo $2r$.

        We will use the convex gluing theorem \cite[Theorem II.11.1]{bridsonhaefliger} which in our setting states that if two $\CAT(1)$ spaces are glued by isometry along $\pi$-convex subspaces then the resulting space is also $\CAT(1)$. Here a subspace $A$ of a metric space $(X,d)$ is $\pi$-convex if any two points $x, y$ in $A$ with $d(x,y) < \pi$, can be joined by a geodesic and any such geodesic is contained in $A$.

        We will realize $\cL_{\theta, \frac{\pi}{3}, 2r}$ as $(\cN \sqcup \cS)/\sim$, where $\cN$ and $\cS$ (labeled to indicate ``northern hemisphere and southern hemisphere''), are as follows. Viewing the vertex of $\Delta_i$ with angle $\alpha$ as the south pole of $S^2$, let $\Delta_i^S \subset \Delta_i$ and $\Delta_i^N \subset \Delta_i$ be the intersection of $\Delta_i$ with the southern hemisphere and northern hemisphere of $S^2$, respectively. Let $\underline{s}_i^j = s_i^j \cap \Delta_i^S$, and let $\overline{s}_i^j = s_i^j \cap \Delta_i^N$ for $j =1,2$.
        Let $\cS = \left(\bigsqcup \Delta_i^S\right)/\sim$ and $\cN = \left(\bigsqcup \Delta_i^N \right)/\sim$ be the union of these spaces after the gluing that defines $\cL_{\theta, \frac{\pi}{3}, 2r}$. The {\it equator} of $\Delta_i^S$ and $\Delta_i^N$ is the intersection of the equator in $S^2$ with $\Delta_i^S$ and $\Delta_i^N$, respectively. The {\it equator} in $\cS$ and $\cN$ is the union of the equators in the cells after gluing.
        Then, $\cL_{\theta, \frac{\pi}{3}, 2r}$ is the union of $\cS$ and $\cN$ glued along their equators by an isometry. The equators in $\cS$ and $\cN$ are $\pi$-convex, since if points in $\cS$ or $\cN$ have distance less than $\pi$, then there is an isometric embedding of the subsegment of the equator containing these points to $S^2$. It remains to prove $\cS$ and $\cN$ are $\CAT(1)$.

        We first show that $\cS$ is $\CAT(1)$. Since each side of $\Delta_i^S$ is convex, the space $(\Delta_i^S \cup \Delta_{i+1}^S)/\sim$ obtained by gluing these triangles is $\CAT(1)$ by the convex gluing theorem. Inductively, the spaces $\cS_1:=(\Delta_1^S \cup \Delta_2^S \cup \ldots \cup \Delta_{r}^S)/\sim$ and $\cS_2:=(\Delta_{r+1}^S \cup \Delta_{r+2}^S \cup \ldots \cup \Delta_{2r}^S)/\sim$ obtained by gluing on one triangle at a time along its boundary are each $\CAT(1)$. The space $\cS$ is isometric to $(\cS_1 \cup \cS_2)/\sim$, where the edges $\underline{s}_1^1 \cup \underline{s}_r^2  \subset \cS_1$ are glued to the edges $\underline{s}_{r+1}^1 \cup \underline{s}_{n}^2 \subset \cS_2$. It remains to prove each of these subsets is $\pi$-convex in $\cS_1$ and $\cS_2$, respectively. Let $\rho = \underline{s}_1^1\cup \underline{s}_r^2$. Let $P,Q \in \rho$ with $d_{\cS_1}(P,Q)<\pi$, and let $\gamma$ be the geodesic in $\cS_1$ from $P$ to $Q$. Let $Q' = \gamma \cap \underline{s}_3^2 = \underline{s}_4^1$. Since $\Delta_i$ is a copy of $\Delta_{\theta, \pi/3}$ , $\Delta_1^S \cup \Delta_2^S \cup \Delta_3^S$ isometrically embeds in $S^2$ and $\underline{s}_1^1 \cup \underline{s}_3^2$ is contained in a great circle of $S^2$. Since $d_{\cS_1}(P, Q')<\pi$, the geodesic $[P,Q']$ is the concatenation of the geodesics $[P,s]$ and $[s,Q']$, where $s$ denotes the image of the south pole of each triangle in $\cS$. By the triangle inequality, $d_{\cS_1}(Q,s) \leq d_{\cS_1}(Q,Q') + d_{\cS_1}(Q',s)$, so $\gamma$ is contained in $\rho$  as desired. The argument for $\underline{s}_{r+1}^1 \cup \underline{s}_{2r}^2$ is analogous, so $\cS$ is $\CAT(1)$ by the convex gluing theorem.

        We now show that $\cN$ is $\CAT(1)$. The space $\cN$ is a $M^2_1$-polyhedral complex with finite shapes, so it suffices to prove $\cN$ satisfies the link condition and contains no isometrically embedded circles of length less than $2\pi$ by \cite[Theorem II.5.4]{bridsonhaefliger}. Vertex links of $\cN$ are paths, so they satisfy the link condition. Each edge $\overline{s}_i^j$ is convex in $\cN$. Thus, an isometrically embedded circle crosses each such edge at most once, and hence crosses each such edge exactly once. The distance between $\overline{s}_i^1$ and $\overline{s}_i^2 \geq \sigma$, so the length of an isometrically embedded circle is greater than $2\pi$ by assumption. Thus, $\cN$ is $\CAT(1)$. Therefore, $\cL_{\theta, \frac{\pi}{3}, 2r}$ is $\CAT(1)$ by the convex gluing theorem.

        To prove that $\cL_{\theta, \frac{2\pi}{3}, r}$ is also $\CAT(1)$ we need a very slight variation in the above proof. Note that each $\Delta_i$ is a copy of $\Delta_{\theta, 2\pi/3}$, therefore, $\underline{s}_1^1 \cup \underline{s}_3^2$ is no longer isometric to a subsegment of a great circle of $S^2$. In this case, we look at $\Delta_1^S \cup \Delta_2^S$ which is isometrically embedded in $S^2$ and contains a segment $\gamma'$ containing $\underline{s}_1^1$ such that $\gamma'$ is isometric to half a great circle of $S^2$. We now chose $Q'$ to be the intersection of $[P,Q]$ with $\gamma'$. Rest of the proof follows as above.
    \end{proof}

In addition, we have the following lemma that relates $\theta_1$ and $y_0$, the height of the horosphere $\partial \cH_0$ on which the caps are glued.

\begin{figure}
    \begin{centering}
	\begin{overpic}[width=.4\textwidth, tics=5]{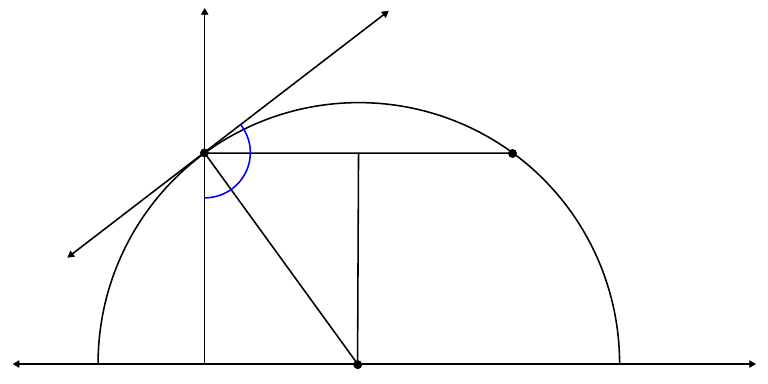}
        \put(20.5,30){\small{$ih$}}
        \put(68,30){\small{$D+ih$}}
        \put(33,25.5){\small{\textcolor{blue}{$\theta$}}}
    \end{overpic}
	\caption{\small{The blue angle between hyperbolic geodesics indicated in the figure is relevant to building a $\CAT(-1)$ model geometry. Its relationship between the values $h$ and $D$ is computed in \Cref{lemma:angles}.  }}
	\label{figure:angles}
    \end{centering}
    \end{figure}

\begin{lemma} \label{lemma:angles}
        View $\Hy^2$ in the upper halfspace model. Let $\gamma$ denote the vertical geodesic ray $(0,ih]$ for some $h >0$. Let $\gamma'$ be a geodesic segment from $ih$ to $D+ih$ for $D>0$.  Let $\theta = \angle_{ih}(\gamma, \gamma')$. See \Cref{figure:angles}. Then $\theta = \pi - \arctan\bigl(\frac{2h}{D}\bigr)$.
    \end{lemma}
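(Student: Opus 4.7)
The plan is to work in the upper half-space model, where hyperbolic angles coincide with Euclidean angles, and to compute the Euclidean tangent vectors to the two geodesic rays at their common endpoint $ih$. First I would observe that the ray $\gamma = (0,ih]$ meets $ih$ along the imaginary axis, so the outgoing tangent direction to $\gamma$ at $ih$ (pointing into $\gamma$, i.e.\ toward $0$) is the downward unit vector $(0,-1)$.

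Next I would identify $\gamma'$ explicitly. Since $ih$ and $D+ih$ are not on a common vertical line, $\gamma'$ is the unique Euclidean semicircle orthogonal to the real axis through these two points. Writing the center as $(c,0)$ with radius $r$, the equations $c^2 + h^2 = r^2$ and $(c-D)^2 + h^2 = r^2$ force
\[
c = \frac{D}{2}, \qquad r = \sqrt{\tfrac{D^2}{4} + h^2}.
\]
The Euclidean tangent to this semicircle at the point $(0,h)$ is perpendicular to the radius vector $(0,h)-(D/2,0) = (-D/2,\,h)$. Selecting the orientation that points toward $D+ih$ gives an outgoing tangent direction proportional to $(2h,\,D)$.

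Finally I would compute the Euclidean angle $\theta$ between $(0,-1)$ and $(2h,D)$. Setting $\beta = \arctan(2h/D)$, the vector $(2h,D)$ makes angle $\beta$ with the upward vertical $(0,1)$, and hence angle $\pi - \beta$ with the downward vertical $(0,-1)$. This yields
\[
\theta \;=\; \pi - \arctan\!\left(\tfrac{2h}{D}\right),
\]
as claimed. I do not anticipate any obstacle here; once angles are reduced to Euclidean ones in the upper half-plane, the verification is a direct application of the tangent-perpendicular-to-radius computation for a semicircular geodesic.
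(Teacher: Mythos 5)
Your proof is correct and takes essentially the same approach as the paper's: both use conformality of the upper half-plane model to reduce to a Euclidean angle computation, identify $\gamma'$ as the semicircle centered at $\frac{D}{2}$, and use the tangent-perpendicular-to-radius fact at $ih$. Your explicit tangent-vector calculation is just a coordinatized version of the paper's angle chase with $\theta_1+\theta_2=\frac{\pi}{2}$ and $\tan\theta_2=\frac{2h}{D}$.
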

\begin{proof}
The conclusion follows from elementary Euclidean geometry as follows; see
\Cref{figure:angles}. The segment $\gamma'$ is contained in a semicircle $C$ that meets the real axis at right angles. Let $\ell$ be the tangent line to $C$ based at $ih$, and let $\ell'$ be the subray of $\ell$ with nonnegative real coordinates. Then $\theta = \angle_{ih}(\gamma, \ell')$. Let $\sigma$ be the Euclidean geodesic between $ih$ and $D+ih$, and let $\sigma'$ be the Euclidean geodesic between $ih$ and $\frac{D}{2}$. Let $\theta_1 = \angle_{ih}(\ell',\sigma)$, and let $\theta_2 = \angle_{ih}(\sigma,\sigma')$. Then, $\theta_1+\theta_2 = \frac{\pi}{2}$, $\tan \theta_2 = \frac{2h}{D}$, and $\theta = \frac{\pi}{2} +\theta_1 = \pi - \theta_2$, as desired.
\end{proof}

 \begin{prop} \label{prop:y_0}
    There exists $y_0>0$ so that if $\cY_\Gamma$ is the complex built using truncated blocks and gluing caps relative to a horoball $\cH_0$ given by $w_3 \geq y_0$ as in \Cref{subsection:thespaceYm}, then the link in $\cY_\Gamma$ of each vertex of a cap is $\CAT(1)$.
 \end{prop}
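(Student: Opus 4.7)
The plan is, roughly: (1) apply \Cref{lemma_LaCAT1} to turn $\CAT(1)$-ness of each cap-vertex link into an inequality of the form $n\sigma>2\pi$; (2) use the Spherical Law of Cosines and \Cref{lemma:angles} to translate this into an inequality in $y_0$; (3) take $y_0$ large enough to handle all three cap types simultaneously. By \Cref{nota:cap_links}, the three link types that arise are $\cL_{\theta_1,\pi/3,2r}$ for $C_1$ with $r\geq 4$, $\cL_{\theta_2,2\pi/3,q}$ or $\cL_{\theta_2,2\pi/3,r}$ for $C_2$ with $q,r\geq 4$, and $\cL_{\theta_3,\pi/3,2p}$ (or $2q$, $2r$) for $C_3$ with $p,q,r\geq 4$. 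Since $n\sigma>2\pi$ is tightest when $n$ is smallest and every admissible label is at least $4$, it suffices to establish the three strict inequalities $\sigma_1>\pi/4$, $\sigma_2>\pi/2$, and $\sigma_3>\pi/4$.

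The Spherical Law of Cosines gives $\cos\sigma_i=\cos^2\theta_i+\cos\alpha_i\sin^2\theta_i$, so $\sigma_i\to\alpha_i$ as $\theta_i\to\pi/2$. Since $\alpha_1=\alpha_3=\pi/3>\pi/4$ and $\alpha_2=2\pi/3>\pi/2$, each of the three target inequalities is strictly satisfied in the limit $\theta_i\to\pi/2$, and hence by continuity for $\theta_i$ sufficiently close to $\pi/2$. Next, by \Cref{lemma:angles} applied in the vertical plane containing a single edge of the hyperbolic polygon $P'$ (whose endpoints lie on the horosphere $\partial\cH_0$ at Euclidean distance $D_i$ apart), we have $\theta_i=\pi-\arctan(2y_0/D_i)$, where $D_1=\sqrt{3}$ (the triangles tiling $T_2(\cF)$ for $C_1$ have edge $\sqrt{3}$) and $D_2=D_3=1$ (hexagons and triangles of edge $1$ for $C_2$ and $C_3$). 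Thus $\cos^2\theta_i=D_i^2/(4y_0^2+D_i^2)\to 0$, so $\theta_i\to\pi/2$ as $y_0\to\infty$, confirming that each of the three inequalities is satisfied for all $y_0$ past some threshold.

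Substituting the explicit formula for $\cos^2\theta_i$ into the three $\sigma_i$-inequalities yields three polynomial lower bounds on $y_0^2$; taking the maximum gives a universal $y_0$. A short calculation shows the most restrictive bound is from $C_1$ and simplifies to $y_0^2>3\sqrt{2}/4$; the bounds from $C_2$ and $C_3$ are weaker. In particular, $y_0=1.5$ (as noted in \Cref{nota:blocks}) is a concrete valid choice, and \Cref{lemma_LaCAT1} then completes the proof. The only obstacle is routine bookkeeping: identifying the worst cases among admissible labels $p,q,r\geq 4$ and carrying out the three computations; no new conceptual ingredient is required.
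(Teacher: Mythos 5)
Your proposal is correct and follows essentially the same route as the paper: reduce to the inequalities $2r\sigma_{1,3}>2\pi$ and $r\sigma_2>2\pi$ via \Cref{lemma_LaCAT1}, use the Spherical Law of Cosines together with \Cref{lemma:angles} (with $D_1=\sqrt{3}$, $D_2=D_3=1$) to express each $\sigma_i$ in terms of $y_0$, and take $y_0$ past the worst threshold. Your explicit bound $y_0^2>3\sqrt{2}/4$ from the $C_1$ case agrees with the paper's appendix computation ($y_0\gtrsim 1.0298$), and the identification of $C_1$ as the binding constraint matches as well.
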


\begin{proof}
    Let $v$ be a vertex of a cap. As described in \Cref{nota:cap_links}, there are three cases up to isometry, corresponding to caps $C_1$, $C_2$ and $C_3$.
Consider the case $i=1$. Then $\Lk(v_1, \cY_\Gamma)$ is isometric to $\cL_{\theta_1, \frac{\pi}{3}, 2r}$, where the spherical triangle $\Delta_{\theta_1}$ used to build $\cL_{\theta_1, \frac{\pi}{3}, 2r}$ is equal to $\Sigma_1$. As shown in \Cref{lemma_LaCAT1}, this complex is $\CAT(1)$ if $2 r \sigma_1 > 2 \pi$. Thus, we want $\sigma_1 > \frac{\pi}{r}$. Since $r \geq 4$, it suffices to ensure
\begin{equation*}\label{eqn:sigma_1}\sigma_1 > \frac{\pi}{4}. \end{equation*}

The values $\sigma_1$ and $\theta_1$ are related by the Spherical Law of Cosines:
\begin{equation*}\label{eqn:sigma_1 cosine}
\sigma_1 = \arccos\left(\cos^2 \theta_1+ \frac{1}{2}\sin^2 \theta_1\right).
\end{equation*}
Finally, by \Cref{lemma:angles},
\begin{equation*}\label{eqn:theta y_0}
\theta_1 = \pi - \arctan\left(\frac{2y_0}{D}\right)
\end{equation*}
for $D=\sqrt{3}$.
We get a similar set of equations for $i=2$ and $i=3$ and appropriate value of $D$. Any $y_0$ that satisfies all three sets of equations thus satisfies the conclusion of the proposition. For instance, one may take $y_0=1.5$. An explicit calculation to solve these system of equations is provided in the Appendix (\Cref{sec:appendix}).
\end{proof}

We end by recording one other feature of the geometry of the caps required later in the proof of \Cref{lemma:disjoint_prisms}.

    \begin{lemma}\label{lem:height_of_cap}
        The height of a cap in $\cY_\Gamma$ is at most $\sqrt{y_0^2+1}$, where $y_0$ is the height of the horosphere $\partial \cH_0$ in $\mathbb{U}^3$ used to define a truncated block.
    \end{lemma}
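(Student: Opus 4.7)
The plan is to reduce to a direct computation in the upper half-space model $\mathbb{U}^3$ using the fact that each cap is built over a regular Euclidean polygon of bounded circumradius lying on the horosphere $\partial\cH_0 = \{w_3 = y_0\}$. First I would enumerate the three isometry types of caps identified in \Cref{nota:cap_links}: these are built over an equilateral triangle of side $\sqrt{3}$, a regular hexagon of side $1$, and an equilateral triangle of side $1$. A short calculation gives that the circumradius of each of these polygons is at most $1$ (the first two have circumradius exactly $1$, while the third has circumradius $1/\sqrt{3}$).

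The main step is to show that a cap $C_P$ built over a regular polygon $P$ of circumradius $r$ has height at most $\sqrt{r^2 + y_0^2}$. Let $p_0 \in \R^2 = \partial\mathbb{U}^3 \setminus \{\infty\}$ denote the foot of the vertical axis through the Euclidean center of $P$. Each vertex of $P$ sits at Euclidean distance exactly $\sqrt{r^2 + y_0^2}$ from $p_0$, so the vertex set of $P$ lies on the Euclidean hemisphere of radius $R := \sqrt{r^2 + y_0^2}$ centered at $p_0$. This hemisphere represents a totally geodesic hyperbolic plane $H$ in $\mathbb{U}^3$, and hence the hyperbolic polygon $P'$ with the same vertex set is contained in $H$. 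On the other hand, since $P$ is a convex Euclidean polygon of circumradius $r$, every point of $P$ lies at Euclidean distance at most $R$ from $p_0$, so $P$ itself lies in the closed hyperbolic half-space $H^-$ bounded by $H$ and containing the vertical ray above $p_0$.

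Finally, since the cap $C_P$ is the hyperbolic convex hull of $P \cup P'$, and both $P$ and $P'$ lie in the hyperbolically convex set $H^-$, we have $C_P \subset H^-$. The supremum of $w_3$ over $H^-$ equals $R$, attained at the apex of the hemisphere, so the height of $C_P$ is at most $\sqrt{r^2 + y_0^2}$. Combined with the bound $r \leq 1$ from the enumeration above, this yields the desired inequality $\sqrt{y_0^2 + 1}$.

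I do not expect any significant obstacle here, as the argument is essentially a single geometric observation: the entire cap is trapped beneath a specific hemisphere whose apex height is immediately computable. The only point requiring mild care is to notice that a regular Euclidean polygon of circumradius $r$ on the horosphere is contained in the Euclidean ball of radius $R$ about $p_0$, which follows from elementary Euclidean convexity and the Pythagorean relation between horizontal and vertical distances in $\mathbb{U}^3$.
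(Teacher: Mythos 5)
Your argument is correct and follows essentially the same route as the paper: both identify the Euclidean hemisphere through the vertices of the base polygon (orthogonal to $\partial\mathbb{U}^3$, of radius $\sqrt{r^2+y_0^2}$ where $r$ is the circumradius) and bound the cap's height by the apex of that hemisphere. Your explicit convexity step showing $C_P$ lies in the half-space bounded by that hemisphere, and your circumradius values for the three cap types, are both correct.
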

    \begin{proof} Let $C$ be a cap built over a Euclidean polygon $C'$ contained in the horosphere $\partial \cH_0$. Let $\gamma$ be the Euclidean segment joining the center of $C'$ to a vertex of $C'$.  Let $S$ be the circle circumscribing $C'$ in the horosphere. Then the hyperbolic polygon bounding the cap $C$ is contained in a Euclidean hemisphere that intersects the horosphere in the circle $S$ and is orthogonal to the boundary of $\mathbb{U}^3$. The height of the cap $C$ is the radius of this hemisphere, which equals the hypotenuse of the right-angled triangle with side lengths $y_0$ and the length of $\gamma$.

        Let the height of the cap $C$ be $h$. There are three cases to check, with the geometry described in \Cref{nota:cap_links}. If $C$ is constructed from a $\Delta(3,3,r)$ plane, the side length of $C'$ is $\sqrt{3}$ and the length of $\gamma$ equals $1$, so $h = \sqrt{y_0^2+1}$. If $C$ is constructed from a $\Delta(3,q,r)$ plane, the side length of $C'$ is $1$ and the length of $\gamma$ equals $\frac{1}{\sqrt{3}}$, so $h = \sqrt{y_0^2+\frac{1}{3}}$. Lastly, if $C$ is constructed from a $\Delta(p,q,r)$ plane, the side length of $C'$ is $1$ and the length of $\gamma$ equals $1$, so $h = \sqrt{y_0^2+1}$. The lemma now follows.
    \end{proof}

 \subsubsection{\texorpdfstring{A $\CAT(1)$ metric on the $2$-skeleton of the $n$-simplex.}{A CAT(-1) metric on the 2-skeleton of the m-simplex.}}\label{subsection:2skeletonnsimplex}

    We show in this subsection that a certain ``maximally symmetric'' piecewise-spherical metric on the $2$-skeleton of the $m$-simplex is a $\CAT(1)$. This space appears as a link in the complex $\cY_{\Gamma}$ at the vertex $x_0$ as in Figure~\ref{figure:blockAndDavisHex}.

    Let $\cL_m$ denote the $2$-skeleton of the $(m-1)$-simplex equipped with the piecewise-spherical metric in which each $2$-cell is isometric to the equilateral triangle in the metric unit $2$-sphere $S^2$ with angles $\frac{2\pi}{3}$. For example, the space $\cL_4$ is isometric to the unit Euclidean sphere.

    Recall relevant definitions from \cite[Chapter 7]{bridsonhaefliger}. A \emph{geodesic $k$-simplex} in $M_{\kappa}^k$, is the convex hull of $(k+1)$ points in general position.  A spherical simplicial complex $L$ is a \emph{metric flag complex} if it satisfies the following condition: if a set of vertices $\{v_0, \ldots, v_k\}$ are pairwise joined by edges $e_{ij}$ in $L$ and there exists a spherical $k$-simplex in $\mathbb{S}^k$ whose edge lengths are $\ell(e_{ij})$, then $\{v_0, \ldots, v_k\}$ span a $k$-simplex in~$L$.

    \begin{lemma}[Moussong's lemma \cite{moussong}, see \cite{bridsonhaefliger}]
        If all the edges of a spherical simplicial complex $L$ have lengths at least $\frac{\pi}{2}$, then $L$ is CAT(1) if and only if it is a metric flag complex.
    \end{lemma}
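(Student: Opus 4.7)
The plan is to prove the two directions of Moussong's lemma separately. For the forward direction ($\CAT(1)$ implies metric flag), I would argue by contradiction: suppose $\{v_0, \ldots, v_k\}$ are pairwise joined by edges realizing the edge lengths of a spherical $k$-simplex $\sigma \subset \mathbb{S}^k$ but do not span a simplex in $L$, with $k$ minimal. Then every proper sub-collection spans a face of $L$, so the boundary $\partial \sigma$ embeds as a piecewise-spherical $(k-1)$-sphere $\Sigma \subset L$. By minimality and standard diameter estimates, this embedding is isometric on balls of radius less than $\pi$. I would then exhibit a pair of points in $L$ at distance less than $\pi$ joined by two distinct geodesics---one routed through $\Sigma$ and one that would pass through the missing interior of $\sigma$---contradicting uniqueness of short geodesics in a $\CAT(1)$ space.

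The reverse direction requires more work and I would prove it by induction on $\dim L$. For the base case $\dim L = 1$, the complex is a metric graph with every edge of length at least $\pi/2$. Here the metric flag condition is essentially vacuous, and the criterion reduces, via the standard characterization of one-dimensional $\CAT(1)$ complexes, to showing that $L$ contains no isometrically embedded circle of length less than $2\pi$; this follows from the long-edge hypothesis together with the observation that any embedded cycle contains at least four edges.

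For the inductive step, assume the result in dimensions less than $n$ and let $\dim L = n$. I would first verify that for every vertex $v \in L$, the link $\Lk(v,L)$ satisfies the same hypotheses in dimension $n-1$. Its edges correspond to two-simplices of $L$ at $v$, and their lengths are the angles at $v$ of those two-simplices. A standard spherical-geometry fact---that all face angles of a spherical simplex whose edge lengths are at least $\pi/2$ are themselves at least $\pi/2$---supplies the edge-length bound. The metric flag condition transfers to the link because a collection of link-vertices joined pairwise by edges that realize a spherical simplex corresponds to simplices at $v$ whose opposite vertices in $L$ are pairwise joined by long edges realizing a higher-dimensional spherical simplex, which the flag condition on $L$ fills in. By the inductive hypothesis, $\Lk(v,L)$ is $\CAT(1)$, so by \Cref{prop:linkcond} the complex $L$ is locally $\CAT(1)$.

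The main obstacle is the final step, upgrading local $\CAT(1)$ to global $\CAT(1)$. By the Cartan--Hadamard theorem for $\CAT(1)$ spaces, it suffices to rule out closed geodesics of length less than $2\pi$. Supposing $\gamma$ is such a closed geodesic of minimal length, I would analyze its sequence of transitions between open simplices. Using that every vertex link is $\CAT(1)$ of diameter at least $\pi$, together with the metric flag condition to produce shortcutting simplices whenever $\gamma$ threads through a pattern of vertices with pairwise distances less than $\pi$, one derives a contradiction either to minimality of $\gamma$ or to its being a local geodesic. This combinatorial-geometric argument, essentially Moussong's original one, is the heart of the proof; the delicate point is that the shortcut simplex furnished by flagness must be shown to provide a genuinely strictly shorter path, which requires tracking how $\gamma$ enters and exits high-dimensional cells and using the long-edge hypothesis to exclude degenerate configurations.
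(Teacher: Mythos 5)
The paper does not prove this statement: it is quoted as a known theorem of Moussong, with the proof deferred to \cite{moussong} and \cite{bridsonhaefliger}, so there is no internal argument to compare yours against. Judged on its own terms, your proposal is a reasonable roadmap of the standard proof (forward direction by exhibiting a short closed geodesic in the boundary of a missing simplex; reverse direction by induction on dimension via links, then a local-to-global step), but it does not constitute a proof, because the step you yourself identify as ``the heart of the proof'' --- showing that a locally $\CAT(1)$, metric flag, piecewise-spherical complex with all edges $\geq \frac{\pi}{2}$ contains no closed geodesic of length less than $2\pi$ --- is only described, not carried out. That step is essentially all of the content of Moussong's lemma: it occupies many pages in Moussong's thesis (and in Davis's Appendix I), requires introducing auxiliary structures for tracking how a closed geodesic crosses cells, and is exactly where the hypotheses ``edges $\geq \frac{\pi}{2}$'' and ``metric flag'' interact. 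Writing ``one derives a contradiction either to minimality of $\gamma$ or to its being a local geodesic'' is a statement of intent, not an argument.

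Two further points. First, your base case is wrong as stated: for a one-dimensional $L$ the metric flag condition is \emph{not} vacuous --- it is precisely the assertion that no three vertices are pairwise joined by edges realizing a spherical triangle, i.e.\ that no $3$-cycle has perimeter less than $2\pi$ --- and embedded cycles certainly can have three edges (each of length in $[\frac{\pi}{2},\pi)$, hence total length possibly below $2\pi$). The correct base case uses flagness to exclude short $3$-cycles and the edge-length bound to handle cycles with at least four edges. Second, in the forward direction the claim that $\partial\sigma$ embeds ``isometrically on balls of radius less than $\pi$'' is itself a nontrivial lemma (full subcomplexes of large piecewise-spherical complexes are $\pi$-convex) that needs proof before the two-geodesics contradiction can be run. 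None of this suggests your outline is misguided --- it follows Moussong's architecture faithfully --- but as submitted it is an annotated table of contents for the proof rather than the proof.
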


We now use Moussong's Lemma to prove the following theorem.

\begin{thm} \label{thm:cat1}
    The metric space $\cL_m$ is $\CAT(1)$ for all $m \geq 3$.
\end{thm}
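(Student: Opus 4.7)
The plan is to apply Moussong's Lemma as stated just above. Since $\cL_m$ is a spherical simplicial complex by construction, the task reduces to verifying two things: first, that every edge has length at least $\pi/2$; and second, that $\cL_m$ is a metric flag complex.

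For the edge-length condition, each $2$-cell of $\cL_m$ is an equilateral spherical triangle with every angle equal to $\frac{2\pi}{3}$. Letting $s$ denote the common side length of such a triangle, the Spherical Law of Cosines (for angles) gives
\[
\cos\left(\tfrac{2\pi}{3}\right) = -\cos^2\left(\tfrac{2\pi}{3}\right) + \sin^2\left(\tfrac{2\pi}{3}\right)\cos s,
\]
which I would solve to obtain $\cos s = -\tfrac{1}{3}$, hence $s = \arccos(-\tfrac{1}{3}) > \tfrac{\pi}{2}$. Thus every edge of $\cL_m$ has length $\arccos(-\tfrac{1}{3})$, which is strictly greater than $\tfrac{\pi}{2}$, as required.

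For the metric flag condition, I must show that whenever $k+1$ vertices of $\cL_m$ are pairwise joined by edges and their pairwise edge-lengths can be realized by a (nondegenerate) spherical $k$-simplex in $\mathbb{S}^k$, those vertices span a $k$-simplex in $\cL_m$. Since $\cL_m$ is the full $2$-skeleton of the $(m-1)$-simplex, any two vertices are already joined by an edge and any three vertices span a $2$-cell; thus the conditions for $k=0,1,2$ are automatic. The heart of the argument is to rule out $k\geq 3$: this is done by showing that no set of $k+1\geq 4$ unit vectors in any $\mathbb{S}^n$ can have pairwise inner products all equal to $-\tfrac{1}{3}$ while being affinely independent. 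Concretely, such a configuration would give a Gram matrix of the form $\tfrac{4}{3}I_{k+1} - \tfrac{1}{3}J_{k+1}$, where $J_{k+1}$ is the all-ones matrix. A short calculation shows its eigenvalues are $\tfrac{4}{3}$ with multiplicity $k$ and $\tfrac{3-k}{3}$ with multiplicity $1$; in particular, this matrix fails to be positive definite exactly when $k\geq 3$ (it is positive semidefinite but degenerate at $k=3$, and indefinite for $k\geq 4$). Hence no spherical $k$-simplex in $\mathbb{S}^k$ realizes the required edge lengths when $k\geq 3$, and the metric flag condition is vacuously satisfied in those cases.

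With both hypotheses of Moussong's Lemma established, the conclusion is immediate: $\cL_m$ is $\CAT(1)$ for every $m\geq 3$. I expect the main (though short) obstacle to be a clean derivation of the eigenvalue computation above and the clear articulation that a degenerate Gram matrix does not correspond to a spherical $k$-simplex, so the metric flag condition really is trivially satisfied in high codimension; everything else is essentially direct application of standard spherical trigonometry and Moussong's Lemma.
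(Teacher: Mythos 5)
Your proposal is correct and follows the paper's strategy exactly: apply Moussong's Lemma, compute the common edge length $\arccos(-\tfrac{1}{3}) > \tfrac{\pi}{2}$, observe that the conditions for $k \le 2$ are automatic because $\cL_m$ is the full $2$-skeleton of a simplex, and then show the metric flag condition is vacuous for $k \ge 3$ because no nondegenerate spherical $k$-simplex realizes these edge lengths. The only place you diverge is in that last verification: the paper argues synthetically that for four pairwise-joined vertices the three spherical triangles meeting at a vertex have angle sum exactly $2\pi$, forcing the four points onto a copy of $\mathbb{S}^2$ and hence out of general position, and then deduces the case $k > 3$ from the case $k=3$. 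Your Gram matrix computation (eigenvalues $\tfrac{4}{3}$ with multiplicity $k$ and $\tfrac{3-k}{3}$ with multiplicity $1$, so the matrix is singular at $k=3$ and indefinite for $k \ge 4$) reaches the same conclusion, handles all $k \ge 3$ uniformly rather than by reduction to $k=3$, and makes the degeneracy at $k=3$ completely explicit; the paper's version is more geometric and requires no linear algebra. Both are complete proofs.
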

\begin{proof}
The length of each edge in $\mathcal{L}_m$ is equal to $\ell_0 =\arccos(\frac{-1}{3})$, which is greater than $\frac{\pi}{2}$. Therefore, by Moussong's Lemma, it suffices to show that $\mathcal{L}_m$ is a metric flag complex. For a collection of three vertices $\{v_0, v_1, v_2\}$ in $\mathcal{L}_m$, there exists a unique (up to isometry) spherical 2-complex in $\mathbb{S}^2$ whose edge lengths are $\ell_0$, and any three vertices in $\mathcal{L}_m$ span a 2-simplex. Note that the angle at each vertex in such a 2-simplex is $\frac{2\pi}{3}$.

Next consider a collection of four vertices in $\mathbb{S}^3$, each pair joined by a geodesic of length $\ell_0$. Each subcollection of three vertices spans a unique spherical 2-simplex, and the sum of angles of the three 2-simplices incident at a vertex is $2\pi$. Therefore, the four vertices lie on a copy of $\mathbb{S}^2$ and are not in general position. Thus, there is no spherical 3-simplex in $\mathbb{S}^3$ with edge lengths $\ell_0$. This implies there is no spherical $k$-simplex in $\mathbb{S}^k$ with edge lengths $\ell_0$ for $k \geq 3$, completing the proof.
\end{proof}

\subsubsection{\texorpdfstring{Proof of $\CAT(-1)$ model geometry.}{Proof of CAT(-1) model geometry.}} \label{subsec:remaining_links}

    \begin{thm} \label{thm:YGammaCAT-1}
        The space $\cY_{\Gamma}$ is $\CAT(-1)$.
    \end{thm}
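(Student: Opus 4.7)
The plan is to combine Gromov's link condition with the Cartan--Hadamard theorem. Since $\cY_\Gamma$ has finitely many isometry types of cells, \Cref{prop:comlenspace} ensures it is a complete geodesic space, and \Cref{prop:linkcond} reduces the local $\CAT(-1)$ condition to verifying that the link $\Lk(v, \cY_\Gamma)$ is $\CAT(1)$ for every vertex $v \in \cY_\Gamma$. Combined with simple connectivity (\Cref{lemma:cYsimplyconn}) and the Cartan--Hadamard theorem for complete, simply connected, locally $\CAT(-1)$ length spaces, this will give the desired global $\CAT(-1)$ property.

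The remainder of the proof is a case analysis over vertex types. The vertices of $\cY_\Gamma$ split into four classes coming from the block-and-cap construction: the central vertex type $x_0$ of the truncated blocks; the edge vertex type $x_{ij}$ on tetrahedron edges $E(ij)$; the face vertex type $x_i$ on pentagonal faces; and the vertices lying on horospherical triangles. Vertices in the last class are either interior points of attached horoballs (in the Euclidean triangle-group case, where the horoball contribution to the link is a smooth hyperbolic piece and pairs $\CAT(1)$-ly with the block side) or else cap vertices $v_i$ (in the hyperbolic triangle-group case), whose links have the form $\cL_{\theta, \alpha, n}$ and are $\CAT(1)$ by \Cref{prop:y_0}, using the choice of horosphere height $y_0$ made there. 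The link at $x_0$ is isometric to $\cL_m$, as identified in the discussion preceding \Cref{thm:cat1}, and is $\CAT(1)$ by that theorem.

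The main remaining work is the link verification at the interior block vertices $x_i$ and $x_{ij}$. At such a vertex, the link is a piecewise-spherical complex obtained by summing the link contributions from each truncated block $\hat{B}(ijk)$ containing the vertex; the combinatorics of the gluing is governed by the triangles of $\Gamma$ through the relevant vertex or edge $\{i,j\}$. Since all blocks are isometric pieces of a regular ideal tetrahedron with dihedral angles $\pi/3$, each single-block contribution is an explicit spherical polygon whose vertex angles are fixed by the tetrahedral geometry. Verifying $\CAT(1)$ then amounts to checking either Moussong's metric-flag condition (where the edge lengths $\geq \pi/2$ hypothesis holds after a routine computation) or, directly, that no closed geodesic in the link has length less than $2\pi$.

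The main obstacle will be the edge vertex $x_{ij}$, where potentially many blocks may glue along the edge $E(ij)$ (one for each triangle of $\Gamma$ through $\{i,j\}$, together with $m_{ij}$-fold replication around $E(ij)$), and where the kite contribution has angle $\pi/3 < \pi/2$ at $x_{ij}$. One has to enumerate the possible short closed loops in this link: each such loop alternates between arcs of length $\pi/3$ coming from kites and arcs of length $\pi/2$ coming from pentagonal faces. The hypothesis $m_{ij} \geq 3$ for every edge of $\Gamma$, together with the way pentagonal faces spread out the kite contributions, ensures that any closed geodesic has length at least $2\pi$, completing the verification.
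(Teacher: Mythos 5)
Your overall strategy (Cartan--Hadamard plus the ideal-polyhedral link condition of \Cref{prop:linkcond}, then a case analysis over vertex types) is the same as the paper's, and your treatment of $x_0$ (via $\cL_m$ and \Cref{thm:cat1}) and of the cap vertices (via \Cref{prop:y_0}) matches. The gap is in the verification at $x_i$ and especially $x_{ij}$, which you correctly identify as the hard case but do not actually resolve. Moussong's lemma is unavailable at $x_{ij}$: the link there contains edges of length $\pi/3 < \pi/2$ (coming from the $2m_{ij}$ kites around the edge $E(ij)$), so its hypothesis fails outright. Your fallback --- enumerating ``short closed loops'' that ``alternate between arcs of length $\pi/3$ \ldots and arcs of length $\pi/2$'' --- treats the link as a metric graph, but $\Lk(x_{ij},\cY_\Gamma)$ is a \emph{two-dimensional} piecewise-spherical complex (each truncated block contributes a spherical triangle, not an arc), and closed geodesics in such a complex need not follow edges. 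Checking absence of short embedded circles in the $1$-skeleton proves nothing, and even the correct criterion \cite[Theorem II.5.4]{bridsonhaefliger} requires controlling all isometrically embedded circles together with the vertex links inside the link complex; the appeal to $m_{ij}\geq 3$ as stated is an assertion, not an argument.

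The paper closes this gap by identifying the global structure of these links as spherical joins: $\Lk(x_{ij},\cY_\Gamma)$ is the spherical join of $m-2$ points with a circle of length $2m_{ij}\cdot\frac{\pi}{3}\geq 2\pi$, and $\Lk(x_i,\cY_\Gamma)$ is the join of $S^0$ with the complete graph on $m-1$ vertices with all edges of length $\frac{2\pi}{3}$; both are then $\CAT(1)$ by \cite[II.3.15]{bridsonhaefliger}, with $m_{ij}\geq 3$ entering exactly to make the circle factor long enough. You would need this (or an equivalent two-dimensional argument) to complete the proof. A secondary omission: $\cY_\Gamma$ as constructed is not yet an $M^3_{-1}$-ideal polyhedral complex --- when some edge label equals $3$ the caps do not match the block boundary faces cell-by-cell --- so one must subdivide blocks and caps and check the links at the new vertices this creates (cap centers, edge midpoints $w$), which your case list does not cover.
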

    \begin{proof}
        The space $\cY_{\Gamma}$ is a geodesic metric space by Proposition~\ref{prop:comlenspace} and is simply connected by Lemma~\ref{lemma:cYsimplyconn}. Thus, by the Cartan--Hadamard Theorem (see \cite[Theorem II.4.1]{bridsonhaefliger}), it remains to prove that $\cY_{\Gamma}$ has curvature $\leq -1$.
        We will show that the space $\cY_{\Gamma}$ can be realized as an $M^3_{-1}$-ideal polyhedral complex with $\Shapes(\cY_{\Gamma})$ finite. So, by Proposition~\ref{prop:linkcond} it will suffice to prove that for every vertex $v \in \cY_{\Gamma}$ the link complex $\Lk(v,\cY_{\Gamma})$ is $\CAT(1)$.

        We begin by noting that the space $\cY_{\Gamma}$ given in Construction~\ref{const:Ym} is not described as a $M^3_{-1}$-ideal polyhedral complex. Indeed, $\cY_{\Gamma}$ is built by first assembling truncated blocks (which are not convex in $\Hy^3$) and then attaching either caps or cusps to the boundary components. Before describing the alternative cellular structure, we will verify that links of vertices contained in kites in $\cY_{\Gamma}$ are $\CAT(1)$. We will then subdivide cells and glue on subdivided caps to get a $M^3_{-1}$-ideal polyhedral complex. In the new cellulation, the links of vertices contained in kites are simply subdivided, and will remain $\CAT(1)$. We chose this order for exposition because the links before the subdivision are more symmetric and are easier to work with.

        \noindent {\sc (I.) Vertices in kites.}

        Let $\hat{B}(ijk)$ be a truncated block in the space $\cY_{\Gamma}$.
        We will refer to the labeling shown in Figure~\ref{figure:blockAndDavisHex}.  Up to isometry, it suffices to check the link complexes $\Lk(x_0, \cY_{\Gamma})$, $\Lk(x_i, \cY_{\Gamma})$, and $\Lk(x_{ij}, \cY_{\Gamma})$.

        The point $x_0$ corresponds to a vertex in the Davis complex, as shown in red in Figure~\ref{figure:blockAndDavisHex}. The link of this vertex in the Davis complex is a complete graph on $m$ vertices. There is a vertex in the link $\Lk(x_0, \cY_{\Gamma})$ corresponding to each edge $\{x_0, x_i\}$ for $1 \leq i \leq m = |V\Gamma|$. Each disjoint triple $x_i, x_j, x_k$ yields a truncated block $\hat{B}(ijk)$ in the space $\cY_{\Gamma}$. Thus, the link of $x_0$ restricted to $\hat{B}(ijk)$ is a spherical triangle with angles $\frac{2\pi}{3}$. Therefore, $\Lk(x_0, \cY_{\Gamma})$ is the metric space $\cL_m$, which is $\CAT(1)$ by \Cref{thm:cat1}.

        We next show $\Lk(x_i, \cY_{\Gamma})$ is isometric to the spherical join of $S^0$ with the complete graph on $m-1$ vertices, where each edge of the graph has length $\frac{2\pi}{3}$.
        The vertex $x_i$ is incident to $(m-1) + 2$ edges in the polyhedral complex $\cY_\Gamma$. Indeed, in the subdivided Davis complex, the vertex $x_i$ corresponds to the midpoint of an edge; see \Cref{figure:blockAndDavisHex}. Suppose this edge is $\{x_0, x_0'\}$. In addition, for each $j \in \{1, \ldots, m\} \setminus \{i\}$ there is an edge in $\cY_\Gamma$  from $x_i$ to $x_{ij}$. We may thus label the vertices in $\Lk(x_i, \cY_\Gamma)$ by $x_0, x_0'$ and $\{j \, |\, j \in \{1, \ldots, m\} \setminus \{i\}\}$. For each pair of distinct labels $j,k \in \{1, \ldots, m\} \setminus \{i\}$ there is a truncated block $\hat{B}$ (respectively $\hat{B}'$) containing $x_i, x_{ij}, x_{ik}$ and $x_0$ (respectively $x_0'$). The link of $x_i$ in the block $\hat{B}$ is a spherical triangle with vertices $j, k, x_0$ given by the spherical join of a point (corresponding to $x_0$) with an edge (corresponding to $x_{ij}, x_{ik}$) of length $\frac{2\pi}{3}$. The link of $x_i$ in the block $\hat{B}'$ yields an isometric spherical triangle labeled by $j,k$ and $x_0'$. The collection of such labeled spherical triangles are glued along edges with the same vertex labels, yielding the join as desired.
        Since both $S^0$ and the metric graph are $\CAT(1)$, so is the spherical join \cite[II.3.15]{bridsonhaefliger}.

        Finally, we show that $\Lk(x_{ij}, \cY_\Gamma)$ is isometric to the spherical join of $m-2$ singletons with a metric graph isomorphic to a cycle with $2m_{ij}$ edges, each of length $\frac{\pi}{3}$. We first label the vertices of the link. The point $x_{ij}$ corresponds to a barycenter of a polygon with $2m_{ij}$ sides in the Davis complex as in Figure~\ref{figure:blockAndDavisHex}. This polygon is divided into $2m_{ij}$ kites incident to $x_{ij}$. Label these in cyclic order by $K_1, \ldots, K_{2m_{ij}}$ so that $K_n$ shares an edge with $K_{n+1}$. Hence, $x_{ij}$ is incident to $2m_{ij}$ edges in $\cY_{\Gamma}$ coming from these kites, and we label the corresponding vertices in the link of $x_{ij}$ by $v_1, \ldots, v_{2m_{ij}}$ so that $v_n$ corresponds to the edge in the kites $K_{n-1}$ and $K_n$. Let $I:=\{1, \ldots, m\} \setminus \{i,j\}$. For each $k \in I$, each kite $K_n$ is contained in a block labeled by $ijk$, and these blocks are glued along their edges labeled $E(ij)$ that contain $x_{ij}$. Hence, there are $m-2$ additional vertices in the link of $x_{ij}$ that we will label by $\{w_k \,|\, k \in I\}$. For each $n \in \{1, \ldots, 2m_{ij}\}$ and each $k \in I$, the link of $x_{ij}$ in the block containing $K_n$ and labeled by $ijk$ is a spherical join of a point (corresponding to $w_k$) with an edge (corresponding to $v_n$ and $v_{n+1}$) of length $\frac{\pi}{3}$. The collection of such labeled spherical triangles are glued along edges with the same vertex labels, yielding the join as desired. Thus, $\Lk(x_{ij}, \cY_{\Gamma})$ is $\CAT(1)$ by \cite[II.3.15]{bridsonhaefliger}.

        \vskip.2in

    \begin{figure}
    \begin{centering}
	\begin{overpic}[width=.6\textwidth,tics=5]{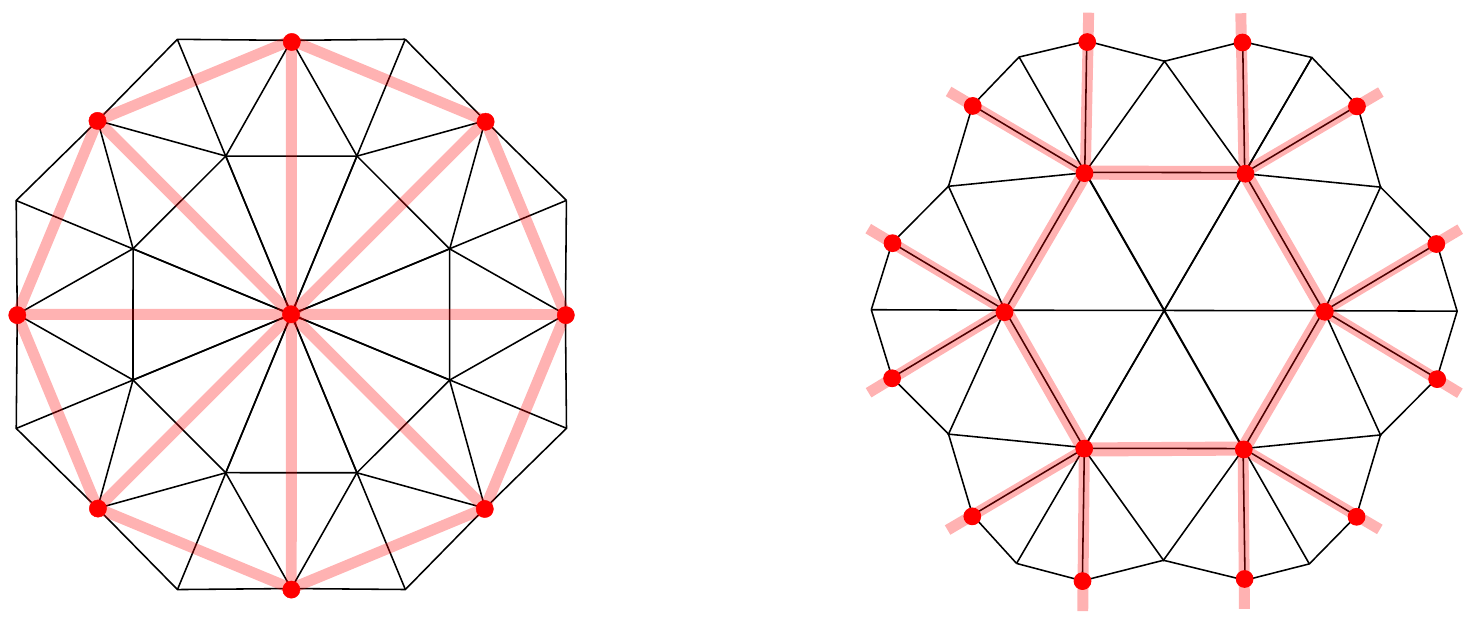}
    \end{overpic}
	\caption{\small{Triangle tilings $\cT_1(\cF)$ of the boundary components of $\hat{\cY}_{\Gamma}$ are shown in black. In red is the alternative tiling $\cT_2(\cF)$, by triangles on the left and hexagons on the right, so that each vertex in this tiling has link of cone angle strictly greater than $2\pi$. The left picture corresponds to a boundary component $\cF$ determined by $\Delta(3,3,4)$ and the right picture to $\Delta(3,4,4)$.}}
	\label{figure:tri_tilings}
    \end{centering}
    \end{figure}

        \noindent {\sc (II.) An $M^3_{-1}$-ideal polyhedral structure on $\cY_{\Gamma}$ and the remaining links.}

            The space $\cY_{\Gamma}$ is constructed by first assembling truncated blocks via the Davis complex and then gluing either caps or cusps to the boundary components of the resultant space. We will subdivide these objects and glue them together to realize $\cY_{\Gamma}$ as a space built from ideal polyhedra. Along the way, we will verify the links of the remaining vertices are $\CAT(1)$.

            Let $\hat{\cB}$ be a truncated block in $\cY_{\Gamma}$. Suppose $\hat{\cB}$ is labeled by three letters $i,j,k \in \{1, \ldots, |V\Gamma|\}$ corresponding to three generators of the Coxeter group that span a triangle in the defining graph with edge labels $p,q,r \geq 3$. First, subdivide the kite faces of $\hat{\cB}$ by adding an edge from the vertex $x_0$ to each of the vertices $x_{ij}$, $x_{ik}$, and $x_{jk}$. Suppose the boundary face of $\hat{\cB}$ is contained in a boundary component $\cF$ of $\hat{\cY}_{\Gamma}$.  Recall, there are tilings $\cT_1(\cF)$ and $\cT_2(\cF)$ of the boundary component $\cF$ that were defined in \Cref{const:Ym}.

            Suppose $p=q=r=3$. Then, there is a horoball from $\Hy^3$ glued to $\cF$ along its boundary horosphere, which inherits a tiling by equilateral Euclidean triangles from the tiling $\cT_1(\cF) = \cT_2(\cF)$ of $\cF$. One such triangle corresponds to the boundary of $\hat{\cB}$. Subdivide the horoball into cusps over these triangles. Then the union of a cusp and the corresponding truncated block is an ideal hyperbolic polyhedron (isometric to the block $\cB$). There are no new vertices constructed from this gluing.

            Suppose $p,q,r>3$. Then $\cT_1(\cF) = \cT_2(\cF)$, and hence there is a cap glued directly to the boundary component of $\hat{\cB}$. Take the union of $\hat{\cB}$ and this cap to obtain an $M^3_{-1}$-polyhedron. There are three vertex links to verify: the vertices that are vertices of the cap. These links are $\CAT(1)$ by \Cref{prop:y_0}.

            Now suppose $p=q=3$ and $r >3$. Each triangle in $\cT_1(\cF)$ has two vertices with cone angle $2\pi$ and one vertex with cone angle $\frac{2\pi r}{3}$. Each vertex in $\cT_2(\cF)$ has cone angle $> 2\pi$.
            As in the left of Figure~\ref{figure:tri_tilings}, locally one may view the space as having a truncated block ``below'' each black equilateral triangle and a hyperbolic cap ``above'' each red equilateral triangle. Since these do not agree along their boundaries, we perform the natural subdivision and gluing. More specifically, suppose that $(x_i,x_j)^r=1$ in $W_{\Gamma}$. First subdivide $\hat{\cB}$ into two $3$-cells by adding a $2$-cell as follows. Let $v$ denote the vertex of the boundary face of $\hat{\cB}$ incident to the edge $E(ij)$,
            and let $v'$ denote the midpoint of the edge of the boundary face of $\hat{\cB}$ incident to the face labeled by $k$. Add a geodesic $2$-cell to $\hat{\cB}$ with vertex set $v$, $v'$, $x_k$, $x_0$, and $x_{ij}$. Then, subdivide the cap (partially) glued to $\hat{\cB}$ into six isometric pieces by adding geodesic $2$-cells that, in the upper half-space model of $\Hy^3$, lie above the black edges in the barycentric subdivision of the red equilateral triangle. The boundary face of each half of the truncated block can then be glued to the boundary of one sixth of the subdivided cap to form a hyperbolic polyhedron.

            This subdivision and gluing results in, up to isometry, three vertex links to verify; see Figure~\ref{figure:link334}. The vertex added to the center of the cap glued to $\hat{\cB}$ lies on the boundary of a halfspace in $\Hy^3$, so its link is a hemisphere in $S^2$, which is $\CAT(1)$. Next consider the vertex $w$, the midpoint of an edge of the cap. The vertex $w$ lies in two caps and has a neighborhood disjoint from the truncated block. In each cap, $w$ lies on the intersection of two geodesic planes, so its link in each cap is a lune in $S^2$.
            Thus, the link of $w$ in $\cY_{\Gamma}$ is the union of two lunes along one boundary arc of each. Each lune is a $\CAT(1)$ space and these spaces are glued along a convex subset, so $\Lk(w, \cY_{\Gamma})$ is $\CAT(1)$ by \cite[Theorem II.11.1]{bridsonhaefliger}. If $v$ is a vertex of a cap, then $\Lk(v,\cY_\Gamma)$ is $\CAT(1)$ by \Cref{prop:y_0}.

    \begin{figure}
    \begin{centering}
	\begin{overpic}[width=.5\textwidth, tics=5]{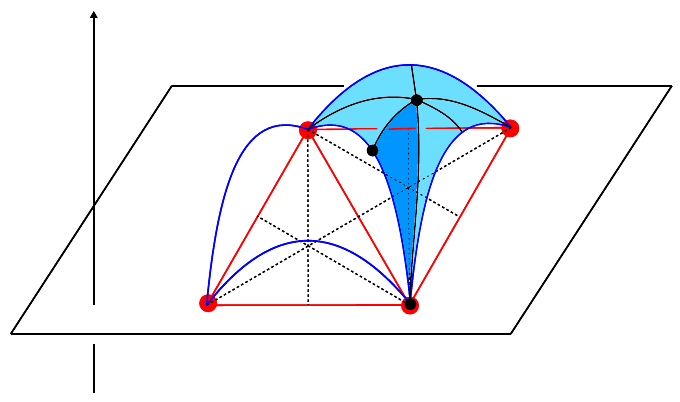}
        \put(0,45){$\Hy^3$}
        \put(56.3,36){\small{$w$}}
        \put(62.5,13.5){\small{$v$}}
    \end{overpic}
	\caption{\small{The red triangles are equilateral triangles on a horosphere in $\Hy^3$. The blue lines depict hyperbolic geodesics between their endpoints, and the blue shaded triangles lie on a halfspace containing the three red points.  }}
	\label{figure:link334}
    \end{centering}
    \end{figure}

            Suppose $p=3$ and $q,r>3$. Each triangle in $\cT_1(\cF)$ has one vertex with link of cone angle $2\pi$ and with two vertices with links of cone angle $> 2\pi$. Therefore, each vertex of $\cT_2(\cF)$ has cone angle $\frac{2\pi r}{3} > 2\pi$. As in the right side picture in Figure~\ref{figure:tri_tilings}, one may view the space as having a truncated block ``below'' each black equilateral triangle and with a hyperbolic cap ``above'' each red hexagon with Euclidean side lengths $1$. As before, these do not agree along their boundaries, so we subdivide the caps and glue them to the truncated blocks in the natural way.
            Each cap is subdivided into six isometric $3$-cells by adding geodesic $2$-cells that, in the upper half-space model of $\Hy^3$ lie above the black edges in the subdivision of the hexagon. The boundary face of the truncated block can be glued to one sixth of the subdivided cap to form a hyperbolic polyhedron. This subdivision and gluing results in, up to isometry, two vertex links to verify. The vertex added to the center of the cap glued to $\hat{\cB}$ lies on the boundary of a halfspace in $\Hy^3$, so its link is a hemisphere in $S^2$, which is $\CAT(1)$. If $v$ is the vertex of the cap then $\Lk(v,\cY_\Gamma)$ is $\CAT(1)$ by \Cref{prop:y_0}.

            Since the subdivisions and gluings are compatible throughout the space $\cY_\Gamma$, the resultant cell complex is a $\CAT(-1)$ $M^3_{-1}$-ideal polyhedral complex. \end{proof}

    \section{Upper bounds on the conformal dimension}\label{sec:upperbound}

    In this section we give an upper bound for the conformal dimension of the Bowditch boundary $\partial(W_\Gamma, \cP)$ equipped with a visual metric. The cusped Cayley graph $X(W_{\Gamma}, \cP)$ is quasi-isometric to the $\CAT(-1)$ space $\cY_{\Gamma}$ by Proposition~\ref{prop:QIYAndCuspX}. The boundary $\p \cY_\Gamma$ admits a visual metric $d_e$ with parameter~$e$ by Theorem~\ref{thm:vis_parameter}. Thus, an upper bound on the conformal dimension can be given by providing an upper bound on the Hausdorff dimension of the metric space $(\p \cY_\Gamma, d_e)$. This Hausdorff dimension is equal to the critical exponent of the Poincar\'{e} series for the action of $W_\Gamma$ on $\cY_\Gamma$ by Theorem~\ref{thm:Paulin}. Therefore, it suffices to count the number of points of the orbit $W_\Gamma \cdot x_0$ of a fixed basepoint $x_0 \in \cY_\Gamma$ that lie in the ball of radius $r$ around $x_0$.

    We will use the following notation throughout this section.

    \begin{notation} \label{notation:curlyXm}
        The subdivided Davis complex for $W_\Gamma$ is denoted $SX_\Gamma$ and is given in Construction~\ref{const:Ymhat}. The union of kites in the $\CAT(-1)$ complex $\cY_\Gamma$ is a connected subcomplex denoted~$\cX_\Gamma$ also described in Construction~\ref{const:Ymhat} and is combinatorially isomorphic to $S X_\Gamma$. We note that by construction, each component of $\cY_\Gamma\setminus \cX_\Gamma$ corresponds to a conjugate of a triangle subgroup of $W_\Gamma$.

        An {\bf $\cX_\Gamma$-polygon} is a union of kites in $\cX_\Gamma$ that is combinatorially isomorphic to a subdivided polygon in $S X_\Gamma$. An {\bf $\cX_\Gamma$-flat} is a union of $\cX_\Gamma$-polygons that is combinatorially isomorphic to a flat in $S X_\Gamma$, which arises from a triangle subgroup of $\Gamma$ with edges labeled $3$. Note that the induced metric on an  $\cX_\Gamma$-flat is not the Euclidean metric. An {\bf $\cX_{\Gamma}$-hyp plane} is the union of $\cX_{\Gamma}$-polygons that is combinatorially isomorphic to a subspace of $S X_\Gamma$ that is invariant under a hyperbolic triangle subgroup of $W_\Gamma$.

        Let $x_0$ be a vertex of a fixed $\cX_\Gamma$-polygon $P_0$.
    \end{notation}

    The strategy for counting orbit points in the ball $B_{\cY_\Gamma}(r,x_0) \subset \cY_\Gamma$ has three steps:

    \begin{enumerate}
        \item Define the {\it itinerary type} of an orbit point $gx_0$ in terms of the distance between disjoint $\cX_\Gamma$-polygons that the geodesic from $x_0$ to $gx_0$ intersects. The itinerary type will be a vector with entries in $\N \cup \{\frac{1}{2}\}$. (This is done in \Cref{subsec:itinerary_type}.)
        \item Count the number of orbit points with a given itinerary type. (\Cref{subsec:fixed_itinerary})
        \item Sum the counts over all the itinerary types of orbit points in the $r$-ball. (\Cref{subsec:summing_over_types})
    \end{enumerate}

    Finally, we put these steps together to obtain the bound on the Hausdorff dimension of $(\p \cY_\Gamma, d_e)$, completed in Subsection~\ref{subsec:Hausdim_bound}. We postpone to \Cref{subsec:GeoArg} the geometric arguments used in the calculation of the constants involved in the final theorem.

\subsection{Itinerary type} \label{subsec:itinerary_type}

   We use disjoint $\cX_{\Gamma}$-polygons to define the itinerary type of an orbit point. This point of view is useful for two reasons: there is a uniform lower bound on the distance between disjoint $\cX_\Gamma$-polygons (Lemma~\ref{lemma:disjoint_polygons}), and we will be able to transfer some counting to the combinatorially well-understood Davis complex.

    \begin{defn}[Polygon sequence]
        Let $g \in W_\Gamma$ and let $\gamma$ be the unique geodesic from $x_0$ to $gx_0$ in~$\cY_\Gamma$. A {\it polygon sequence} for $gx_0$ is a list of $\cX_\Gamma$-polygons $P_0, P_1, \ldots, P_j$, so that $P_0$ is the $\cX_\Gamma$-polygon containing $x_0$ specified above, and for $i\geq 1$, $P_i$ is a first $\cX_\Gamma$-polygon disjoint from $P_{i-1}$ that $\gamma$ intersects after leaving $P_{i-1}$, and where $gx_0 \in P_j$.
        For consecutive $\cX_\Gamma$-polygons $P_i$, $P_{i+1}$ in a polygon sequence there are three possibilities. We say the pair has
        \begin{itemize}
        \item {\it Type~A}, if there exists an $\cX_\Gamma$-polygon $P$ so that $P \cap P_i \neq \emptyset$ and $P \cap P_{i+1} \neq \emptyset$.
        \end{itemize}
        Recall that the subspace $\cX_\G$ of $\cY_\G$ is connected and removing it separates $\cY_\G$ into components,
        each of which corresponds to (and is preserved by) a conjugate of a $3$-generator subgroup of $W_\G$.
        Thus, if $P_i, P_{i+1}$ is not a Type A pair, then the subpath $\gamma_i$ of $\gamma$ connecting $\gamma \cap P_i$ to $\gamma \cap P_{i+1}$ has the following form. Either the interior of $\gamma_i$ is contained in one component of $\cY_\Gamma \setminus \cX_\Gamma$, or there exists an $\cX_\Gamma$-polygon $\bar{P}$ that intersects $P_i$, and $\gamma_i$ is the concatenation of a segment from $P_i$ to $\bar{P}$ and a segment from $\bar{P}$ to $P_{i+1}$ whose interior is contained in a component of $\cY_\Gamma \setminus \cX_\Gamma$. We say the pair $P_i, P_{i+1}$ is
        \begin{itemize}
        \item {\it Type~B}, if $P_i$ (or $\bar{P}$) and $P_{i+1}$ are in the same $\cX_{\Gamma}$-flat.
        \item {\it Type~C}, if $P_i$ (or $\bar{P}$) and $P_{i+1}$ are in the same $\cX_{\Gamma}$-hyp plane.
        \end{itemize}
        We define the \emph{length} of a polygon sequence $P_0, \ldots, P_{j}$ to be equal to $j$.
    \end{defn}

    \begin{lemma}
        If $g \in W_\Gamma$, then there exists a polygon sequence for $gx_0$.
    \end{lemma}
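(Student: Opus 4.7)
The plan is to construct the polygon sequence greedily along the unique geodesic $\gamma$ from $x_0$ to $gx_0$ in the $\CAT(-1)$ space $\cY_\Gamma$. I would initialize with $P_0$ as in \Cref{notation:curlyXm}, and inductively, given $P_0, \ldots, P_i$ with $gx_0 \notin P_i$, set $P_{i+1}$ to be the first $\cX_\Gamma$-polygon disjoint from $P_i$ that $\gamma$ enters after its final exit from $P_i$ (the exit parameter $t_i := \sup\{t : \gamma(t) \in P_i\}$ is attained because $P_i$ is a finite union of compact hyperbolic kites). I would halt the iteration as soon as the current polygon contains $gx_0$.

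To see that $P_{i+1}$ is well-defined whenever $gx_0 \notin P_i$, I would first exhibit at least one $\cX_\Gamma$-polygon disjoint from $P_i$ that $\gamma$ intersects. Using the Davis-complex-like structure of $\cX_\Gamma$, the vertex $gx_0$ lies in $\binom{|V\Gamma|}{2}$ distinct $\cX_\Gamma$-polygons, one for each pair of standard generators of $W_\Gamma$. Since $P_i$ has finitely many vertices, a coset computation in the dihedral subgroups of $W_\Gamma$ produces such a polygon through $gx_0$ whose vertex set is disjoint from that of $P_i$; this polygon is intersected by $\gamma$ at its terminal endpoint. The ``first'' such polygon is then well-defined because only finitely many $\cX_\Gamma$-polygons meet the compact image of $\gamma$, and the preimage of each closed polygon under $\gamma$ is a closed subset of the parameter interval.

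Termination would follow from the forthcoming \Cref{lemma:disjoint_polygons}, which provides a uniform lower bound $d > 0$ on the distance between disjoint $\cX_\Gamma$-polygons. Each step of the iteration therefore consumes at least length $d$ of $\gamma$, and since $\gamma$ has finite length the process halts after at most $|\gamma|/d$ steps. By construction, it halts exactly when the current polygon contains $gx_0$, producing the desired polygon sequence. The main point requiring care is the combinatorial existence of a polygon through $gx_0$ disjoint from $P_i$; I expect this to follow from the large-type assumption on $\Gamma$ and the abundance of dihedral cosets in $W_\Gamma$, which together ensure that most of the many polygons through $gx_0$ avoid any fixed polygon $P_i$.
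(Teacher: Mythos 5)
Your construction is essentially the same as the paper's: both walk along the unique geodesic $\gamma$ and greedily select the first $\cX_\Gamma$-polygon disjoint from $P_i$, with the key existence step being that $gx_0$ is a vertex of $\cY_\Gamma$ and hence lies in some polygon disjoint from $P_i$ (a claim the paper also asserts with no more detail than your "coset computation" remark). Your added observations on well-definedness of "first" and on termination via \Cref{lemma:disjoint_polygons} are harmless refinements of the same argument.
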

    \begin{proof}
        Let $P_0$ be the $\cX_\Gamma$-polygon containing $x_0$ specified above, and let $\gamma$ be the unique geodesic from $x_0$ to $gx_0$ in $\cY_\Gamma$. If $gx_0\in P_0$, then the polygon sequence for $gx_0$ is simply $P_0$ and has length 0. Otherwise, let $x_1$ denote the first point along $\gamma$ for which $x_1$ is contained in some $\cX_\Gamma$-polygon which is disjoint from $P_0$. If $x_1$ is in the interior of some $\cX_\Gamma$-polygon, define $P_1$ to be the (unique)
        $\cX_\Gamma$-polygon containing $x_1$. If the point $x_1$ is on the boundary of some $\cX_\Gamma$-polygon, then choose $P_1$ to be a polygon which contains $x_1$ and is disjoint from $P_0$. If $gx_0\in P_1$ then the polygon sequence for $gx_0$ is complete. Otherwise, we continue defining polygons recursively in this fashion so that for each $i \geq 1$, if the polygon $P_{i}$ contains the point $gx_0$, then the procedure terminates, and $P_0, P_1, \ldots P_i$ is a polygon sequence for $gx_0$. Otherwise, a $\cX_\Gamma$-polygon $P_{i+1}$ is chosen so that $P_{i+1}$ is both disjoint from $P_{i}$ and contains the first point along the segment of $\gamma$ from $P_{i}$ to $gx_0$ which intersects any $\cX_\Gamma$-polygon that is disjoint from $P_{i}$. We note that if $gx_0$ is not contained in $P_i$, then such a choice for the polygon $P_{i+1}$ exists. Indeed, if every $\cX_\Gamma$-polygon that $\gamma$ intersects after $P_i$ contains the point $gx_0$, then the fact that $gx_0$ is the vertex of a polygon means that there is always a choice of $\cX_\Gamma$-polygon $P_{i+1}$ containing $gx_0$ which is disjoint from $P_i$.
    \end{proof}

    \begin{defn}[Itinerary type]
        Let $g \in W_\Gamma$, and let $P_0, P_1, \ldots, P_j$ be a polygon sequence for $gx_0$. We say the point $gx_0$ has {\it itinerary type} $(\ell_1, \ldots, \ell_j) \in (\N \cup \{\frac{1}{2}\})^j$, where $\ell_i = \frac{1}{2}$ if $P_{i-1}$ and $P_i$ are of Type~A and $\ell_i = \lceil d_{\cY_\Gamma}(P_{i-1}, P_i) \rceil \geq 1$ otherwise.
    \end{defn}

    Note that the point $gx_0$ need not have a unique itinerary type, since an orbit point may have more than one polygon sequence.
    We use the constant $\frac{1}{2}$ in the above definition because of \Cref{lemma:disjoint_polygons} which proves the distance between disjoint $\cX_\Gamma$ polygons is at least~$\frac{1}{2}$. As a consequence of this lemma, we also get the following result.

    \begin{corollary} \label{cor:hexLength}
        If $d_{\cY_\Gamma}(x_0, gx_0) \leq r$, then the length of a polygon sequence for $gx_0$ is at most $2r$.
    \end{corollary}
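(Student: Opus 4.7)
The plan is to derive the bound from the forthcoming Lemma~\ref{lemma:disjoint_polygons}, which (as signaled in the paragraph preceding the corollary) gives a uniform lower bound of $\frac{1}{2}$ on the distance between disjoint $\cX_\Gamma$-polygons. The argument is essentially a pigeonhole/length estimate along the unique geodesic $\gamma$ from $x_0$ to $gx_0$ in the $\CAT(-1)$ space $\cY_\Gamma$.

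First I would fix a polygon sequence $P_0, P_1, \ldots, P_j$ for $gx_0$ and, for each $i = 1, \ldots, j$, pick points $q_i \in \gamma \cap P_{i-1}$ and $p_i \in \gamma \cap P_i$ such that the subsegment $\gamma_i \subset \gamma$ from $q_i$ to $p_i$ witnesses the transition between the two consecutive $\cX_\Gamma$-polygons. By the definition of polygon sequence, $P_{i-1}$ and $P_i$ are disjoint, so Lemma~\ref{lemma:disjoint_polygons} gives
\[
    \mathrm{length}(\gamma_i) \geq d_{\cY_\Gamma}(P_{i-1}, P_i) \geq \tfrac{1}{2}.
\]

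Next I would observe that by choosing the $q_i$ and $p_i$ along $\gamma$ in the natural order determined by the polygon sequence (in particular, $p_i$ is the first entry point of $\gamma$ into $P_i$ after leaving $P_{i-1}$, and $q_{i+1}$ lies at or beyond $p_i$ along $\gamma$), the subsegments $\gamma_1, \ldots, \gamma_j$ have pairwise disjoint interiors and are all contained in $\gamma$. Summing the estimates above yields
\[
    r \ \geq \ d_{\cY_\Gamma}(x_0, gx_0) \ = \ \mathrm{length}(\gamma) \ \geq \ \sum_{i=1}^{j} \mathrm{length}(\gamma_i) \ \geq \ \tfrac{j}{2},
\]
so $j \leq 2r$, which is the desired bound.

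The only subtlety, and the one place I would be careful, is verifying that the chosen subsegments really are disjoint and ordered along $\gamma$ — this just requires unpacking the recursive definition of the polygon sequence (each $P_i$ is chosen as a \emph{first} $\cX_\Gamma$-polygon disjoint from $P_{i-1}$ met by $\gamma$), together with the fact that $\gamma$ is a geodesic and hence an injective path. Otherwise the proof is a direct application of Lemma~\ref{lemma:disjoint_polygons} and I would not expect any real obstacle.
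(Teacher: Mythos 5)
Your argument is correct and is exactly the one the paper intends: the paper states the corollary as an immediate consequence of \Cref{lemma:disjoint_polygons} (the $\frac{1}{2}$ lower bound on the distance between disjoint $\cX_\Gamma$-polygons) without writing out the details, and your pigeonhole estimate along the ordered, interior-disjoint subsegments of $\gamma$ is the intended unpacking of that consequence.
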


\subsection{Counting orbit points with a fixed itinerary type} \label{subsec:fixed_itinerary}

    Fix an itinerary type $\ell = (\ell_1, \ldots, \ell_j) \in (\N \cup \{\frac{1}{2}\})^j$. If $gx_0$ has itinerary type $\ell$, then $gx_0$ has a polygon sequence $P_0, P_1, \ldots, P_j$ that yields the vector $\ell$. The main result of this subsection is \Cref{thm:fixed_itinerary}, which gives an upper bound on the number of possible ultimate polygons $P_j$ that could occur in such a polygon sequence yielding $\ell$. Since each orbit point with itinerary $\ell$ is a vertex of such a polygon, this yields an upper bound on the number of orbit points with a given itinerary type.

\begin{defn} \label{nota:comb_sphere}
        Let $\cC$ be a collection of $\cX_\Gamma$-polygons. The {\it combinatorial sphere of radius 1} about $\cC$, denoted $\cS_1(\cC)$, is the collection of polygons in $\cX_\Gamma \setminus \cC$ that intersect $\cC$ nontrivially. Inductively, the {\it combinatorial sphere of radius $k$} about $\cC$, denoted $\cS_k(\cC)$, is given by $\cS_k(\cC) = \cS_1\bigl( \bigcup_{i \leq k-1} \cS_i(\cC) \cup \cC \bigr)$. The {\it ball of radius $k$} about $\cC$ is denoted $\cB_k(\cC)$ and is given by $\cB_k(\cC) = \bigcup_{i \leq k} \cS_k(\cC)$.
    \end{defn}

    Given an itinerary type $\ell$, we define collections of $\cX_\Gamma$-polygons $\cP_0, \cP_1, \ldots, \cP_j$ recursively as follows.

    \begin{defn} \label{def:curlyP}
        Let $\ell = (\ell_1, \ldots, \ell_j)$ be an itinerary type. Let $\cP_0 = \{P_0\}$, where $P_0$ is the $\cX_\Gamma$-polygon specified in Notation~\ref{notation:curlyXm}. Suppose $\cP_0, \cP_1, \ldots, \cP_{i-1}$ have been defined, then define $\cP_{i}$ as follows. If $\ell_i = \frac{1}{2}$, then $\cP_{i} = \cS_2(\cP_{i-1})$. If $\ell_i \geq 1$, then $P \in \cP_{i}$ if there exists an $\cX_\Gamma$-polygon $P' \in \cP_{i-1} \cup \cS_1(\cP_{i-1})$ and an $\cX_\Gamma$-flat or an $\cX_{\Gamma}$-hyp plane containing $P$ and $P'$ with $d_{\cY_\Gamma}(P,P') \leq \ell_i$.
        \end{defn}

The proof of \Cref{thm:fixed_itinerary} contains the heart of the geometric and combinatorial arguments in this section, and we first give an example to illustrate the main idea.

 \begin{example}
        Suppose $\ell = (\frac{1}{2}, \frac{1}{2}, 20)$ is an itinerary type.
        Suppose $P_0, P_1, P_2, P_3$ is a polygon sequence yielding $\ell$. Then, $P_0, P_1$ is a Type A pair, $P_1, P_2$ is a Type A pair, and $P_2, P_3$ is either a Type B or a Type C pair and $d_{\cY_\Gamma}(P_2,P_3) \leq 20$.
        Since $P_0$ is a fixed polygon, we let $\cP_0 = \{P_0\}$. The set $\cP_1$ consists of the polygons that form a Type A pair with $P_0$. The set $\cP_2$ consists of the polygons that form a Type A pair with some polygon in $\cP_1$. Lastly, the collection $\cP_3$ consists of the polygons $P$ that form a Type B or Type C pair with some polygon $P'$ in $\cP_2$ and $d_{\cY_\Gamma}(P,P') \leq 20$.
    \end{example}

    As indicated in the above example, the desired bound on the cardinality of $\cP_j$ will come from the following computations carried out in \Cref{subsec:GeoArg}. Given a polygon $P$, we count the number of polygons $P'$ so that $P$ and $P'$ form a Type A pair, a Type B pair with $d_{\cY_\Gamma}(P,P') \leq \ell_i$, and a Type C pair with $d_{\cY_\Gamma}(P,P') \leq \ell_i$. These bounds are collected in the theorem below.

    \begin{thm} \label{thm:fixed_itinerary}
        Let $W_\Gamma \in \cW$ be generated by $m$ elements, and let $M$ be the maximum edge label in $\Gamma$.
        Let $P$ be an $\cX_\Gamma$-polygon. Let $\ell_i \in \N$.
        \begin{enumerate}
            \item There are at most $A$ $\cX_\Gamma$-polygons that form a Type A pair with $P$, where
                \[A = 2M(m-2)^2\left( \frac{m-1}{2} + 2M-5 + (2M-4)(m-3) + (2M-3)\frac{(m-3)^2}{4}\right).\]

            \item  There are at most $Be^{\ell_i}$ $\cX_\Gamma$-polygons $P'$ that form a Type B pair with $P$ and so that $d(P,P') \leq \ell_i$, where
                \[ B = \frac{32\pi}{3\sqrt{3}}\bigl(M(m^3-5m^2+8m-4)+m-2\bigr)\]

            \item There are at most $C_1e^{C_2\ell_i}$ $\cX_\Gamma$-polygons $P'$ that form a Type C pair with $P$ and so that $d(P,P') \leq \ell_i$, where
                \[ C_1 = (4M^2-6M)\bigl(M(m^3-5m^2+8m-4)+m-2\bigr),\]
            and
                \[ C_2 = \frac{\log(2M-3)}{D(y_0)},  \quad \textrm{ where } \quad D(y_0) = 4\sinh^{-1}\left(\frac{1}{4\sqrt{3} \sqrt{y_0^2+1}}\right).\]
            (Recall that $y_0$ is the height of the truncated blocks, as specified in \Cref{nota:blocks}.)

            \item  Let $\ell = (\ell_1, \ldots, \ell_j)$, be an itinerary type of a point $gx_0 \in B_{\cY_\Gamma}(r,x_0)$. Suppose $k$ entries in $\ell$ are equal to $\frac{1}{2}$. Let $F = \max\{B,C_1\}$. Then
            \[|\cP_j| \leq A^kF^{j-k} e^{3C_2r}.\]
        \end{enumerate}

    \end{thm}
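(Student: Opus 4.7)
My plan is to handle parts (1)--(3) essentially independently, with part~(4) reducing to a routine iteration of those bounds using the recursive definition of $\cP_i$. The four parts are geometrically distinct: (1) is a purely combinatorial count in the Davis complex, (2) is a Euclidean count in a flat arising from a $\Delta(3,3,3)$-subgroup, (3) is a hyperbolic count in a plane stabilized by a hyperbolic triangle subgroup, and (4) is bookkeeping.

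For part~(1), note that Type A pairs $P, P'$ share some third $\cX_\Gamma$-polygon $P''$, so $P'$ lies at combinatorial distance at most two from $P$. I would count as follows. A polygon has at most $2M$ vertices; at each vertex of the Davis complex the link is $K_m$, so the polygons incident to that vertex are indexed by pairs of Coxeter generators, giving a count in terms of $m$. Summing over vertices and edges of $P$ bounds the number of polygons touching $P$, and repeating once more gives the number at combinatorial distance $\leq 2$. Keeping careful track of polygons that share a vertex versus those sharing an edge, and avoiding double-counting, produces the explicit constant $A$.

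For parts~(2) and~(3), I would fix a polygon $P'$ in the base collection and count polygons $P$ in a common flat (resp.\ hyp plane) at $\cY_\Gamma$-distance at most $\ell_i$. A $\Delta(3,3,3)$-flat has a $\Z^2$ subgroup of finite index acting by translations, so the lattice of polygons within Euclidean distance $R$ has cardinality $O(R^2)$, which is bounded by $B_0 e^R$; since $P$ lies in at most $m-2$ flats (one per triangle of $\Gamma$ with label pair matching that of $P$) and the induced metric is bi-Lipschitz to the Euclidean one on the flat, this yields the bound $Be^{\ell_i}$ after absorbing constants. For a hyperbolic plane stabilized by $\Delta(p,q,r)$, the number of polygons at combinatorial distance $\leq n$ from a fixed polygon is at most $(2M-3)^n$ because each polygon has at most $2M$ sides and one side is shared with the previous polygon. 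To convert combinatorial to metric distance I would use the minimum translation length $D(y_0) = 4\sinh^{-1}(1/(4\sqrt{3}\sqrt{y_0^2+1}))$ of a generator acting on the plane, which is computed from the geometry of the truncated blocks adjacent to the plane; this gives combinatorial distance $\leq \ell_i/D(y_0)$, hence at most $(2M-3)^{\ell_i/D(y_0)} = e^{C_2 \ell_i}$ polygons.

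For part~(4), I would argue by induction on $i$. When $\ell_i = \frac{1}{2}$, the definition $\cP_i = \cS_2(\cP_{i-1})$ together with part~(1) gives $|\cP_i| \leq A |\cP_{i-1}|$. When $\ell_i \geq 1$, every $P \in \cP_i$ is controlled by some $P' \in \cP_{i-1} \cup \cS_1(\cP_{i-1})$ lying in a common flat or hyp plane; parts~(2) and~(3) yield a multiplier of at most $F e^{C_2 \ell_i}$ per such $P'$, where $F = \max\{B, C_1\}$ (using $C_2 \geq 1$, which follows from $M \geq 3$ once $D(y_0)$ is shown to be bounded above by $\log 3$). Taking the product yields
\[
  |\cP_j| \leq A^k F^{j-k} \exp\Bigl(C_2 \sum_{\ell_i \geq 1} \ell_i \Bigr).
\]
Since $\ell_i = \lceil d_{\cY_\Gamma}(P_{i-1}, P_i) \rceil \leq d_{\cY_\Gamma}(P_{i-1}, P_i) + 1$, the sum is bounded by the total length of the geodesic plus the number of Type~B/C steps, i.e.\ by $r + (j-k) \leq 3r$ using Corollary~5.3. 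This yields the factor $e^{3 C_2 r}$.

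The main obstacle is extracting the explicit growth constant $C_2$ in part~(3): one must identify a generator whose action on the hyp plane has a computable translation length, and this depends on the geometry of the kites in the truncated blocks and on the horosphere height $y_0$ chosen in \Cref{prop:y_0}. This reduces to an explicit hyperbolic trigonometry computation in $\Hy^3$, which I would carry out in \Cref{subsec:GeoArg} together with the analogous elementary counts for Type~A and Type~B.
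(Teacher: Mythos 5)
Your overall architecture — deferring (1)--(3) to separate counting lemmas and obtaining (4) by iterating the recursion for $\cP_i$ — matches the paper's proof, and your treatment of (1), (3), and (4) is essentially sound (your remark that one needs $C_2 \geq 1$ to absorb the Type~B bound into $Fe^{C_2\ell_i}$ is a point the paper leaves implicit, and it does hold since $D(y_0) \approx 0.32 < \log 3$ for $y_0 = 1.5$). However, part~(2) contains a genuine gap: you assert that the induced metric on an $\cX_\Gamma$-flat is bi-Lipschitz to the Euclidean metric and then count $O(R^2)$ lattice polygons. This is false — the paper explicitly warns that the induced metric on an $\cX_\Gamma$-flat is \emph{not} Euclidean. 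A geodesic in $\cY_\Gamma$ between two polygons of the flat shortcuts through the horoball glued to the corresponding boundary component, so two polygons at Euclidean distance $d$ along the horosphere are at $\cY_\Gamma$-distance only about $2\log d + O(1)$ (via the formula $\sinh(d_{\Hy^2}(z,w)/2) = |z-w|/(2\sqrt{\Ima(z)\Ima(w)})$). Consequently the set of polygons within $\cY_\Gamma$-distance $\ell_i$ corresponds to a Euclidean ball of radius roughly $2y\sinh(\ell_i/2) \sim ye^{\ell_i/2}$, containing on the order of $e^{\ell_i}$ hexagons — genuinely exponentially many, not quadratically many. Your bi-Lipschitz reduction would bound only the polygons in a Euclidean ball of radius $O(\ell_i)$ and would therefore omit almost all of the polygons you are required to count; the resulting "proof" of the bound $Be^{\ell_i}$ would be invalid even though the stated bound happens to have the correct exponential form.

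A secondary, smaller issue: in part~(3) the conversion from $\cY_\Gamma$-distance to combinatorial distance in a hyp plane cannot quite be done via a "translation length of a generator," because the minimizing geodesic between two polygons of the plane need not stay near the plane — it can cut through the caps glued above it. The paper handles this by enlarging each polygon to its \emph{prism} (the polygon together with the bounded components above it, including the caps, whose height is controlled by \Cref{lem:height_of_cap}) and proving that disjoint prisms are at distance at least $D(y_0)$; a geodesic of length $\ell_i$ then meets at most $\ell_i/D(y_0)$ prisms, which is the combinatorial bound you need. Your proposal cites the right constant $D(y_0)$, so this is repairable, but the justification must go through the prism separation estimate rather than a translation-length argument confined to the plane itself.
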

    \begin{proof}
        The proofs of conclusions (1), (2), and (3) appear in \Cref{lem:TypeACount}, \Cref{cor:TypeBCount}, and \Cref{cor:TypeCCount}, respectively. We prove conclusion (4) holds.
        The recursive definition of $\cP_i$ together with conclusions (1)--(3) implies that if $\ell_i = \frac{1}{2}$, then $|\cP_{i}| \leq A|\cP_{i-1}|$, and if $\ell_i \geq 1$, then $|\cP_{i}| \leq |\cP_{i-1}|Fe^{C_2\ell_i}$.

        There are $k$ entries of $\ell$ equal to $\frac{1}{2}$, so $|\cP_{j}| \leq A^kF^{j-k}e^{C_2L}$, where $L = \sum_{\ell_i \geq 1} \ell_i$. It remains to bound~$L$:  \[L = \sum_{\ell_i \geq 1} \ell_i \leq (r - \frac{k}{2}) + (j-k) \leq r + j \leq 3r,\]
        where in the first inequality the term $r-\frac{k}{2}$ follows from the fact that each of the $\frac{1}{2}$ entries in $\ell$ comes from a path of length greater than or equal to $\frac{1}{2}$ by \Cref{lemma:hexside} and the $j-k$ term accounts for the rounding of the $\ell_i\geq1$. The last inequality follows from Corollary~\ref{cor:hexLength}.
    \end{proof}

    \subsection{Summing over possible itinerary types in a ball} \label{subsec:summing_over_types}

   The bound on the number of orbit points in a ball is obtained by summing over the possible itinerary types.

    \begin{prop}\label{upperboundprop}
        Let $r \in \N$. Using the constants from \Cref{thm:fixed_itinerary}, \[ |W_\Gamma x_0 \cap B_{\cY_\Gamma}(r,x_0)| \leq
            2M e^{3C_2r}(1+A+F)^{3r}.    \]
    \end{prop}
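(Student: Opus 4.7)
The plan is to partition the orbit points $gx_0 \in B_{\cY_\Gamma}(r,x_0)$ according to their itinerary type and apply \Cref{thm:fixed_itinerary}(4) to each class. Each orbit point realizing an itinerary type $\ell = (\ell_1,\dots,\ell_j)$ with $k$ entries equal to $\frac{1}{2}$ lies at a vertex of the terminal polygon of some polygon sequence that yields $\ell$. Since every $\cX_\Gamma$-polygon corresponds to a polygon of the Davis complex with at most $2M$ sides, it contains at most $2M$ vertices in the orbit $W_\Gamma x_0$. Hence the number of orbit points realizing a given itinerary type $\ell$ is at most $2M \cdot |\cP_j| \leq 2M \cdot A^{k} F^{j-k} e^{3C_2 r}$.

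To enumerate itinerary types, I will first use \Cref{cor:hexLength} to bound $j \leq 2r$, and then reuse the computation appearing in the proof of \Cref{thm:fixed_itinerary}(4) to show that $L := \sum_{\ell_i \geq 1} \ell_i$ satisfies $L \leq (r - k/2) + (j-k)$, so that
\[ L + k \;\leq\; r + j - \tfrac{k}{2} \;\leq\; 3r. \]
For fixed $j$ and $k$ there are $\binom{j}{k}$ ways to place the $\frac{1}{2}$-entries, and for each value of $L$ the number of compositions of $L$ into $j-k$ positive parts is $\binom{L-1}{j-k-1}$. Summing over $j-k \leq L \leq 3r-k$, the hockey-stick identity gives $\binom{3r-k}{j-k}$ total compositions, so the number of itinerary types with the given $(j,k)$ arising from points of $B_{\cY_\Gamma}(r,x_0)$ is at most $\binom{j}{k}\binom{3r-k}{j-k}$.

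Assembling these bounds yields
\[ |W_\Gamma x_0 \cap B_{\cY_\Gamma}(r,x_0)| \;\leq\; 2M e^{3C_2 r} \sum_{j=0}^{2r} \sum_{k=0}^{j} \binom{j}{k}\binom{3r-k}{j-k} A^k F^{j-k}. \]
Using $\binom{j}{k} \leq \binom{3r}{k}$ (since $k \leq j \leq 3r$) and extending the outer range of summation to $j \leq 3r$, the double sum is dominated by
\[ \sum_{j=0}^{3r}\sum_{k=0}^{j}\binom{3r}{k}\binom{3r-k}{j-k}A^kF^{j-k} \;=\; (1+A+F)^{3r}, \]
where the final equality is the multinomial expansion of $(1+A+F)^{3r}$ via the substitution $a = 3r-j$, $b = k$, $c = j-k$. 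Combined with the prefactor $2M e^{3C_2 r}$, this produces the claimed bound.

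The main obstacle is really just the single inequality $L + k \leq 3r$: this is precisely what allows the itinerary data to be packed into $3r$ ``slots'' and then compared with the multinomial expansion of $(1+A+F)^{3r}$. Once this bookkeeping is carried out, the rest of the proof is a routine composition count combined with the per-itinerary bound from \Cref{thm:fixed_itinerary}(4).
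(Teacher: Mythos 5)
Your proposal is correct and follows essentially the same route as the paper: partition orbit points by itinerary type, apply \Cref{thm:fixed_itinerary}(4) together with \Cref{cor:hexLength} and the bound $L \leq (r-\tfrac{k}{2})+(j-k)$, count itinerary types by compositions, and compare with the expansion of $(1+A+F)^{3r}$. The only difference is cosmetic bookkeeping — you use the hockey-stick identity and a direct multinomial expansion where the paper uses stars-and-bars followed by two applications of the binomial theorem — and both yield the stated bound.
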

    \begin{proof}
        Suppose $gx_0 \in W_\Gamma x_0 \,\cap\, B_{\cY_\Gamma}(r,x_0)$. Then $g x_0$ is a vertex of a polygon in $\cP_{j}$ constructed with respect to an itinerary type $\ell = (\ell_1, \ldots, \ell_j)$ as in Definition~\ref{def:curlyP} and with $j \leq 2r$ by Lemma~\ref{cor:hexLength}. Thus, an upper bound on the cardinality of the set $W_{\Gamma}x_0 \cap B_{\cY_\Gamma}(r,x_0)$ is obtained via the sum of the cardinality of the sets $|\cP_{j}|$ as the vector $\ell$, of length at most $2r$, varies.

        First, fix $j \leq 2r$. Let $\cP^j$ be the union of all sets $\cP_{j}$ constructed with respect to an itinerary vector of length $j$.
        Then $\cP^j$ includes the ultimate polygons in all polygon sequences of length equal to $j$. As the number, $k$, of entries in $\ell$ equal to $\frac{1}{2}$ varies and the position of these entries varies, \Cref{thm:fixed_itinerary} implies

        \begin{eqnarray}
           |\cP^j| &\leq& \sum_{k=0}^j \left( A^kF^{j-k} e^{3C_2r} \right) \binom{j}{k} \binom{r+j - \lfloor\frac{3k}{2}\rfloor -1}{j-k-1}  \label{eq:one} \\
           & \leq & \binom{r + j}{j}e^{3C_2r}\sum_{k=0}^j A^kF^{j-k} \binom{j}{k} \label{eq:two} \\
           & \leq & \binom{r+j}{j}e^{3C_2r}\left(A+F \right)^j \label{eq:three}
        \end{eqnarray}
       Indeed, to see that Equation~(\ref{eq:one}) holds, fix the number $k$ of entries in $\ell$ equal to $\frac{1}{2}$. Then, $ |\cP_{j}| \leq A^kF^{j-k} e^{3C_2r}$ by \Cref{thm:fixed_itinerary}. The term $\binom{j}{k}$ accounts for the places in $\ell$ that the $k$ entries equal to $\frac{1}{2}$ can appear. Finally, as in the proof of \Cref{thm:fixed_itinerary}, if there are $k$ entries equal to $\frac{1}{2}$, then the sum of the remaining terms $\ell_i$ that are not equal to $\frac{1}{2}$ satisfies $\sum_{\ell_i \geq 1} \ell_i \leq (r - \frac{k}{2}) + (j-k)$. The final term $\binom{r+j - \lfloor \frac{3k}{2}\rfloor -1}{j-k-1}$ accounts for how these numbers are distributed in the remaining $j-k$ spots. This count is given by the classic ``stars and bars'' computation:
       the ``stars'' in this computation are the possible ways to distribute the values of $\ell_i$. Since each of the $\ell_i$ terms, of which there are $j-k$, must be an integer greater than or equal to one, there are at most $r - \lfloor \frac{k}{2} \rfloor$ stars remaining to distribute in the $j - k$ ``buckets''. This yields $j - k - 1$ ``bars''.
       Equation~(\ref{eq:two}) follows since $\binom{r+ j-\lfloor\frac{3k}{2}\rfloor -1}{j-k-1} \leq \binom{r+j-\lfloor \frac{k}{2}\rfloor}{j} \leq \binom{r+j}{j}$. Finally, Equation~(\ref{eq:three}) follows from applying the binomial theorem.
        The desired count is now given by varying $j$, using that there are at most 2M vertices per polygon:
    \begin{eqnarray*}
        |W_\Gamma x_0 \cap B_{\cY_\Gamma}(r,x_0)| &\leq&
            2M\sum_{j=1}^{2r} |\cP^j| \\
            &\leq& 2M \sum_{j=1}^{2r} \binom{r+j}{j}e^{3C_2r}\left(A+F \right)^j  \\
            & \leq & 2M e^{3C_2r} \sum_{j=0}^{3r} \binom{3r}{j} \left(A+F \right)^j  \\
            & \leq & 2M  e^{3C_2r}(1+A+F)^{3r},
    \end{eqnarray*}
    where the last lines follow from applying the binomial theorem and that $j \leq 2r$.
    \end{proof}

\subsection{Hausdorff dimension bound} \label{subsec:Hausdim_bound}

    \begin{thm}\label{thm:HausDimBound}
        If $m \ge 4$, the Hausdorff dimension of $\partial \cY_\Gamma$, and hence the conformal dimension of
        the Bowditch boundary $\partial(W_\Gamma, \mathcal{P})$,
        is bounded from above by
        \[
        3 C_2+ 3\log(1+A+F),
        \]
        where the constants are defined in \Cref{thm:fixed_itinerary},
    \end{thm}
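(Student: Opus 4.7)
The plan is to combine the orbit counting bound from \Cref{upperboundprop} with Paulin's theorem and the quasisymmetry invariance of conformal dimension. Set $\alpha = 3C_2 + 3\log(1+A+F)$ for brevity. First, I will use \Cref{upperboundprop} to bound the Poincar\'e series for the action of $W_\Gamma$ on $\cY_\Gamma$ based at $x_0$. Partitioning the group by integer annuli gives, for any $s > 0$,
\[
\sum_{g \in W_\Gamma} e^{-s\, d_{\cY_\Gamma}(x_0,\, gx_0)} \;\le\; \sum_{r=0}^\infty \bigl|W_\Gamma x_0 \cap B_{\cY_\Gamma}(r+1,x_0)\bigr|\, e^{-sr} \;\le\; 2M\,e^{\alpha}\sum_{r=0}^\infty e^{(\alpha - s)r},
\]
which converges whenever $s > \alpha$. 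Hence the critical exponent satisfies $\delta\bigl(W_\Gamma \curvearrowright \cY_\Gamma\bigr) \le \alpha$.

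Next I will invoke \Cref{thm:Paulin}. The space $\cY_\Gamma$ is a proper geodesic $\CAT(-1)$ space by \Cref{thm:YGammaCAT-1}, and since $m \ge 4$ the Bowditch boundary is infinite so the $W_\Gamma$--action on $\partial \cY_\Gamma$ has no global fixed point (this follows from geometric finiteness, \Cref{prop:QIYAndCuspX}, together with the existence of infinitely many distinct conical limit points). Equipping $\partial \cY_\Gamma$ with the visual metric $d_e$ of parameter $e$ given by \Cref{thm:vis_parameter}, Paulin's theorem yields
\[
\Hdim\bigl(\Lambda_c(W_\Gamma, \cY_\Gamma),\, d_e\bigr) \;=\; \delta\bigl(W_\Gamma \curvearrowright \cY_\Gamma\bigr) \;\le\; \alpha.
\]

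To upgrade this to a bound on $\Hdim(\partial \cY_\Gamma, d_e)$, I will use that the bounded parabolic points form a countable set: they are in bijection with the cosets of the maximal parabolic subgroups of $(W_\Gamma, \cP)$, which are countable. Since a countable set has zero $s$-dimensional Hausdorff content for every $s \ge 0$, the remark following \Cref{thm:Paulin} gives $\Hdim(\partial \cY_\Gamma, d_e) = \Hdim(\Lambda_c, d_e) \le \alpha$.

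Finally, to pass to the conformal dimension of the Bowditch boundary, I will combine \Cref{prop:QIYAndCuspX} with \Cref{cor:quasisymmetry_Bowditch}: $\cY_\Gamma$ is $W_\Gamma$--equivariantly quasi-isometric to the cusped Cayley graph $X(W_\Gamma,\cP)$, so by \Cref{thm_qs} the visual boundaries are quasisymmetric when each is equipped with a visual metric. Since no $P \in \cP$ is nontrivially relatively hyperbolic (each is virtually $\Z^2$ or trivial, hence the quasisymmetry statement of \Cref{cor:quasisymmetry_Bowditch} applies), this quasisymmetry identifies $\partial \cY_\Gamma$ with $\partial(W_\Gamma,\cP)$. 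By the very definition of conformal dimension as an infimum over the quasisymmetry class,
\[
\Confdim\bigl(\partial(W_\Gamma,\cP)\bigr) \;\le\; \Hdim(\partial \cY_\Gamma, d_e) \;\le\; \alpha,
\]
which is the desired bound. There is no real obstacle here beyond bookkeeping; the genuine work was done in establishing the $\CAT(-1)$ model (\Cref{thm:YGammaCAT-1}) and the orbit count of \Cref{upperboundprop}.
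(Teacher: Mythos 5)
Your proposal is correct and follows essentially the same route as the paper: bound the Poincar\'e series using \Cref{upperboundprop}, deduce $\delta(W_\Gamma \curvearrowright \cY_\Gamma) \le 3C_2 + 3\log(1+A+F)$, apply \Cref{thm:Paulin} together with the countability of the bounded parabolic points, and transfer the bound to $\partial(W_\Gamma,\cP)$ via the quasisymmetry coming from \Cref{prop:QIYAndCuspX}. The only differences are cosmetic (a shifted annulus indexing and a direct geometric series in place of the root test, plus an explicit check of the no-global-fixed-point hypothesis that the paper leaves implicit).
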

    \begin{proof}
        \Cref{upperboundprop}
        proves $|W_\Gamma x_0 \cap B_{\cY_\Gamma}(r,x_0)| \le 2M e^{3C_2r}(1+A+F)^{3r}$.
        Substituting into the Poincar\'e series with constant $s$ yields
        \[
            \sum_{g \in W_\Gamma} e^{-s d(x_0, gx_0)}
            \leq \sum_{r = 1}^\infty |W_{\Gamma}x_0 \cap B_{\cY_{\Gamma}}(r,x_0)| e^{-sr}
            \le \sum_{r=1}^\infty 2M e^{3C_2r}(1+A+F)^{3r}e^{-sr}.
        \]

        By the root test,
        this series converges provided
        \[
        e^{-s +3C_2} \cdot (1+ A + F)^3 < 1.
        \]
        This holds when $s$ satisfies the inequality
        \[
        3C_2 + 3\log(1+ A+ F) < s.
        \]

       The stated bound on Hausdorff dimension follows by \Cref{thm:Paulin}, since
       $$\text{Confdim}(\partial(W_\Gamma, \cP))  = \text{Confdim}(\partial \cY_\Gamma) \leq \text{Hdim}(\partial \cY_\Gamma) = \text{Hdim}(\Lambda_c(W_\Gamma,\cY_\Gamma)) = \delta(W_\Gamma \curvearrowright \cY_\Gamma).$$ The second to last equality follows from the fact that $\partial \cY_\Gamma$ is a union of conical limit points of $W_\Gamma$ and countably many bounded parabolic points (since $W_\Gamma$ is hyperbolic relative to a finite collection of conjugacy classes of abelian subgroups). See discussion in \Cref{subsection:hausdim}.
    \end{proof}

    We note that in the statement of \Cref{thm:HausDimBound} the constant $C_2$ depends on $M = \max_{i \neq j}\{m_{ij}\}$ and the choice of $y_0$ in the construction of the model space $\cY_\Gamma$. The constants $A$ and $F$ depend on both $m$ and $M$.  In particular, as $y_0$ increases, the bound on conformal dimension in the above theorem increases. We now provide a specific upper bound in terms of the values $m$ and $M$ only.

    \begin{corollary} \label{cor:HausUpperBound}
        Let $W_\Gamma \in \cW$ be generated by $m \geq 4$ elements. Let $M = \max_{i \neq j}\{m_{ij}\}$.
        \begin{enumerate}
        \item If $M=3$, then
        \[ \Confdim(\p (W_\Gamma, \cP)) \leq 23 + 12\log m.  \]
        \item If $M \geq 4$, then
        \[ \Confdim(\p (W_\Gamma, \cP)) \leq  13 + 12 \log m + 19 \log M. \]
        \end{enumerate}
        \end{corollary}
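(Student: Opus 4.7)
The plan is to derive both inequalities directly from the bound $\Confdim\bigl(\p(W_\Gamma, \cP)\bigr) \leq 3C_2 + 3\log(1+A+F)$ provided by \Cref{thm:HausDimBound}, by substituting the explicit values of $A$, $B$, $C_1$, $C_2$ from \Cref{thm:fixed_itinerary}, estimating each term as a polynomial in $m$ and $M$, and simplifying. This is essentially a quantitative exercise: the conceptual work has already been done in the preceding sections, and what remains is to organize the numerical estimates so that they aggregate into the clean constants $23, 12, 13, 19$ claimed in the two cases.

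First I would handle the special structure of the case $M=3$. When every edge of $\Gamma$ is labeled $3$, every standard triangle subgroup of $W_\Gamma$ is the Euclidean triangle group $\Delta(3,3,3)$, so there are no $\cX_\Gamma$-hyp planes in $\cY_\Gamma$ and hence no Type~C pairs in any polygon sequence. Consequently the contribution $C_1 e^{C_2\ell_i}$ in \Cref{thm:fixed_itinerary}(3) is absent, so one may take $F=B$ and drop the $3C_2$ summand entirely. The remaining task is to bound $A$ and $B$ for $M=3$: after expansion, $A$ is a polynomial in $m$ of degree four with leading coefficient $\frac{9}{2}$ and $B$ is a cubic in $m$ with leading coefficient $\frac{32\pi}{\sqrt{3}}$, so $1+A+B$ admits a clean estimate of the form $Cm^4$. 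Taking logarithms and multiplying by $3$ yields a bound of the shape $c+12\log m$ and one chooses the constants to verify $c \leq 23$.

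For the case $M\geq 4$, I would fix the explicit value $y_0 = 1.5$ permitted by \Cref{prop:y_0} and compute $D(y_0) = 4\sinh^{-1}\!\bigl(1/(4\sqrt{3}\sqrt{y_0^2+1})\bigr)$ as an explicit positive constant, which bounds $C_2$ above by a universal multiple of $\log(2M-3) \leq \log M + \log 2$. For the polynomial estimates, the leading term of $A$ behaves like $M^2 m^4$, the leading term of $B$ like $Mm^3$, and the leading term of $C_1$ like $M^3 m^3$; for $M\geq 4$ one has $C_1 \geq B$ and therefore $F = C_1$. Combining gives an estimate of the form $\log(1+A+F) \leq c_1 + 3\log M + 4\log m$, and $3C_2 \leq c_2 + c_3 \log M$. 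Summing all terms produces $\Confdim \leq c + 12\log m + 19\log M$; the arithmetic chase to verify the additive constant is at most $13$ amounts to bounding each explicit prefactor ($2M$ from vertex counts, $\frac{32\pi}{3\sqrt{3}}$ from $B$, $4M^2-6M$ from $C_1$, and the reciprocal of $D(1.5)$ from $C_2$) and absorbing lower-order polynomial terms.

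The main obstacle is purely computational: the explicit constants in \Cref{thm:fixed_itinerary} are messy, and care must be taken to choose the right order for splitting $\log(1+A+F)$ (for instance, writing $1+A+F \leq (1+A)(1+F)$ when convenient) so that the resulting additive constants absorb into the claimed $23$ and $13$ without growing with $m$ or $M$. Beyond this bookkeeping, there are no further conceptual difficulties: the hyperbolic/Euclidean dichotomy of triangle subgroups cleanly separates the two cases, and the rest follows from routine estimates using $m \geq 4$ and (in the second case) $M\geq 4$.
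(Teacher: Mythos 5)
Your proposal for the case $M \geq 4$ is essentially the paper's proof: take $y_0=1.5$, bound $3C_2 \leq 10\log(2M-3) \leq 10\log M + 10\log 2$, estimate $1+A \leq 3m^4M^2$ and $F = C_1 \leq 4m^3M^3$, and collect $9\log M + 10\log M = 19\log M$, $12\log m$, and $10\log 2 + 3\log 7 < 13$. That part is fine.

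For $M=3$ you deviate from the paper, and the deviation contains a genuine error. You observe (correctly) that when all labels equal $3$ every triangle subgroup is Euclidean, so there are no Type~C pairs, and you conclude that one may ``drop the $3C_2$ summand entirely.'' That does not follow. The exponential factor $e^{3C_2 r}$ in \Cref{thm:fixed_itinerary}(4) is not contributed solely by Type~C pairs: the Type~B count is $Be^{\ell_i}$, so even with no Type~C pairs the recursion gives $|\cP_j| \leq A^k B^{j-k} e^{L}$ with $L \leq 3r$, and the root test then requires $s > 3 + 3\log(1+A+B)$, not $s > 3\log(1+A+B)$. The correct replacement for $3C_2$ in your argument is $3$, not $0$. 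This slip happens to be harmless for the stated conclusion --- $3 + 3\log(42m^4) \approx 14.2 + 12\log m \leq 23 + 12\log m$ --- but the step as written is wrong and should be repaired. Note also that the detour is unnecessary: the paper simply keeps $3C_2 = 10\log 3 \approx 10.99$ together with $F=B$ (valid since $4M^2-6M = 18 < \tfrac{32\pi}{3\sqrt 3}$ when $M=3$) and obtains $10\log 3 + 3\log 42 + 12\log m \approx 22.2 + 12\log m$, which already fits under $23$. Your ``no Type~C pairs'' observation, once corrected to yield $+3$ rather than $+0$, actually gives a slightly better constant than the paper's, so it is a legitimate refinement --- but only after the Type~B exponential factor is accounted for.
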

    \begin{proof}
        Let $\cY_\Gamma$ be the $\CAT(-1)$ space constructed with $y_0 = 1.5$ as in \Cref{nota:blocks}. Then, $D(1.5)>0.3$ by \Cref{lemma:disjoint_prisms}. Hence, $3C_2 \leq 10 \log(2M-3)$. We now upper bound both $A$ and $F$. The highest degree term in $A$ is $m^4M^2$ after expanding. Since $m \geq 4$ and $M \geq 3$, one sees that $1+A \leq 3m^4M^2$. We recall that $F = \max\{B, C_1\}$. If $M = 3$, then $F = B$, and if $M \geq 4$, then $F = C_1$. Expanding the expressions for $B$ and $C_1$ and using that $m \geq 4$ and $M \geq 3$, one sees $B \leq 20m^3M$ and $C_1 \leq 4m^3M^3$. Therefore, if $M=3$, then
            \begin{eqnarray*}
                 \Confdim(\p (W_\Gamma, \cP)) &\leq& 10 \log 3 + 3\log(27m^4+60m^3)  \\
                 &\leq&  10 \log 3 + 3\log(42m^4) \\
                 &\leq& 10\log 3 + 3\log 42 + 12\log m
            \end{eqnarray*}
        If $M \geq 4$, then
                \begin{eqnarray*}
                    \Confdim(\p (W_\Gamma, \cP)) &\leq & 10 \log (2M-3) + 3\log(3m^4M^2+4m^3M^3) \\
                    &\leq& 10\log (2M-3) + 3\log(7m^4M^3) \\
                    & \leq & 10\log M + 10\log 2 + 3\log 7 + 12\log m + 9 \log M \\
                \end{eqnarray*}
        The statement of the corollary follows.
    \end{proof}

    \begin{remark} \label{rem:upperbounds}
        The upper bounds on the conformal dimension given in this section tend to infinity as the maximal edge label $M$ tends to infinity. On the other hand, we expect that, fixing $m$, the conformal dimension of the Bowditch boundary should {\it decrease} as $M$ tends to infinity. Intuitively, as $M$ increases, polygons in the Davis complex grow, and the group resembles a free group more and more. In this case, it takes longer to see the branching behavior that contributes to Hausdorff dimension. Indeed, this structure is exhibited in the upper bounds of Bourdon--Kleiner~\cite{bourdonkleiner15}, given in the Introduction. This discrepancy can be explained as follows.

        The upper bounds on the conformal dimension here are given by producing an upper bound on the Hausdorff dimension of the boundary of the $\CAT(-1)$ space $\cY_\Gamma$. We believe the Hausdorff dimension of $\cY_\Gamma$ does tend to infinity as $M \rightarrow \infty$ and $m$ is fixed. Indeed, the space $\cY_\Gamma$ is highly singular and constructed by attaching $3$-cells to uniformly bounded kites.
        Thus, we expect that trees with uniformly bounded edge lengths and increasing valence can be uniformly quasi-isometrically embedded into the model space $\cY_\Gamma$ as $M \rightarrow \infty$, which would imply the Hausdorff dimension of the boundary tends to infinity.

        This discussion motivates an understanding of other model geometries for groups in this family and stronger tools to produce upper bounds on conformal dimension in the relatively hyperbolic setting. The strategy employed by Bourdon--Kleiner in the hyperbolic case does not clearly translate to this setting.
        \end{remark}

\subsection{Geometric arguments and constant calculations} \label{subsec:GeoArg}

This section contains the geometric arguments used in \Cref{thm:fixed_itinerary}.

\subsubsection{Distance between disjoint polygons and prisms}

      First we bound the distance between disjoint $\cX_\Gamma$-polygons. We then define {\it prisms}, which deformation retract to $\cX_\Gamma$-polygons. We compute the distance between disjoint prisms in \Cref{lemma:disjoint_prisms}. This computation will be used to translate distances in $\cY_\Gamma$ to combinatorial distances in the Davis complex. To find these bounds, we consider the following subspaces of $\cY_{\Gamma}$.

        A {\it wall} in the $\CAT(-1)$ space $\cY_{\Gamma}$ is the fixed point set of a conjugate of a standard generator of~$W_{\Gamma}$.
        If $W$ is a wall in $\cY_{\Gamma}$, then $W$ is convex and separates $\cY_{\Gamma}$ into more than one component by \cite[Corollary II.2.8]{bridsonhaefliger}.

    \begin{figure}
    \begin{centering}
	\begin{overpic}[width=.8\textwidth, tics=5]{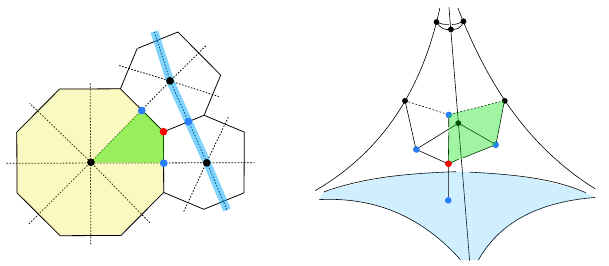}
        \put(27.5,20){\small{$v$}}
        \put(20,18){\small{$K$}}
        \put(10,25){\small{$P$}}
        \put(24,39.5){\small{$W'$}}
        \put(71.5,15.5){\small{$v$}}
        \put(79,23){\small{$K$}}
        \put(85,10){\small{$W$}}
        \put(71.2,10){\small{$u$}}
    \end{overpic}
	\caption{\small{Computing the distance between a $\cX_{\Gamma}$-polygon and a wall.}}
	\label{figure:polygon_distance}
    \end{centering}
    \end{figure}

    \begin{lemma} \label{lemma:disjoint_polygons}
        If $P$ and $Q$ are disjoint $\cX_\Gamma$-polygons, then $d_{\cY_\Gamma}(P,Q)\geq \frac{1}{2}$.
    \end{lemma}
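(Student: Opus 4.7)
The plan is to use the fact that two disjoint $\cX_\Gamma$-polygons are separated by a wall of $\cY_\Gamma$ and to obtain a lower bound of $1/4$ on the distance from each polygon to this separating wall.

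First, under the combinatorial isomorphism $\cX_\Gamma \cong S X_\Gamma$ from \Cref{const:Ymhat}, the disjoint $\cX_\Gamma$-polygons $P$ and $Q$ correspond to Davis complex polygons that share no vertex. Applying \Cref{lemma:convex} in the Davis complex produces a wall whose carrier contains neither polygon; by the construction of $\cY_\Gamma$, this yields a wall $W \subset \cY_\Gamma$ whose two open halfspaces contain the relative interiors of $P$ and $Q$, respectively. The $\CAT(-1)$ geodesic realizing $d_{\cY_\Gamma}(P,Q)$ must therefore cross $W$ at a unique point $u$.

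The key geometric estimate is: if $W$ is a wall whose intersection with an $\cX_\Gamma$-polygon $P$ is either empty or a single boundary vertex of type $x_i$, then any point of $W$ that does not lie on $P$ is at distance at least $1/4$ from $P$ in $\cY_\Gamma$. I would prove this kite-by-kite: using \Cref{lemma:hexside} together with the angle data from its proof, namely $\angle_{x_0}(x_i,x_{ij}) = \tfrac{1}{2}\arccos(-\tfrac{1}{3})$, $\angle_{x_i}(x_0, x_{ij}) = \tfrac{\pi}{2}$, and $\angle_{x_{ij}}(x_i,x_0) = \tfrac{\pi}{6}$, hyperbolic trigonometry in the right triangle with vertices $x_0, x_i, x_{ij}$ yields explicit distances from points in a kite to walls crossing the adjacent kites, and these distances all dominate $1/4$.

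Applying the estimate at the crossing point $u$ to both $P$ and $Q$ yields $d_{\cY_\Gamma}(P, u) \geq 1/4$ and $d_{\cY_\Gamma}(u, Q) \geq 1/4$, so $d_{\cY_\Gamma}(P, Q) \geq 1/2$. The main obstacle is the degenerate case in which the separating wall $W$ meets $P$ at a single vertex $v$ of type $x_i$, where the naive estimate $d(P,W) \geq 1/4$ is vacuous. In this case I would instead argue, using the $\CAT(-1)$ link structure at $v$ computed in the proof of \Cref{thm:YGammaCAT-1}, that $u$ cannot equal $v$ (since $v \notin Q$) and that any point of $W$ sufficiently close to $v$ lies at distance at least $1/4$ from $Q$ along $W$, so that the total length of the geodesic from $P$ to $Q$ through $u$ still exceeds $1/2$.
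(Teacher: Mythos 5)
Your overall strategy coincides with the paper's: find a wall $W$ separating $P$ from $Q$, bound the distance from each polygon to $W$ below by $\frac{1}{4}$, and add. The local input $\frac{\log 2}{2}\ge\frac{1}{4}$ from \Cref{lemma:hexside} is also the right one. But there is a genuine gap at the key step. You assert that ``kite-by-kite'' hyperbolic trigonometry in the triangle $x_0,x_i,x_{ij}$ shows every point of $W$ is at distance at least $\frac{1}{4}$ from $P$. That computation only controls the distance from $P$ to the portion of $W$ lying in the tetrahedra adjacent to $P$; the wall $W$ is an unbounded convex subset of $\cY_\Gamma$ running through many blocks, caps and cusps, and a priori the infimum $d_{\cY_\Gamma}(P,W)$ could be approached elsewhere, or realized by a geodesic that does not stay in one tetrahedron. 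The paper's proof spends essentially all of its effort closing exactly this gap: it computes $d_{\Hy^3}(v,u)=\frac{\log 2}{2}$ for $v$ the nearest Davis vertex of $P$ and $u$ the incenter of the adjacent face $T\subset W$, and then runs two Alexandrov-angle arguments with nearest-point projections (\cite[Proposition II.2.4]{bridsonhaefliger}) to show first that $\pi_W(v)=u$, hence $d(v,W)=d(v,u)$, and second that $d(P,W)=d(v,W)$ --- in each case producing a triangle or quadrilateral with too many angles $\ge\frac{\pi}{2}$ for a $\CAT(-1)$ space. Some argument of this kind (convexity of $W$ and $P$ plus angle comparison) is unavoidable, and your outline does not supply it.

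Separately, the ``degenerate case'' you set aside cannot occur, so the vague fallback you sketch for it is unnecessary: by the structure of walls recalled in \Cref{subsection:Coxeter}, the intersection of a wall with a polygon is either empty or a single geodesic segment crossing it, never a single vertex, since a wall contains the midpoint $x_i$ of an edge only if it is the wall dual to that edge, in which case it crosses every polygon containing that edge. Hence a wall separating $P$ from $Q$ is disjoint from both polygons, and the two-sided estimate $d(P,W),\, d(Q,W)\ge\frac{1}{4}$ applies directly once it is actually established.
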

    \begin{proof}
        Let $P$ and $Q$ be disjoint $\cX_{\Gamma}$-polygons. By construction, there is a wall of $\cY_{\Gamma}$ separating $P$ from $Q$. Let $W$ be a closest wall in $\cY_{\Gamma}$ to $P$. We will show $d_{\cY_{\Gamma}}(P, W) \geq \frac{1}{4}$. The result will follow.

There is a deformation retraction $\cY_{\Gamma} \rightarrow \cX_{\Gamma}$ by Proposition~\ref{prop:QIYAndCuspX}. The deformation retract $W'$ of $W$ to $\cX_\Gamma$ is combinatorially isomorphic to a subspace of $S X_\Gamma$, which corresponds to a wall in the Davis complex. The $\cX_\Gamma$-polygon $P$ is subdivided into kites. Let $v$ be the closest vertex of $P$ to $W$ and let $K \subset P$ be the kite containing $v$. Up to an isometry (as in \Cref{const:Ymhat}), we may assume  that $K$ is contained in an ideal tetrahedron $\cT$ contained in $\H^3$ and that $W \cap \cT$ is a face $T$ of $\cT$.  Viewing $\Hy^3$ in the unit ball model, we may assume the point $v$ lies on the disk in $\Hy^3$ with boundary the equator, $K$ lies in the northern hemisphere and $T$ lies in the southern hemisphere. See \Cref{figure:polygon_distance}.

       Let $u$ be the point in $T$ equidistant to the three sides of $T$. Since the geodesic in $\Hy^3$ from $v$ to $u$ meets both the disk through the equator and the triangle $T$ at right angles, $d_{\cY_\Gamma}(v,T) = d_{\Hy^3}(v,T) = d_{\Hy^3}(v,u)$. By construction, $d_{\Hy^3}(v,u) = d(x_0,x_i) = \frac{\log 2}{2}\geq \frac{1}{4}$ by Lemma~\ref{lemma:hexside}.

       We now show $d_{\cY_{\Gamma}}(v,T) = d_{\cY_{\Gamma}}(v,W)$. Let $\pi_W \colon \cY_{\Gamma} \to W$ be the nearest point projection map to the convex wall $W$. We claim that $\pi_W(v) = u$. Suppose not. Then there exists a geodesic triangle with vertex set $\{v,u,\pi_W(v)\}$. The Alexandrov angle $\angle_u(v, \pi_W(v)) = \pi/2$ by construction. Further, $\angle_{\pi_W(v)}(v,u) \geq \pi/2$ by~\cite[Proposition II.2.4]{bridsonhaefliger}. This contradicts that $\cY_\Gamma$ is $\CAT(-1)$.

    It remains to show that $d_{\cY_\Gamma}(P,W) = d_{\cY_\Gamma}(v,W)$. Suppose towards a contradiction that there exists $y\in P$ with $y \neq v$ so that $d_{\cY_\Gamma}(P,W) = d_{\cY_\Gamma}(y,W)$. We again argue using angles. Let $\pi_P:\cY_{\Gamma} \rightarrow P$ be the nearest point projection to the set $P$, which is convex by construction. Suppose first that $\pi_W(y) \neq \pi_W(p)$. Consider the quadrilateral on the ordered vertex set $\{v,y,\pi_W(y), u=\pi_W(v)\}$. Note the geodesic connecting $v$ and $y$ is contained in $P$, and the geodesic connecting $\pi_W(y)$ and $\pi_W(v)$ is contained in $W$. Further, $\pi_P(\pi_W(y)) = y$ since nearest point projections are unique in a $\CAT(0)$ space.  Consider the Alexandrov angles
        \[\alpha = \angle_v(u,y), \quad \beta=\angle_y(v, \pi_W(y)),
        \quad \gamma  =\angle_{\pi_W(y)}(u,y), \quad \delta=\angle_u(v,\pi_W(y)).\]
    Then, $\beta, \gamma, \delta \geq \frac{\pi}{2}$ by \cite[Proposition II.2.4]{bridsonhaefliger}. To see $\alpha \geq \frac{\pi}{2}$ note the geodesic from $v$ to $y$ begins in the kite $K$ and hence makes an obtuse angle with the geodesic from $v$ to $u$ in $\Hy^3$. These angles contradict that $\cY_\Gamma$ is $\CAT(-1)$. Otherwise, $\pi_W(y) = \pi_W(v)$, which also yields a contradiction since $\alpha \geq \frac{\pi}{2}$ and $\angle_y(v,u) \geq \frac{\pi}{2}$.
\end{proof}

        We will employ the following distance calculation from hyperbolic geometry.

    \begin{lemma}[\cite{katok}, Theorem 1.2.6(iii)] \label{rem:Katok}
        Let $z, w$ be two points in the upper half plane model $\Hy^2$, given as complex numbers. Let $\Ima(z)$ denote the imaginary part of $z$ and $|z|$ the norm of $z$. Then
            \[ \sinh\left( \frac{d_{\Hy^2}(z,w)}{2} \right) = \frac{|z-w|}{2\bigl(\Ima(z)\Ima(w)\bigr)^{1/2}}.  \]
    \end{lemma}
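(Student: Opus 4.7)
The plan is to prove the formula by first establishing that the right-hand side is invariant under the full isometry group of $\Hy^2$, and then verifying the identity on a single convenient configuration.

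For the invariance, let $R(z,w) = |z-w|^2 / \bigl(4\,\Ima(z)\,\Ima(w)\bigr)$. Every orientation-preserving isometry of $\Hy^2$ has the form $g(z) = (az+b)/(cz+d)$ with $a,b,c,d \in \R$ and $ad-bc = 1$. A short direct calculation gives the two key identities
\[
g(z) - g(w) = \frac{z-w}{(cz+d)(cw+d)}, \qquad \Ima\bigl(g(z)\bigr) = \frac{\Ima(z)}{|cz+d|^2},
\]
and substituting these into $R(gz, gw)$ shows immediately that $R(gz, gw) = R(z,w)$. Symmetry in $(z,w)$ plus the reflection $z \mapsto -\bar z$ then handles orientation-reversing isometries, so $R$ is invariant under all of $\Isom(\Hy^2)$.

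Next I would reduce to a normal form. Using that $\mathrm{PSL}(2,\R)$ acts transitively on $\Hy^2$ and that the stabilizer of $i$ acts transitively on unit tangent vectors at $i$, one can apply an isometry sending $z$ to $i$ and $w$ onto the positive imaginary axis, so that $w = ti$ for some $t \ge 1$. The geodesic between $i$ and $ti$ is the vertical axis, along which the hyperbolic length element is $dy/y$, so $d_{\Hy^2}(i, ti) = \log t$. Then a direct calculation yields
\[
R(i, ti) = \frac{(t-1)^2}{4t} = \left(\frac{\sqrt{t} - 1/\sqrt{t}}{2}\right)^{\!2} = \sinh^2\!\left(\tfrac{\log t}{2}\right) = \sinh^2\!\left(\tfrac{d_{\Hy^2}(i, ti)}{2}\right),
\]
establishing the formula in the normal form. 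Combined with invariance of both sides under $\Isom(\Hy^2)$, this gives the identity for all $z, w \in \Hy^2$.

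There is essentially no obstacle; the only step that requires care is the invariance calculation of $R$, and even that is just algebra with Möbius transformations. The main thing is to notice that one should reduce to $z=i$ and $w=ti$ rather than trying to integrate the line element along a general geodesic arc in $\Hy^2$, which would be considerably messier.
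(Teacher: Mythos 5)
Your proof is correct. Note, however, that the paper does not prove this statement at all: it is quoted verbatim from Katok's book (Theorem 1.2.6(iii)) as a known fact, so there is no in-paper argument to compare against. Your argument — checking that $R(z,w)=|z-w|^2/\bigl(4\,\Ima(z)\Ima(w)\bigr)$ is invariant under M\"obius transformations via the identities $g(z)-g(w)=(z-w)/\bigl((cz+d)(cw+d)\bigr)$ and $\Ima(g(z))=\Ima(z)/|cz+d|^2$, then reducing to $z=i$, $w=ti$ and computing $d(i,ti)=\log t$ along the vertical geodesic — is the standard textbook proof (and is essentially the one in Katok). The only cosmetic point is that at the end you equate squares, so you should remark that both sides are nonnegative before taking square roots; this is immediate since $d\ge 0$.
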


    \begin{defn}[Prisms]
    Let $P$ be an $\cX_\Gamma$-polygon.
    A \textbf{prism} $\P$ over $P$ will be a subset of $\cY_\Gamma$
    whose image under the deformation retraction to $\cX_\Gamma$ is $P$.
    In the full preimage of $P$ under this deformation retraction,
    after removing $P$ there is one component corresponding to each $3$-generator special subgroup
    of $W_\Gamma$.
    Such a component is unbounded with one ideal boundary point
    exactly when this $3$-generator special subgroup
    is the Euclidean $(3,3,3)$-triangle Coxeter group,
    and is bounded otherwise.
    We reserve the word ``prism'' for the closure of a bounded component.
    \end{defn}

    \begin{lemma}\label{lemma:disjoint_prisms}  If $\mathbb{P}$ and $\mathbb{P}'$ are disjoint prisms in $\cY_{\Gamma}$, then 
    \[d_{\cY_{\Gamma}}(\mathbb{P}, \mathbb{P}') \geq 4\sinh^{-1}\left(\frac{1}{4\sqrt{3} \sqrt{y_0^2+1}}\right) =: D(y_0),\] where $y_0$ is the height of the truncated blocks as specified in \Cref{nota:blocks}.
    \end{lemma}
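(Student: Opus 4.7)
The plan is to locate the geometric configuration that minimizes the distance between two disjoint prisms and compute this distance explicitly in the upper half-space model of $\Hy^3$. Fix disjoint prisms $\mathbb{P}, \mathbb{P}'$ with underlying $\cX_\Gamma$-polygons $P, P'$, and let $\gamma$ be the unique minimizing geodesic between them in the $\CAT(-1)$ space $\cY_\Gamma$; since the prisms are closed and convex, $\gamma$ meets both orthogonally at its endpoints.

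First I would show that $\gamma$ is contained in a single cap attached to a boundary component of $\hat{\cY}_\Gamma$. The point is that any geodesic which travels a nontrivial portion of its length in $\cX_\Gamma$ already has length at least $\tfrac{1}{2}$ by \Cref{lemma:disjoint_polygons}, which exceeds $D(y_0)$ for any reasonable $y_0$; and any geodesic that enters a cusp region must ascend and descend a horoball, incurring substantial length. The convexity of walls in the $\CAT(-1)$ space $\cY_\Gamma$ together with nearest-point projection onto caps then reduces the problem to computing the minimum distance between two disjoint prism-pieces inside a single cap.

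Next I would carry out the calculation in $\mathbb{U}^3$. By \Cref{lem:height_of_cap} the cap has maximum height $h := \sqrt{y_0^2 + 1}$, attained by a cap over a unit regular hexagon (from $\Delta(3,q,r)$ with $q,r \geq 4$) or over an equilateral triangle of side $\sqrt{3}$ (from $\Delta(3,3,r)$). In either case the two prism-pieces project to disjoint sub-regions of the base Euclidean polygon, and inspection of the combinatorial structure of the dual tiling $T_1(\cF)$ shows their horizontal separation is bounded below by $1/(2\sqrt{3})$, the apothem of the unit equilateral triangle that arises as the smallest subregion of $T_1(\cF)$.

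Finally, I would apply \Cref{rem:Katok} to two points at horizontal separation $1/(2\sqrt{3})$ on the horosphere at height $h$; this gives hyperbolic distance $2\sinh^{-1}\!\bigl(1/(4\sqrt{3}\sqrt{y_0^2 + 1})\bigr)$. The extremal path rises from one prism to this top region and descends into the other, contributing an additional factor of two to yield
\[
D(y_0) = 4 \sinh^{-1}\!\left( \frac{1}{4\sqrt{3}\sqrt{y_0^2 + 1}} \right).
\]
The main obstacle will be justifying the single-cap reduction and identifying the correct extremal sub-regions, which requires a careful $\CAT(-1)$ convexity argument to rule out shorter paths that cross through several caps, blocks, or cusps in sequence.
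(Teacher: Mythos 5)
Your proposal lands on the right extremal picture (closest approach happens at cap height, horizontal separation governed by the apothem $\tfrac{1}{2\sqrt{3}}$, then Katok's formula), but the step that produces the final factor of $2$ is not valid, and it is exactly where the constant $4\sinh^{-1}(\cdot)$ versus $2\sinh^{-1}(\cdot)$ is decided. \Cref{rem:Katok} applied to the two closest points of $\P$ and $\P'$ already gives the full hyperbolic distance between them — there is no "additional factor of two" to be harvested from the path "rising and descending." If you instead feed the prism-to-prism horizontal separation (which is twice the apothem, i.e.\ $\tfrac{1}{\sqrt{3}}$) into Katok at height $h=\sqrt{y_0^2+1}$, you get only $2\sinh^{-1}\bigl(\tfrac{1}{2\sqrt{3}h}\bigr)$, and by convexity of $\sinh$ this is \emph{strictly smaller} than $4\sinh^{-1}\bigl(\tfrac{1}{4\sqrt{3}h}\bigr)$, so your method proves a weaker bound than the lemma asserts. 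The paper gets the factor of $2$ additively, not inside the $\sinh^{-1}$: it takes a wall $W$ of $\cY_\Gamma$ separating $\P$ from $\P'$, shows $d_{\cY_\Gamma}(\P,W)\geq 2\sinh^{-1}\bigl(\tfrac{1}{4\sqrt{3}h}\bigr)$ using the prism-to-wall horizontal separation $\Rea(x_0-x_k)=\tfrac{1}{2\sqrt{3}}$, and then splits the geodesic from $\P$ to $\P'$ at its crossing of $W$ so that $d_{\cY_\Gamma}(\P,\P')\geq d_{\cY_\Gamma}(\P,W)+d_{\cY_\Gamma}(W,\P')$. You need that intermediate convex wall; without it the argument cannot reach $D(y_0)$.

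The second gap is the reduction you defer to "a careful $\CAT(-1)$ convexity argument": the claim that the minimizer lives in a single cap is both unproven and not quite the right statement (disjoint prisms need not border a common cap, and the wall $W$ runs through many blocks and caps). What actually has to be shown — and what the paper proves with nearest-point projections and Alexandrov-angle comparisons in the $\CAT(-1)$ space — is that $d_{\cY_\Gamma}(\P,W)$ is realized at the endpoints $v_c, u_c$ of the fibers of the deformation retraction over the closest pair $v\in P$, $u\in W$, i.e.\ at the top of the prism and the top of the wall; only then do the bounds $|v_c-u_c|\geq\tfrac{1}{2\sqrt{3}}$ and $\Ima(v_c),\Ima(u_c)\leq\sqrt{y_0^2+1}$ (from \Cref{lem:height_of_cap}) combine with \Cref{rem:Katok}. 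Your appeal to \Cref{lemma:disjoint_polygons} to rule out paths through $\cX_\Gamma$ is also not needed once this is set up, and as stated it is imprecise, since a geodesic between prisms can graze $\cX_\Gamma$ without realizing the distance between disjoint polygons.
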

    \begin{proof}
        Let $\P$ and $\P'$ be disjoint prisms corresponding to $\cX_\Gamma$-polygons $P$ and $P'$. Let $W$ be a closest wall to $\P$ in $\cY_\Gamma$.

        We continue the notation from the proof of Lemma~\ref{lemma:disjoint_polygons}; see Figure~\ref{figure:polygon_distance}. Specifically, let $v \in K \subset P$ and $u \in W$ be the points so that $d_{\cY_\Gamma}(P,W) = d_{\cY_\Gamma}(v,u)$. Then, up to isometry, $v$ and $u$ can be viewed as the points $x_0$ and $x_k$ in a kite as shown in Figure~\ref{figure:blockAndDavisHex}.
        Let $v_c$ and $u_c$ be the farthest points from $v$ and $u$ under the deformation retraction from $\cY_\Gamma$ to $\cX_\Gamma$.
        We will show that $d_{\cY_{\Gamma}}(\P,W) = d_{\cY_{\Gamma}}(v_c,u_c)$ and bound the latter.

        We first show that $d_{\cY_\Gamma}(v_c, W) = d_{\cY_\Gamma}(v_c,u_c)$. Suppose towards a contradiction $\pi_W(v_c) \neq u_c$. Consider the geodesic triangle with vertex set $\{v_c,u_c, \pi_W(v_c)\}$. Then, $\angle_{\pi_W(v_c)}(v_c,u_c) \geq \frac{\pi}{2}$ by \cite[Proposition II.2.4]{bridsonhaefliger}. Moreover, $\angle_{u_c}(v_c, \pi_W(v_c)) = \frac{\pi}{2}$ because the geodesic joining $v_c$ and $u_c$ projects to the geodesic joining $v$ and $u$, which makes an angle $\frac{\pi}{2}$ with the wall $W$. This is a contradiction, so  $d_{\cY_\Gamma}(v_c, W) = d_{\cY_\Gamma}(v_c,u_c)$.

        We next show $d_{\cY_\Gamma}(\P, W) = d_{\cY_\Gamma}(v_c,W)$, again using an angle argument. Suppose towards a contradiction there exists $p \in \P$ so that $p \neq v_c$ and $d_{\cY_\Gamma}(\P, W) = d_{\cY_{\Gamma}}(p,W)$. Suppose first that $\pi_W(p) \neq \pi_W(v_c)$ and consider the geodesic quadrilateral formed cyclically by $\{p,v_c, \pi_W(v_c)=u_c, \pi_W(p)\}$. Then $p = \pi_\P(\pi_W(p))$. Thus, the Alexandrov angles in this quadrilateral at $p, u_c$, and $\pi_W(p)$ are at least $\frac{\pi}{2}$. The Alexandrov angle $\angle_{v_c}(p,u_c) \geq \frac{\pi}{2}$ by construction, a contradiction. The case $\pi_W(v_c) = \pi_W(p)$ is similar. Thus, $d_{\cY_\Gamma}(\P, W) = d_{\cY_\Gamma}(v_c,W)$.

        It remains to bound $d_{\cY_\Gamma}(v_c,u_c)$. These points can be viewed in $\Hy^2 \subset \Hy^3$ via Notation~\ref{nota:blocks}.
        Lemma~\ref{rem:Katok} implies
            \[\sinh\left(\frac{d_{\H^2}(v_c, u_c)}{2}\right) = \frac{|v_c-u_c|}{2 \sqrt{\Ima(v_c)\Ima(u_c)}}. \]
                    In order to find a lower bound on $d_{\cY_{\Gamma}}(v', w') = d_{\H^2}(v',w')$, we have
        \begin{itemize}
            \item $|v_c-u_c| \geq \Rea(v_c-u_c) = \Rea(x_0-x_k)= \frac{1}{2\sqrt{3}}$, as seen using the isometry to the upper half space model as in Notation~\ref{nota:blocks}.
            \item $\Ima(v_c)$ and $\Ima(u_c)$ are bounded from above by the height of the corresponding cap in the upper half space model.
        \end{itemize}
        Therefore, using Lemma~\ref{lem:height_of_cap}, we get that
        \[\sinh\left(\frac{d_{\H^2}(v_c, u_c)}{2}\right) \geq \frac{1}{4\sqrt{3} \sqrt{y_0^2+1}}. \]
        The concludes the proof of the lemma as $d_{\cY_\Gamma}(\P, \P') = 2 d_{\H^2}(v_c, u_c)$.
        \end{proof}

    \subsubsection{Type A pairs}

      The next lemma counts the size of the combinatorial sphere of radius one around a polygon and is used in counting all types of pairs.

    \begin{lemma} \label{lem:countS1}
        If $P$ is a $\cX_\Gamma$-polygon, then if $M = \max_{i\neq j} m_{ij}$,
        \[ |\cS_1(P)| \leq M(m^2-3m+2) =: S_1.\]
    \end{lemma}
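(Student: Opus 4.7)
The plan is a direct count based on the local combinatorial structure of $\cX_\Gamma$, which is combinatorially isomorphic to the subdivided Davis complex $SX_\Gamma$. First I would record two standing facts that I would use repeatedly: (i) the $\cX_\Gamma$-polygon $P$ has label $\{s_i,s_j\}$ for some generators with $m_{ij}\le M$, so the boundary of $P$ in $SX_\Gamma$ has exactly $2m_{ij}$ vertices and $2m_{ij}$ edges, the latter alternately labeled $s_i$ and $s_j$; (ii) since $\Gamma=K_m$, the vertex link in $X_\Gamma$ is $K_m$, so each vertex is contained in exactly $\binom{m}{2}$ polygons (one for each pair of incident edge-labels), and each edge labeled $s_k$ lies in exactly $m-1$ polygons (one for each pair $\{s_k,s_l\}$ with $l\ne k$).

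Next I would partition $\cS_1(P)$ into the polygons sharing an edge with $P$ and the polygons meeting $P$ in vertices only. By fact (ii), each of the $2m_{ij}$ edges of $P$ lies in $m-2$ polygons other than $P$ itself, giving at most $2m_{ij}(m-2)\le 2M(m-2)$ edge-sharing polygons.

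For the vertex-only count, I would work one vertex $v$ of $P$ at a time. Of the $\binom{m}{2}$ polygons at $v$, those sharing at least one of the two edges of $P$ at $v$ number $2(m-1)-1=2m-3$, where the $-1$ corrects for $P$ being counted twice. Thus at most $\binom{m}{2}-(2m-3)=\tfrac{(m-2)(m-3)}{2}$ polygons at $v$ meet $P$ only along a vertex. Summing crudely over the $2m_{ij}$ vertices of $P$ (which only overcounts polygons meeting $P$ at more than one vertex, so the bound is preserved) yields at most $m_{ij}(m-2)(m-3)\le M(m-2)(m-3)$ such polygons.

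Adding the two contributions gives $|\cS_1(P)|\le 2M(m-2)+M(m-2)(m-3)=M(m-2)(m-1)=M(m^2-3m+2)$, which is exactly the stated bound $S_1$. The argument is essentially bookkeeping; there is no genuine obstacle, and the one place where sharpness is lost — the possibility that a polygon meets $P$ at several vertices without sharing an edge — is harmless because only an upper bound is required.
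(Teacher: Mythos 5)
Your proof is correct and follows essentially the same route as the paper: partition $\cS_1(P)$ into edge-sharing polygons ($2m_{ij}(m-2)\le 2M(m-2)$ of them) and vertex-only polygons ($2m_{ij}\binom{m-2}{2}\le 2M\binom{m-2}{2}$ of them), then add. The only cosmetic difference is that you obtain the per-vertex count $\binom{m-2}{2}$ by complementary counting inside the $\binom{m}{2}$ polygons at a vertex, whereas the paper counts pairs of non-$P$ edges directly; the result is identical.
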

    \begin{proof}
        Fix a polygon $P$ of $\cX_\Gamma$, which corresponds to a polygon in the Davis complex $X_\Gamma$.  Any polygon $P'\in \cS_1(P)$ intersects $P$ in either an edge or a single vertex, but not both. For each edge $e$ of $P$, there are $m-2$ polygons that share that edge (one for each choice of generator that does not include the two generators that determine the polygon $P$). Therefore, there are at most $2M(m-2)$ polygons which intersect $P$ in an edge. For each vertex of $P$, there are $m-2$ edges which are not edges of $P$. As each pair of these edges determines a unique polygon, there are $2M\binom{m-2}{2}$ polygons which meet $P$ at a single vertex. Thus, $\cS_1(P) = 2M(m-2)+ 2M\binom{m-2}{2} = M(m^2-3m+2)$.
    \end{proof}

    Let $P$ be a $\cX_\Gamma$-polygon. If $P,P'$ form a Type A pair, then $P' \in \cS_2(P)$, whose size is computed in the next combinatorial lemma.

\begin{lemma}[Type A count]\label{lem:TypeACount}
        If $P$ is a $\cX_\Gamma$-polygon, then
        \[A:= |\cS_2(P)| \leq 2M(m-2)^2\left( \frac{m-1}{2} + (2M-5) + (2M-4)(m-3) + (2M-3)\frac{(m-3)^2}{4}\right),\]
        where $M = \max_{i \neq j}\{m_{ij}\}$. In particular, there are at most $A$ polygons that form a Type A pair with~$P$.
    \end{lemma}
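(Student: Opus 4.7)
The plan is to bound $|\cS_2(P)|$ by enumerating pairs $(P', P'')$ with $P' \in \cS_1(P)$ and $P'' \in \cS_1(P') \setminus (\cS_1(P) \cup \{P\})$. Every polygon $P'' \in \cS_2(P)$ arises in this way: by definition $P''$ shares a cell with something in $\cS_1(P)\cup\{P\}$, and $P''$ cannot share anything with $P$ itself (else it would lie in $\cS_1(P)\cup\{P\}$). Moreover, the shared cell $P'\cap P''$ must lie in $P'\setminus P$, for otherwise it would meet $P$ and again force $P''\in\cS_1(P)\cup\{P\}$. Therefore
\[
|\cS_2(P)| \;\leq\; \sum_{P' \in \cS_1(P)} \#\bigl\{\,P'' \,:\, P' \cap P'' \text{ is a nonempty cell of } P' \setminus P\,\bigr\},
\]
and any overcountings in this sum only weaken the estimate.

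To evaluate the right-hand side, I would split into four subcases based on whether each of $P \cap P'$ and $P' \cap P''$ is an edge or only a vertex. The combinatorial inputs, already present in the proof of \Cref{lem:countS1}, are: a polygon has at most $2M$ edges and at most $2M$ vertices; every edge is contained in exactly $m-2$ polygons other than a fixed one through it; every vertex $u$ of $P'$ is contained in at most $\binom{m-2}{2}=(m-2)(m-3)/2$ polygons meeting $P'$ only at $u$ (namely those whose two generators are both disjoint from $P'$'s two generators); and the numbers of $P'\in\cS_1(P)$ sharing with $P$ an edge, or only a vertex, are bounded respectively by $2M(m-2)$ and $M(m-2)(m-3)$. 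In each subcase I would count the admissible edges or vertices of $P'$ after excluding those that force $P'' \in \cS_1(P)\cup\{P\}$---for example, when $P\cap P'$ is an edge, one excludes both that edge and the two edges of $P'$ adjacent to it before selecting $P'\cap P''$ to be an edge---and then multiply by the per-cell polygon multiplicity. Summing the four subcase contributions and factoring out the common prefactor $2M(m-2)^2$ yields a bracket containing four terms, one from each subcase.

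The main obstacle is the bookkeeping: one must arrange the exclusions so that the precise constants $\tfrac{m-1}{2},\,2M-5,\,2M-4,\,2M-3$ emerge, rather than the slightly larger constants that a purely naive count would produce. Matching the prefactor $\tfrac{m-1}{2}$ in particular requires careful parameterisation via edge-sharing versus vertex-sharing intermediate polygons $P'$, since $|\cS_1(P)|\leq 2M(m-2)\cdot\tfrac{m-1}{2}$, so the $\tfrac{m-1}{2}$ effectively encodes the overall density of $\cS_1(P)$ relative to the edge-sharing subcount used as the base for the factorisation.
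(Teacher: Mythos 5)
Your overall strategy---bounding $|\cS_2(P)|$ by attaching each $P''$ to the first sphere through an intermediate polygon---is reasonable in outline, but the specific decomposition you propose does not produce the constants in the statement, and the missing step is not mere bookkeeping. The paper classifies a polygon $P''\in\cS_2(P)$ by its intersection with the subcomplex $\cS_1(P)$ (a path of combinatorial length $2$, a single edge, or a single vertex), after splitting the vertices of $\cS_1(P)$ not in $P$ into \emph{inner} vertices (adjacent to $P$) and \emph{outer} vertices. The term $\frac{m-1}{2}$ does not encode the density of $\cS_1(P)$ as you suggest; it comes from $\binom{m-1}{2}$, the number of pairs of edges at a fixed inner vertex $v$ other than the edge joining $v$ to $P$: each such pair spans a polygon meeting $\cS_1(P)$ in a length-$2$ path centered at $v$, and there are at most $2M(m-2)$ inner vertices, giving the contribution $2M(m-2)^2\cdot\frac{m-1}{2}$. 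Your parameterisation by pairs $(P',\,P'\cap P'')$ has no natural slot for this count.

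Concretely, in your (edge, edge) subcase, excluding only the shared edge and its two neighbours leaves $2M-3$ candidate edges of $P'$, not $2M-5$; the two extra edges are exactly those with an inner endpoint, and the polygons through them are the length-$2$-path polygons that must instead be routed through the inner-vertex count above. The same issue recurs in the other subcases (e.g.\ $2M-2$ rather than $2M-4$ admissible edges for a vertex-sharing $P'$), and in addition each $P''$ adjacent to several members of $\cS_1(P)$ is counted once per adjacency in your sum. To recover the lemma you need the inner/outer dichotomy: count the length-$2$-path polygons once per inner vertex via $\binom{m-1}{2}$, and count the remaining polygons via \emph{outer} edges of $\cS_1(P)$ (at most $2M-5$ per edge-adjacent $P'$ and $2M-4$ per vertex-adjacent $P'$, each supporting $m-2$ new polygons) and \emph{outer} vertices (at most $2M-4$, resp.\ $2M-3$, per polygon, each supporting $\binom{m-2}{2}$ new polygons). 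As written, your argument yields a valid but strictly larger bound and does not establish the stated inequality.
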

    \begin{proof}
        Fix a polygon $P$ of $\cX_\Gamma$, which corresponds to a polygon in the Davis complex $X_\Gamma$. The polygons in $\cS_2(P)$ are disjoint from $P$ and intersect $\cS_1(P)$ in
        one of the following ways: \begin{enumerate}
            \item a path of combinatorial length 2,
            \item a single edge, or
            \item a single vertex.
        \end{enumerate}
        We will count polygons of each type, using the following notion. A vertex $v \in \cS_2(P) \cap \cS_1(P)$ is said to be an {\it inner vertex} if it is adjacent to $P$ and is an {\it outer vertex} otherwise.

        We first count the polygons in Case~(1). If a polygon $P'$ meets $\cS_1(P)$ in a path of combinatorial length~2, then the center vertex of this path is an inner vertex. There are $2m_{ij}(m-2)$ inner vertices. If $v$ is an inner vertex, then it is adjacent to $P$ via an edge. Each pair of the remaining $m-1$ edges incident to $v$ contribute a polygon to Case~(1). Thus, there are $2m_{ij}(m-2)\binom{m-1}{2}$ polygons that meet $\cS_1(P)$ in a path of combinatorial length 2.

        We next count the polygons in Case~(2). Since every pair of edges disjoint from $P$ that are adjacent to an inner vertex are contained in a polygon from Case~(1), if a polygon meets $\cS_1(P)$ in a single edge, it does so at an edge of $\cS_1(P)$ which is adjacent to two outer vertices.  We call such an edge an \textit{outer edge}; we count them and their contribution. Each of the $2m_{ij}(m-2)$ polygons intersecting $P$ in an edge contributes at most $2M-5$ outer edges, and each of the $2m_{ij}\binom{m-2}{2}$ polygons intersecting $P$ in a vertex contributes at most $2M-4$ outer edges. Each outer edge of a polygon in $\cS_1(P)$ is adjacent to $m-2$ polygons not in $\cS_1(P)$. So, there are at most $(m-2)\left(2m_{ij}(m-2)(2M-5) + 2m_{ij}\binom{m-2}{2}(2M-4)\right)$ polygons that intersect $\cS_1(P)$ in a single edge.

        Finally, we count the polygons in Case~(3). If a polygon meets $\cS_1(P)$ only in a single vertex, it does so at an outer vertex. Each  of the $2m_{ij}(m-2)$ polygons that intersect $P$ in an edge has at most $2M-4$ outer vertices, and each of the $2m_{ij}\binom{m-2}{2}$ polygons that intersect $P$ in a vertex has at most $2M-3$ outer vertices. Every outer vertex is incident to $\binom{m-2}{2}$ polygons which intersect $\cS_1(P)$ in exactly a vertex. Thus, the number of polygons that meet $\cS_1(P)$ in exactly one vertex is at most $\binom{m-2}{2}\left(2m_{ij}(m-2) (2M-4)  + 2m_{ij} \binom{m-2}{2} (2M-3)   \right)$.

        Adding up the counts in the three cases gives the desired count of $\cS_2(P)$.
    \end{proof}

    \subsubsection{Type B pairs}

    \begin{lemma} \label{lemma:flatcount}
        Fix an $\cX_\Gamma$-hexagon $H$ and an $\cX_\Gamma$-flat $\cF$ containing $H$. Let $\ell_i \in \N$. There are at most $b_0e^{\ell_i}$ $\cX_\Gamma$-hexagons $H'$ in $\cF$ satisfying $d_{\cY_\Gamma}(H,H') \leq \ell_i$ with $b_0 = \frac{32 \pi}{3 \sqrt{3}}$.
    \end{lemma}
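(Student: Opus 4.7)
My plan is to set up coordinates in the upper half-space $\mathbb{U}^3$, identify hexagon centers in $\cF$ with vertices of a triangular lattice on a horosphere, use hyperbolic geometry to bound distances, and count the resulting lattice points. Since $\cF$ arises from a $(3,3,3)$-triangle subgroup, by \Cref{const:Ym} a horoball $\cH$ has been glued to $\cF$ in the construction of $\cY_\Gamma$. Model the cusped region over $\cF$ in $\mathbb{U}^3$ with $\cH = \{x_3 \ge y_0\}$ based at $\infty$, so that $\partial\cH$ is a Euclidean plane tiled by unit-side equilateral triangles. A direct computation extending \Cref{lemma:hexside} (working with $x_0 = (\tfrac12, \tfrac{\sqrt3}{6}, \sqrt{2/3})$, $x_i = (\tfrac12, 0, \tfrac{\sqrt3}{2})$, and the intersection of the geodesic plane through $x_0, x_i, x_j$ with the edge $E(ij)$) places each hexagon center $x_{ij}$ at height $x_3 = 1$ in $\mathbb{U}^3$, projecting vertically to a vertex of the triangular tiling on $\partial \cH$. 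Thus the hexagon centers form a triangular lattice $\Lambda$ with Euclidean spacing $1$ and fundamental-domain area $\sqrt{3}/2$.

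The key estimate is the lower bound $d_{\cY_\Gamma}(x_{ij}, x_{ij}') \geq d_{\Hy^3}(x_{ij}, x_{ij}') = 2\sinh^{-1}(E/2)$, where $E$ is the Euclidean distance between the vertical projections of $x_{ij}$ and $x_{ij}'$ to $\partial \cH$. This follows because the cusped region together with $\cH$ forms a subset $\cR \subseteq \cY_\Gamma$ isometric to a subspace of $\Hy^3$, and $\cR$ is convex: horoballs in $\CAT(-1)$ spaces are convex by a standard argument, and the extension through the cusped blocks is convex along each kite face because the kite is a totally geodesic $2$-plane in $\Hy^3$ along which $\cR$ sits as a half-space. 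Combining with the triangle inequality $d_{\cY_\Gamma}(x_{ij}, x_{ij}') \leq d_{\cY_\Gamma}(H, H') + 2R$, where $R = \mathrm{arccosh}(\sqrt{3/2})$ is the hyperbolic radius of a hexagon (computable from the coordinates above via the standard $\Hy^3$ distance formula), the hypothesis $d_{\cY_\Gamma}(H,H') \leq \ell_i$ yields $E \leq e^R \, e^{\ell_i/2} \leq (4/\sqrt{3})\, e^{\ell_i/2}$, where the last inequality uses the explicit value $e^R = (\sqrt{3}+1)/\sqrt{2} < 4/\sqrt{3}$.

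Finally, I count lattice points: the number of points of $\Lambda$ in a Euclidean disk of radius $E$ is at most $\pi E^2/(\sqrt{3}/2) = 2\pi E^2/\sqrt{3}$. Substituting the bound on $E$ gives the count $\leq \tfrac{2\pi}{\sqrt{3}} \cdot \tfrac{16}{3}\, e^{\ell_i} = \tfrac{32\pi}{3\sqrt{3}}\, e^{\ell_i} = b_0 \, e^{\ell_i}$, as required. The main obstacle is the convexity of $\cR$ in $\cY_\Gamma$: while convexity of $\cH$ itself is standard, extending convexity through the cusped-block portion requires verifying that each kite face (along which $\cR$ meets blocks from other triangle subgroups) is locally totally geodesic in $\cY_\Gamma$, which follows from the link-condition analysis of Section~\ref{subsection:LC} applied at boundary points of $\cR$.
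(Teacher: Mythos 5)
Your proof follows essentially the same route as the paper's: identify $\cF\cup\cB\cup\cH$ with a region of $\mathbb{U}^3$, convert the bound on $d_{\cY_\Gamma}$ into a bound on Euclidean displacement along the horosphere via the formula $\sinh(d/2)=|z-w|/2\sqrt{\Ima(z)\Ima(w)}$, and finish with a Euclidean area count; the paper works with the hexagon vertices and divides by hexagon area, while you work with the centers $x_{ij}$ at height $1$ and count lattice points, but the computations check out and the constant $\tfrac{32\pi}{3\sqrt{3}}$ agrees. Two small caveats: your lattice-point bound $\pi E^{2}/(\sqrt{3}/2)$ ignores boundary effects (the honest bound is $\pi(E+1/\sqrt{3})^{2}/(\sqrt{3}/2)$, which your slack $e^{R}<4/\sqrt{3}$ absorbs, though only barely at $\ell_i=1$), and the inequality $d_{\Hy^3}\le d_{\cY_\Gamma}$ on this region --- which you rightly isolate as the main point and whose justification you only sketch --- is used silently by the paper as well.
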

    \begin{proof}
        We will translate the desired count to a count of hexagons in a tiling of the Euclidean plane as follows. Fix an $\cX_\Gamma$-hexagon $H$ and an $\cX_\Gamma$-flat $\cF$ containing $H$. The $\cX_\Gamma$-flat $\cF$ corresponds to three generators in a Euclidean triangle subgroup of $W_\Gamma$. Recall that in constructing the model space $\cY_\Gamma$ there is a collection $\cB$ of truncated blocks each of whose union of kites is glued to $\cF$. Further, there is a horoball $\cH$ glued to the boundary of the union of these blocks. The space $\cF \cup \cB \cup \cH$ can be identified with a subset $\Hy$ of $\Hy^3$. The vertices of $\cF$ are contained in a unique horosphere $\cS_{\cF}$  in $\Hy$. For each $\cX_\Gamma$-hexagon~$H$ in $\cF$ there is a corresponding regular Euclidean hexagon $H_{\mathbb{E}}$ in the horosphere $\cS_{\cF}$ with the same vertex set.
        Suppose the horosphere is at height $y$ in the upper halfspace model of $\H^3$ (under the identification described in Notation~\ref{nota:blocks}).
        If $z$ and $w$ are two adjacent vertices of $H$, then by \Cref{lemma:hexside}, $d_{\cY_\Gamma}(z,w) = \log 2$, and hence the diameter of $H$ is at most $3 \log 2$. By Lemma~\ref{rem:Katok}, $|z-w| = \frac{y}{\sqrt{2}}$, which is the Euclidean side length of $H_{\mathbb{E}}$. Thus, the Euclidean area of $H_{\mathbb{E}}$ equals $\frac{3y^2 \sqrt{3}}{4}$.

        Suppose $H' \in \cF$ with $d_{\cY_\Gamma}(H,H') \leq \ell_i$ as in the statement of the lemma. Suppose the geodesic in $\cY_\Gamma$ realizing this distance emanates from $\hat{p} \in H$ and terminates at $\hat{p}' \in H'$, points that may lie in the interior of these hexagons. Therefore, by the triangle inequality, there exist vertices of $H$ and $H'$ with distance in $\cY_\Gamma$ at most $\ell_i+\diam_{\cY_\Gamma}(H) \leq \ell_i+3\log2$. By \Cref{rem:Katok}, the Euclidean  distance in $\cS_{\cF}$ between these vertices, and thus the Euclidean distance between the hexagons $H$ and $H'$, is at most $2y \sinh(\frac{\ell_i+3\log2}{2})$, which is less than or equal to $y e^{\frac{\ell_i}{2}} 2 \sqrt{2}$. Thus, the number of Euclidean regular hexagons that can fit inside a ball of radius $y e^{\ell_i/2} 2\sqrt{2}$ is at most
        \[\frac{\pi(y e^{\frac{\ell_i}{2}} 2\sqrt{2})^2}{\frac{3y^2 \sqrt{3}}{4}} = \frac{32 \pi e^{\ell_i}}{3 \sqrt{3}},\]
        giving a bound on the number of $\cX_\Gamma$-hexagons in $\cF$ that are at distance (with respect to $\cY_\Gamma$) at most $\ell_i$ from $H$.
    \end{proof}

    \begin{corollary}[Type B count] \label{cor:TypeBCount}
        There are at most $Be^{\ell_i}$ polygons $P'$ that form a Type B pair with $P$ and so that $d(P,P') \leq \ell_i$, where
               \[B = \frac{32\pi}{3\sqrt{3}}M(m^3 - 5 m^2 + 8 m - 4).\]
    \end{corollary}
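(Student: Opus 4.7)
My plan is to reduce the Type B count directly to Lemma \ref{lemma:flatcount}, which bounds the number of hexagons in a single flat within a given distance. The main task is to enumerate how many (pointed) flats can host a polygon $P'$ making a Type B pair with the fixed polygon $P$.

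By the definition of a Type B pair, if $P, P'$ form such a pair, then there is a polygon $\bar P$ with $\bar P \cap P \neq \emptyset$ (allowing $\bar P = P$) and an $\cX_\Gamma$-flat $\cF$ containing both $\bar P$ and $P'$. Since flats in $\cX_\Gamma$ consist of hexagons (coming from $(3,3,3)$-triangle subgroups), both $\bar P$ and $P'$ are hexagons. Thus I would first enumerate the possible $\bar P$: either $\bar P = P$ or $\bar P \in \cS_1(P)$, giving at most $1 + |\cS_1(P)| \leq 1 + M(m-1)(m-2)$ choices by Lemma \ref{lem:countS1}. Next, I would count flats through a fixed hexagon $\bar P$: a flat is determined by adding a third Coxeter generator $s_k$ so that the triangle in $\Gamma$ spanned by the generators of $\bar P$ and $s_k$ has all edge labels equal to $3$, which gives at most $m-2$ possibilities.

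With these choices fixed, I would apply Lemma \ref{lemma:flatcount} inside the flat $\cF$ at the basepoint hexagon $\bar P$. The key observation is that since $\bar P$ meets $P$, any $P'$ satisfying $d_{\cY_\Gamma}(P, P') \le \ell_i$ automatically satisfies $d_{\cY_\Gamma}(\bar P, P') \le \ell_i$, so Lemma \ref{lemma:flatcount} contributes at most $b_0 e^{\ell_i}$ candidates for $P'$ with $b_0 = \tfrac{32\pi}{3\sqrt 3}$.

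Multiplying the three counts yields at most
\[
\bigl(1 + M(m-1)(m-2)\bigr)(m-2)\, b_0\, e^{\ell_i}
= b_0\bigl(M(m-1)(m-2)^2 + (m-2)\bigr)\, e^{\ell_i},
\]
and expanding $(m-1)(m-2)^2 = m^3 - 5m^2 + 8m - 4$ gives the claimed value of $B$. The only subtle point is ensuring that no Type B pair is missed by restricting $\bar P$ to $\cS_1(P) \cup \{P\}$, which follows directly from the definition of $\gamma_i$ in the Type B case; the remaining steps are combinatorial bookkeeping plus a single invocation of Lemma \ref{lemma:flatcount}, so I expect no real obstacle beyond being careful to include the $\bar P = P$ case in the count.
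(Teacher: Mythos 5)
Your proposal is correct and follows essentially the same route as the paper's proof: enumerate the base polygon $\bar P \in \{P\} \cup \cS_1(P)$ (at most $S_1 + 1 = M(m-1)(m-2)+1$ choices by \Cref{lem:countS1}), multiply by the at most $m-2$ flats through it, and invoke \Cref{lemma:flatcount} once to get the factor $b_0 e^{\ell_i}$. The constant you obtain, $b_0\bigl(M(m^3-5m^2+8m-4)+(m-2)\bigr)$, is exactly the value of $B$ recorded in \Cref{thm:fixed_itinerary}(2); the formula displayed in the corollary itself drops the $+(m-2)$ term, so your bookkeeping is, if anything, the more faithful one (though note that your one-line justification that $\bar P \cap P \neq \emptyset$ forces $d_{\cY_\Gamma}(\bar P, P') \le \ell_i$ is not literally valid — adjacency alone only gives $d(\bar P,P') \le d(P,P') + \diam P$ — and the correct reason, used implicitly in the paper as well, is that in the Type B configuration the geodesic segment from $P$ to $P'$ passes through $\bar P$).
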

    \begin{proof}
        Let $P$ be an $\cX_\Gamma$-polygon. Suppose $P'$ forms a Type B pair with $P$. It follows from the definition of Type B pair that either
            \begin{enumerate}
                \item $P$ and $P'$ are in the same $\cX_\Gamma$-flat, or
                \item there exists an $\cX_\Gamma$-polygon $\bar{P}$ that intersects $P$, and $\bar{P}$ and $P'$ are in the same $\cX_\Gamma$-flat.
            \end{enumerate}
            Thus, there are at most $(S_1+1)(m-2)b_0e^{\ell_i}$ polygons $P'$ that form a Type B pair with $P$ and so that $d(P,P') \leq \ell_i$. Indeed, the $(S_1+1)$ term accounts for choosing either $P$ or a $\cX_\Gamma$-polygon $\bar{P}$ adjacent to $P$, where $S_1$ is computed in \Cref{lem:countS1}. The $m-2$ factor is to choose an $\cX_\Gamma$-flat or $\cX_\Gamma$-hyp plane containing either $P$ or $\bar{P}$. The $b_0e^{\ell_i}$ factor accounts for the number of $\cX_\Gamma$-polygons $P'$ in an $\cX_\Gamma$-flat with $d_{\cY_\Gamma}(P,P') \leq \ell_i$ or $d_{\cY_\Gamma}(\bar{P},P') \leq \ell_i$ by \Cref{lemma:flatcount}. The result follows from combining terms.
    \end{proof}

    \subsubsection{Type C pairs}

    We will use the next lemma on the size of combinatorial balls in the Davis-Moussong complex of a hyperbolic triangle group.

\begin{lemma}\label{lem:combinatorial ball size}
    Let $\Delta(p,q,r)$, $p \leq q \leq r$, be a hyperbolic triangle group with Davis-Moussong complex $X_{\Delta}$. Let $P$ be a polygon in $X_{\Delta}$. Then the size of the combinatorial ball of radius $k$ about $P$ in $X_{\Delta}$ is at most $2r(2r-3)^{k}$.
    \end{lemma}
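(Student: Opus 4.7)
The plan is to exploit the simple structure of the Davis--Moussong complex $X_\Delta$ of a three-generator Coxeter group. Since $\Delta(p,q,r)$ has exactly three standard generators $s_1, s_2, s_3$, every vertex of $X_\Delta$ is incident to exactly three edges (one per generator) and exactly three polygons (one per unordered pair of generators); equivalently, the link of every vertex is a triangle.

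The key observation I would establish first is a vertex-to-edge adjacency reduction: any two distinct polygons sharing a vertex in $X_\Delta$ must also share an edge. Indeed, any two of the three generator pairs at a vertex $v$ have a common generator $s_k$, and the edge at $v$ labeled $s_k$ lies in both corresponding polygons. Consequently, for every polygon $P$ in $X_\Delta$ the combinatorial sphere $\cS_1(P)$ coincides with the set of polygons that share an edge with $P$. Since $P$ has at most $2r$ sides and each edge belongs to exactly two polygons, this yields $|\cS_1(P)| \leq 2r$.

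For the inductive step, fix $k \geq 1$ and let $P' \in \cS_k(P)$. By the vertex-to-edge reduction, $P'$ shares an edge $e$ with some polygon $\bar P \in \cB_{k-1}(P)$; let $v_1, v_2$ denote the endpoints of $e$. At each $v_i$, the unique third polygon $Q_i$ (distinct from $P'$ and $\bar P$) shares an edge with both $P'$ and $\bar P$, and so lies in $\cB_k(P)$. Among the at most $2r$ edges of $P'$, the edge $e$ together with the two edges of $P'$ at $v_1, v_2$ other than $e$ are therefore shared with polygons already in $\cB_k(P)$, so $P'$ contributes at most $2r-3$ new polygons to $\cS_{k+1}(P)$.

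Iterating gives $|\cS_k(P)| \leq 2r(2r-3)^{k-1}$ for all $k \geq 1$, and summing the geometric series,
\[
|\cB_k(P)| \leq 1 + 2r\sum_{i=0}^{k-1}(2r-3)^i \leq 2r(2r-3)^k,
\]
the final inequality being routine since $r \geq 3$ (in fact $r \geq 4$ for a hyperbolic triangle group with all labels at least $3$). The only delicate point is the vertex-to-edge reduction, which is special to the three-generator setting and would fail for larger defining graphs; once established, the count is a straightforward induction on $k$.
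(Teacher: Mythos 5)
Your proof is correct and follows essentially the same induction as the paper's: $|\cS_1(P)| \leq 2r$, each polygon of $\cS_k(P)$ has at most $2r-3$ free edges available to polygons of $\cS_{k+1}(P)$, and the geometric series gives the stated bound on $|\cB_k(P)|$. Your explicit vertex-to-edge reduction (any two polygons of $X_\Delta$ meeting at a vertex already share an edge, because vertex links in the three-generator case are triangles) is a welcome detail that the paper's proof leaves implicit when it passes directly to counting shared edges.
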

    \begin{proof}
    It follows from induction that $|S_k(P)|\leq 2r(2r-3)^{k-1}$. Indeed, $|S_1(P)| = 2r$. For $k>1$, each polygon in $S_{k-1}(P)$ intersects $S_{k-2}(P)$ in either one or two edges (where $S_0(P) = P$), and shares at least three edges with polygons in $B_{k-1}(P)$. Thus, each polygon in $S_{k-1}(P)$ has at most $2r-3$ free edges to which a single polygon in $S_k(P)$ could be glued. The result follows. Therefore, since $r \geq 3$,
    \[|B_k(P)| \leq 1 + 2r \sum_{i=0}^{k-1} (2r-3)^i \leq 2r(2r-3)^k. \qedhere\]
    \end{proof}

\begin{lemma} \label{lemma:hyp plane count}
        Let $P$ be an $\cX_\Gamma$-polygon in an $\cX_\Gamma$-hyp plane $\cF$. Let $\ell_i \in \N$. There are at most $c_0e^{C_2\ell_i}$ $\cX_\Gamma$-polygons $P'$ in $\cF$ satisfying $d_{\cY_\Gamma}(P,P') \leq \ell_i$ with $c_0 = 2M(2M-3)$  and $C_2 = \frac{\log(2M-3)}{D(y_0)}$, where $M = \max_{i \neq j} m_{ij}$ and $D(y_0)$ is the constant from \Cref{lemma:disjoint_prisms}.
    \end{lemma}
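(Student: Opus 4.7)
The plan is to convert the metric bound $d_{\cY_\Gamma}(P,P') \leq \ell_i$ into a combinatorial distance bound in the hyp plane $\cF$, viewed combinatorially as the Davis--Moussong complex of the hyperbolic triangle group $\Delta(p,q,r)$ stabilizing $\cF$ (with $p \le q \le r \le M$), and then apply \Cref{lem:combinatorial ball size}. The crucial geometric input is \Cref{lemma:disjoint_prisms}: disjoint prisms have $\cY_\Gamma$-distance at least $D(y_0)$. Since $\cF$ is stabilized by a hyperbolic triangle subgroup, every polygon of $\cF$ is the base of a bounded prism in $\cY_\Gamma$.

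First I will establish the claim: if $P' \in \cF$ has combinatorial distance $k$ from $P$ in $\cF$, then $d_{\cY_\Gamma}(P,P') \geq (k-1)D(y_0)$. Take a minimal combinatorial chain $P = P_0, P_1, \ldots, P_k = P'$ in $\cF$; minimality forces $P_i$ and $P_j$ with $|i-j| \ge 2$ to share no face, so the associated prisms are pairwise disjoint, and the chain determines $k$ walls of $\cF$ separating $P$ from $P'$. Using convexity of the subcomplex $\cY_\cF$ (the union of $\cF$-prisms with their caps) as a convex subspace of the $\CAT(-1)$ space $\cY_\Gamma$, any $\cY_\Gamma$-geodesic $\gamma$ from $P$ to $P'$ stays in $\cY_\cF$ and must cross each of these $k$ walls exactly once. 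Between consecutive crossings, $\gamma$ traverses regions bounded by disjoint prisms, so each transition contributes at least $D(y_0)$ of length; summing yields length $\geq (k-1) D(y_0)$.

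Combining the metric hypothesis with this lower bound gives $k \leq \ell_i/D(y_0) + 1$, so $P'$ lies in the combinatorial ball $\cB_k(P) \cap \cF$ of radius $\lfloor \ell_i/D(y_0) \rfloor + 1$. Invoking \Cref{lem:combinatorial ball size} applied to $X_{\Delta(p,q,r)}$ and using $r \le M$,
\[
|\cB_k(P) \cap \cF| \le 2r(2r-3)^k \le 2M(2M-3) \cdot (2M-3)^{\ell_i/D(y_0)} = c_0\, e^{C_2 \ell_i},
\]
with $c_0 = 2M(2M-3)$ and $C_2 = \log(2M-3)/D(y_0)$, completing the proof.

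The main obstacle is securing the linear-in-$k$ lower bound in the first step. A direct application of \Cref{lemma:disjoint_prisms} produces only a single factor of $D(y_0)$ for any $k \geq 2$; obtaining the cumulative bound requires both the convexity of $\cY_\cF$ inside $\cY_\Gamma$ (forcing the geodesic to remain in $\cY_\cF$ and to cross each of the $k$ separating walls in order) and a careful per-wall accounting that uses disjointness of non-adjacent-chain prisms to extract a definite $D(y_0)$ of geodesic displacement between each pair of consecutive wall crossings.
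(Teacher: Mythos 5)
Your proposal takes essentially the same route as the paper: it uses \Cref{lemma:disjoint_prisms} to convert the hypothesis $d_{\cY_\Gamma}(P,P')\le \ell_i$ into a combinatorial distance bound of roughly $\ell_i/D(y_0)$ inside $\cF$, and then applies \Cref{lem:combinatorial ball size} with $r\le M$ to get $2M(2M-3)e^{(\ell_i/D(y_0))\log(2M-3)}$. Your per-wall accounting along the geodesic is in fact more explicit than the paper's one-line assertion that the geodesic ``crosses at most $\ell_i/D$ prisms,'' so this is fine.
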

    \begin{proof}
        Let $D = D(y_0)$ be the lower bound for distance between two disjoint prisms obtained above. Let $\mathbb{P}$ and $\mathbb{P}'$ be the prisms corresponding to the polygons $P$ and $P'$ as in the statement of the lemma. Then $d_{\cY_{\Gamma}}(\mathbb{P}, \mathbb{P}') \leq \ell_i$. Thus a geodesic in $\cY_{\Gamma}$ joining $P$ and $P'$ crosses at most $\ell_i/D$ prisms, each of which corresponds to a polygon in the $\cX_{\Gamma}$-hyp plane $\cF$. The plane $\cF$ corresponds to a hyperbolic triangle group $\Delta(p,q,r)$ for some $p \leq q \leq r$. By \Cref{lem:combinatorial ball size}, there are at most \[2r(2r-3)^{\lceil \ell_i/D \rceil} \leq
        2r(2r-3)^{(\ell_i/D) +1} =
        2r(2r-3)e^{(\ell_i/D)\log(2r-3)} \leq 2M(2M-3)e^{(\ell_i/D)\log(2M-3)} \] such polygons.
    \end{proof}

    \begin{corollary}[Type C count] \label{cor:TypeCCount}
        Let $P$ be an $\cX_\Gamma$-polygon. There are at most $C_1e^{C_2\ell_i}$ polygons $P'$ that form a Type C pair with $P$ and so that $d(P,P') \leq \ell_i$ with $C_1 = (4M^3-6M^2)(m^3 - 5 m^2 + 8 m - 4)$.
    \end{corollary}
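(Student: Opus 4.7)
The plan is to mirror the proof of \Cref{cor:TypeBCount} exactly, substituting the role of $\cX_\Gamma$-flats with $\cX_\Gamma$-hyp planes and applying \Cref{lemma:hyp plane count} in place of \Cref{lemma:flatcount}. Specifically, I would begin by unpacking the definition of a Type C pair: if $P'$ forms a Type C pair with $P$, then either (i) $P$ and $P'$ lie in a common $\cX_\Gamma$-hyp plane, or (ii) there is an $\cX_\Gamma$-polygon $\bar P$ with $\bar P \cap P \neq \emptyset$ such that $\bar P$ and $P'$ lie in a common $\cX_\Gamma$-hyp plane. In either case the bound $d_{\cY_\Gamma}(\cdot,P') \le \ell_i$ applies to the starting polygon of that hyp plane.

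Next I would count by making three successive choices. For the first polygon (either $P$ itself or a neighbor $\bar P \in \cS_1(P)$) there are at most $S_1+1$ options, where $S_1 = M(m-1)(m-2)$ by \Cref{lem:countS1}. For the hyp plane containing this chosen polygon, note that a polygon is determined by an unordered pair of standard generators $\{s_i,s_j\}$ and a hyp plane containing it corresponds to a third generator $s_k$ making $\{s_i,s_j,s_k\}$ a hyperbolic triangle subgroup; there are at most $m-2$ such choices. Finally, for the polygon $P'$ in that hyp plane at distance at most $\ell_i$ from the chosen starting polygon, \Cref{lemma:hyp plane count} gives at most $c_0 e^{C_2 \ell_i}$ options, with $c_0 = 2M(2M-3)$.

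Multiplying these bounds together yields at most $(S_1+1)(m-2)\,c_0\, e^{C_2\ell_i}$ Type C partners $P'$. A direct expansion shows
\[
(S_1+1)(m-2) = M(m-1)(m-2)^2 + (m-2) = M(m^3-5m^2+8m-4) + (m-2),
\]
and then multiplying by $c_0 = 2M(2M-3) = 4M^2-6M$ recovers the stated constant
\[
C_1 = (4M^2-6M)\bigl(M(m^3-5m^2+8m-4) + (m-2)\bigr),
\]
completing the corollary.

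There is essentially no obstacle here beyond the bookkeeping: the geometric content is already captured by \Cref{lemma:hyp plane count} (which in turn rests on the distance bound in \Cref{lemma:disjoint_prisms} translating metric distance in $\cY_\Gamma$ to combinatorial distance in a hyperbolic triangle-group Davis complex), and the combinatorial content is the same case split used in \Cref{cor:TypeBCount}. The only point requiring minor care is confirming that every Type C pair really does arise from one of the two configurations (i) or (ii), which follows directly from the Type B/C definition in \Cref{subsec:itinerary_type}.
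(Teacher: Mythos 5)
Your proposal is correct and is exactly the argument the paper intends: its proof of this corollary consists of the single line that it ``follows from \Cref{lemma:hyp plane count} and is analogous to that of \Cref{cor:TypeBCount},'' and you have simply written out that analogy with the same case split and the same product of bounds $(S_1+1)(m-2)\,c_0\,e^{C_2\ell_i}$. Note only that the constant you derive, $(4M^2-6M)\bigl(M(m^3-5m^2+8m-4)+m-2\bigr)$, agrees with the value of $C_1$ recorded in \Cref{thm:fixed_itinerary} rather than with the slightly smaller expression printed in the corollary's statement, which appears to have dropped the $+(m-2)$ term coming from the case $\bar P = P$.
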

    \begin{proof}
        The proof follows from \Cref{lemma:hyp plane count} and is analogous to that of \Cref{cor:TypeBCount}.
    \end{proof}

\section{Appendix} \label{sec:appendix}

In this section, we record the calculation to compute the constant $y_0$ in \Cref{prop:y_0}.

By \Cref{lemma_LaCAT1},  we want $2r \sigma_i > 2\pi$ for $i=1,3$ and $r \sigma_2 > 2\pi$. Since $r \geq 4$, we get
\[\sigma_{1,3} \geq \frac{\pi}{4} \quad \sigma_2 \geq \frac{\pi}{2}\]
By the Spherical Law of Cosines, we have
\[\cos(\sigma_{1,3}) = \cos^2(\theta_{1,3}) + \sin^2(\theta_{1,3}) \cos\left(\frac{\pi}{3}\right)\]
\[\cos(\sigma_2) = \cos^2(\theta_2) + \sin^2(\theta_2) \cos\left(\frac{2\pi}{3}\right)\]
and
\[\theta_1 = \pi - \arctan\left(\frac{2y_0}{\sqrt{3}}\right)\]
\[\theta_{2,3} = \pi- \arctan(2y_0)\]

Thus, for a fixed $i$, we can solve the above inequalities for $y_0$.

Solving for $i=1,3$:
Let $x = \frac{2y_0}{\sqrt{3}}$ for $i=1$ and $x=2y_0$ for $i=3$. Let $\arctan(x) = y$.
\begin{flalign*}
\sigma_{1,3} = \arccos\left(\cos^2\theta_{1,3} + \frac{1}{2}\sin^2\theta_{1,3}\right) & \geq \frac{\pi}{4} \\
\arccos\left(\cos^2 y + \frac{1}{2}\sin^2 y \right) & \geq \frac{\pi}{4} \quad \quad (\sin(\pi-\eta) = \sin(\eta))\\
\arccos\left(1-\frac{1}{2}\sin^2 y \right) & \geq \frac{\pi}{4} \quad \quad (\cos^2 \eta = 1- \sin^2 \eta)\\
\arccos\left(1-\frac{1-\cos 2y}{4}\right) & \geq \frac{\pi}{4} \quad \quad (\cos(2 \eta) = 1-2\sin^2 \eta)\\
\arccos\left(\frac{3}{4} + \frac{1}{4}\cos 2y \right) & \geq \frac{\pi}{4}\\
\arccos\left(\frac{3}{4} + \frac{1-\tan^2y}{4(1+\tan^2 y)}\right) & \geq \frac{\pi}{4} \quad \quad (\cos 2 \eta = \frac{1-\tan^2\eta}{1+\tan^2\eta})\\
\arccos\left(\frac{3}{4} + \frac{1-x^2}{4(1+x^2)}\right) & \geq \frac{\pi}{4} \quad \quad (\tan y = x)\\
\arccos\left( \frac{2+x^2}{2+2x^2} \right) \geq \frac{\pi}{4} \\
\frac{2+x^2}{2+2x^2} \leq \frac{1}{\sqrt{2}}\\
2^{1/4} \leq x
\end{flalign*}
Therefore for $i=1$, we have $2^{1/4} \leq \frac{2y_0}{\sqrt{3}}$ which gives $ 1.0298 \leq y_0$. For $i=3$, we get $2^{1/4} \leq 2y_0$, giving $0.5946 \leq y_0$. A similar calculation for $i=2$, yields $0.86 \leq y_0$. Therefore, $y_0=1.5$ satisfies \Cref{prop:y_0}.

\bibliographystyle{alpha}
\bibliography{refs.bib}

\bigskip

{\footnotesize
\noindent{}Elizabeth Field, \textsc{Division of Engineering \& Mathematics, University of Washington Bothell}\par\nopagebreak
\noindent{}\textit{E-mail address:} \texttt{ecfield@uw.edu}
}

\medskip

{\footnotesize
\noindent{}Radhika Gupta, \textsc{School of Mathematics, Tata Institute of Fundamental Research }\par\nopagebreak
\noindent{}\textit{E-mail address:} \texttt{radhikagupta.maths@gmail.com}
}

\medskip

{\footnotesize
\noindent{}Robert Alonzo Lyman, \textsc{Department of Mathematics and Computer Science, Rutgers University Newark}\par\nopagebreak
\noindent{}\textit{E-mail address:} \texttt{robbie.lyman@rutgers.edu}
}

\medskip

{\footnotesize
\noindent{}Emily Stark, \textsc{Department of Mathematics and Computer Science, Wesleyan University}\par\nopagebreak
\noindent{}\textit{E-mail address:} \texttt{estark@wesleyan.edu}
}

\end{document}